\documentclass[10pt,a4paper]{article}

\usepackage{amsmath,amsthm,amssymb,amsfonts,amscd}
\usepackage[utf8]{inputenc}
\usepackage{verbatim}
\usepackage[all]{xy}
\usepackage{latexsym}
\usepackage{epsfig}
\usepackage[hidelinks]{hyperref}

\theoremstyle{plain} \newtheorem{prop}{Proposition} \newtheorem{theo}{Theorem} \newtheorem{teo}{Teorema} \newtheorem*{theo*}{Theorem} \newtheorem*{teo*}{Teorema} \newtheorem*{con}{Conjecture}  \newtheorem*{conj}{Conjetura} \newtheorem{lem}{Lemma} 
\theoremstyle{definition} \newtheorem{defi}{Definition}
\theoremstyle{remark} \newtheorem{rem}{Remark}

\author{Miguel Fern\'{a}ndez Duque}
\title{Local Uniformization of Foliations for Rational Archimedean Valuations}
\makeindex
\begin{document}
\maketitle
\tableofcontents

\section{Introduction}

This work is devoted to show the following statement
\renewcommand{\thetheo}{\Roman{theo}}
\begin{theo}\label{th:I}
	Let $ k $ be a field of characteristic zero and let $ K/k $ be a finitely generated field extension. Let $ \mathcal{F} $ be a rational codimension one foliation of $ K/k $. Given a $ k $-rational archimedean valuation $ \nu $ of $ K/k $, there is a projective model $ M $ of $ K/k $ such that $ \mathcal{F} $ is log-final at the center of $ \nu $ in $ M $.
\end{theo}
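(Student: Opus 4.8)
\noindent\emph{Proof strategy.}\ \ The plan is to reduce Theorem~\ref{th:I} to a local problem along $\nu$ and then to run a blow-up algorithm governed by a well-founded invariant. It suffices to produce a finite chain of blow-ups $M_N\to\cdots\to M_1\to M_0$ of projective models of $K/k$, where $M_0$ is a fixed projective model and each arrow is the blow-up of an irreducible closed center through the center of $\nu$, such that $\mathcal F$ is log-final at the center of $\nu$ in $M_N$; since $M_N$ is again a projective model this is the theorem. Writing $R:=\mathcal O_{M_0,\xi}$ with $\xi$ the center of $\nu$ and applying embedded resolution of singularities in characteristic zero, I may assume $R$ regular and equipped with a reduced normal crossings divisor $E$ (possibly empty, to be viewed as the accumulated exceptional locus) to which $\mathcal F$ is adapted; thereafter only blow-ups with regular centers having normal crossings with the current total transform of $E$ are allowed, so that regularity and the normal crossings structure persist along the whole chain.

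\medskip
\noindent\textbf{Local form and target.}\ \ Locally $\mathcal F$ is given by a saturated integrable logarithmic $1$-form
\[
\omega=\sum_{i\le e} a_i\,\frac{dx_i}{x_i}+\sum_{j>e} a_j\,dx_j,\qquad a_i\in R,\quad \omega\wedge d\omega=0,
\]
for a regular system of parameters $(x_1,\dots,x_n)$ with $E=\{x_1\cdots x_e=0\}$ and $n=\operatorname{trdeg}(K/k)$. ``Log-final at the center'' means the singularity of $(\mathcal F,E)$ there is simple: the linear part of $\omega$ is already in final normal form, i.e.\ log-elementary (some relevant logarithmic coefficient a unit, with no forbidden resonance) or one of the finitely many nilpotent final cases of the local theory. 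The aim is to reach this state at the moving center of $\nu$ after a finite admissible chain.

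\medskip
\noindent\textbf{Invariant and algorithm.}\ \ To $(R,E,\mathcal F)$, weighted by $\nu$, I would attach a well-founded invariant with two layers: a primary layer --- an adapted multiplicity of $\omega$ along the center of $\nu$, an integer whose positivity beyond a known threshold witnesses non-simplicity --- and, when the primary layer is constant, a secondary layer built from a Hironaka-type characteristic polyhedron of $\omega$ in $\nu$-adapted coordinates, recording how far those coordinates are from being $\nu$-monomial. The loop is: while $\mathcal F$ is not log-final at the center, choose a $\nu$-admissible blow-up --- a combinatorial, Perron-type center read off the vertices of the characteristic polyhedron, or a contact (curve) center --- that either strictly lowers the adapted multiplicity or, fixing it, strictly simplifies the polyhedron; iterate. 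The valuation intervenes twice: it selects which blow-up to perform at each stage, and its rational archimedean nature (value group of rank one inside $\mathbb R$, residue field kept small by $k$-rationality, hence not growing under blow-up) is what rigidifies the lattices carrying the polyhedron vertices enough to force the secondary layer to descend in a well-ordered fashion, so that the algorithm terminates.

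\medskip
\noindent\textbf{The main obstacle.}\ \ As always in reduction of singularities of codimension-one foliations, the crux is the \emph{stable} case: the adapted multiplicity cannot be lowered by any further admissible blow-up, yet the singularity is not simple --- the phenomenon of infinite chains of singular points along $\nu$ that defeats the naive induction already for surfaces and, more sharply, for threefolds. I expect the bulk of the work to lie here. I would analyse the Newton polyhedron of $\omega$ under the relevant monoidal transformations, using integrability $\omega\wedge d\omega=0$ to constrain the admissible initial forms, and using the rational archimedean structure of $\nu$ to drive a Perron/continued-fraction descent on the polyhedron vertices, showing that the polyhedron can always be forced down to a single vertex; at such a vertex the chosen coordinates are $\nu$-monomial, and a direct computation then identifies the singularity as log-elementary (or as one of the controlled nilpotent final cases). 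Carrying this through uniformly over all possible rational ranks of $\nu$, and checking that every operation stays within the class of $\nu$-admissible blow-ups preserving regularity and normal crossings, is where essentially all the difficulty is concentrated.
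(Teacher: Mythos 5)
There is a genuine gap, and it sits exactly where you say you ``expect the bulk of the work to lie''. Your plan names the stable case but offers no mechanism to resolve it, whereas that case is the entire content of the paper's argument. Two concrete points. First, your induction has no workable engine: once you expand $\omega$ in powers of a chosen dependent variable, the individual levels are no longer integrable $1$-forms, so integrability cannot be ``used to constrain the admissible initial forms'' level by level. The paper resolves this by replacing the exact condition $\omega\wedge d\omega=0$ with the truncated condition $\nu_{\mathcal A}(\omega\wedge d\omega)\geq 2\gamma$ relative to a fixed value $\gamma$, which is inherited (in a controlled, weaker form) by the levels via the expansion $\omega\wedge d\omega=\sum_m z^m(\Theta_m+\frac{dz}{z}\wedge\Delta_m)$; this truncation is what makes the induction on the number of dependent variables ($T_3(\ell)\Rightarrow T_3(\ell+1)$, with auxiliary statements for functions and pairs) possible. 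Nothing in your two-layer invariant plays this role.

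Second, your claim that a Perron/continued-fraction descent ``forces the polyhedron down to a single vertex'' fails as stated: in the resonant situations the critical height can remain stable under arbitrarily many Puiseux-type packages while the explicit value accumulates strictly below the target (the paper's examples with Euler's equation and with $F=z-y-\phi$ exhibit exactly this). Escaping this accumulation requires an ordered (Tschirnhausen-type) change of the dependent coordinate, which in general is only a formal series, so one must work in $\hat{\mathcal O}$ rather than with the convergent data your setup keeps; and for $1$-forms the correct coordinate change is not read off a Newton polyhedron but is produced by the truncated proportionality (de Rham--Saito) lemma together with the analysis of the resonance conditions (R1)/(R2). Without these ingredients your loop has no termination proof, and the assertion that non-descent of the primary invariant can always be traded for a strict simplification of the polyhedron is precisely the unproved (and, without the above corrections, false) step.
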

In order to prove this theorem we have introduced a non usual tool in the theory of foliations, the truncation, which behavior from the differential and integrable viewpoint is better than the expected.

Theorem \ref{th:I} is a first step in the proof of the following conjecture, whose complete poof is the goal of future works:
\begin{con}
	Let $ k $ be a field of characteristic zero and let $ K/k $ be a finitely generated field extension. Let $ \mathcal{F} $ be a rational codimension one foliation of $ K/k $. Given a valuation $ \nu $ of $ K/k $, there is a projective model $ M $ of $ K/k $ such that $ \mathcal{F} $ is log-final at the center of $ \nu $ in $ M $.
\end{con}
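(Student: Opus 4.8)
The plan is to deduce the conjecture from Theorem~\ref{th:I} by two successive reductions: an induction on the rank of $\nu$ that removes the archimedean hypothesis, and, in the resulting archimedean case, a descent on the residue transcendence degree of $\nu$ that brings us back to the $k$-rational situation of Theorem~\ref{th:I}.

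\textbf{Reduction to the archimedean case.} Let $h=\operatorname{rank}(\nu)\ge 2$. From the chain of primes $0=\mathfrak p_0\subsetneq\cdots\subsetneq\mathfrak p_h=\mathfrak m_\nu$ of the valuation ring $R_\nu$ one writes $\nu=\nu_1\circ\bar\nu$, where $\nu_1$ (the localization of $R_\nu$ at $\mathfrak p_1$) is an archimedean valuation of $K/k$ and $\bar\nu$ is a valuation over $k$ of rank $h-1$ of the residue field $K_1:=k_{\nu_1}$. First I would apply the archimedean case to $\nu_1$, obtaining a projective model $M_1$ on which $\mathcal F$ is log-final at the center $Y_1$ of $\nu_1$, arranged moreover so that $Y_1$ is smooth with $\kappa(Y_1)=K_1$; then $Y_1$ is a projective model of $K_1/k$, and log-finality of $\mathcal F$ along $Y_1$ should cut out an induced rational codimension one foliation $\mathcal F_1$ of $K_1/k$ (or a trivial one, in which case there is nothing further to prove). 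Here the precise definition of log-finality — an SNC divisor adapted simultaneously to the foliation and to the center — is essential, for otherwise the restriction of $\mathcal F$ to $Y_1$ need not be a foliation at all. By the induction hypothesis there is a projective model $Y_2\to Y_1$ on which $\mathcal F_1$ is log-final at the center of $\bar\nu$; lifting $Y_2\to Y_1$ to a projective model $M_2\to M_1$ (blowing up $M_1$ along smooth centers contained in $Y_1$ and adapted to the SNC divisor, so that the strict transform of $Y_1$ is $Y_2$), one must finally check that on $M_2$ the foliation $\mathcal F$ is log-final at the center of $\nu$; this last point amounts to the local assertion that a transversal log-final structure coming from $\nu_1$ and a log-final structure for $\mathcal F_1$ coming from $\bar\nu$ combine to a log-final structure for $\mathcal F$.

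\textbf{The archimedean case with arbitrary residue field.} For archimedean $\nu$ I would induct on $d=\operatorname{trdeg}(k_\nu/k)$. The case $d=0$ is Theorem~\ref{th:I} after absorbing, by descent in characteristic zero, a finite extension of the ground field that splits the algebraic residue extension; if moreover $\operatorname{rat.rk}(\nu)>1$ one tests log-finality — an open condition along the center — after replacing $\nu$ by a rational-rank-one archimedean valuation with the same center on every model, for which Theorem~\ref{th:I} is directly available. For $d\ge 1$, starting from Zariski's local uniformization of $K/k$ one passes to a model on which the center $Z$ of $\nu$ is smooth of dimension $d$; localizing $\mathcal O_{M,Z}$ and using a generic projection (equivalently, a coefficient field along $Z$, available in characteristic zero) exhibits $\nu$ as an archimedean valuation of $K$ over a base field $k'\supseteq k$ with strictly smaller residue transcendence degree, to which the lower-$d$ case applies; descending the resulting model yields log-finality of $\mathcal F$ at the center of $\nu$.

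\textbf{Main obstacle.} I expect the genuinely hard part to be the lifting step in the rank induction: one must extend the modification $Y_2\to Y_1$ furnished by the induction hypothesis to a modification $M_2\to M_1$ without destroying the log-finality of $\mathcal F$ with respect to $\nu_1$ along $Y_1$, and without creating new singularities of $\mathcal F$ that would force a further appeal to Theorem~\ref{th:I} and risk circularity. Controlling this requires a uniform form of the archimedean case — log-finality of $\mathcal F$ throughout a neighbourhood of the generic point of $Y_1$, compatibly with the adapted SNC divisor — so that the blow-ups seen only by $\bar\nu$ merely refine that divisor and leave the transversal foliated structure log-final. The truncation tool introduced in this paper, whose good differential and integrable behaviour along a valuation already underlies Theorem~\ref{th:I}, should be the instrument that makes this uniform control possible, now applied simultaneously to the archimedean class $\nu_1$ and to the residual valuation $\bar\nu$.
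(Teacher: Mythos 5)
The statement you are proving is labelled a \emph{conjecture} in the paper: the author explicitly proves only the $k$-rational archimedean case (Theorem \ref{th:I}) and defers the general statement to future work, so there is no proof in the paper to compare yours against. Your proposal is a reasonable high-level program — and it is close to the program the author hints at (rank reduction in the spirit of Novacoski--Spivakovsky, plus removal of the $k$-rationality hypothesis) — but as written it is a strategy with several essential steps asserted rather than proved, so it cannot be accepted as a proof of the conjecture.

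Concretely: (1) In the rank reduction, the existence of an induced rational codimension one foliation $\mathcal F_1$ of $K_1/k$ on the center $Y_1$ of $\nu_1$ is not automatic. A generator $\omega$ of $\mathcal F_{M_1,Y_1}(\log \boldsymbol{x})$ restricted to $Y_1$ may vanish identically, may fail to be saturated, or $Y_1$ may be invariant/dicritical for $\mathcal F$; each case changes what object $\bar\nu$ is supposed to uniformize, and the ``trivial'' case you set aside is precisely where the composite argument needs a separate mechanism. (2) The lifting step you yourself flag — extending $Y_2\to Y_1$ to $M_2\to M_1$ so that log-finality at the center of $\nu$ follows from log-finality of $\mathcal F_1$ at the center of $\bar\nu$ together with log-finality of $\mathcal F$ along $Y_1$ — is the core difficulty and is not addressed; log-finality is a condition on the ideal $(a_1,\dots,a_r)$ in a specific coordinate system, and blow-ups with centers inside $Y_1$ can create new resonances among the residual coefficients, so the ``combination of transversal structures'' is an assertion, not a lemma. (3) In the residue-field reduction, replacing $k$ by a larger base $k'$ changes $\Omega_{K/k}$ to the smaller space $\Omega_{K/k'}$, and $\mathcal F\subset\Omega_{K/k}$ does not canonically induce a codimension one foliation of $K/k'$ (its image may be zero or non-integrable as a subspace of $\Omega_{K/k'}$); likewise the descent from a finite extension of $k$ splitting the residue extension must be checked against the definition of log-final, which is tied to a regular system of parameters over $k$. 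Finally, the parenthetical reduction to rational rank one is unnecessary (Theorem \ref{th:I} already covers $k$-rational archimedean valuations of arbitrary rational rank) and the claimed existence of a rational-rank-one valuation with the same centers on every model is itself unproved. Until these points are established, the conjecture remains open.
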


\begin{center}
	 - - -
\end{center}

Consider a finitely generated field extension $ K/k $ with transcendence degree $ \operatorname{tr.deg}(K/k)=n $ over an algebraically closed zero characteristic field $ k $. Let $ \{z_1,z_2,\dots,z_n\} \subset K $ be a transcendence basis of such field extension. We have the following tower of fields
$$ k \subset k(z_1,z_2,\dots,z_n) \subset K \ , $$
where $ K $ is a finitely generated algebraic field extension (thus separable since $ \operatorname{char}(k)=0 $) of $ k(z_1,z_2,\dots,z_n) $. The module of Kähler differentials $ \Omega_{K/k} $ is a $ K $-vector space of dimension $ n = \operatorname{tr.deg}(K/k) $ and $ \{dz_1,dz_2,\dots,dz_n\} $ is a basis of such space. A \emph{rational codimension one foliation $ \mathcal{F} $ of $ K/k $} is a $ K $-vector subspace of dimension one of $ \Omega_{K/k} $ such that for every $ 1 $-form $ \omega \in \mathcal{F} $ the integrability condition
$$ \omega \wedge d \omega = 0  $$
is satisfied.

This definition of foliation agrees with the classical definition in complex projective geometry. Consider the complex projective space $ \mathbb{P}_{\mathbb{C}}^{n} $ and a affine cover $ \mathbb{P}_{\mathbb{C}}^{n} = U_0 \cup U_1 \cup \cdots \cup U_n $. A codimension one foliation of $ \mathbb{P}_{\mathbb{C}}^{n} $ is given by $ n+1 $ integrable homogeneous polynomial $ 1 $-forms 
$$
W_i=\sum_{j=1}^n P^i_j(z^i_1,z^i_2,\ldots,z^i_n) \, dz^i_j\ , \quad i=0,1,\dots,n \ ,
$$
defined over the affine charts $ U_i \simeq \mathbb{C}[z^i_1,z^i_2,\ldots,z^i_n] $, in such a way that
$$
W_{i\vert_{U_i\cap U_j}}= G_{ij} \, W_{\ell\vert_{U_i\cap U_j}} \ ,
$$
where $ G_{ij} $ is an invertible rational function $ U_i \cap U_j $. The field of rational functions of any affine chart $ U_i $ and $ \mathbb{P}_{\mathbb{C}}^{n} $ itself, is
$$ K \simeq \mathbb{C}(z^i_1,z^i_2,\ldots,z^i_n) $$
for any index $ i $. All the $ 1 $-forms $ W_i $ can be considered as elements of $ \Omega_{K/\mathbb{C}} $. Any of them spans the same vector subspace of dimension $ 1 $
$$ \mathcal{F} = <W_i> \subset \Omega_{K/\mathbb{C}} \ , $$
which is a codimension one rational foliation of $ K/\mathbb{C} $ following ou definition.

\begin{center}
	 - - -
\end{center}

A projective model of $ K/k $ is a projective $ k $-variety $ M $, in the sense of scheme theory, such that $ K = \kappa(M) $ is its field of rational functions. Take a regular $ k $-rational point $ Y \in M $, it means, a point such that the local ring $ \mathcal{O}_{M,Y} $ is regular and its residue field is $ \kappa_{M,Y} \simeq k $. A system of generators $z_1,z_2,\ldots,z_n$ of the maximal ideal ${\mathfrak m}_{M,Y}$ is also a transcendence basis of $ K/k $, thus it provides a basis $ dz_1,dz_2,\ldots,dz_n $ of $ \Omega_{K/k} $. Consider a system of generators of ${\mathfrak m}_{M,Y}$ of the form $ \boldsymbol{z}=(\boldsymbol{x},\boldsymbol{y}) $, where $ \boldsymbol{x}=(x_1,x_2,\cdots,x_r) $ and $ \boldsymbol{y}=(y_1,y_2,\cdots,y_{n-r}) $. Let $\Omega_{{\mathcal O}_{M,Y}/k}(\log \boldsymbol{x})$ be the ${\mathcal O}_{M,Y}$-submodule of $\Omega_{K/k}$ generated by $\Omega_{{\mathcal O}_{M,Y}/k}$ and the logarithmic differentials
$$
\frac{dx_1}{x_1},\frac{dx_2}{x_2},\dots,\frac{dx_r}{x_r} \ .
$$  
We have that $\Omega_{{\mathcal O}_{M,Y}/k}(\log \boldsymbol{x})$ is a free ${\mathcal O}_{M,Y}$-module of rank $ n $ generated by 
$$
\frac{dx_1}{x_1},\frac{dx_2}{x_2},\dots,\frac{dx_r}{x_r}, dy_1,dy_2,\ldots,dy_{n-r} \ .
$$ 
Let $\mathcal F$ be a codimension one rational foliation of $K/k$. Consider
$$
{\mathcal F}_{M,Y}(\log \boldsymbol{x})= {\mathcal F}\cap \Omega_{{\mathcal O}_{M,Y}/k}(\log \boldsymbol{x}) \ . 
$$
$ {\mathcal F}_{M,Y}(\log \boldsymbol{x}) $ is a free ${\mathcal O}_{M,Y}$-module of rank $ 1 $ generated by an integrable $ 1 $-form
$$
\omega=\sum_{i=1}^r a_i\frac{dx_i}{x_i}+\sum_{j=1}^{n-r}b_jdy_j \ ,
$$
where the coefficients $a_1,a_2,\dots,a_r,b_1,b_2,\dots,b_{n-r}\in {\mathcal O}_{M,Y}$ have no common factor. We say that
\begin{enumerate}
	\item $\mathcal F$ is \emph{$ \boldsymbol{x} $-log elementary} at $Y\in M$ if $ (a_1,a_2,\dots,a_r) = {\mathcal O}_{M,Y} $;
	\item $\mathcal F$ is \emph{$ \boldsymbol{x} $-log canonical} at $Y\in M$ if $ (a_1,a_2,\dots,a_r)\subset {\mathfrak m}_{M,Y}$ and in addition
	$$ (a_1,a_2,\dots,a_r) \not \subset (x_1,x_2,\dots,x_r)+ \mathfrak{m}_{M,Y}^2 \ . $$
\end{enumerate}
We say that $\mathcal F$ is \emph{$ x $-log final} at $Y\in M$ if it is $ \boldsymbol{x} $-log elementary or $ \boldsymbol{x} $-log canonical. Finally, we say that $\mathcal F$ is \emph{log-final} at $Y\in M$ if it is $ \boldsymbol{x} $-log final for certain system of generators $(\boldsymbol{x},\boldsymbol{y})$ of $ \mathfrak{m}_{M,Y} $. 

To be log-final is the algebraic version of the concept of pre-simple singularity of the complex analytic case (\cite{Sei},\cite{CaCe},\cite{Ca04}). Let us briefly recall this definition. Consider a foliation of $ (\mathbb{C}^2,\boldsymbol{0}) $ given locally by
$$ a(x,y)dx+b(x,y)dy=0 \ . $$  
The origin $ (0,0) $ is a pre-simple singularity if the foliation is non singular (one of the series $a(x,y)$ or $b(x,y)$ is a unit) or if the Jacobian matrix
$$
\left(\begin{array}{cc}
{\partial b}/{\partial x}(0,0) & {-\partial a}/{\partial x}(0,0) \\
{\partial b}/{\partial y}(0,0) & {-\partial a}/{\partial y}(0,0)
\end{array}\right)
$$
is non-nilpotent. In this situation we can always take local analytic coordinates $x',y'$ such that the foliation is locally given by
$$
 a'(x',y')\frac{dx'}{x'}+ b'(x',y')dy'=0 \ ,
$$
where $a'(x',y')=y'+\cdots$, thus the foliation is $ x' $-log-final at the origin, with respect to the local analytic coordinates $ (x',y') $. A more detailed study can be found in \cite{CaCeDe}. In general, for foliations over complex ambient spaces of arbitrary dimension, the concept of pre-simple singularity introduced in \cite{CaCe} and \cite{Ca04} is equivalent to the property of being log-final.

In the case of foliations over algebraic varieties of dimension $ 2 $ or $ 3 $, the theorem proved in this work, as well as the conjecture, are consequences of the following global results of reduction of singularities (see \cite{Sei} for the two-dimensional case and \cite{Ca04} for the dimension $ 3 $):
\begin{theo*}[A. Seidenberg, 1968; F. Cano 2004]
Let $\mathcal F$ be a codimension one rational foliation of $({\mathbb C}^n,0)$, $n=2,3$. There is a finite composition of blow-ups
$$ (\mathbb{C}^{n},0)  \leftarrow  (M_1,Z_1) \leftarrow \cdots \leftarrow (M_N,Z_N) = (M,Z) $$
such that $ \mathcal{F} $ is log-final at every point $ Y \in Z $.
\end{theo*}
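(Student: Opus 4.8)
The plan is to treat first the case $n=2$ (Seidenberg) and then $n=3$ (Cano) along the same lines: in each case I reduce the global assertion to a local one and then resolve locally by a finite sequence of blow-ups controlled by a numerical invariant that is attached to $\mathcal F$ and to the exceptional divisor and takes values in a well-ordered set. Since $\mathcal F$ is a fixed one-dimensional subspace of $\Omega_{K/k}$, a blow-up does not change the foliation; it only changes the local generator at a given point, replacing the total transform of $\omega$ by its strict transform (dividing out the exceptional factor), which is again a local generator of $\mathcal F_{M,Y}(\log\boldsymbol x)$ and in particular still integrable. The locus of points of a given projective model at which $\mathcal F$ is not log-final is closed, so it is enough to lower, by a single well-chosen blow-up, the maximal value of the invariant over this locus, and then to iterate.

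\textbf{The two-dimensional case.} Here I would blow up only points; the reduced exceptional divisor then stays normal crossings automatically. At a point $Y$ I write $\omega=\sum_{i}a_i\,dx_i/x_i+\sum_j b_j\,dy_j$ as in the body of the paper and attach to it an adapted multiplicity $m$ (the order of the coefficient ideal $(a_1,\dots,a_r,b_1,\dots,b_{n-r})$ relative to the divisor, or an appropriate refinement of it), together with a secondary datum read off the Newton polygon of $\omega$ in coordinates adapted to the divisor. The three facts to prove are: (i) $m$ does not increase under a point blow-up; (ii) if $m$ is preserved at an infinitely near point, that point lies on the directrix of $\omega$, and there the secondary Newton-polygon datum strictly decreases; (iii) when neither $m$ nor the secondary datum can be decreased by any admissible blow-up, the linear part of $\omega$ is non-nilpotent (or $\omega$ is non-singular), which, by the equivalence recalled in the introduction between pre-simple and log-final singularities, is exactly the $\boldsymbol x$-log-final condition at $Y$. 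From (i)--(iii) only finitely many infinitely near points carry the maximal value of the invariant, and iterating exhausts the non-log-final locus.

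\textbf{The three-dimensional case.} The scheme is the same, but the invariant is substantially more elaborate: a lexicographic string built from the adapted multiplicity, the dimension of the directrix of $\omega$, and combinatorial data describing how the exceptional divisor meets the singular locus; the admissible centres are now points or smooth curves that are contained in the non-log-final locus and have normal crossings with the divisor. One must show, case by case, that a well-chosen admissible blow-up never increases this string and strictly decreases it unless the centre is already log-final; here the integrability condition $\omega\wedge d\omega=0$ plays an essential role, both in keeping the directrix and the auxiliary hypersurface used to propagate the control of the reduction under control, and in excluding the configurations that would otherwise break the descent. Termination then follows from the well-ordering of the values of the invariant.

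The step I expect to be the main obstacle is precisely this last one: the dimension-three case analysis of the behaviour of the invariant under blow-up — especially the nilpotent and tangent situations and the dicritical components of the exceptional divisor — together with verifying that improving the foliation near one point does not create new non-log-final points elsewhere. This is the heart of the matter and is carried out in full in the cited works.
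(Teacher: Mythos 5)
The paper does not actually prove this statement: it is quoted in the introduction as a known global result, with the two-dimensional case attributed to \cite{Sei} and the three-dimensional case to \cite{Ca04}, and the whole point of the paper is that the analogous global statement in dimension $n\geq 4$ is open, which is why the author retreats to local uniformization along a valuation. So there is no in-paper proof to compare yours against; the relevant comparison is with the cited works, whose broad strategy (descent on an adapted multiplicity, with a directrix/Newton-polygon secondary invariant, and in dimension three a lexicographic string whose control uses the integrability condition) your outline does identify correctly.

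That said, what you have written is a roadmap, not a proof. Every step that carries actual content is deferred: points (i)--(iii) in dimension two are stated as "the three facts to prove," and in dimension three you say explicitly that the case analysis "is carried out in full in the cited works." For a result whose three-dimensional case occupies roughly a hundred pages of Cano's Annals paper, this cannot count as a proof. There is also one structural issue you flag but do not resolve, and it is not a side issue: the global strategy in dimension three is genuinely not "pick the worst point of the non-log-final locus, improve it, iterate." Blowing up a point or curve can create new non-log-final points on the exceptional divisor and can worsen the situation at nearby points of a permissible curve; Cano's proof has to set up a global reduction scheme (generic equireduction along the singular locus, separate destruction of the finitely many special points) precisely to avoid the non-terminating loops that the naive pointwise strategy produces. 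Your last paragraph names this as "the main obstacle" and then stops, so the argument as written does not terminate. If your goal is to use this theorem in the context of the present paper, the honest move is the one the author makes: cite \cite{Sei} and \cite{Ca04} and do not attempt to reprove them.
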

In the case of ambient spaces o dimension $ n\geq 4 $ the global reduction of singularities of codimension one foliations is an open problem.

\begin{center}
 - - -
\end{center}

Resolution of singularities of algebraic varieties over a ground field of characteristic $ 0 $ was achieved by Hironaka in its celebrated paper \cite{Hi1}.
\begin{theo*}[Hironaka's Reduction of Singularities, 1964]
	Let $K/k$ be a finitely generated field extension, where $k$ has characteristic zero. There is a non-singular projective model $M$ of $K/k$.
\end{theo*}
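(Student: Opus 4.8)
The statement is the classical theorem of Hironaka, so the honest ``plan'' is to invoke \cite{Hi1}; what follows is a sketch of how a constructive proof in the modern style (Villamayor, Bierstone--Milman, Encinas--Villamayor, W\l{}odarczyk, Koll\'ar) would be organized. First I would produce \emph{some} projective model: since $K/k$ is finitely generated, write $K=k(a_1,\dots,a_m)$, set $A=k[a_1,\dots,a_m]\subset K$, and let $M_0$ be the projective closure of $\operatorname{Spec} A$ inside $\mathbb{P}^m_k$, so that $K=\kappa(M_0)$. Embedding $M_0\hookrightarrow W_0:=\mathbb{P}^m_k$ in a smooth projective $k$-variety, the task reduces to \emph{embedded} resolution: to find a finite composition of blow-ups of $W_0$ along smooth centers, each contained in the successive strict transforms of the singular locus of $M_0$ and with normal crossings with the accumulated exceptional divisor, after which the strict transform of $M_0$ is smooth. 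By Noetherian induction and a standard patching argument this reduces further to the \emph{principalization} problem: given a smooth $k$-variety $W$, a coherent ideal sheaf $\mathcal{I}\subset\mathcal{O}_W$, and a normal crossings divisor $E$, produce a sequence of blow-ups of $W$ along smooth centers having normal crossings with $E$ and its total transform, after which the total transform of $\mathcal{I}$ is locally principal and supported on a normal crossings divisor.

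The engine is an induction on $\dim W$ via hypersurfaces of maximal contact. To the pair $(\mathcal{I},b)$, where $b=\max_\xi\operatorname{ord}_\xi\mathcal{I}$, one attaches the closed set $\operatorname{Max} b$ on which the order is attained. \emph{Using $\operatorname{char} k=0$}, near any point of $\operatorname{Max} b$ one can choose a smooth hypersurface $Z$ --- a hypersurface of maximal contact, obtained by a Tschirnhausen-type change of coordinates applied to a suitable order-$(b-1)$ derivative of a local generator --- with $\operatorname{Max} b\subset Z$ and which remains such after the relevant blow-ups. One then forms the \emph{coefficient ideal} $\mathcal{C}(\mathcal{I})$ on $Z$, a marked ideal on a variety of one lower dimension whose resolution data control those of $\mathcal{I}$; by the inductive hypothesis the algorithm applied to $\mathcal{C}(\mathcal{I})$ on $Z$ produces the centers to blow up in $W$. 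After each such blow-up the controlling invariant --- the Hironaka invariant, a string in a lexicographically well-ordered set built from the orders at the successive stages together with combinatorial weights coming from the old exceptional components --- does not increase, and after finitely many steps it strictly drops; since the invariant is upper semicontinuous, its maximum locus is closed, and the maximal-contact construction guarantees it is smooth, so it is a legitimate center.

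Termination then follows because the invariant lives in a well-ordered set, hence cannot strictly decrease infinitely often, and because each ``stable stretch'' of the process reduces to resolving a purely monomial ideal, which terminates by an elementary argument on exponents. Independently of the auxiliary choices, one checks that the construction is canonical enough to glue over $M_0$ --- that the invariant and the chosen center do not depend on the maximal-contact hypersurface --- handled by comparing coefficient ideals, or via W\l{}odarczyk's homogenized ideals, or the Bierstone--Milman presentations. Finally, blowing up a projective variety along a closed center keeps it projective, so the resulting $M$ is again projective, and once the ideal of $M_0$ has been made locally principal and monomial with a normal crossings exceptional divisor, its strict transform is smooth; this $M$ is the desired non-singular projective model.

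The main obstacle is precisely the existence and good behaviour of hypersurfaces of maximal contact and the choice-independence of the associated invariant --- this is where characteristic zero is indispensable, since maximal contact fails in positive characteristic and the whole inductive scheme collapses --- together with the delicate bookkeeping of the exceptional divisors needed to force the monomial part of the invariant to descend and to preserve the normal crossings condition. All of this is exactly what is carried out in \cite{Hi1}, which we simply quote.
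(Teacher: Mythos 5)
Your proposal is fine: the paper itself gives no proof of this statement, it simply quotes it as Hironaka's classical theorem with a citation to \cite{Hi1}, which is exactly what you ultimately do. The additional sketch of the modern constructive proof (maximal contact, coefficient ideals, principalization) is accurate as an outline but is extra material the paper does not attempt, so nothing further is required.
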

Before the work of Hironaka, the problem was solved for dimension at most three. The case of complex curves was already treated by Newton in 1676. For surfaces, the first rigorous proof is due to Walker in 1935 \cite{Wal}. The case of $ 3 $-folds was solved by Zariski in 1944 \cite{Zar44Ann}. Before this result, Zariski proved local uniformization for algebraic varieties in characteristic zero in arbitrary dimension \cite{Zar40}.
\begin{theo*}[Zariski's Local Uniformization, 1940]
	Let $K/k$ be a finitely generated field extension, where $k$ has characteristic zero and consider a valuation $ \nu $ of $ K/k $. There is a projective model $M$ of $K/k$ such that the center of $ \nu $ in $M$ is a regular point. 
\end{theo*}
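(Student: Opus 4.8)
The plan is to argue by induction on $n=\operatorname{tr.deg}(K/k)$. The base cases are immediate: for $n=0$ the extension $K/k$ is finite (and separable, since $\operatorname{char}k=0$), so $\operatorname{Spec}K$ is already a regular model; for $n=1$, any projective model of $K/k$ admits a normalization which is a nonsingular projective curve, and the center of $\nu$ on it is a closed point whose local ring is a discrete valuation ring, hence regular. Assume then $n\ge 2$ and the statement for transcendence degree $<n$. A first reduction is to the case that $\nu$ has rank one: a general valuation decomposes as a composite $\nu=\nu'\circ\bar\nu$, with $\nu'$ of rank one on $K$, residue field $K'$, and $\bar\nu$ a valuation of $K'/k$; by Abhyankar's inequality applied to $\nu'$ one has $\operatorname{tr.deg}(K'/k)<n$, so $\bar\nu$ is handled by the inductive hypothesis, and one then lifts the resulting model of $K'$ through the projective system of models of $K$ dominating it by the standard dominance/limit argument for composite valuations.

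So let $\nu$ be of rank one, and invoke Abhyankar's inequality $\operatorname{rat.rk}(\nu)+\operatorname{tr.deg}(\kappa_\nu/k)\le n$; the argument splits according to whether equality holds. \textbf{Equality (Abhyankar case).} Then $\Gamma_\nu$ is free abelian of rank $r=\operatorname{rat.rk}(\nu)$ and $\kappa_\nu/k$ is finitely generated of transcendence degree $n-r$. Choose $u_1,\dots,u_r\in K^\times$ whose values are $\mathbb{Q}$-linearly independent and span $\Gamma_\nu\otimes\mathbb{Q}$, and $w_1,\dots,w_{n-r}\in\mathcal{O}_\nu$ whose residues form a transcendence basis of $\kappa_\nu/k$. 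Replacing the $u_i$ by suitable Laurent monomials in themselves — equivalently, performing a toric monoidal transformation of a starting model adapted to the cone spanned by $\nu(u_1),\dots,\nu(u_r)$, which is the step forcing the value \emph{semigroup} to fill out the lattice $\Gamma_\nu$ — one arranges that $u_1,\dots,u_r,w_1,\dots,w_{n-r}$ lie in and generate the maximal ideal of the local ring $\mathcal{O}_{M,\xi}$ of the center $\xi$ on a suitable affine chart of the new model, so $\mathcal{O}_{M,\xi}$ is regular.

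\textbf{Strict inequality.} Pick a transcendence basis $z_1,\dots,z_n$ of $K/k$ and write $K=k(z_1,\dots,z_n)(\theta)$ with $\theta$ a root of an irreducible $f\in k(z_1,\dots,z_n)[T]$. The restriction of $\nu$ to $k(z_1,\dots,z_n)$ is a valuation of a purely transcendental field and is easy to monomialize. The core is to control $f$ near $T=\theta$ through a generalized Puiseux-type expansion of $\theta$ in the $z_i$: truncating this expansion at higher and higher order, and performing at each stage the monoidal transformation dictated by the leading monomials of the truncation (a blow-up along the ideal they generate), strictly decreases a carefully chosen numerical invariant of the pair $(\nu,f)$ measuring how far $\nu$ is from being monomial in the current coordinates. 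After finitely many such steps one lands either in the Abhyankar case above or in a configuration of strictly smaller transcendence degree, to which the inductive hypothesis applies.

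The main obstacle is precisely the termination of this last process: one must produce an invariant with values in a well-ordered set that provably drops under the relevant monoidal transformations, and rule out stabilization caused by the residue field or the value group growing under the transformations. This is the delicate Puiseux-series analysis at the heart of Zariski's argument, and it is exactly the place where, in the foliated analogue pursued in this paper, the truncation technology takes over from the classical combinatorics.
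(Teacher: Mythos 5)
Note first that the paper does not prove this statement at all: it appears as quoted classical background, with the proof delegated to Zariski's original article \cite{Zar40}, so there is no internal argument of the paper to measure yours against. Judged on its own, what you have written is an outline of Zariski's strategy rather than a proof, and the decisive steps are asserted instead of carried out. In the rank-one, non-Abhyankar case you introduce the generalized Puiseux-type expansion and then say that one ``must produce an invariant with values in a well-ordered set that provably drops'' and must rule out stabilization coming from growth of the residue field or value group --- but you never define such an invariant, never specify which monoidal transformations are performed, and never prove termination. That is precisely the technical core of Zariski's 1940 argument (and the part that the present paper, in its foliated analogue, replaces by the truncation machinery), so leaving it as a named ``obstacle'' means the theorem has not been proved.

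The other gap is the reduction to rank one. The inductive hypothesis only uniformizes $\bar\nu$ on the residue field $K'$ of $\nu'$; to conclude for the composite valuation $\nu$ you must also uniformize $\nu'$ on $K$ and make the two uniformizations compatible, i.e.\ lift the regular center of $\bar\nu$ through the specialization defined by $\nu'$. This is not a ``standard dominance/limit argument'': it is a genuinely delicate step, which in Zariski's paper takes real work and which in modern form is the subject of a separate article that this very paper cites (Novacoski--Spivakovsky \cite{NoSpi}). Similarly, in the Abhyankar-equality case the assertion that one toric modification turns $u_1,\dots,u_r,w_1,\dots,w_{n-r}$ into a regular system of parameters at the center needs an actual argument (one must saturate the value semigroup inside $\Gamma_\nu$ and check that the chosen chart's local ring is dominated by $R_\nu$ with the right maximal ideal), although this case is indeed the accessible one. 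In short: the skeleton is the right one, but the load-bearing parts --- termination of the Puiseux/monoidal process and the composite-valuation lifting --- are missing.
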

In \cite{Zar44Bul}, Zariski showed the compactness of the Zariski-Riemann space (the space of valuations of $K/k$), which implies that a finite number of projective models are enough to support local uniformizations for any valuation. Then, he obtained his global result by patching projective models, but this method only works in dimension at most $ 3 $.
 
The general problems of resolution of singularities has a long history after the works of Zariski and Hironaka. The case of complex analytic spaces was achieved by Aroca, Hironaka and Vicente \cite{AroHiVi}. The huge original proofs of Hironaka have been analyzed very carefully with emphasis in the constructiveness and functorial properties by Villamayor \cite{Vil}, Bierstone and Milman \cite{BierMil} and others.
 
One of the keys of the resolution of singularities is the maximal contact theory and its differential version, developed by Giraud \cite{Gir}. This is one of the starting points of the strongest known results in positive characteristic, due to Cossart and Piltant, who proved resolution of singularities of $ 3 $-folds in positive characteristic \cite{CoPil08,CoPil09}. This work improves the results of Abhyankar, which shows resolution in positive characteristic for surfaces \cite{Abh56}, and for $ 3 $-folds in the case of fields of characteristic at least $ 7 $ \cite{Abh66}. All of these results in positive characteristic are obtained passing through local uniformization.

Another related problems are concerning the monomialization of morphisms in zero characteristic due to Cutkosky \cite{Cut}. The difficulties in this case are close to the ones for case of vector fields or foliations. 
 
Reduction of singularities of vector fields in dimension two was achieved by Seidenberg \cite{Sei}. In dimension $ 3 $, there is partial results due to Cano \cite{Ca87}, an then this author, Roche and Spivakovsky obtain a global reduction of singularities through local uniformization \cite{CaRoSp}, using the axiomatic Zariski's patching method developed by Piltant \cite{Pil}. Recently McQuillan and Panazzolo have treated the $ 3 $-dimensional case from a non-birational viewpoint \cite{McQPa}.

\begin{center}
 - - -
\end{center}

In this work we treat the case of $ k $-rational rank one valuations of $ K/k $. In the classical Zariski's approach to the Local Uniformization problem, this is the starting point,  and it concentrates the main algorithmic and combinatoric difficulties. We hope the situation to be similar in the case of codimension one foliations, and that starting from the result obtained in this work we can complete the proof of the conjecture in future works.

A valuation $ \nu : K^* \rightarrow \Gamma $ of $ K/k $ is $ k $-rational if its residue field $ \kappa_{\nu} $ is isomorphic to the base field $ k $. The valuation $ \nu $ is archimedean if and only if there is an inclusion of ordered groups $ \Gamma \subset (\mathbb{R},+) $. 

Let $M$ be a projective model of $K/k$. The center of $\nu$ at $M$ is the unique point $Y\in M$ such that for every $ \phi \in \mathcal{O}_{M,Y} $ we have
$$ \nu(\phi) \geq 0 \quad \text{and} \quad \nu(\phi)>0 \Leftrightarrow \phi \in \mathfrak{m}_{M,Y} \ .  $$

Such a point always exists and it is unique (see \cite{Zar} or \cite{Va}). In addition, there is a tower of fields
$$ k \subset \kappa_{M,Y} \subset \kappa_{\nu} \ . $$
Since we only consider $ k $-rational valuations we have that $ k =\kappa_{M,Y} $ and therefore the centers of $ \nu $ in each projective model are $k$-rational points (in particular they are closed points). 

The \emph{rational rank} $ \operatorname{rat.rk}(\nu) $ is the dimension over $ \mathbb{Q} $ of $ \Gamma \otimes_{\mathbb{Z}} \mathbb{Q} $. Abhyankar's inequality guarantees that $ \operatorname{rat.rk}(\nu) \leq \operatorname{tr.deg}(K/k) $. The rational ranks correspond with the maximum number of elements $ \phi_1,\phi_2,\dots,\phi_r \in K^* $ with $ \mathbb{Z} $-independents values $ \nu(\phi_1), \nu(\phi_2), \dots, \nu(\phi_r) \in \Gamma $.

Our technical results are stated in terms of {\em parameterized regular local models}. A parameterized regular local model ${\mathcal A}$ for $K/k , \nu $ is a pair
$$
{\mathcal A}=({\mathcal O}, (\boldsymbol{x},\boldsymbol{y}))
$$
such that
\begin{enumerate}
	\item There is a projective model $M$ of $K/k$ such that the center $Y$ of $ \nu $ is a regular point of $M$ and ${\mathcal O}={\mathcal O}_{M,Y}$;
	\item The list $(\boldsymbol{x},\boldsymbol{y})=(x_1,x_2,\ldots,x_r,y_1,y_2,\dots,y_{n-r})$, where $ r = \operatorname{rat.rk}(\nu) $, is a regular system of parameters of ${\mathcal O}$ and the values $ \nu(x_1),\nu(x_2),\ldots,\nu(x_r) $ are ${\mathbb Z}$-independent. 
\end{enumerate}
The existence of parameterized regular local models is proved using the global resolution of singularities of Hironaka \cite{Hi1}. This proof can be found in \cite{CaRoSp}, where such models are introduced.

According with this terminology, given a rational codimension one foliation $ \mathcal{F} $ of $ K/k $, we denote
$$
{\mathcal F}_{\mathcal A}={\mathcal F}\cap \Omega_{{\mathcal O}/k}(\log \boldsymbol{x})={\mathcal F}_{M,Y}(\log \boldsymbol{x}) \ .
$$
We say that $\mathcal F$ is \emph{$\mathcal A$-final} if $ \mathcal{F}_{\mathcal{A}} $ is $\boldsymbol{x}$-log final. 

We will use transformations ${\mathcal A}\rightarrow {\mathcal A}'$ of parameterized regular local models, called {\em basic operations}, that have an underlying morphism $ \mathcal{O} \rightarrow \mathcal{O}' $ which can be either a blow-up or the identity morphism. Let us describe them:
\begin{itemize}
	\item {\em Ordered coordinate changes}. The underlying morphism $ \mathcal{O} \rightarrow \mathcal{O}' $ is the identity. Given an index $0\leq \ell\leq n-r$ we consider a new coordinate $ y'_{\ell} $ given by
	$$
	y_\ell'=y_{\ell}+\psi(\boldsymbol{x},y_1,y_2,\ldots,y_{\ell-1}),
	$$ 
	where $\psi(\boldsymbol{x},y_1,y_2,\ldots,y_{\ell-1})\in k[\boldsymbol{x},y_1,y_2,\ldots,y_{\ell-1}]$ is written as
	$$
	\psi(\boldsymbol{x},y_1,y_2,\ldots,y_{\ell-1})=\sum_I \boldsymbol{x}^I \psi_I(y_1,y_2,\ldots,y_{\ell-1})
	$$
	with $\nu(\boldsymbol{x}^I) \geq \nu(y_\ell)$ if $ \psi_I \neq 0 $. 
	\item {\em Blow-ups con centros of codimensión dos}. \item {\em Blow-ups of codimension two centers}. The center of the blow-up will be either $x_i=x_j=0$ or $x_i=y_j=0$. The ring ${\mathcal O}'$ is 
	$$
	{\mathcal O}'={\mathcal O}[\boldsymbol{x}',\boldsymbol{y}']_{(\boldsymbol{x}',\boldsymbol{y}')}
	$$
	where the coordinates $(\boldsymbol{x}',\boldsymbol{y}')$ are given by:
	\begin{enumerate}
		\item If the centeris $x_i=x_j=0$ and in addition $\nu(x_i)<\nu(x_j)$, then $x'_j :=x_j/x_i$.
		\item If the centeris $x_i=y_j=0$ and in addition $\nu(x_i)<\nu(y_j)$, then $y'_j :=y_j/x_i$.
		\item If the centeris $x_i=y_j=0$ and in addition $\nu(x_i)>\nu(y_j)$, then $x'_i :=x_i/y_j$.
		\item If the centeris $x_i=y_j=0$ and in addition $\nu(x_i)=\nu(y_j)$, then $y'_j := y_j/x_i-\xi$, where $\xi\in k^* $ is the unique constant such that $\nu(y_j/x_i-\xi)>0$.
	\end{enumerate}
	The last case is a \emph{blow-up with translation}. The remaining cases are \emph{combinatorial blow-up}.
\end{itemize}
Theorem \ref{th:I} is a consequence of the following statement in terms of parameterized regular local models:
\begin{theo}\label{th:II}
	Let $ k $ be a field of characteristic zero and let $ K/k $ be a finitely generated field extension. Let $ \mathcal{F} $ be a rational codimension one foliation of $ K/k $. Given a $ k $-rational archimedean valuation $ \nu $ of $ K/k $ and a parameterized regular local model $ \mathcal{A} $ of $K/k, \nu$, there is a finite composition of basic transformations
	$$ {\mathcal A}={\mathcal A}_0\rightarrow {\mathcal A}_1\rightarrow \cdots\rightarrow {\mathcal A}_N={\mathcal B} $$
	such that ${\mathcal F}$ is ${\mathcal B}$-final.  
\end{theo}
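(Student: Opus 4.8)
The plan is to run a Zariski-style local uniformization for the generator
$\omega=\sum_{i=1}^r a_i\frac{dx_i}{x_i}+\sum_{j=1}^{n-r}b_j\,dy_j$ of $\mathcal F_{\mathcal A}$, replacing the classical reduction of a single function to a monomial by a reduction of $\omega$ itself, and to control the combinatorics by means of the truncation operator. Only the logarithmic coefficients $a_1,\dots,a_r$ enter the $\boldsymbol{x}$-log final condition: being $\boldsymbol{x}$-log elementary means $(a_1,\dots,a_r)=\mathcal O$, while being $\boldsymbol{x}$-log canonical means $(a_1,\dots,a_r)\subset\mathfrak m$ but some $a_i$ has a nonzero linear term in $\boldsymbol{y}$, i.e. $(a_1,\dots,a_r)\not\subset(x_1,\dots,x_r)+\mathfrak m^2$. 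So the goal is to reach one of these two situations after finitely many basic transformations. To organize the induction I would attach to $\omega$ an invariant built from the $\nu$-value of the coefficient ideal $(a_1,\dots,a_r)$ together with data of the Newton polyhedra of the $a_i$ relative to the monomials $\boldsymbol{x}^I$; since $\nu$ is archimedean, $\Gamma\subset\mathbb R$, and after the standard refinement this invariant can be placed in a well-ordered set.

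The heart of the argument is the passage through the \emph{truncated form}. Given a level $\delta\in\Gamma$ larger than the finitely many values that enter the $\boldsymbol{x}$-log final condition in the current model, let $\omega^{(\delta)}$ be obtained from $\omega$ by deleting every monomial of $\nu$-value $>\delta$; because $\nu(x_i),\nu(y_j)>0$ this leaves only finitely many monomials. The key property of the truncation --- the one the introduction advertises as behaving ``better than expected'' --- is that $\omega^{(\delta)}$ remains integrable up to high value: the $\nu$-value of $\omega^{(\delta)}\wedge d\omega^{(\delta)}$ grows with $\delta$, so $\omega^{(\delta)}$ can be manipulated exactly as an honest integrable form. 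Moreover, because $\delta$ exceeds the thresholds of the log-final condition, $\mathcal F$ becomes $\mathcal B$-final as soon as $\omega^{(\delta)}$ does. The problem is thereby reduced to making $\omega^{(\delta)}$, a form supported on finitely many monomials, $\boldsymbol{x}$-log final.

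For this purely combinatorial problem I would exploit that $\nu(x_1),\dots,\nu(x_r)$ are $\mathbb Z$-independent and $\nu$ has rank one: distinct monomials in $\boldsymbol{x}$ have distinct values, so any finite family of monomials has a unique monomial of least value, which can be factored out. I would then alternate three moves. Combinatorial blow-ups with centers $x_i=x_j=0$ or $x_i=y_j=0$ push the Newton data of the $a_i$ toward ``monomial times unit''; a blow-up with translation, using the unique constant $\xi\in k^*$ supplied by the $k$-rationality of $\nu$, is performed whenever such a center has two coordinates of equal value; and ordered coordinate changes $y'_\ell=y_\ell+\psi(\boldsymbol{x},y_1,\dots,y_{\ell-1})$ with $\nu(\boldsymbol{x}^I)\ge\nu(y_\ell)$ absorb the purely $\boldsymbol{x}$-dependent part of the coefficients into the logarithmic block. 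After finitely many steps either some $a_i$ has become a unit --- the $\boldsymbol{x}$-log elementary case --- or all $a_i$ lie in $\mathfrak m$ and one of them carries a linear $\boldsymbol{y}$-term outside $(x_1,\dots,x_r)+\mathfrak m^2$ --- the $\boldsymbol{x}$-log canonical case.

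The main obstacle is the termination of this reduction: one must exhibit an invariant of $\omega^{(\delta)}$ --- a finite list of rationals extracted from the Newton polyhedra of the $a_i$ and of their non-monomial residues --- that strictly decreases under every basic transformation, and, crucially, verify that blow-ups with translation cannot fall into an infinite loop. Here the archimedean hypothesis is used twice over: it guarantees existence and uniqueness of the translation constants and it places the invariant in a well-ordered set; and the (approximate) integrability of $\omega^{(\delta)}$, inherited from $\omega\wedge d\omega=0$, is what ties together the residual obstructions of the various $a_i$, ruling out the resonant configurations that would otherwise block the descent. Once $\omega^{(\delta)}$ is $\boldsymbol{x}$-log final the same holds for $\omega$, which gives Theorem \ref{th:II}; Theorem \ref{th:I} then follows, since parameterized regular local models exist by Hironaka's resolution and $\mathcal B$-finality translates into log-finality at the center of $\nu$ in the associated projective model.
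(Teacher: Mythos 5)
Your plan has a genuine gap at exactly the point where the paper has to work hardest: the bridge from a truncated statement back to finality of $\mathcal F$. You claim that once $\delta$ exceeds ``the finitely many values that enter the log-final condition in the current model'', $\mathcal F$ is $\mathcal B$-final as soon as $\omega^{(\delta)}$ is. No such finite set of thresholds exists: the log-final condition is not detected at any value level fixed in advance, because under basic transformations the explicit value of $\omega$ may grow without bound while $\omega$ never becomes dominant (the Euler-equation ``infinite value'' phenomenon), or may accumulate below a bound until a Tschirnhausen change is made (the three-variable example in the introduction). A truncated uniformization at level $\gamma$ ends either dominantly --- which does yield the $\boldsymbol{x}$-log elementary case --- or recessively, which only says the explicit value passed $\gamma$ and gives no finality at all; and the recessive outcome can occur for \emph{every} $\gamma$. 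Your argument produces no mechanism for the $\boldsymbol{x}$-log canonical outcome, which is precisely what must be extracted in that situation: in the paper this is the whole of Chapter \ref{ch:final}, where the case ``recessive for all $\gamma$'' is analysed through the $z$-level decomposition of a generator, the stabilization of the critical height at $\chi=1$ under the resonance condition (R2), and a separate treatment of the case $f=0$ (lower dimensional type via integrability). Relatedly, literally deleting all monomials (in $\boldsymbol x$ \emph{and} $\boldsymbol y$) of value $>\delta$ is not compatible with blow-ups and coordinate changes --- the transform of the truncation is not the truncation of the transform --- which is why the paper never truncates the object itself but instead proves truncated \emph{statements} ($\gamma$-final, $\gamma$-prepared) about the full formal form, with an explicit value that tracks only $\boldsymbol{x}$-monomials.

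The second gap is termination. You acknowledge that one must exhibit a strictly decreasing invariant and rule out loops of translated blow-ups, but this is asserted rather than proved, and it is the technical core of the result: it is not obtained from a single finite list of rationals attached to the Newton polyhedra of the $a_i$. The paper needs the induction on the number of dependent variables with statements $T_3,T_4,T_5$, the $\gamma$-preparation of levels (Theorem \ref{th:gammma_preparation}), the non-increase of the critical height and its drop outside resonance (Propositions \ref{pr:stability_critical_height_Puiseux_package} and \ref{pr:no_resonant_conditions}), and the Tschirnhausen coordinate changes that break the accumulation of explicit value when resonance persists. Moreover, integrability does not ``rule out the resonant configurations'': condition (R2) can be satisfied indefinitely, and it is exactly the source of the log-canonical case rather than an obstruction to be excluded. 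So while your proposal captures the spirit of the paper (truncation, reduction to combinatorics in the independent variables, control by a well-ordered invariant), both the reduction of the truncated form and the passage back to $\mathcal B$-finality of $\mathcal F$ remain unproved, and the second of these would fail as stated.
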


\begin{center}
	- - -
\end{center}

We systematically consider the formal completion $\widehat{\mathcal O}$ of the local ring $\mathcal O$. A first reason for doing that is of practical nature, since
$$
\widehat{\mathcal O}=k[[x,y]],
$$
we can consider the elements of ${\mathcal O}$ as being formal series. The second reason to consider the formal completion is due to the fact that the solutions of differential equations with coefficients in $ \mathcal{O} $ need not to be in the same ring $\mathcal O$ (even in the case that we are working in the analytic category). 

Let us illustrate this with an example. If our ``problem object'' is a function $f\in {\mathcal O}$, after finitely many basic transformations we obtain a parameterized regular local model $ \mathcal{A}' = (\mathcal{O}',(\boldsymbol{x}',\boldsymbol{y}')) $ such that 
$$ f=\boldsymbol{x}'^{\boldsymbol{p}} U \ , \quad U \in \mathcal{O}' \setminus \mathfrak{m}' \ . $$
This is a direct consequence of Zariski's Local Uniformization. For completeness we include a proof for the case of functions which we will use as a guide for the case of differential $ 1 $-forms. If we consider the foliation given by $ df=0 $ we have that it is $ \mathcal{A}' $-final, in particular it is $ \boldsymbol{x} $-log elementary. This property is always satisfied by foliations having a first integral: it is always possible to reach a model in which the foliation is $ \boldsymbol{x} $-log elementary. However, this does not happen in general. In dimension two we have an example given by Euler's Equation:
$$
(y-x)\frac{dx}{x}-xdy=0 \ .
$$
The foliation of $ (\mathbb{C}^2,\boldsymbol{0}) $ given by this equation is $x$-log canonical. In addition, it has an invariant formal curve with equation $\hat y=0$ where
$$
\hat{y}=y-\sum_{n=0}^{\infty} n!\, x^{n+1} \ .
$$
Note that if we consider formal coordinates $ (x,\hat{y}) $ the foliation is given by
$$
\frac{dx}{x}+\frac{d\hat y}{\hat y}=0 \ ,
$$ 
thus it would be ``$x\hat y$-log elementary''. If we consider the valuation of $ \mathbb{C}(x,y)/\mathbb{C} $ given by
$$
\nu(f(x,y))=\operatorname{ord}_t(f(t,\sum_{n=0}^{\infty} n! t^{n+1}))
$$
we can check that it is not possible to reach by means of basic transformations a parameterized regular local model in which the foliation is $ x $-log elementary. Regardless of the basic transformations we perform the foliation will be $ x $-log canonical. In fact, $ \nu $ is the only valuation of $ \mathbb{C}(x,y)/\mathbb{C} $ which satisfies this property. This is due to the fact that $ \nu $ follows the infinitely near points of the invariant formal curve $ \hat{y}=0 $.

In some sense, we can think that the differential $ 1 $-form $ \omega = x dy-(y-x){dx}/{x}$ has ``infinite value'' with respect to $\nu$. The property of `having infinite value'' can be also satisfied by formal functions $\hat{f} \in \hat{\mathcal O}\setminus {\mathcal O}$ (in this example $ \hat{y} \in \hat{\mathcal{O}} $ has ``infinite value''), but it can never be satisfied by elements $ f\in {\mathcal O}\setminus \{0\}$. Note that although $ \omega $ has ``infinite value'' its coefficients do not have it.

\begin{center}
	- - -
\end{center}

One of the main differences of our procedure with respect to the classical Zariski's approach to Local Uniformization is that we proceed by systematically considering truncations relative to a given element of the value group. This method is essential for us since thanks to it we can control control the integrability condition inside the general induction procedure. In addition, this method can be applied to formal functions as well as differential $ 1 $-forms with formal coefficients.
 
Let $ \mathcal{A} = \big( \mathcal{O} ; (\boldsymbol{x},\boldsymbol{y}) \big) $ be a parameterized regular local model for $ K,\nu $, where $ \boldsymbol{x}=(x_1,x_2,\dots,x_r) $ and $ \boldsymbol{y}=(y_1,y_2,\dots,y_{n-r}) $. For each index $ 0 \leq \ell \leq n-r $, consider the power series ring 
$$
R_{\mathcal A}^{\ell} := k[[\boldsymbol{x},y_1,y_2,\ldots,y_{\ell}]] \ .
$$ 
Note that $ R_{\mathcal A}^{n-r} \simeq \hat{\mathcal O} $. For each index $ 0 \leq \ell \leq n-r $ we define $ N_{\mathcal A}^{\ell} $ as the $R_{\mathcal A}^{\ell}$-module generated by
$$
\frac{dx_1}{x_1}, \frac{dx_2}{x_2}, \dots,\frac{dx_r}{x_r}, dy_1, dy_2, \dots, dy_{\ell} \ .
$$
Any element $ \omega\in N_{\mathcal A}^{\ell} $ may be written in a unique way as $\omega=\sum_Ix^I\omega_I$, where
$$
\omega_I= \sum_{i=1}^ra_{I,i}(y_1,y_2,\ldots,y_\ell)\frac{dx_i}{x_i}+
\sum_{j=1}^\ell b_{I,j}(y_1,y_2,\ldots,y_\ell){dy_j} \ .
$$
We define the {\em explicit value} $\nu_{\mathcal A}(\omega)$ to be the minimum among the values $\nu(x^I)$ such that $\omega_I\ne 0$. 

Let us fix an element of the value group $\gamma\in \Gamma$. Take  $\omega\in N_{\mathcal A}^{\ell}$ and let $x^{I_0}$ be the monomial such that $\nu_{\mathcal A}(\omega)=\nu(x^{I_0})$. We say that $\omega$ is {\em $\gamma$-final in $\mathcal A$} if one of the following situations holds
\begin{itemize}
	\item {\em $\gamma$-final dominant case:}  $\nu_{\mathcal A}(\omega)\leq \gamma$ and at least one of the coefficients $a_{I_0,i}$ satisfies $a_{I_0,i}(0)\ne 0$ (equivalently if $\omega_{I_0}$ is $x$-log elementary). 
	\item {\em $\gamma$-final recessive case:} $\nu_{\mathcal A}(\omega)=\nu(x^{I_0})>\gamma$.
\end{itemize}
Theorem \ref{th:II} (thus Theorem \ref{th:I} too) is a consequence of the following result stated in terms of truncations:
\begin{theo}[Truncated Local Uniformization]\label{th:III}
	Let $\omega\in N_{\mathcal A}^{\ell}$ be a $ 1 $-form and fix a value $\gamma\in \Gamma$. If
	$$ \nu_{\mathcal A}(\omega\wedge d\omega)\geq 2\gamma \ , $$
	then there is a finite composition of basic transformations ${\mathcal A}\rightarrow {\mathcal B}$, which do not affect to the variables $y_j$ with $j>\ell$, such that $\omega$ is $\gamma$-final in $\mathcal B$.
\end{theo}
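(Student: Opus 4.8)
The plan is to argue by induction on the number $\ell$ of free variables $y_1,\dots,y_\ell$ that may occur in $\omega$. For $\ell=0$ there is nothing to do: any nonzero $\omega\in N_{\mathcal A}^0$ equals $\sum_I x^I\bigl(\sum_i a_{I,i}\tfrac{dx_i}{x_i}\bigr)$ with $a_{I,i}\in k$; the monomial $x^{I_0}$ realizing $\nu_{\mathcal A}(\omega)$ is \emph{unique} because $\nu(x_1),\dots,\nu(x_r)$ are $\mathbb Z$-independent, and some $a_{I_0,i}\in k^{*}$, so $\omega$ is already $\gamma$-final (dominant if $\nu_{\mathcal A}(\omega)\le\gamma$, recessive otherwise; and $\omega=0$ is recessive). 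For the inductive step let $\bar\omega\in N_{\mathcal A}^{\ell-1}$ be obtained from $\omega$ by setting $y_\ell=0$ and discarding the $dy_\ell$-component. The restriction operator commutes with $d$ and with $\wedge$ and cannot lower the explicit value, so $\nu_{\mathcal A}(\bar\omega\wedge d\bar\omega)\ge\nu_{\mathcal A}(\omega\wedge d\omega)\ge 2\gamma$; by the induction hypothesis, finitely many basic transformations not affecting $y_\ell,\dots,y_{n-r}$ make $\bar\omega$ be $\gamma$-final, and since such operations are also basic transformations of $\mathcal A$ and commute with restriction, we may assume $\bar\omega$ is already $\gamma$-final.

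\textbf{Normal form.} If $\nu_{\mathcal A}(\omega)>\gamma$, then $\omega$ is $\gamma$-final (recessive). If $\bar\omega$ is dominant with leading index $\bar I_0$ and $\nu_{\mathcal A}(\omega)=\nu(x^{\bar I_0})$, then $I_0=\bar I_0$ and $\omega_{I_0}$, $\bar\omega_{\bar I_0}$ share their coefficients at $\boldsymbol y=0$, so $\omega_{I_0}$ is $x$-log elementary and $\omega$ is $\gamma$-final (dominant). In every remaining case $\delta:=\nu_{\mathcal A}(\omega)\le\gamma$ and $\nu(x^{I_0})<\nu_{\mathcal A}(\bar\omega)$, hence $\bar\omega_{I_0}=0$: all $\tfrac{dx_i}{x_i}$-coefficients of $\omega_{I_0}$ and all its $dy_j$-coefficients with $j<\ell$ lie in $(y_\ell)$, and only the $dy_\ell$-coefficient $b_{I_0,\ell}$ may fail to vanish at $\boldsymbol y=0$. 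Here the integrability hypothesis enters through the following remark: $\mathbb Z$-independence of the $\nu(x_i)$ forces the explicit-value-$2\delta$ part of $\omega\wedge d\omega$ to equal $x^{2I_0}\,\omega_{I_0}\wedge d\omega_{I_0}$, so that $\nu_{\mathcal A}(\omega\wedge d\omega)\ge 2\gamma$ together with $\delta<\gamma$ forces $\omega_{I_0}\wedge d\omega_{I_0}=0$: the leading logarithmic form is itself integrable.

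\textbf{Core step.} It remains, starting from such a leading form, to raise the explicit value above $\gamma$ or to create an $x$-log elementary leading form. First I would use the local uniformization of functions discussed in the Introduction to prepare the finitely many coefficients $a_{I,i},b_{I,j}$ with $\nu(x^I)\le\gamma$ — there are finitely many of these because $\nu$ is archimedean. Then one dispatches on the leading form: if $b_{I_0,\ell}$ also vanishes at $\boldsymbol y=0$, then $\omega_{I_0}\in\mathfrak m_{k[[\boldsymbol y]]}\cdot N_{\mathcal A}^{\ell}$, and a blow-up of a codimension-two center ($x_i=x_j=0$ or $x_i=y_j=0$, combinatorial or with translation, chosen according to how $\nu$ orders the relevant values — the Perron/continued-fraction pattern of Zariski's treatment of rank-one valuations) transfers $\boldsymbol y$-order into $x$-order; if $b_{I_0,\ell}$ is a unit, one divides by it and uses $\omega_{I_0}\wedge d\omega_{I_0}=0$ (valid when $\delta<\gamma$) to recognise $\omega_{I_0}$ up to a unit as $dy_\ell$ plus a form in $\mathfrak m_{k[[\boldsymbol y]]}\cdot N^{\ell}_{\mathcal A}$, then straightens it by an ordered coordinate change $y_\ell'=y_\ell+\psi(\boldsymbol x,y_1,\dots,y_{\ell-1})$ followed by combinatorial blow-ups, landing either in the previous case or in the recessive case; the borderline $\delta=\gamma$ is dealt with directly by combinatorial blow-ups. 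Termination uses archimedeanity: there are only finitely many monomials $x^I$ with $\nu(x^I)\le 2\gamma$, so $\delta$ can strictly increase only finitely often before exceeding $\gamma$, while the auxiliary $\boldsymbol y$-order is a non-negative integer; one checks that a suitable lexicographic combination strictly decreases at each stage, so after finitely many basic transformations $\omega$ is $\gamma$-final.

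\textbf{Main obstacle.} The heart of the proof is the core step, and the hardest case is the one in which the leading form is transverse to $y_\ell$ (unit $dy_\ell$-coefficient): there one must genuinely use integrability to normalise $\omega_{I_0}$, carry out the straightening with only the very restricted coordinate changes available (those $y_\ell'=y_\ell+\psi$ with $\psi$ a polynomial in $\boldsymbol x,y_1,\dots,y_{\ell-1}$ whose monomials have value $\ge\nu(y_\ell)$), and do so uniformly over the finitely many combinatorial types describing how $\nu$ compares $\nu(x_i)$ and $\nu(y_j)$. Equally delicate is making sure that a transformation which improves the leading $x$-level of the truncation $\omega^{\le\gamma}$ does not reintroduce non-final behaviour at the lower $x$-levels already arranged; controlling this interaction between the several $x$-levels below $\gamma$ is what makes the well-ordered-descent bookkeeping nontrivial.
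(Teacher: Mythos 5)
Your base case and the preparation of the $0$-level are fine and agree with the paper (for $\ell=0$ every form is automatically $\gamma$-final by $\mathbb Z$-independence, and restriction to $y_\ell=0$ does give $\nu_{\mathcal A}(\bar\omega\wedge d\bar\omega)\geq 2\gamma$, which is exactly how the paper prepares its $0$-level), and your observation that the value-$2\delta$ graded piece of $\omega\wedge d\omega$ is $\boldsymbol{x}^{2I_0}\,\omega_{I_0}\wedge d\omega_{I_0}$ is correct. But the ``core step'', which you yourself identify as the heart of the matter, contains a genuine gap, and the termination argument offered for it is false. You argue that $\delta=\nu_{\mathcal A}(\omega)$ can strictly increase only finitely often because a fixed model has only finitely many monomials of value $\leq 2\gamma$; however, after each blow-up the explicit value is computed in a \emph{new} model with new coordinates, so the set of attainable values changes at every step, and in rational rank $\geq 2$ the value group is dense in $\mathbb R$. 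The explicit value can therefore increase by ever smaller amounts and accumulate strictly below $\gamma$: this is precisely the phenomenon exhibited in the Introduction by $F=z-y-\phi$ (and by $dF$), where an infinite sequence of Puiseux packages and preparations gives $\nu_{\mathcal A_i}(F)=1-2^{-i}<\gamma$ forever. Ruling this out is the main content of Chapter \ref{ch:getting_final_forms}: one needs the resonance analysis (conditions (R1), (R2)), the proof that (R1) cannot recur while the critical height is stable, and the Tschirnhausen-type ordered coordinate change whose existence is extracted from integrability via the truncated proportionality statement (Lemma \ref{le:truncated_proportionality}) and which at least doubles $\nu(z)$ at each occurrence. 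None of this is supplied by ``a suitable lexicographic combination strictly decreases''.

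There is also a structural problem with how you feed the induction. You propose to ``prepare the finitely many coefficients $a_{I,i},b_{I,j}$ with $\nu(\boldsymbol{x}^I)\leq\gamma$ using local uniformization of functions''; but these coefficients are formal series in \emph{all} $\ell$ dependent variables, so the statement you would need is $T_4(\ell)$, i.e.\ the function analogue at the very level you are trying to prove (in the paper $T_4(\ell)$ is a consequence of $T_3(\ell)$, not an input), and it is not the classical Zariski result, since formal coefficients can have ``infinite value''. The paper's induction works because it expands $\omega$ in powers of the \emph{last} dependent variable, so each level is a pair $(\eta_k,f_k)\in N_{\mathcal A}^{\ell-1}\times R_{\mathcal A}^{\ell-1}$ to which $T_5(\ell-1)$ applies; and even then the levels are not individually integrable, so one only gets the estimate $\nu_{\mathcal A}(\eta_k\wedge d\eta_k)\geq\min\{2\gamma,\ \delta_{k-s}+\delta_{k+s}\}$ from the $\Theta_m$-identity, which forces the iterative Newton-polygon approximation of Lemma \ref{le:epsilon} and the critical-height bookkeeping. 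Your $\boldsymbol{x}$-monomial expansion keeps all dependent variables inside the coefficients, so it never interfaces with the induction hypothesis, and exploiting integrability only of the single leading coefficient $\omega_{I_0}$ does not control the interaction between the infinitely many levels that the truncation actually has to manage. As it stands the proposal reproduces the easy outer shell of the argument but not the mechanism that makes the induction and the termination work.
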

The main and more technical part of this work, Chapters \ref{ch:truncated_preparation} and \ref{ch:getting_final_forms}, is devoted to prove Theorem \ref{th:III}. In Chapter \ref{ch:final} we show how to derive Theorem \ref{th:I} from Theorem \ref{th:III}.

\begin{center}
	- - -
\end{center}

We prove Theorem \ref{th:III} by induction on the number of dependent variables $ \ell $. Instead of perform arbitrary compositions of basic transformations, we will restrict ourselves to the \emph{$ \ell $-nested transformations} which we define by induction. A \emph{$ 0 $-nested transformation} is a finite composition of blow-ups of the kind $ x_i=x_j=0 $ (in particular all oh them are combinatorial). A \emph{$ \ell $-nested transformation} is a finite composition of $ (\ell-1) $-nested transformations, \emph{$ \ell $-Puiseux's packages} and ordered changes of the $ \ell $-th variable.

Given a dependent variable $ y_{\ell} $, its value linearly depends on the value of the independent variables $ \boldsymbol{x} $. It means that there are integers $ d \geq 1 $ and $ p_1,\dots,p_r $ such that
$$
d \nu(y_{\ell}) = p_1 \nu(x_1) + \cdots + p_r \nu(x_r) \ ,
$$
or equivalently, the \emph{contact rational function $ y_{\ell}^d / \boldsymbol{x^p} $} has zero value. Since the valuation is $ k $-rational, there is a unique constant $ \xi \in k^* $ such that $ y_{\ell}^d / x^p - \xi $ has positive value. A \emph{$ \ell $-Puiseux's package} ${\mathcal A}\rightarrow {\mathcal A}'$ is any finite composition of blow-ups of parameterized regular local models with centers of the kind $ x_i=x_j=0 $ or $ x_i = y_{\ell} = 0 $ such that all the blow-ups are combinatorial except the last one. In particular, we have that $ y_{\ell}'=y_{\ell}^d / x^p - \xi $. The existence of Puiseux's packages follows form the resolution of singularities of the binomial ideal $ (y_{\ell}^d  - \xi x^p)$.

The Puiseux's packages were introduced in \cite{CaRoSp} for the treatment of vector fields. In the two-dimensional case, the Puiseux's packages are directly related with the Puiseux's pairs of the analytic branches that the valuation follows.

The statements $T_3(\ell)$, $T_4(\ell)$ and $T_5(\ell)$ formulated below are proved by induction on $ \ell $. The statement $T_3(\ell)$ is about $1$-forms with formal coefficients. In particular, Theorem \ref{th:III} is equivalent to $T_3(n-r)$. The statement $T_4(\ell)$ is about formal functions, while $T_5(\ell)$ deals with pairs function-form - in fact this result can be state and prove in a more general way for finite lists os functions and $ 1 $-forms without adding difficulties, but is this precise formulation which we will use inside the induction procedure -.

The concept of explicit value previously defined for $ 1 $-forms extends directly to the case of functions, pairs function-form and $ p $-forms for any $ p\geq 2 $. In the same way, given a value $ \gamma \in \Gamma $ we extend the definition of $ \gamma $-final $ 1 $-forms to the case of functions and pairs function-form.

Let $ F \in R_{\mathcal{A}}^{\ell} $ be a formal function and let $ \omega \in N_{\mathcal{A}}^{\ell} $ be a $ 1 $-form. We state the following results: 
\begin{description}
	\item[$\boldsymbol{T_3(\ell)} $ :]Assume that $\nu_{\mathcal A}(\omega\wedge d\omega)\geq 2\gamma$. There is a $ \ell $-nested transformation $ \mathcal{A} \rightarrow \mathcal{B} $ such that $\omega $ is $ \gamma $-final in $ \mathcal{B} $.
	\item[$\boldsymbol{T_4(\ell)} $ :] There is a $ \ell $-nested transformation $ \mathcal{A} \rightarrow \mathcal{B} $ such that $ F $ is $ \gamma $-final in $ \mathcal{B} $.
	\item[$\boldsymbol{T_5(\ell)} $ :] Assume that $\nu_{\mathcal A}(\omega\wedge d\omega)\geq 2\gamma$. There is a $ \ell $-nested transformation $ \mathcal{A} \rightarrow \mathcal{B} $ such that the pair $(\omega, F) $ is $ \gamma $-final in $ \mathcal{B} $.
	
\end{description}
The statements ${T_3(0)} $, ${T_4(0)} $ and ${T_5(0)} $ can be proved easily by means of the control of the Newton Polyhedron of an ideal under combinatorial blow-ups \cite{Spi83}. 

As induction hypothesis we assume that statements ${T_3(k)} $, ${T_4(k)} $ and ${T_5(k)} $ are true for all $k\leq \ell$. We will see that ${T_3(\ell+1)} $ implies ${T_4(\ell+1)} $ and ${T_5(\ell+1)} $, although in Chapter \ref{ch:function_case} we include the proof of ${T_4(\ell+1)} $ since we will use this proof as a guide for the case of differential $ 1 $-forms. The more difficult step is to prove ${T_3(\ell+1)} $ making use of the hypothesis induction.

\begin{center}
	- - -
\end{center}
Assuming the hypothesis induction, we divide the poof of ${T_3(\ell+1)} $ in two steps:
\begin{enumerate}
	\item Process of \emph{$ \gamma $-preparation} of a $ 1 $-form $ \omega \in N_{\mathcal{A}}^{\ell+1} $ by means of $ \ell $-nested transformations (Chapter \ref{ch:truncated_preparation}).
	\item Getting $ \gamma $-final $ 1 $-forms. We define invariants related to a $ \gamma $-prepared $ 1 $-form $ \omega \in N_{\mathcal{A}}^{\ell+1} $ and controlling them we determine a $ (\ell+1) $-nested transformation in such a way that $ \omega $ is $ \gamma $-final in the parameterized regular local model obtained (Chapter \ref{ch:getting_final_forms}).
\end{enumerate}
We end this introduction with a brief description of these steps.
\begin{center}
	- - -
\end{center}

As an example, we show how to obtain Local Uniformization of a function using the method of truncations. The difficulties encountered in the process of $ \gamma $-preparation of a $ 1 $-form do not appear in the case of functions. The main difference lies in the nature of the objects to which we apply the induction hypothesis. Given a formal function $ F \in R_{\mathcal{A}}^{\ell + 1} $ we can write it as a power series in the last dependent variable
$$ F = \sum_{s\geq 0} y_{\ell+1}^s F_s(\boldsymbol{x},y_1,y_2,\dots,y_{\ell}) \ . $$
The coefficients $ F_s $ belong to $ R_{\mathcal{A}}^{\ell} $ thus we can apply $ T_4(\ell) $ with them. However, given a $ 1 $-form $ \omega \in N_{\mathcal{A}}^{\ell+1} $, if we ``decompose'' it in the same way
$$ \omega = \sum_{s\geq 0} y_{\ell+1}^s \omega_s(\boldsymbol{x},y_1,y_2,\dots,y_{\ell}) \ , $$
the ``coefficients'' $ \omega_s $ do not belong to $ N_{\mathcal{A}}^{\ell} $, so we can not use $ T_3(\ell) $ directly.

Since we proceed by induction on the parameter $ \ell $ we denote $ z = y_{\ell+1} $ and $ \boldsymbol{y}=(y_1,y_2,\dots,y_{\ell}) $. As before, given $ F \in R_{\mathcal{A}}^{\ell + 1} $ we consider the decomposition $ F=\sum z^s F_s $. We define the \emph{Truncated Newton Polygon} $ \mathcal{N}(F;\mathcal{A},\gamma) $ as the positive convex hull in $ \mathbb{R}^2 $ of the points $(0,\gamma/\nu(z))$, $(\gamma,0)$ and $(\nu_{\mathcal A}(F_s),s)$ for $ s\geq 0 $. In the same way we define the\emph{Truncated Dominant Newton Polygon} $ \mathrm{Dom}\mathcal{N} $, this time considering $(0,\gamma/\nu(z))$, $(\gamma,0)$ and only the points $(\nu_{\mathcal A}(F_s),s)$ such that $ F_s $ is $(\gamma-s\nu(z))$-final dominant. The function $ F $ is $ \gamma $-prepared in $ \mathcal{A} $ if 
$$
{\mathcal N}= {\mathrm Dom}{\mathcal N}
$$
and in addition $(\nu_{\mathcal A}(F_s),s)$ is an interior point of $\mathcal N$ for any non dominant level $F_s$.

The $ \gamma $-preparation of a function has no major difficulties. First, since a $ \ell $-nested transformation does not mix the levels $ F_s $ between them, we can perform a $ \ell $-nested transformation in such a way that we obtain the maximum number of $ (\gamma - s\nu(z)) $-final levels $ F_s $ (this operation does not use of the induction hypothesis). Then, the induction hypothesis allow us to reach a $ \gamma $-prepared situation directly: is enough to apply $ T_4(\ell) $ to the levels $ F_s $ with $ s \leq \gamma/\nu(z) $ which are non-dominant. 

Once we have a $ \gamma $-prepared situation we take the \emph{critical height} $ \chi $, which corresponds with the highest point of the side of $\mathcal N$ with slope $-1/\nu(z)$ (see Section \ref{se:critical_height_function}).

Now we must to study the behavior of $ \chi $ after performing a $ z $-Puiseux's package with contact rational function $z^d/\boldsymbol{x^p} $, followed by a new $ \gamma $-preparation.

If the \emph{ramification index} $ d $ is strictly greater than $ 1 $ we obtain $ \chi' < \chi $. If at one moment we have that the critical height is $ 0 $, after one more $ z $-Puiseux's package and a $ \gamma $-preparation we reach a $ \gamma $-final situation.

Therefore, it only remains to consider the case $ d=1 $ and $ \chi > 0 $ indefinitely. In this situation, each $ z $-Puiseux's package increases the explicit value of $ F $. If a one moment such value becomes greater than $ \gamma $ we have reach a $ \gamma $-final recessive situation. However, it may happen that the value ``accumulates'' before reaching $ \gamma $. This phenomenon is due to the possibility of having $ d \geq 2 $ before the $ \gamma $-preparation, but $ d= 1 $ after performing it. In this situation we use a partial Tschirhausen (a certain kind of ordered change of coordinates). In the case of differential $ 1 $-forms we also use this kind of change of variables, but its determination is more subtle.

Let us illustrate this situation with an example. Define recursively the following elements of $ \mathbb{C}(x,y,z) $ for $ j \geq 0 $:
$$
f_{j+1}=\frac{f_j}{g_{j}} \ , \quad  g_{j+1}= \frac{g_j^2}{f_j} - 1 \ , \quad h_{j+1}= \frac{g_j h_j}{f_j} - 1 \ ,
$$
where
$$
f_0 =x \ , \quad g_0 = y \ , \quad h_0 = z \ .
$$
Let $ \nu $ be a valuation of $ \mathbb{C}(x,y,z)/\mathbb{C} $ such that
$$
\nu(x) = 1 \ , \quad \nu(y) = \nu(z) = \frac{1}{2} \ , \quad \nu(y - z) = 1 \ ,
$$
and in addition there is a transcendental series $\phi=\sum_{k=1}^\infty c_kx^k$ such that
$$
\nu \left(y-z- \sum_{k=1}^t c_k x^k \right) = t+1 \quad \text{for all } t \geq 1 \ .
$$
We have that $ \mathcal{A}=(\mathcal{O},(x,y,z)) $ where $ \mathcal{O} = \mathbb{C}[z,y,z]_{(x,y,z)} $ is a parameterized regular local model of $ \mathbb{C}(x,y,z)/\mathbb{C},\nu $. Using the recurrence relations we conclude that for any $ j \geq 0 $ we have
$$
\nu(f_j) = 2^{-j} \ , \quad \nu(g_j) = \nu(h_j) = 2^{-j-1} \quad \text{and} \quad \nu(g_j - h_j) = 2^{-j} \ .
$$
Consider as the problem object the formal function $ F \in \mathbb{C}[[x,y,z]] $ given by
$$
F = z - y - \phi
$$
and fix a value $ \gamma \geq 1 $. The function $ F $ is not $ \gamma $-prepared (note that the $ 0 $-level $ F_0 = -y -\phi  $ has explicit value $ 0 < \gamma $ and it is not $ 0 $-final dominant). In order to $ \gamma $-prepare $ F $ is enough to perform a $ y $-Puiseux's package $ \mathcal{A} \rightarrow \mathcal{A}'_1 $, formed by two blow-ups, whose equations are
$$
x = x_1^2(y_1+1) \ , \quad y = x_1 (y_1+1) \ ,
$$
where $ (x_1,y_1,z) $ are the local coordinates in $ \mathcal{A}'_1 $. We have $ x_1 = f_1 $ and $ y_1 = g_1 $. Moreover,
$$
F = z - x_1 (y_1+1) - \phi \ ,
$$
thus $ F $ is $ \gamma $-prepared (note that $x_1^2$ divides $\phi$). In $ \mathcal{A}'_1 $ the ramification index of the dependent variable $ z $ is $ 1 $, therefore the combinatorial blow-up with center $ (x_1,z) $ is a $ z $-Puiseux's package $ \mathcal{A}'_1 \rightarrow \mathcal{A}_1 $. We have local coordinates $ (x_1,y_1,z_1) $ where $ z_1 $ is given by
$$
z = x_1 (z_1+1) \ .
$$
It follows that
$$
F = x_1 (z_1+\xi_1) - x_1 (y_1+\xi_1) - \phi = x_1 \left(z_1 - y_1 - \frac{\phi}{x_1} \right)\ .
$$
The situation in $ \mathcal{A}_1 $ is similar to the one we have in $ \mathcal{A} $,but now the variables have half value than the original ones. We can iterate this process by considering an infinite sequence 
\begin{equation*}
	\xymatrix{ \mathcal{A} \ar[r]^{\pi_1}  & \mathcal{A}'_1 \ar[r]^{\tau_1} & \mathcal{A}_1 \ar[r]^{\pi_2} & \cdots }
\end{equation*}
where $ \pi_i : \mathcal{A}_i \rightarrow \mathcal{A}'_{i+1} $ is a $ y_i $-Puiseux's package (in fact it is also a $ \gamma $-preparation for $ F $) and $ \tau_i : \mathcal{A}'_i \rightarrow \mathcal{A}_{i} $ is a $ z_i $-Puiseux's package. We have $ \mathcal{A}_i = (\mathcal{O}_i,(x_i,y_i,z_i)) $, where $ \mathcal{O}_i = \mathbb{C}[z_i,y_i,z_i]_{(x_i,y_i,z_i)} $. Moreover, we know the values of the coordinates in any $ \mathcal{A}_i $ since $ x_i=f_i $, $ y_i=g_i $ and $ z_i=h_i $. Furhermore, in all these models the function $ F $ is $ \gamma $-prepared and it is written as
$$
F = x_i^{2^i -1} U_i \left( z_i - y_i - \sum_{k=1}^\infty c_k x_i^{2^i(k-1)+1} V_i \right) \ ,
$$
where $ U_i $ and $ V_i $ are units of $ \mathcal{O}_i = \mathbb{C}[z_s,y_s,z_s]_{(x_i,y_i,z_i)} $. Thus we have
$$
\nu_{\mathcal{A}_i}(F) = \nu(x_i^{2^i -1}) = (2^i -1)\nu(f_i) = 1 - 2^{-i} < 1 \leq \gamma \ ,
$$
so $ F $ is not $ \gamma $-final in any $ \mathcal{A}_i $.

In order to avoid this accumulation phenomenon we perform Tschirnhausen coordinate change $ \mathcal{A} \rightarrow \mathcal{B} $ given by
$$
\tilde{z} = z - y \ .
$$
We have $ \nu(\tilde{z})=\nu(x) $ thus the ramification index of $ \tilde{z} $ is $ 1 $. The formal function $ F $ is written in this variables as
$$
F = \tilde{z} - \phi \ ,
$$
thus it is $ \gamma $-prepared. A combinatorial blow-up with center $ (x,\tilde{z}) $ is a $ \tilde{z} $-Puiseux's package $ \mathcal{B} \rightarrow \mathcal{B}_1 $. In $ \mathcal{B}_1 $ we have local coordinates $ (x,y,\tilde{z}_1) $ where $ \tilde{z}_1 $ is given by
$$
\tilde{z}_1 = \frac{\tilde{z}}{x} - c_1
$$
and its value is $ \nu(\tilde{z}_1) = 1 $. The function $ F $ is written now as
$$
F= x \left( \tilde{z}_1 - \sum_{k=1}^{\infty} c_{k+1} x^k \right) \ ,
$$
so it is $ \gamma $-prepared. Performing $ \tilde{z} $-Puiseux's packages
\begin{equation*}
	\xymatrix{
		\mathcal{B} \ar[r]^{\theta_1}  & \mathcal{B}_1 \ar[r]^{\theta_2} & \cdots }
\end{equation*}
we obtain parameterized regular local models $ \mathcal{R_i} $ with coordinates $ (x,y,\tilde{z}_i) $, with $ \nu(\tilde{z}_i) = 1 $, in such a way that $ F $ is written as
$$
F = x^i ( \tilde{z}_i - \sum_{k=1}^{\infty} c_{k+i} x^k \ .
$$
In any of these models $ F $ is $ \gamma $-prepared and we have
$$
\nu_{\mathcal{B}_i}(F) = \nu(x^i) = i \ .
$$
We see that for any index $ i $ greater than $ \gamma $ the function $ F $ is $ \gamma $-final recessive in $ \mathcal{B}_i $.

In this example we see that the formal function $ F $ has ``infinite value'' with respect to $ \nu $. The same happens if we consider the $ 1 $-form with formal coefficients $ dF $. Moreover, we see that this accumulation phenomenon can also happen when dealing with convergent functions: once we have fixed a value $ \gamma \geq 1 $ is is enough to consider the function $ F_T = z-y -\sum_{k=1}^T c_k x^k $ for any $ T > \gamma $. Although $ \nu(F_T) = T+1 $, while we perform Puiseux's packages exclusively, the explicit value of $ F $ will be strictly less than $ 1 $. Again, $ dF $ provides an example of the same phenomenon for $ 1 $-forms.

\begin{center}
	- - -
\end{center}

Let us explain now how to $ \gamma $-prepare a $ 1 $-form $ \omega \in N_{\mathcal{A}}^{\ell+1}$ such that
$$
	\nu_{\mathcal A}(\omega\wedge d\omega)\geq 2\gamma \ .
$$
Consider the decomposition in powers of the dependent variable $ z $
$$
\omega = \sum_{k=0}^{\infty} z^k \omega_k \ , \quad \omega_k = \eta_k + f_k \frac{dz}{z} \ , 
$$
where $ \eta_k \in N_{\mathcal{A}}^{\ell} $ and $ f_k \in R_{\mathcal{A}}^{\ell} $. Note that unlike the case of functions, the levels $ \omega_s $ of a $ 1 $-form $ \omega \in N_{\mathcal{A}}^{\ell+1} $ do not belong to $ N_{\mathcal{A}}^{\ell} $. For each level $ \omega_s = \eta_s + f_s dz/z$ we associate a pair $ (\eta_s,f_s) $ in such a way that we can apply $ T_5(\ell) $ as we will detail later.

The explicit value $ \nu_{\mathcal{A}}(\omega_k) $ of a level $ \omega_k $ is the minimum among $ \nu_{\mathcal{A}}(\eta_k) $ and $ \nu_{\mathcal{A}}(f_k) $. Given a value $\alpha\in \Gamma$ we say that $ \omega_k $ is \emph{$\alpha$-final dominant} if one of the following conditions holds:
\begin{itemize}
	\item $ \nu_{\mathcal{A}}(\eta_k) < \nu_{\mathcal{A}}(f_k) $ and $ \eta_k $ is $\alpha$-final dominant;
	\item $ \nu_{\mathcal{A}}(f_k) < \nu_{\mathcal{A}}(\eta_k) $ and $ f_k $ is $\alpha$-final dominant;
	\item $ \nu_{\mathcal{A}}(f_k) = \nu_{\mathcal{A}}(\eta_k) $ and both $ f_k $ and $\eta_k$ are $\alpha$-final dominant.
\end{itemize}
The Truncated Newton Polygon $ \mathcal{N} $ and the Truncated Dominant Newton Polygon $\textrm{Dom}{\mathcal N}$ of a $ 1 $-form are obtained in the same way we do it in the case of functions: considering the positive convex hull in $ \mathbb{R}^2 $ of the points $(\nu_{\mathcal A}(\omega_s),s)$ (all of them of $ \mathcal{N} $, only the $ (\gamma-s\nu(z)) $-final dominant ones for $\textrm{Dom}{\mathcal N}$) together with $(\gamma,0)$ and $(0,\gamma/\nu(z))$. In the same way, we say that $ \omega $ is $ \gamma $-prepared if
$$  \mathcal N=\textrm{Dom}{\mathcal N} $$
and moreover $(\nu_{\mathcal A}(\omega_s),s)$ is a interior point of $\mathcal N$ for all $ s $ such that $\omega_s$ is non-dominant.

As in the case of functions, the first step in the process of $ \gamma $-preparation of a $ 1 $-form consists in performing a $ \ell $-nested transformation in order to get the maximum number of $ (\gamma-s\nu(z)) $-final dominant levels $ \omega_s $. This step do not use the hypothesis induction. Furthermore, performing a $ \ell $-nested transformation does not mix the levels between them, and inside each level there are no interferences between the differential part $ \eta_s $ and the functional part $ f_s $.

After completing this first step we have that $ \textrm{Dom}{\mathcal N} $ is stable, it means, $ \textrm{Dom}{\mathcal N} $ will be the same even if we perform more $ \ell $-nested transformations. We have $ {\mathcal N} \subset \textrm{Dom}{\mathcal N} $ and we must to determine a transformation which make both polygons equal (the remaining condition to reach a $ \gamma $-prepared situation is easy to obtain once both polygons are equal).

In order to apply $ T_5(\ell) $ to the levels $ \omega_s $ we need to know $ \nu_{\mathcal A}(\eta_s\wedge d\eta_s) $. We have that $ \nu_{\mathcal A}(\omega\wedge d\omega) \geq 2 \gamma $, however, we do not know if the same happens with the $ 3 $-forms $ \eta_s \wedge d\eta_s$. In oder to know ``how integrable'' is each $ \eta_s $ we use the following formula:
$$
	\omega \wedge d \omega = \sum_{m=0}^{\infty} z^m \Big( \Theta_m + \frac{dz}{z} \Delta_m \Big)
$$
where
$$
\Theta_m := \sum_{i+j=m} \eta_i \wedge d \eta_j
$$
and
$$
\Delta_m := \sum_{i+j=m} j \eta_j \wedge \eta_i +f_i d \eta_j + \eta_i \wedge d f_j \ .
$$
Since $ \nu_{\mathcal A}(\omega\wedge d\omega) \geq 2 \gamma $ we have
\begin{equation*}
	\nu_{\mathcal A}(\Theta_m) \geq 2 \gamma \quad \text{and} \quad 	\nu_{\mathcal A}(\Delta_m) \geq 2 \gamma
\end{equation*}
for every $ m \geq 0 $. On the other hand, for all $ 1 $-form $ \sigma $ we have
$$
\nu_{A}(d\sigma) \geq \nu_{A}(\sigma) \ .
$$
Thanks to this observation, we have that $ \nu_{\mathcal A}(\Theta_{2s}) \geq 2 \gamma $ implies
$$
\nu_{\mathcal A}(\eta_s\wedge d\eta_s) \geq \min \big\{ \{ 2\gamma \} \cup \{ \nu_{\mathcal A}(\eta_{s-i})+\nu_{\mathcal A}(\eta_{s+i})\}_{1 \leq i \leq s} \big\} \ . 
$$
This formula allows us to apply $ T_5(\ell) $ starting with the lowest non-dominant level and continuing with the upper levels until reach the highest non-dominant level below $ \gamma/\nu(z) $. Repeating this process we can approach $\mathcal N$ to $\textrm{Dom}{\mathcal N}$ until a prefixed distance as it is explained in Lemma \ref{le:epsilon}. In particular we can get that all the vertices of $\textrm{Dom}{\mathcal N}$ are also vertices of $\mathcal N$. This procedure is essential but it is not enough to complete the $ \gamma $-preparation.

After bringing $\mathcal N$ over $\textrm{Dom}{\mathcal N}$ we complete the $ \gamma $-preparation by using ``truncated proportionality'' statements as the following truncated version of the De Rham-Saito Lemma (\cite{Sai}):
\begin{quote}
	Let $ \eta $ and $ \sigma $ be $ 1 $-forms. If $ \eta $ is log-elementary and $ \nu_{\mathcal{A}}(\eta \wedge \sigma) = \alpha $ then there is a function $ f $ and a $ 1 $-form $ \bar{\sigma} $ with $ \nu_{\mathcal{A}}(\sigma) > \alpha $ such that $ \sigma = f \eta + \bar{\sigma} $.
\end{quote}

\begin{center}
 - - -
\end{center}

Once we have completed the $ \gamma $-preparation process, we consider, as we did in the case of functions, the critical height $ \chi $ of $\mathcal N$ as the main control invariant. We must to study the behavior of $ \chi $ after performing a $ z $-Puiseux's package, with rational contact function $ z^d/ \boldsymbol{x^p} $, followed by a new $ \gamma $-preparation. Unless one of the \emph{resonant conditions} (R1) or (R2) defined in Chapter \ref{ch:getting_final_forms} is satisfied, the critical height drops. If at one moment we have $ \chi=0 $, one more $ z $-Puiseux's package followed by a $ \gamma $-preparation will produce a $ \gamma $-final situation.

Condition (R1) requires the ramification exponent $ d $ to be strictly greater than $ 1 $. As we will show, if this condition is satisfied it can not be satisfied again while the critical height remains stable. On the other hand, condition (R2) requires $ d=1 $, but unlike (R1), it can be satisfied infinitely many times. 

Therefore, it only remains to consider the case in which condition (R1) is satisfied indefinitely. In this situation, each $ z $-Puiseux's package increases the explicit value of $ \omega $. If it becomes greater than $ \gamma $, we will reach a parameterized regular local model in which $ \omega $ is $ \gamma $-final recessive. However, as in the case of functions, it may happen that the explicit value ``accumulates''. To avoid this phenomenon, we perform a Tschirnhausen change of coordinates. Such a change of variables will be determined thanks to truncated proportionality properties similar to the ones used in the $ \gamma $-preparation process.

\section{Preliminaries}

\subsection{Codimension one foliations}
Let $ k $ be an algebraically closed field of characteristic zero and consider a finitely generated field extension $ K/k $. Let $ \Omega_{K/k} $ be the module of K\"{a}hler differentials. Let $ d: K \rightarrow  \Omega_{K/k} $ be the exterior derivative. We have that a finite subset $ \{ \alpha_1, \alpha_2,\dots,\alpha_n \} \subset K $ is a transcendence basis of $ K/k $ if and only if $ \Omega_{K/k} $ is a free $ K $-module generated by $ \{ d\alpha_1, d\alpha_2,\dots, d\alpha_n \} $ (see \cite{Ku}, Corollary 5.4). In particular we have that $ \Omega_{K/k} $ is a $ K $-vector space of dimension
$$
\dim_{K}(\Omega_{K/k}) = \operatorname{tr.deg}(K/k) \ .
$$
\begin{defi}
	A \emph{rational codimension one foliation of $ K/k $} is a one-dimensional $ K $-vector subspace $ \mathcal{F} \subset \Omega_{K/k} $ such that for any $ \omega \in \mathcal{F} $ the \emph{integrability condition}
	$$
	\omega \wedge d \omega = 0
	$$
	is satisfied.
\end{defi}
A projective model of $ K/k $ is a projective $ k $-variety $ M $, in the sense of scheme theory, such that $ K = K(M) $ is its field of rational functions. Let $ n $ be the dimension of the variety $ M $. We have that $ n := \dim(M) = \operatorname{tr.deg}(K/k) $. Let $ P \in M $ be a point and denote by $ \mathcal{O}_{M,P} $ and $ \mathfrak{m}_{M,P} $ its local ring and its maximal ideal respectively. Suppose that $ P $ is $ k $-rational, it means, the residue field $ \kappa_{M,P} := \mathcal{O}_{M,P} / \mathfrak{m}_{M,P} $ is isomorphic to the ground field $ k $ (so in particular $ P $ is a closed point). The point $ P \in M $ is regular if $ \mathcal{O}_{M,P} $ is a regular local ring of Krull dimension $ n $. Let $ \Omega_{\mathcal{O}/k} $ be the $ \mathcal{O} $-module of K\"{a}hler differentials. The \emph{Jacobian Criterion} (see \cite{Ku}, Theorem 7.2) states that the point $ P $ is regular if and only if $ \Omega_{\mathcal{O}/k} $ is a free $ \mathcal{O} $-module of rank $ n $.

Fix a regular point $ P \in M $ and denote its local ring $ \mathcal{O}_{M,P} $ by $ \mathcal{O} $. Since $ \operatorname{Frac}(\mathcal{O}) = K $ we have an inclusion of $ \mathcal{O} $-modules
\begin{equation*} 
	\Omega_{\mathcal{O}/k} \hookrightarrow \Omega_{K/k} = \Omega_{\mathcal{O}/k} \otimes_{\mathcal{O}} K \ .
\end{equation*}
Given a rational codimension one foliation $ \mathcal{F} \subset \Omega_{K/k} $ and a point $ P \in M $ define
$$ \mathcal{F}_{M,P} := \mathcal{F} \cap \Omega_{\mathcal{O}/k} \ . $$
We have that $ \mathcal{F}_{M,P} $ is a free rank one $ \mathcal{O} $-submodule of $ \Omega_{\mathcal{O}/k} $. Let $ \{z_1,z_2,\dots,z_n\} $ be a regular system of parameters of its local ring $ \mathcal{O} $. The $ z_i $'s are algebraically independent over $ k $ so they are a transcendence basis of $ K/k $, thus we have
$$  \Omega_{K/k} \simeq \bigoplus_{i=1}^n dz_i K \quad \text{ and } \quad \Omega_{\mathcal{O}/k} \simeq \bigoplus_{i=1}^n dz_i \mathcal{O} . $$
Take an element $ \omega = \sum a_i dz_i $ of $ \mathcal{F}_{M,P} $. We have that $ \omega $ generates $ \mathcal{F}_{M,P} $ as $ \mathcal{O} $-module if and only if $ a_1,a_2,\dots,a_n $ are coprime elements of $ \mathcal{O} $. In fact, let $ d \in \mathcal{O} $ be the greatest common divisor of the coefficients $ a_i $. Denote $ \tilde{a}_i := d^{-1} a_i $. Since $ \tilde{a}_i \in \mathcal{O} $, the $ 1 $-form $ \tilde{\omega} = \sum \tilde{a}_i dz_i $ is an element of $ \mathcal{F}_{M,P} $. If $ \omega $ is a generator of $ \mathcal{F}_{M,P} $ then $ d $ has to be a unit since $ \tilde{\omega} = d^{-1} \omega $. On the other hand, if $ a_1,a_2,\dots,a_n $ are not coprimes $ d^{-1} \notin \mathcal{O} $, so $ \omega $ is not a generator.

\begin{prop}\label{pr:regular_conditions}
	The following are equivalent:
	\begin{enumerate}
		\item $ \Omega_{\mathcal{O}/k} / \mathcal{F}_{M,P} $ is a free $ \mathcal{O} $-module of rank $ n - 1 $;
		\item there is a decomposition $ \Omega_{\mathcal{O}/k} = \mathcal{F}_{M,P} \oplus \mathcal{J} $ with $ \mathcal{J} $ a free $ \mathcal{O} $-module of rank $ n - 1 $;
		\item there exists an element $ \omega = \sum a_i dz_i \in \mathcal{F}_{M,P} $ with $ (a_1,a_2,\dots,a_n) = \mathcal{O} $.
	\end{enumerate}
	\begin{proof}
		The equivalence $ 1) \Leftrightarrow 2) $ is direct. For $ 2) \Rightarrow 3) $ note that $ (a_1,a_2,\dots,a_n) = \mathcal{O} $ implies that the coefficients $ a_i $ are coprimes, hence $ \omega $  is a generator of $ \mathcal{F}_{M,P} $. Let $ i_0 $ be an index such that $ a_{i_0} $ is a unit of $ \mathcal{O} $. Taking $ \mathcal{J} $ as $ \mathcal{J} = \oplus_{i \neq i_0} dz_i \mathcal{O}  $ we obtain $ 2) $. Finally for $ 1) \Rightarrow 3) $ suppose $ 3) $ is false. We have that the classes of $ dz_i $ modulo $ \mathcal{F}_{M,P} $ are $ \mathcal{O} $-independents, so the rank of $ \Omega_{\mathcal{O}/k} / \mathcal{F}_{M,P} $ is at least $ n $.
	\end{proof}
\end{prop}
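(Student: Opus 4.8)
The plan is to prove the cycle of implications $3)\Rightarrow 2)\Rightarrow 1)\Rightarrow 3)$; together with the (immediate) equivalence $1)\Leftrightarrow 2)$ this gives the statement. I will use what was established just above: $\mathcal{F}_{M,P}$ is a free rank one $\mathcal{O}$-module possessing a generator $\omega=\sum a_i\,dz_i$ with coprime coefficients, while $\mathcal{O}$ is a Noetherian local ring with maximal ideal $\mathfrak{m}$, residue field $k$, and $\{dz_1,\dots,dz_n\}$ is an $\mathcal{O}$-basis of $\Omega_{\mathcal{O}/k}$. For $2)\Rightarrow 1)$ there is nothing to prove, since a complement of $\mathcal{F}_{M,P}$ is isomorphic to $\Omega_{\mathcal{O}/k}/\mathcal{F}_{M,P}$; and for $1)\Rightarrow 2)$ the point is that a free, hence projective, quotient splits the exact sequence $0\to\mathcal{F}_{M,P}\to\Omega_{\mathcal{O}/k}\to\Omega_{\mathcal{O}/k}/\mathcal{F}_{M,P}\to 0$, so one may take $\mathcal{J}$ to be the image of a section, a free submodule of rank $n-1$.

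For $3)\Rightarrow 2)$ the argument is linear algebra over $\mathcal{O}$. If $\omega=\sum a_i\,dz_i\in\mathcal{F}_{M,P}$ has $(a_1,\dots,a_n)=\mathcal{O}$, then by locality some $a_{i_0}$ is a unit; any common divisor of the $a_i$ then divides $a_{i_0}$ and is therefore a unit, so the coefficients are coprime and $\omega$ generates $\mathcal{F}_{M,P}$. Replacing $dz_{i_0}$ by $\omega$ in the basis $(dz_1,\dots,dz_n)$ is an $\mathcal{O}$-linear substitution whose matrix has determinant $\pm a_{i_0}$, invertible in $\mathcal{O}$; hence $\{\omega\}\cup\{dz_i\colon i\neq i_0\}$ is again an $\mathcal{O}$-basis of $\Omega_{\mathcal{O}/k}$, and $\mathcal{J}:=\bigoplus_{i\neq i_0}\mathcal{O}\,dz_i$ realizes the decomposition in $2)$.

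For $1)\Rightarrow 3)$ I would argue by contraposition. If $3)$ fails, the coprime generator $\omega=\sum a_i\,dz_i$ has all its coefficients in $\mathfrak{m}$, whence $\mathcal{F}_{M,P}=\mathcal{O}\omega\subseteq\mathfrak{m}\,\Omega_{\mathcal{O}/k}$; therefore $\mathcal{F}_{M,P}+\mathfrak{m}\,\Omega_{\mathcal{O}/k}=\mathfrak{m}\,\Omega_{\mathcal{O}/k}$, and tensoring the quotient with $k$ yields $(\Omega_{\mathcal{O}/k}/\mathcal{F}_{M,P})\otimes_{\mathcal{O}}k\simeq\Omega_{\mathcal{O}/k}/\mathfrak{m}\,\Omega_{\mathcal{O}/k}$, in which the classes of $dz_1,\dots,dz_n$ are still $k$-linearly independent. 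By Nakayama the quotient $\Omega_{\mathcal{O}/k}/\mathcal{F}_{M,P}$ then needs at least $n$ generators, so it cannot be free of rank $n-1$ and $1)$ fails; combined with $2)\Rightarrow 1)$ this closes the cycle $1)\Rightarrow 3)\Rightarrow 2)\Rightarrow 1)$. The only step carrying any real content is this last one: everything hinges on the fact that making the quotient free would force the minimal number of generators to drop by exactly one, which is incompatible with the foliation generator lying inside $\mathfrak{m}\,\Omega_{\mathcal{O}/k}$; the remaining implications are just the splitting of a sequence of projective modules and a unimodular change of basis.
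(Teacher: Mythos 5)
Your proposal is correct and follows essentially the same route as the paper: the easy equivalence $1)\Leftrightarrow 2)$, the splitting $3)\Rightarrow 2)$ by taking $\mathcal{J}=\bigoplus_{i\neq i_0}\mathcal{O}\,dz_i$ for an index $i_0$ with $a_{i_0}$ a unit, and $1)\Rightarrow 3)$ by contraposition from the fact that if all coefficients of the coprime generator lie in $\mathfrak{m}$ the quotient cannot have rank $n-1$. Your Nakayama formulation simply makes precise what the paper phrases as the classes of the $dz_i$ being ``$\mathcal{O}$-independent'' modulo $\mathcal{F}_{M,P}$, so there is no substantive difference.
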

\begin{defi}
	A rational codimension one foliation $ \mathcal{F} $ is \emph{regular at a point $ P \in M $} if $ P $ is a non-singular point of $ M $ and the equivalent conditions of Proposition \ref{pr:regular_conditions} are satisfied.
\end{defi}
\begin{rem}
	Given a projective model $ M $ of $ K $ the set $ \operatorname{Reg}_{\mathcal{F}}(M) $ of points where $ \mathcal{F} $ is regular is a non-empty open subset of $ M $.
\end{rem}
Let $ \boldsymbol{x}=(x_1,x_2,\dots,x_r) $ be the first $ r $ elements of the regular system of parameters $ \{z_1,z_2,\dots,z_n\} $ and let $ \boldsymbol{y}=(y_1,y_2,\dots,y_{n-r}) $ be the remaining ones. Let $ \Omega_{\mathcal{O}/k}(\log \boldsymbol{x}) $ be the free $ \mathcal{O} $-module
$$ \Omega_{\mathcal{O}/k}(\log \boldsymbol{x}) := \bigg( \bigoplus_{i=1}^r \frac{dx_i}{x_i} \mathcal{O} \bigg) \oplus \bigg( \bigoplus_{j=1}^{n-r} dy_i \mathcal{O} \bigg) . $$
We have $ \mathcal{O} $-module monomorphisms
\begin{eqnarray}\label{eq:inclusion_modules_differentials}
	\Omega_{\mathcal{O}/k} & \hookrightarrow & \Omega_{\mathcal{O}/k}(\log \boldsymbol{x}) \\
	\omega = \sum_{i=1}^n a_i dz_i  & \mapsto & \sum_{i=1}^r x_i a_i \frac{dx_i}{x_i} + \sum_{j=r+1}^n a_i dy_j \nonumber
\end{eqnarray}
and
\begin{eqnarray*}
	\Omega_{\mathcal{O}/k}(\log \boldsymbol{x}) & \hookrightarrow & \Omega_{K/k} \\
	\sum_{i=1}^r a_i \frac{dx_i}{x_i} + \sum_{j=r+1}^n a_i dy_j & \mapsto & \sum_{i=1}^r \frac{a_i}{x_i} dx_i + \sum_{j=r+1}^n a_i dy_j .
\end{eqnarray*}
Given a foliation $ \mathcal{F} \subset \Omega_{K/k} $ and a point $ P \in M $ denote
$$ \mathcal{F}_{M,P}(\log \boldsymbol{x}) := \mathcal{F} \cap \Omega_{\mathcal{O}/k}(\log \boldsymbol{x}) \ . $$
We have that $ \mathcal{F}_{M,P}(\log \boldsymbol{x}) $ is a rank one free $ \mathcal{O} $-submodule of $ \Omega_{\mathcal{O}/k}(\log \boldsymbol{x}) $. Take an element $ \omega = \sum a_i \frac{dx_i}{x_i} + \sum a_j dy_j \in \mathcal{F}_{M,P}(\log \boldsymbol{x}) $. We have that $ \omega $ generates $ \mathcal{F}_{M,P}(\log \boldsymbol{x}) $ as $ \mathcal{O} $-module if and only if $ a_1,a_2,\dots,a_n $ are coprime elements of $ \mathcal{O} $. 
\begin{defi}
	Let $ P \in M $ be a closed point. Let $ (\boldsymbol{x},\boldsymbol{y}) $ be a regular system of parameters of $ \mathcal{O}_{M,P} $. A foliation $ \mathcal{F} $ is \emph{$ \boldsymbol{x} $-log-final at $ P $} if $ P $ is a non-singular point of $ M $ and a generator of $ \mathcal{F}_{M,P}(\log \boldsymbol{x}) $
	$$ \omega = \sum_{i=1}^r a_i \frac{dx_1}{x_1} + \sum_{j=r+1}^n a_j dy_j,$$
	satisfies one of the following conditions:
	\begin{enumerate}
		\item $ (a_1,a_2,\dots,a_r) = {\mathcal O} $ ;
		\item $ (a_1,a_2,\dots,a_r)\subset {\mathfrak m}$ and in addition
			$$ (a_1,a_2,\dots,a_r) \not \subset (x_1,x_2,\dots,x_r)+ \mathfrak{m}^2 \ . $$
	\end{enumerate}
	Points satisfying the first condition are called \emph{$ \boldsymbol{x} $-log-elementary} and the ones satisfying the second condition are called \emph{$ \boldsymbol{x} $-log-canonical}.
\end{defi}
\begin{defi}
	A foliation $ \mathcal{F} $ is \emph{log-final at $ P $} if there exists a regular system of parameters $ (\boldsymbol{x},\boldsymbol{y}) $ of $ \mathcal{O}_{M,P} $ such that $ \mathcal{F} $ is $ \boldsymbol{x} $-log-final at $ P $.
\end{defi}
\begin{rem}
	Given a projective model $ M $ of $ K $ the set $ \operatorname{Log-Final}_{\mathcal{F}}(M) $ of points where $ \mathcal{F} $ is log-final is a non-empty open subset of $ M $.
\end{rem}

\subsection{Valuations}
We collect now some classical definitions and results from valuation Theory. We omit the proofs of many assertions, which can be found in \cite{ZarSa}, Chapter 6.
\begin{defi}
	Let $ K/k $ be a field extension. A subring $ R \subset K $ is a \emph{valuation ring of $ K / k$} if $ \operatorname{Frac}(R)=K $, $ k \subset R $ and the following property holds:
	$$ \forall x \in K , \quad x \notin R \Rightarrow x^{-1} \in R .$$
\end{defi}
It follows from the definition that $ R $ is a local ring with maximal ideal
$$ \mathfrak{m} = \{ x \in R \ |\ x^{-1} \notin R \}. $$
\begin{defi}
	Let $ K $ be an extension field of $ k $ and let $ \Gamma $ be an additive abelian totally ordered group. A valuation of $ K/k $ with values in $ \Gamma $ is a surjective mapping
	$$ \nu : K^* \longrightarrow \Gamma $$
	such that the following conditions are satisfied:
	\begin{itemize}
		\item $ \nu(xy) = \nu(x) + \nu(y) $,
		\item $ \nu(x+y) \geq \min \{ \nu(x),\nu(y) \} $,
		\item $ \nu(\alpha) = 0 $ for every $ \alpha \in k^* $.
	\end{itemize}
\end{defi}
It is usual to add formally the element $ + \infty $ to the group $ \Gamma $ with the usual arithmetic rules ($ \alpha + \infty = \infty, \ \beta < \infty \ \forall \alpha, \beta \in \Gamma $) and consider the valuation $ \nu : K \longrightarrow \Gamma \cup \{ \infty \} $ with $ \nu(0) = \infty $. We will use this convention.

Given a valuation $ \nu $ of $ K/k $ the set
$$ R_{\nu} := \big\{ x\in K \ |\ \nu(x)\geq 0 \big\}$$
is a valuation ring of $ K/k $ with maximal ideal
$$ \mathfrak{m}_{\nu} := \{ x \in K \ |\ \nu(x)> 0\} .$$
The ring $ R_{\nu} $ is \emph{the valuation ring of $ \nu $}. Its quotient field $ \kappa_{\nu} := R_{\nu} / \mathfrak{m}_{\nu} $ is \emph{the residue field of $ \nu $}.

Conversely, given a valuation ring $ R $ of $ K/k $ we can construct a valuation $ \nu_R $ of $ K/k $ such that $ R = R_{\nu_R} $. Since $ R $ is a subring of $ K $, its invertible elements form a subgroup $ R^*=R \setminus \mathfrak{m} $ of the multiplicative group $ K^* $. Let $ \Gamma $ be the quotient group $ K^* / R^* $. It is an abelian totally ordered group whose order relation is given by the divisibility in R:
$$ x  R^* \leq y  R^* \Leftrightarrow x \text{ divides } y \text{ in } R .$$
This is clearly an order relation on $ \Gamma $. Since $ R $ is a valuation ring we have that it is a total order:
$$ x  R^* \not\leq y  R^* \Rightarrow x \text{ does not divide } y \text{ in } R \Rightarrow \frac{y}{x} \notin R $$
$$ \Rightarrow \frac{x}{y} \in R \Rightarrow y \text{ divides } x \text{ in } R \Rightarrow y  R^* \leq x  R^* .$$
The valuation is the natural group homomorphism $ \nu : K^* \rightarrow \Gamma = K^* / R^* $. The positive part $ \Gamma_{+} $ is the image of the maximal ideal $ \mathfrak{m} $.

A totally ordered group $ G $ is \emph{archimedean} if it satisfies the \emph{archimedean property}:
$$ \forall x,y \in G_{>0} ,\ \exists n \in \mathbb{N} \ |\ y \leq x . $$
It is well-known that a totally ordered group is archimedean if and only if it is isomorphic as totally ordered group to some subgroup of $ (\mathbb{R},+) $.
\begin{defi}
	A valuation $ \nu $ of $ K/k $ is \emph{archimedean} if its value group $ \Gamma_{\nu} $ is archimedean.
\end{defi}
The \emph{rank of $ \nu $} is defined by
$$ \operatorname{rk}(\nu) := \dim_{Krull}R_{\nu}. $$
This number coincides with the rank of the ordered group $ \Gamma $. We have that a valuation $ \nu $ is archimedean if and only if $ \operatorname{rk}(\nu) = 1 $.

Given a valuation $ \nu $ of $ K/k $ the residue field $ \kappa_{\nu} := R_{\nu} / \mathfrak{m}_{\nu} $ is the \emph{residue field of $ \nu $}. It follows from the third property of the definition of valuation of $ K/k $ that $ \kappa_{\nu} $ is an extension field of $ k $. We define the \emph{dimension of $ \nu $} by
$$ \dim(\nu) := tr.deg (\kappa_{\nu} / k) . $$
\begin{defi}
	A valuation $ \nu $ of $ K/k $ is \emph{$ k $-rational} if $ \kappa_{\nu} \simeq k $.
\end{defi}
\begin{rem}
	Note that in the case of an algebraically closed ground field $ k $ a valuation $ \nu $ of $ K/k $ is $ k $-rational if and only if $ \dim(\nu) = 0 $.
\end{rem}
If $ \nu $ is a $ k $-rational valuation of $ K/k $, we have that for each $ \phi \in K $ with $ \nu(\phi) \geq 0 $ there exists a unique $ \lambda \in k $ such that $ \nu(\phi- \lambda) > 0 $. The existence of such a $ \lambda \in k $ follows from the fact that $ \kappa_{\nu} \simeq k $. Suppose that there are $ \lambda_1, \lambda_2 \in k $ such that $ \nu(\phi- \lambda_i) > 0 $ for $ i=1,2 $. It follows that $ \nu(\lambda_1 - \lambda_2) = \nu((\phi- \lambda_2) - (\phi - \lambda_1)) >0$ hence $ \lambda_1 =  \lambda_2 $.

The largest number of elements of $ K $ with $ \mathbb{Z} $-independent values is the \emph{rational rank of $ \nu $}
$$ \operatorname{rat.rk}(\nu) := \dim_{\mathbb{Q}}(\Gamma \otimes_{\mathbb{Z}} \mathbb{Q}) \ . $$
By the Abhyankar's Inequality we have that
$$ 0 \leq  \operatorname{rk}(\nu) \leq \operatorname{rat.rk}(\nu) \leq \operatorname{tr.deg}(K/k) = n \ . $$

Let $ (A,m) $ and $ (B,n) $ be two local rings. We say that $ A $ is dominated by $ B $ if $ A \subset B $ and $ m = A \cap n $. The relation of domination is denoted by $ A \preceq B $.
\begin{defi}
	Let $ X $ be an algebraic variety over $ k $ whose function field is $ K $. A point $ P \in X $ is called \emph{the center of $ \nu $ at $ X $} if $ \mathcal{O}_{X,P} \preceq R_{\nu} $.
\end{defi}
We will work with projective models of a fixed function field $ K $, i. e., algebraic projective varieties with function field $ K $. The following proposition, whose proof can be found in \cite{Va}, guarantees the existence and uniqueness of the center of any valuation of $ K/k $ in such models.
\begin{prop}
	If $ X $ is a complete variety over a field $ k $, any valuation of $ L/k $, where $ L/K(X) $ is an extension of the function field $ K(X) $ of $ X $, has a unique center on $ X $.
\end{prop}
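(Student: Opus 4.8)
The plan is to first reduce to the case $ L = K(X) $. If $ P \in X $ is any point, then $ \mathcal{O}_{X,P} $ is a subring of $ K(X) $, so the condition $ \mathcal{O}_{X,P} \preceq R_\nu $ only involves $ R_\nu \cap K(X) $, which is exactly the valuation ring of the restriction $ \nu|_{K(X)} $. Hence the centers of $ \nu $ on $ X $ coincide with the centers of $ \nu|_{K(X)} $, and I may assume $ L = K(X) $; write $ R := R_\nu $ and $ \mathfrak{m}_\nu $ for its maximal ideal. The statement splits into existence and uniqueness, and the real content is the translation between ``being a center'' (a statement about domination of local rings) and the topology of the complete variety $ X $.

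For existence, when $ X $ is projective --- the only case needed in the sequel --- I would argue combinatorially. Fix an embedding $ X \hookrightarrow \mathbb{P}^N_k $ with homogeneous coordinates $ T_0,\dots,T_N $; after discarding coordinates I may assume $ X $ lies in no coordinate hyperplane, so each $ T_i/T_0 $ is a nonzero element of $ K(X) $. Set $ \delta_i := \nu(T_i/T_0) \in \Gamma $ and pick an index $ i_0 $ with $ \delta_{i_0} $ minimal (possible, $ \Gamma $ being totally ordered and the index set finite). For every $ j $ one has $ \nu(T_j/T_{i_0}) = \delta_j - \delta_{i_0} \geq 0 $, so the affine chart $ U_{i_0}\cap X = \operatorname{Spec} A_{i_0} $, with $ A_{i_0} = k[T_0/T_{i_0},\dots,T_N/T_{i_0}] $, satisfies $ A_{i_0} \subseteq R $. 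Let $ \mathfrak{p} := \mathfrak{m}_\nu \cap A_{i_0} $, a prime of $ A_{i_0} $, and let $ P $ be the corresponding point, so $ \mathcal{O}_{X,P} = (A_{i_0})_{\mathfrak{p}} $. Elements of $ A_{i_0}\setminus\mathfrak{p} $ have value $ 0 $, hence are units of $ R $, so $ (A_{i_0})_{\mathfrak{p}} \subseteq R $; and if $ a/s \in (A_{i_0})_{\mathfrak{p}} $ has positive value then $ \nu(a) > 0 $, whence $ a \in \mathfrak{p} $. Therefore $ \mathfrak{m}_\nu \cap \mathcal{O}_{X,P} = \mathfrak{p}(A_{i_0})_{\mathfrak{p}} = \mathfrak{m}_{X,P} $ and $ \mathcal{O}_{X,P}\preceq R $, so $ P $ is a center. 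For a general complete $ X $ one instead invokes the valuative criterion of properness applied to the square with top arrow $ \operatorname{Spec} K(X)\to X $ and left arrow $ \operatorname{Spec} R \to \operatorname{Spec} k $; the image of the closed point of $ \operatorname{Spec} R $ under the resulting morphism is a center by the same local-ring bookkeeping.

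For uniqueness I would use that a variety is separated. Suppose $ P,P' $ are both centers, lying in affine opens $ U = \operatorname{Spec} A $ and $ U' = \operatorname{Spec} A' $ of $ X $. Since $ A \subseteq \mathcal{O}_{X,P}\subseteq R $ and likewise $ A'\subseteq R $, both lie in $ R $. Because $ X $ is separated, $ U\cap U' $ is affine and its coordinate ring $ B $ is generated, as a subring of $ K(X) $, by $ A $ and $ A' $; hence $ B\subseteq R $. Put $ \mathfrak{q} := \mathfrak{m}_\nu \cap B $ and let $ Q\in U\cap U' $ be the corresponding point. Contracting $ \mathfrak{q} $ to $ A $ gives $ \mathfrak{q}\cap A = \mathfrak{m}_\nu \cap A $, which is exactly the prime of $ A $ cut out by $ P $ (as $ P $ is a center, $ \mathfrak{m}_{X,P}\cap A = \mathfrak{m}_\nu\cap A $); hence $ Q = P $. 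The same computation inside $ U' $ gives $ Q = P' $, so $ P = P' $.

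The main obstacle is the existence statement in the full generality of complete varieties: there it is essentially equivalent to the valuative criterion of properness, so the honest options are to invoke that criterion or to pass through Chow's lemma. For projective models the combinatorial argument above is self-contained and is all the later chapters require; throughout, the one point demanding care is keeping straight the dictionary between the scheme-theoretic ``image of a morphism out of $ \operatorname{Spec} R $'' and the ring-theoretic relation $ \mathcal{O}_{X,P}\preceq R $, i.e. checking that the relevant maps on local rings are both injective and local.
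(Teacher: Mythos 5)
Your proof is correct, and it is essentially the standard argument: the paper itself does not prove this proposition but defers to \cite{Va}, where the proof runs exactly along your lines (choice of the affine chart $U_{i_0}$ by minimality of the values $\nu(T_i/T_0)$ for existence on a projective model, the valuative criterion for a general complete variety, and separatedness --- via $\mathcal{O}(U\cap U')$ being generated by $\mathcal{O}(U)$ and $\mathcal{O}(U')$ --- for uniqueness). Your preliminary reduction to $L=K(X)$ by intersecting $R_\nu$ with $K(X)$ and the local-ring bookkeeping identifying $\mathfrak{m}_{X,P}=\mathcal{O}_{X,P}\cap\mathfrak{m}_\nu$ are both handled correctly, so there is nothing to fix.
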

If $ P $ is a point of a variety $ X $, its residue field is by definition $ \kappa_{X,P} := \mathcal{O}_{X,P} / \mathfrak{m}_{X,P} $, which is an extension field of $ k $. If we have $ \kappa_P = k $ we say that $ P $ is a $ k $-rational point.
If $ P $ is the center of $ \nu $ at $ X $ there are field extensions
$$ k \subset \kappa_P \subset \kappa_{\nu} . $$
In particular we have $ \dim(P) \leq \dim(\nu) $, where $ \dim(P) := tr.deg (\kappa_P / k) $. Let us note that if $ \nu $ is $ k $-rational then the three fields are the same and the center of $ \nu $ in each projective model has dimension $ 0 $ and it is a $ k $-rational point.

Let $ f : X' \rightarrow X $ a birational morphism. If $ P' $ and $ P $ are the centers of $ \nu $ at $ X' $ and $ X $ respectively, we have $ f(P') = P $ and $ f $ induces a domination of local rings $ \mathcal{O}_{X,P} \preceq \mathcal{O}_{X',P'} $. As a consequence we obtain $ \dim(P) \leq \dim(P') $. The proof of the next statement can also be found in \cite{Va}.
\begin{prop}
	Given a projective model $X$ of $K$, there is a birational morphism $ \pi : X' \rightarrow X $ with $\dim P' = \dim \nu $, where $ P' $ is the center of $ \nu $ in  $ X' $.
\end{prop}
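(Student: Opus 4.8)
\emph{Proof plan.} Write $e:=\dim(\nu)=\operatorname{tr.deg}(\kappa_\nu/k)$. As recalled above, for the center $P'$ of $\nu$ in any projective model one has a tower $k\subset\kappa_{X',P'}\subset\kappa_\nu$, hence $\dim(P')\le\dim(\nu)=e$; so it is enough to construct a projective model $X'$ with a birational morphism $\pi\colon X'\to X$ whose center $P'$ of $\nu$ satisfies $\dim(P')\ge e$. The idea is to modify $X$ so that $e$ algebraically independent elements of $\kappa_\nu$ become residues of regular functions at the center.

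First I would pick elements $u_1,\dots,u_e\in R_\nu$ whose residue classes $\bar u_1,\dots,\bar u_e\in\kappa_\nu$ form a transcendence basis of $\kappa_\nu/k$; note that necessarily $\nu(u_i)=0$, since $\bar u_i\ne 0$, so that $u_i^{-1}\in R_\nu$ as well. Let $P$ be the center of $\nu$ in $X$, choose an affine open $\operatorname{Spec}A\subset X$ containing $P$ with $A$ a finitely generated $k$-algebra and $\operatorname{Frac}(A)=K$, and write $u_i=a_i/b_i$ with $a_i,b_i\in A$. Let $\pi\colon X'\to X$ be the blow-up of $X$ along the closed subscheme which over $\operatorname{Spec}A$ is defined by the product ideal $\prod_{i=1}^e(a_i,b_i)$ (equivalently, iterate the $e$ single blow-ups). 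Then $\pi$ is a birational morphism and $X'$ is again projective over $k$, being a blow-up of a projective variety along a closed subscheme.

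Next I would locate $P'$, the center of $\nu$ in $X'$. By the compatibility of centers with birational morphisms recalled above, $\pi(P')=P$ and $\mathcal O_{X,P}\preceq\mathcal O_{X',P'}\preceq R_\nu$. Over $\operatorname{Spec}A$, the $i$-th blow-up is covered by the two standard affine charts $\operatorname{Spec}A[a_i/b_i]$ and $\operatorname{Spec}A[b_i/a_i]$; since both $u_i=a_i/b_i$ and $u_i^{-1}=b_i/a_i$ lie in $R_\nu$, the point $P'$ lies in one of these charts, and accordingly, for each $i$, one of $u_i,u_i^{-1}$ is a regular function of $\mathcal O_{X',P'}$ whose residue maps to $\bar u_i$ or $\bar u_i^{-1}$ in $\kappa_\nu$. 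Carrying this out for every $i$ exhibits inside $\kappa_{X',P'}$ elements whose images in $\kappa_\nu$ form a list $\bar u_1^{\pm1},\dots,\bar u_e^{\pm1}$; such a list is still algebraically independent over $k$, so $\dim(P')=\operatorname{tr.deg}(\kappa_{X',P'}/k)\ge e$. Combined with $\dim(P')\le e$ this gives $\dim(P')=\dim(\nu)$, as required.

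The only genuinely delicate step is the middle one: arranging that the auxiliary elements $u_i$ (or their inverses) become regular at the center while keeping $X'$ projective with a morphism down to $X$, i.e.\ the chart bookkeeping for the blow-ups and the check that the center of $\nu$ lands in a chart on which $u_i^{\pm1}$ is regular. The choice of the $u_i$ and the final transcendence-degree count are routine. This argument is essentially the one given in \cite{Va}.
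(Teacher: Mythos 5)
Your argument is correct: after blowing up the ideals $(a_i,b_i)$, the center $P'$ of $\nu$ lies in a chart $\operatorname{Spec}A[u_1^{\pm1},\dots,u_e^{\pm1}]$ whose coordinate ring is contained in $R_\nu$, so the residues $\bar u_i^{\pm 1}$ lie in the image of $\kappa_{X',P'}\hookrightarrow\kappa_\nu$ and give $\dim P'\geq e$, while the tower $k\subset\kappa_{P'}\subset\kappa_\nu$ gives the reverse inequality (the only points left implicit, both routine, are extending the ideal sheaf from $\operatorname{Spec}A$ to all of $X$ and the fact that containment of a chart ring in $R_\nu$ forces the center to lie in that chart). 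The paper does not prove this proposition itself but defers to \cite{Va}, and your construction is exactly the standard argument found there.
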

Given a valuation $ \nu $ of $ K/k $, the Local Uniformization Problem consists in determine a projective model $ M $ of $ K $ such that the center of $ \nu $ in $ M $ is a regular point. This problem for varieties over a ground field of characteristic zero was stated and solved by Zariski (see \cite{Zar}). In this work, instead of regularity at the center of the valuation, we require that a given rational codimension one foliation is log-final at the center of the valuation. The precise statement we prove in this work is the following refinement of Theorem \ref{th:I}:
\begin{theo}\label{th:1}
	Let $ k $ be a field of characteristic zero and let $ K/k $ be a finitely generated field extension. Let $ \mathcal{F} $ be a rational codimension one foliation of $ K/k $. Given a projective model $ M $ of $ K/k $ and a $ k $-rational archimedean valuation $ \nu $ of $ K/k $, there is a finite composition of blow-ups with codimension two centers
	$$
	\tilde{M} \rightarrow M
	$$
	such that $ \mathcal{F} $ is log-final at the center of $ \nu $ in $ 	\tilde{M} $.
\end{theo}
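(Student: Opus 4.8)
The plan is to derive Theorem \ref{th:1} from Theorem \ref{th:II}; what remains once Theorem \ref{th:II} is in hand is a translation between the language of parameterized regular local models and that of projective models, preceded by a reduction to a model where that structure is available.

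First I would reduce to the case in which the center of $\nu$ carries a parameterized regular local model. Let $Y$ be the center of $\nu$ in $M$. Using Hironaka's resolution of singularities \cite{Hi1} together with the construction of parameterized regular local models of \cite{CaRoSp}, there is a birational morphism $M_{0}\to M$, realized as a finite composition of blow-ups with codimension two centers, such that the center $Y_{0}$ of $\nu$ in $M_{0}$ is a regular point and $\mathcal{O}_{M_{0},Y_{0}}$ admits a regular system of parameters $(\boldsymbol{x},\boldsymbol{y})=(x_{1},\dots,x_{r},y_{1},\dots,y_{n-r})$, with $r=\operatorname{rat.rk}(\nu)$ and the values $\nu(x_{1}),\dots,\nu(x_{r})$ being $\mathbb{Z}$-independent; that is, $\mathcal{A}=(\mathcal{O}_{M_{0},Y_{0}},(\boldsymbol{x},\boldsymbol{y}))$ is a parameterized regular local model for $K/k,\nu$. (Once the center is regular, the $\mathbb{Z}$-independence of $r$ of the parameters is arranged by finitely many further blow-ups with centers of the kind $x_{i}=x_{j}=0$, as in \cite{CaRoSp}; this is where the hypotheses that $\nu$ is archimedean and $k$-rational are used, so that $r=\operatorname{rat.rk}(\nu)$ and the center remains a $k$-rational closed point.)

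Next I would apply Theorem \ref{th:II} to $\mathcal{F}$, the $k$-rational archimedean valuation $\nu$, and the parameterized regular local model $\mathcal{A}$: there is a finite composition of basic transformations $\mathcal{A}=\mathcal{A}_{0}\to\mathcal{A}_{1}\to\cdots\to\mathcal{A}_{N}=\mathcal{B}$ such that $\mathcal{F}$ is $\mathcal{B}$-final. Write $\mathcal{B}=(\mathcal{O}_{\tilde{M},\tilde{Y}},(\boldsymbol{x}',\boldsymbol{y}'))$, where $\tilde{M}$ is the underlying projective model and $\tilde{Y}$ the center of $\nu$ in $\tilde{M}$, which is a regular point. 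Each step $\mathcal{A}_{i}\to\mathcal{A}_{i+1}$ has underlying morphism either the identity (an ordered coordinate change does not move the model) or a blow-up with a codimension two center of the kind $x_{i}=x_{j}=0$ or $x_{i}=y_{j}=0$; hence $\tilde{M}\to M_{0}$, and therefore $\tilde{M}\to M$, is a finite composition of blow-ups with codimension two centers. Finally one unwinds the definitions: by construction $\mathcal{F}_{\mathcal{B}}=\mathcal{F}\cap\Omega_{\mathcal{O}_{\tilde{M},\tilde{Y}}/k}(\log\boldsymbol{x}')=\mathcal{F}_{\tilde{M},\tilde{Y}}(\log\boldsymbol{x}')$, and ``$\mathcal{F}$ is $\mathcal{B}$-final'' says exactly that a generator of $\mathcal{F}_{\tilde{M},\tilde{Y}}(\log\boldsymbol{x}')$ satisfies one of the two conditions defining $\boldsymbol{x}'$-log-finality, i.e.\ $\mathcal{F}$ is $\boldsymbol{x}'$-log-final at $\tilde{Y}$; since $(\boldsymbol{x}',\boldsymbol{y}')$ is a regular system of parameters of $\mathcal{O}_{\tilde{M},\tilde{Y}}$, this means $\mathcal{F}$ is log-final at the center $\tilde{Y}$ of $\nu$ in $\tilde{M}$, which is the assertion of Theorem \ref{th:1} (and Theorem \ref{th:I} then follows by taking $M$ to be any projective model of $K/k$).

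The genuine difficulty is not in this deduction but in Theorem \ref{th:II}, which rests on the Truncated Local Uniformization Theorem \ref{th:III} and its inductive proof through the statements $T_{3}(\ell),T_{4}(\ell),T_{5}(\ell)$ — the truncated De Rham-Saito type proportionality statements, the $\gamma$-preparation of $1$-forms, and the control of the critical height under Puiseux's packages being the heart of the matter. Within the present argument, the one point that demands care is the first reduction: checking that the passage from an arbitrary projective model to one equipped with a parameterized regular local model at the center of $\nu$ can be kept inside the admissible class of blow-ups with codimension two centers, so that the morphism $\tilde{M}\to M$ in the conclusion is indeed of the required form.
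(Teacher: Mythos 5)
Your proposal is correct and coincides with the paper's own route: Theorem \ref{th:1} is obtained there by combining Proposition \ref{pr:prml_existence} (a parameterized regular local model dominating the given model $M$) with Theorem \ref{th:2}, the nested-transformation refinement of Theorem \ref{th:II}, and then unwinding the definition of $\mathcal{B}$-final — exactly the two-step translation you describe, with all the real work deferred to the truncated statements. The only caveat, which you flag yourself, is equally present in the paper: Proposition \ref{pr:prml_existence} produces the preliminary morphism $M_0 \rightarrow M$ only as a composition of blow-ups with \emph{non-singular} centers coming from Zariski's local uniformization (including the second application used to monomialize elements of independent value), not codimension-two centers, so your assertion that this stage can be kept within codimension-two centers is no better justified in your write-up than in the paper itself.
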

Given a function field $ K/k $, we can invoke Hironaka's Resolution of Singularities \cite{Hi1,Hi2} or Zariski's Local Uniformization \cite{Zar} in order to obtain a projective model of $ K/k $ regular at the center oh the valuation $ \nu $. In that situation we have that Theorem \ref{th:1} implies Theorem \ref{th:I}.

\section{Transformations adapted to a valuation}\label{ch:prlm}

\subsection{Parameterized regular local models}
Let $ K/k $ be a finitely generated field extension and let $ \nu $ be a $ k $-rational archimedean valuation of $ K/k $ of rational rank $ r $.
\begin{defi}
	A \emph{parameterized regular local model} for $K/k,\nu$ is a pair $ \mathcal{A} = \big( \mathcal{O} ; (\boldsymbol{x},\boldsymbol{y}) \big)$ such that
	\begin{itemize}
		\item $ \mathcal{O} \subset K $ is the regular local ring of the center of $ \nu $ in some projective model of $ K $;
		\item $ (\boldsymbol{x},\boldsymbol{y})=(x_1,x_2,\dots,x_r,y_1,y_2,\dots,y_{n-r})$ is a regular system of parameters of $\mathcal O$ such that $ \{ \nu(x_1),\nu(x_2),\dots,\nu(x_r) \} \subset \Gamma $ is a basis of $ \Gamma \otimes \mathbb{Q} $ . We call $ \boldsymbol{x} $ the \emph{independent variables} and $ \boldsymbol{y} $ the \emph{dependent variables}.
	\end{itemize}
\end{defi}
The following proposition guarantees the existence of parameterized regular local models.
\begin{prop}\label{pr:prml_existence}
	Given a projective model $ M_0 $ of $ K $, there is a morphism $ M \longrightarrow M_0 $ which is the composition of blow-ups with non-singular centers, such that the center $ P $ of $ \nu $ at $ M $ provides a local ring $\mathcal{O}=\mathcal{O}_{M,P}$ for a parameterized regular local model $ \mathcal{A} $ for $K/k,\nu$.
\begin{proof}
	By Zariski's Local Uniformization \cite{Zar40} we get a projective model $ M' $ of $ K $ non-singular at the center $ P' $ of $ \nu $, jointly with a birational morphism $ M' \rightarrow M_0 $ that is the composition of a finite sequence of blow-ups with non-singular centers.
	
	Take $ r $ elements $ g_1 ,g_2, \dots , g_r \in K $ such that $ \nu(g_1),\nu(g_2),\dots,\nu(g_r) $ are $ \mathbb{Z} $-linearly independent. Since $ K = \operatorname{Frac}(\mathcal{O}_{M',P'}) $, multiplying by the common denominator if necessary we obtain $ r $ elements $ f_1 , f_2,\dots , f_r \in \mathcal{O}_{M',P'} $ with independent values. Another application of Zariski's Local Uniformization gives a birational morphism $ M \rightarrow M' $, that is also a composition of a finite sequence of blow-ups with non-singular centers, such that each $ f_i $ is a monomial (times a unit) in a suitable regular system of parameters of $ \mathcal{O}_{M,P} $, where $ P $ is the center of $ \nu $ in $ M $. Let $ \boldsymbol{z} = (z_1, z_2,\dots , z_n ) $ be such a regular system of parameters. We have
	$$ f_i = U_i \boldsymbol{z}^{\boldsymbol{m_i}} \quad , U_i \in \mathcal{O}_{M,P} \setminus \mathfrak{m}_{M,P}  \text{ for } i=1,2,\dots,r , $$
	where $ \boldsymbol{m_i} \in \mathbb{Z}_{\geq 0}^n $. In terms of values, we have $ \nu(f_i) = \sum m_{ij} \nu(z_j) $. This implies that there are $ r $ variables among the $ z_j $'s whose values are $ \mathbb{Z} $-linearly independent.
\end{proof}
\end{prop}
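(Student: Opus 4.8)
The plan is to reduce the statement to resolution of singularities --- one application to make the center of $\nu$ regular together with a monomialization of a suitable finite family of functions --- after which only elementary linear algebra over $\Gamma\otimes\mathbb{Q}$ remains. First I would fix the center $P_0$ of $\nu$ in $M_0$ and choose $r$ elements of $\mathcal{O}_{M_0,P_0}$ carrying the value group. Since $\operatorname{rat.rk}(\nu)=r$, there exist $g_1,\dots,g_r\in K^*$ with $\mathbb{Z}$-independent values; writing $g_i=a_i/b_i$ with $a_i,b_i\in\mathcal{O}_{M_0,P_0}$, the subgroup of $\Gamma$ generated by $\{\nu(a_i),\nu(b_i)\}$ contains $\langle\nu(g_1),\dots,\nu(g_r)\rangle$, hence it has $r$ elements with $\mathbb{Z}$-independent values; as $\operatorname{rat.rk}(\nu)=r$ it has no more, so among the $a_i$ and $b_i$ I can select $r$ whose values are $\mathbb{Z}$-independent, and these therefore form a basis of the $r$-dimensional space $\Gamma\otimes\mathbb{Q}$. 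Call them $f_1,\dots,f_r\in\mathcal{O}_{M_0,P_0}$.

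Next I would monomialize these functions. Applying embedded resolution of singularities \cite{Hi1} to the hypersurface $V(f_1f_2\cdots f_r)$ --- equivalently, a log resolution of the pair $(M_0,V(f_1\cdots f_r))$ --- produces a morphism $M\to M_0$ that is a finite composition of blow-ups with non-singular centers, with $M$ regular and $\bigcup_i V(f_i)$ a simple normal crossings divisor on $M$. Let $P$ be the center of $\nu$ in $M$; it is regular since $M$ is. Choosing a regular system of parameters $\boldsymbol{z}=(z_1,\dots,z_n)$ of $\mathcal{O}:=\mathcal{O}_{M,P}$ adapted to that divisor, and using that $\mathcal{O}$ is a unique factorization domain whose irreducible factors of each $f_i$ are, up to units, among the $z_j$ cutting out the components of $\bigcup_i V(f_i)$ through $P$, I obtain
$$ f_i=U_i\,\boldsymbol{z}^{\boldsymbol{m}_i}\ ,\qquad U_i\in\mathcal{O}\setminus\mathfrak{m}_{M,P},\quad \boldsymbol{m}_i\in\mathbb{Z}_{\geq 0}^n\ . $$

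Finally I would read off the independent variables. From the displayed identity, $\nu(f_i)=\sum_{j=1}^n m_{ij}\,\nu(z_j)$, so the $\mathbb{Q}$-subspace of $\Gamma\otimes\mathbb{Q}$ spanned by $\nu(z_1),\dots,\nu(z_n)$ contains the $\mathbb{Q}$-span of $\nu(f_1),\dots,\nu(f_r)$, which is all of $\Gamma\otimes\mathbb{Q}$ because that span has dimension $r$. Hence $\nu(z_1),\dots,\nu(z_n)$ generate $\Gamma\otimes\mathbb{Q}$, and after reordering $\nu(z_1),\dots,\nu(z_r)$ form a basis of it. Putting $\boldsymbol{x}=(z_1,\dots,z_r)$ and $\boldsymbol{y}=(z_{r+1},\dots,z_n)$, the pair $\mathcal{A}=(\mathcal{O};(\boldsymbol{x},\boldsymbol{y}))$ is a parameterized regular local model for $K/k,\nu$, and $M\to M_0$ has the required shape.

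The step I expect to be the main obstacle is the monomialization. One must turn \emph{all} the $f_i$ into monomials simultaneously at the center, and the clean device is to resolve the union $\bigcup_i V(f_i)$ rather than principalize each ideal $(f_i)$ separately: once that union is simple normal crossings, each $f_i$, being supported on it inside a regular --- hence factorial --- local ring, is automatically a unit times a monomial. A secondary technical point is that all the birational modifications must be realized as compositions of blow-ups with non-singular centers, which is exactly what the constructive (Hironaka-type) resolution algorithms provide; the value-group bookkeeping afterwards is routine.
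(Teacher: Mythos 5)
Your proof is correct, but it follows a genuinely different route from the one in the paper. The paper's proof applies Zariski's Local Uniformization \cite{Zar40} twice: first to obtain a model $M'$ regular at the center of $\nu$, and then a second time to turn the chosen elements $f_1,\dots,f_r$ into monomials (times units) in a regular system of parameters at the new center. You instead invoke a Hironaka-type log resolution/principalization \cite{Hi1} of the product $f_1\cdots f_r$ over $M_0$, obtaining a globally regular $M$ on which the zero locus is simple normal crossings, and you then read off the monomial form $f_i=U_i\boldsymbol{z}^{\boldsymbol{m}_i}$ from factoriality of the regular local ring; this is essentially the argument the introduction attributes to \cite{CaRoSp}, where these models were first introduced via global resolution. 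Your way of producing the $f_i$ also differs and is in fact more robust: rather than clearing denominators of the $g_i$ (which, as literally stated in the paper, need not preserve $\mathbb{Z}$-independence of the values, since $\nu(hg_i)=\nu(h)+\nu(g_i)$ can introduce a relation), you select $r$ elements among the numerators and denominators $a_i,b_i\in\mathcal{O}_{M_0,P_0}$, using that their values span the $r$-dimensional space $\Gamma\otimes_{\mathbb{Z}}\mathbb{Q}$. The trade-off is that the paper's argument needs only local uniformization along $\nu$ (the minimal and thematically consistent input), while yours uses the full strength of resolution in characteristic zero but is cleaner at the monomialization step. One detail you should make precise: the $f_i$ are only germs at $P_0$, so ``$V(f_1\cdots f_r)$'' should be interpreted as (the closure of) the zero locus on an affine neighborhood of $P_0$, or as the principalization of a coherent ideal sheaf extending $(f_1\cdots f_r)$; since the center $P$ of $\nu$ in $M$ maps to $P_0$, the pulled-back functions are regular at $P$ and the normal crossings property holds where you need it, so this is only a matter of wording, not a gap.
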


\subsection{Transformations of parameterized regular local models}
Parameterized regular local models are the ambient spaces in which we will work. A transformation between parameterized regular local models is formed by an inclusion of regular local rings and a specific selection of a regular system of parameters of the new ring. We consider two elementary operations: blow-ups and change of coordinates. Certain composition of these operations, called nested transformations, are the transformations between parameterized regular local models which we allow. Given such a transformation
$$
\pi : \mathcal{A} \longrightarrow \mathcal{A}' \ ,
$$
we denote with the same symbol the corresponding inclusion of local rings
$$
\pi : \mathcal{O} \longrightarrow \mathcal{O}' \ ,
$$
and also the induced $ \mathcal{O} $-module homomorphism
$$
\pi : \Omega_{\mathcal{O}/k}(\log \boldsymbol{x}) \longrightarrow \Omega_{\mathcal{O}'/k}(\log \boldsymbol{x}') \ .
$$
\subsubsection{Blowing-up parameterized regular local models}
Let $ \mathcal{A} = \big( \mathcal{O} ; (\boldsymbol{x},\boldsymbol{y}) \big) $ be a parametrized regular local model for $ K,\nu $. As center of blow-up we use only the ideals $ I_{ij}, I_{i}^{j} \subset \mathcal{O} $ defined by
\begin{eqnarray*}
	I_{ij} := (x_i,x_j) \mathcal{O} & \text{for} & 1 \leq i < j \leq r \ ; \\
	I_{i}^{j} := (x_i,y_j) \mathcal{O} & \text{for} & 1 \leq i \leq r , \ 1 \leq j \leq n-r \ .
\end{eqnarray*}
\emph{The blow-up of $ \mathcal{A} $ at $ I_{ij} $} is
$$ \theta_{ij}(\mathcal{A}) : \mathcal{A} \longrightarrow \mathcal{A}' \ , $$
where $ \mathcal{A}' = \big( \mathcal{O}' ; (\boldsymbol{x}',\boldsymbol{y}') \big) $ is defined by:
\begin{itemize}
	\item if $ 1 \leq i < j \leq r $ and $ \nu(x_i) < \nu(x_j) $: put $ x_j' := x_j / x_i $, $ x_{k}'=x_{k} $ for $ k \neq j $ and $ \boldsymbol{y}' := \boldsymbol{y} $;
	\item if $ 1 \leq i < j \leq r $ and $ \nu(x_i) > \nu(x_j) $: put $ x_i' := x_i / x_j $, $ x_{k}'=x_{k} $ for $ k \neq i $ and $ \boldsymbol{y}' := \boldsymbol{y} $;
\end{itemize}
and
$$ \mathcal{O}'=\mathcal{O}[\boldsymbol{x}',\boldsymbol{y}']_{(\boldsymbol{x}',\boldsymbol{y}')} \ .$$
\emph{The blow-up of $ \mathcal{A} $ at $ I_{i}^{j} $} is
$$ \theta_{i}^{j}(\mathcal{A}) : \mathcal{A} \longrightarrow \mathcal{A}' \ , $$
where $ \mathcal{A}' = \big( \mathcal{O}' ; (\boldsymbol{x}',\boldsymbol{y}') \big) $ is defined by:
\begin{itemize}
	\item if $ \nu(x_i) < \nu(y_{j}) $: put $ y_{j}' := y_{j} / x_i $, $ y_j'=y_j $ for $ j \neq l $ and $ \boldsymbol{x}' := \boldsymbol{x} $;
	\item if $ \nu(x_i) > \nu(y_{j}) $: put $ x_i' := x_i / y_{j} $, $ x_i'=x_i $ for $ i \neq k $ and $ \boldsymbol{y}' := \boldsymbol{y} $;
	\item if $ \nu(x_i) = \nu(y_j) $: since $ \kappa_{\nu} = k $, there is $ \lambda \in k $ with $ \nu(y_{j} / x_i - \lambda ) > 0 $. Put $ y_{j}' =  y_{j} / x_i - \lambda $, $ y_k'=y_k $ for $ k \neq j $ and $ \boldsymbol{x}' := \boldsymbol{x} $;
\end{itemize}
and
$$ \mathcal{O}'=\mathcal{O}[\boldsymbol{x}',\boldsymbol{y}']_{(\boldsymbol{x}',\boldsymbol{y}')} \ .$$
The first four cases above are called \emph{combinatorial blow-ups} and the fifth is a \emph{blow-up with translation}.
\begin{rem}
	As we said $ \mathcal{O} $ is the local ring of a point $ P $, the center of $ \nu $ in some projective model $ M $. The ring $ \mathcal{O}' $ is just the local ring of the center of $ \nu $ in the variety obtained after blowing-up $ M $ at the subvariety defined locally at $ P $ by $ I_{ij} $. We have that
	$$ \mathcal{O} \preceq \mathcal{O}' \preceq R_{\nu} \ . $$
\end{rem}

\subsubsection{Ordered change of coordinates}
A change of coordinates does not modify the local ring, it just changes the selection of regular parameters. Let $ \mathcal{A} = \big( \mathcal{O} ; (\boldsymbol{x},\boldsymbol{y}) \big) $ be a parameterized regular local model and let $ y_{\ell} $ be a dependent variable.
\begin{defi}
	An \textit{ordered change of the $ \ell $-th coordinate} is a transformation of parametrized regular local models
	$$ T : \mathcal{A} \longrightarrow \tilde{\mathcal{A}} $$
	where $ \tilde{\mathcal{A}} = \big( \tilde{\mathcal{O}} ; (\tilde{\boldsymbol{x}},\tilde{\boldsymbol{y})} \big) $ is given by
	\begin{itemize}
		\item $ \tilde{\mathcal{O}} := \mathcal{O} \, $;
		\item $ \tilde{x}_i := x_i $ for $ 1\leq i\leq r \, $;
		\item $ \tilde{y}_j := y_j $ for $ 1 \leq j \leq n-r $, $ j \neq \ell$;
		\item $ \tilde{y}_{\ell} := y_{\ell} + \psi $ where $ \psi \in \mathfrak{m} \cap k[\boldsymbol{x},y_{1},y_2,\dots,y_{l-1}] $ is a polynomial such that if we write
		$$ \psi = \sum_{I}\boldsymbol{x}^{I} \psi_{I}(y_1,y_2,\dots,y_{\ell-1}) $$
		we have
		$$ \nu(\boldsymbol{x}^{I}) < \nu(y_{\ell}) \Rightarrow \psi_{I}(y_1,y_2,\dots,y_{\ell-1}) \equiv 0 \ . $$
	\end{itemize}
\end{defi}
Note that we have
$$ \nu(\tilde{y}_{\ell}) \geq \nu(y_{\ell}) \ . $$
Taking differentials in the equations of the coordinate change we obtain explicit equations for the $ \mathcal{O} $-homomorphism between the modules of differentials
\begin{eqnarray*}
	\Omega_{\mathcal{O}/k}(\log \boldsymbol{x}) & \longrightarrow & \Omega_{\tilde{\mathcal{O}}/k}(\log \tilde{\boldsymbol{x}}) \\
	\frac{dx_i}{x_i} & \longmapsto & \frac{d\tilde{x}_i}{\tilde{x}_i} \ , \quad i=1,\dots,r \ ; \\
	dy_j & \longmapsto & d\tilde{y}_j \ , \quad 1 \leq j \leq n-r \ , \ j \neq \ell \ ; \\
	dy_{\ell} & \longmapsto & d \tilde{y}_{\ell} + \sum_{i=1}^{r} x_i \frac{\partial \psi}{\partial x_i} \frac{dx_i}{x_i} + \sum_{j=1}^{\ell-1}  \frac{\partial \psi}{\partial y_j} dy_j \ .
\end{eqnarray*}

\subsubsection{Puiseux's packages}\label{se:Puiseux's_packages}
Let $ \mathcal{A} = \big( \mathcal{O} ; (\boldsymbol{x},\boldsymbol{y}) \big)$ be a parameterized regular local model for $K/k,\nu$. Given a dependent variable $ y_{\ell} $ there is a relation
\begin{equation}\label{eq:dependence_relation}
	d \nu(y_{\ell}) = p_1 \nu(x_1) + \cdots + p_r \nu(x_r).
\end{equation}
Requiring $d>0$ and $\gcd(d,p_1,\ldots,p_r)=1 $ the integers of the above expression are uniquely determined. Denoting by $ \boldsymbol{p}=(p_1,\ldots,p_r)\in \mathbb{Z}^r \setminus \{\boldsymbol{0}\}$, Equation \eqref{eq:dependence_relation} is equivalent to
\begin{equation}\label{eq:valorative_dependence_relation}
	\nu(\frac{y_{\ell}^d}{\boldsymbol{x}^{\boldsymbol{p}}})=0 .
\end{equation}
The rational function $ \phi = y_{\ell}^d / \boldsymbol{x}^{\boldsymbol{p}} $ is called the $ l $\emph{-th contact rational function} and $ d = d(\ell;\mathcal{A})$ is the \emph{$ l $-ramification index}.
\begin{rem}
	Let $ \phi $ be the $ l $-th contact rational function and perform a blow-up $ \mathcal{A} \rightarrow \mathcal{B} $. If $ \pi $ is combinatorial then the new $ l $-th contact rational function is the strict transform of $ \phi $.
\end{rem}

Recall that there exists a unique constant $ \xi \in k^* $ such that $ \nu(\phi_j - \xi) > 0 $.
\begin{defi}
	A \emph{$ \ell $-Puiseux's package} is a finite sequence of blow-ups
	$$	\xymatrix{ \mathcal{A} \ar[r]^{\pi_0} & \mathcal{A}_1 \ar[r]^{\pi_1} & \cdots \ar[r]^{\pi_N} & \mathcal{A}' } $$
	where
	\begin{itemize}
		\item $ \pi_t $ is a combinatorial blow-up $ \pi_t=\theta_{ij}(\mathcal{A}_t) $ with $ 1\leq i<j\leq r $, or $ \pi_t=\theta_{i}^{\ell}(\mathcal{A}_t) $ with $ 1\leq i \leq r $, for $ 0 \leq t \leq N-1 $;
		\item $ \pi_N = \theta_{i}^{l}(\mathcal{A}_N) $ is a blow-up with translation.
	\end{itemize}	
	In the special case in which the ramification index is $ d(\ell;\mathcal{A}_t) = 1 $ and $ \nu(y_{t,\ell}) < \nu(x_{t,i}) $, where $ \mathcal{A}_t = (\mathcal{O}_t,(\boldsymbol{x}_t,\boldsymbol{y}_t)) $, we require the combinatorial blow-up $ \pi_t $ not to be $ \theta_{i}^{\ell}(\mathcal{A}_t) $.
\end{defi}
The last condition in the definition is not necessary. We put it because it makes easier some calculations which will appear later in the text. 
\begin{rem}
Let $ \pi_N = \theta_{i}^{\ell}(\mathcal{A}_N) : \mathcal{A}_N \rightarrow \mathcal{A}' $ be the last blow-up of a $ \ell $-Puiseux's package. The $ l $-th contact rational function in $ \mathcal{A}_N $ has to be necessarily $ \phi_{\ell} = y_{\ell} / x_{i} $ and then after the transformation we obtain $ y'_{\ell} = \phi_{\ell} - \xi $.
\end{rem}
Note that a $ \ell $-Puiseux's package gives a local uniformization of the hypersurface $ \boldsymbol{x^q}y_{\ell}^d - \xi \boldsymbol{x^t} = 0 $. We will prove the existence of Puiseux's packages in Proposition \ref{pr:existence_puiseux_packages}.

\subparagraph{{\bf Equations of a Puiseux's package}}
We collect here some specific calculations about Puiseux's packages for further references. Let $  \mathcal{A}' = (\mathcal{O}',(\boldsymbol{x}',\boldsymbol{y}') \big) $ be the parameterized regular local model obtained from $ \mathcal{A} = \big( \mathcal{O} ; (\boldsymbol{x},\boldsymbol{y}) \big) $ by means of a $ \ell $-Puiseux's package $ \pi : \mathcal{A} \rightarrow \mathcal{A}' $. We have
\begin{eqnarray}\label{eq:variables_after_Puiseux_package}
	y_{\ell}' & = & \phi_{\ell} - \xi ,\\ \nonumber
	x_i & = & \boldsymbol{x}'^{\boldsymbol{\alpha}_i} (y_{\ell}' + \xi)^{\beta_i} ,\\ \nonumber
	y_{\ell} & = & \boldsymbol{x}'^{\boldsymbol{\alpha}_0} (y_{\ell}' + \xi)^{\beta_0} ,\\ \nonumber
	y_j & = & y_j' \text{ if } j \neq \ell ,
\end{eqnarray}
where $\boldsymbol{\alpha_0},\boldsymbol{\alpha_i} \in \mathbb{Z}_{\geq 0}^r$ and $\beta_0,\beta_i \in \mathbb{Z}_{\geq 0}$. The relation \eqref{eq:valorative_dependence_relation} gives
\begin{equation}\label{eq:dependence_alpha_p}
	p_1  \boldsymbol{\alpha_1} + \cdots + p_r  \boldsymbol{\alpha_r} - d \boldsymbol{\alpha_0} = 0,
\end{equation}
\begin{equation}\label{eq:dependence_beta_p}
	p_1  \beta_1 + \cdots + p_r \beta_r + 1 =  d \beta_0 .
\end{equation}
It follows from the construction that the matrices $ \check{H}_{\pi} $ and $ H_{\pi} $ defined by
\begin{equation}\label{eq:Puiseux_package_matrix_1}
	\text{\Large $\check{H}_{\pi}$} := \left(
	\begin{array}{ccc}
		\alpha_{11} & \cdots & \alpha_{1r}  \\
		\vdots & \ddots & \vdots  \\
		\alpha_{r1} & \cdots & \alpha_{rr}
	\end{array}
	\right),
\end{equation}
and
\begin{equation}\label{eq:Puiseux_package_matrix_2}
	\text{\Large $H_{\pi}$} := \left(
	\begin{array}{ccc|c}
		&  &  & \beta_1 \\
		& \text{\Large $\check{H}$} &  & \vdots \\
		&  &  & \beta_r \\
		\hline
		\alpha_{01} & \cdots & \alpha_{0r} & \beta_0
	\end{array}
	\right)
\end{equation}
are invertible with non-negative integers coefficients. Using matrix notation Equality \eqref{eq:dependence_alpha_p} can be written as
\begin{equation}\label{eq:dependence_alpha_p_1}
	\boldsymbol{p} \text{\Large $\check{H}_{\pi}$} = -d \boldsymbol{\alpha}_0 \ .
\end{equation}
\noindent Taking differentials in the expressions \eqref{eq:variables_after_Puiseux_package} we obtain
\begin{equation}\label{eq:differentials_after_Puiseux_package}
	\left(
	\begin{array}{c}
	\frac{dx_1}{x_1} \\
	\vdots \\
	\frac{dx_r}{x_r} \\
	\frac{dy_{\ell}}{y_{\ell}}
	\end{array}
	\right)
	= \text{\Large $ H $}
	\left(
	\begin{array}{c}
	\frac{dx'_1}{x'_1} \\
	\vdots \\
	\frac{dx'_r}{x'_r} \\
	\frac{d\phi_{\ell}}{\phi_{\ell}}
	\end{array}
	\right)
\end{equation}
where $ \frac{d\phi_{\ell}}{\phi_{\ell}} = y_{\ell}' \phi_{\ell}^{-1} \frac{dy_{\ell}'}{y_{\ell}'} $. The equality \eqref{eq:differentials_after_Puiseux_package} provides explicit equations for the $ \mathcal{O} $-homomorphism between the modules of differentials
\begin{eqnarray*}
	\Omega_{\mathcal{O}/k}(\log \boldsymbol{x}) & \longrightarrow & \Omega_{\mathcal{O}'/k}(\log \boldsymbol{x}' ) \\
	\frac{dx_i}{x_i} & \longmapsto & \sum_{k=1}^r \alpha_{ik}\frac{dx'_k}{x'_k} + \phi_{\ell}^{-1} \beta_i dy'_{\ell} \ , \quad i=1,\dots,r \ ; \\
	dy_j & \longmapsto & d y'_j \ , \quad 1 \leq j \leq n-r \ , \ j \neq \ell \ ; \\
	dy_{\ell} & \longmapsto & \boldsymbol{x}'^{\boldsymbol{\alpha}_0} \phi_{\ell}^{\beta_0} \left( \sum_{k=1}^r \alpha_{0k}\frac{dx'_k}{x'_k} + \phi_{\ell}^{-1} \beta_0 dy'_{\ell} \right) \ .
\end{eqnarray*}.
\begin{rem}\label{re:monomial_after_Puiseux_package}
	Let $ \boldsymbol{x}^{\boldsymbol{q}_0}y_{\ell}^{e_0} $ be a monomial with integer exponents and let $ \gamma_0 \in \Gamma $ be its value. We have that all the monomials in the variables $ \boldsymbol{x} $ and $ y_{\ell} $ with value $ \gamma_0 $ are those of the form
	$$ \boldsymbol{x}^{\boldsymbol{q}_0}y_{\ell}^{e_0} \phi_{\ell}^t = \boldsymbol{x}^{\boldsymbol{q}_0 - t \boldsymbol{p} } y_{\ell}^{e_0 + td} .$$
	After a $ \ell $-Puiseux's package such a monomial becomes a polynomial with the same value
	$$  \boldsymbol{x}^{\boldsymbol{q}_0}y_{\ell}^{e_0} \phi_{\ell}^t = \boldsymbol{x}'^{\boldsymbol{q}'_0} (y_{\ell}' + \xi)^t $$
	where the exponent $ \boldsymbol{q}'_0 $ is determined by \eqref{eq:variables_after_Puiseux_package} and it does not depend on the parameter $ t $.
\end{rem}

\subparagraph{{ \bf Puiseux's packages without ramification}}
	One notable case is when $ d=1 $ and $ \boldsymbol{p} \in \mathbb{Z}_{\geq 0}^r $. In this situation we always can determine a $ \ell $-Puiseux's package
	\begin{displaymath}
		\xymatrix{
		\mathcal{A}_0 \ar[r]^{\theta_{i_0 \ell}} & \mathcal{A}_1 \ar[r]^{\theta_{i_1 \ell}} & \cdots \ar[r]^{\theta_{i_{N-1} \ell}} & \mathcal{A}_N }
	\end{displaymath}
	such that $ \check{H}_{\pi} = I_r $ and
	$$
	\text{\LARGE $H_{\pi}$} = \left(
	\begin{array}{ccc|c}
	&  &  & 0 \\
	& \text{\LARGE $I_r$} &  & \vdots \\
	&  &  & 0 \\
	\hline
	p_1 & \cdots & p_r & 1
	\end{array}
	\right) .
	$$
	If we have $ d=1 $ but $ \boldsymbol{p} \notin \mathbb{Z}_{\geq 0}^r $, as we will see in Lemma \ref{le:simple_finite_list}, we can determine a finite composition of blow-ups with centers of the kind $ I_{ij} $ to reach the previous case. If $ C $ is the $ r\times r $ matrix related to that transformation, and $ \boldsymbol{p}' \in \mathbb{Z}_{\geq 0}^r $ is the new exponent (in fact $ \boldsymbol{p}' = \boldsymbol{p} C $), we have that the matrix of the complete Puiseux's package is
	$$
	\text{\LARGE $H_{\pi}$} = \left(
	\begin{array}{ccc|c}
		&  &  & 0 \\
		& \text{\LARGE $C$} &  & \vdots \\
		&  &  & 0 \\
		\hline
		0 & \cdots & 0 & 1
	\end{array}
	\right) \times \left(
	\begin{array}{ccc|c}
			&  &  & 0 \\
			& \text{\LARGE $I_r$} &  & \vdots \\
			&  &  & 0 \\
			\hline
			p'_1 & \cdots & p'_r & 1
	\end{array}
	\right) = \left(
	\begin{array}{ccc|c}
			&  &  & 0 \\
			& \text{\LARGE $C$} &  & \vdots \\
			&  &  & 0 \\
			\hline
			p'_1 & \cdots & p'_r & 1
	\end{array}
	\right)
	$$
	\begin{rem}
		In the case $ d=1 $ and $ \boldsymbol{p} \in \mathbb{Z}_{\geq 0}^r $ there is a coordinate change related to the Puiseux's package. Let $ T_0: \mathcal{A}_0 \rightarrow \tilde{\mathcal{A}}_0 $ be an ordered change of the $ \ell $-th variable given by $ \tilde{y}_{\ell} := y_{\ell} - \xi \boldsymbol{x^p} $. We have the following commutative diagram		
		\begin{displaymath}
			\xymatrix{
			\mathcal{A}_0 \ar[r]^{\theta_{i_0 \ell}} \ar[d]^{T_0} & \mathcal{A}_1 \ar[r]^{\theta_{i_1 \ell}} \ar[d]^{T_1} & \cdots \ar[r]^{\theta_{i_{N-2} \ell}} & \mathcal{A}_{N-1} \ar[r]^{\theta_{i_{N-1} \ell}} \ar[d]^{T_{N-1}} & \mathcal{A}_N \ar[d]^{T_N=\operatorname{id}} \\
			\tilde{\mathcal{A}}_0 \ar[r]^{\theta_{i_0 \ell}}  & \tilde{\mathcal{A}}_1 \ar[r]^{\theta_{i_1 \ell}}  & \cdots \ar[r]^{\theta_{i_{N-2} \ell}} & \tilde{\mathcal{A}}_{N-1} \ar[r]^{\theta_{i_{N-1} \ell}} & \tilde{\mathcal{A}}_N }
		\end{displaymath}
		where the upper horizontal row is a $ \ell $-th Puiseux's package, the vertical arrows $ T_s : \mathcal{A}_s \rightarrow \tilde{\mathcal{A}}_s $ for $ s=0,1,\dots,N-1 $ are ordered changes of the $ \ell $-th coordinate and the last vertical arrow is the identity map. Moreover, all the horizontal arrows are combinatorial blow-ups except $ \theta_{i_{N-1} \ell}:\mathcal{A}_{N-1} \rightarrow \mathcal{A}_{N} $ which is a blow-up with translation. 
	\end{rem}

\subsubsection{Nested transformations}
The transformations of parameterized regular models that we have introduced \textit{respect} the relative ordering in the dependent variables. This fact is a key feature of our induction treatment.
\begin{defi}
	A \textit{$ 0 $-nested transformation} is a composition of transformations of parameterized regular local models
	\begin{displaymath}
		\xymatrix{ \mathcal{A} = \mathcal{A}_0 \ar[r]^{\tau_0} & \mathcal{A}_1 \ar[r]^{\tau_1} & \cdots \ar[r]^{\tau_{t-1}} & \mathcal{A}_t = \mathcal{A}' }
	\end{displaymath}
	where each
	\begin{displaymath}
		\xymatrix{ \mathcal{A}_k \ar[r]^{\tau_k} & \mathcal{A}_{k+1} }
	\end{displaymath}
	is a combinatorial blow-up $ \tau_k = \theta_{i_k j_k}(\mathcal{A}_k) $ with $ 1 \leq i_k < j_k \leq r $.

	A \textit{$ \ell $-nested transformation} is a composition of transformations of parameterized regular local models
	\begin{displaymath}
			\xymatrix{ \mathcal{A} = \mathcal{A}_0 \ar[r]^{\tau_0} & \mathcal{A}_1 \ar[r]^{\tau_1} & \cdots \ar[r]^{\tau_{t-1}} & \mathcal{A}_t = \mathcal{A}' }
	\end{displaymath}
	where each
	\begin{displaymath}
		\xymatrix{ \mathcal{A}_k \ar[r]^{\tau_k} & \mathcal{A}_{k+1} }
	\end{displaymath}
	is a $ (l-1) $-nested transformation, a $ \ell $-Puiseux's package or an ordered change of the $ l $-th coordinate.
\end{defi}
\begin{rem}
	Note that a $ \ell $-nested transformations is also a $ l' $-nested transformation for $ l' > l $, and in particular a $ (n-r) $-nested transformation. We will refer to $ (n-r) $-nested transformations simply by \emph{nested transformations}.
\end{rem}

\subsection{Statements in terms of parameterized regular local models}
Let $ K/k $ be a finitely generated field extension and let $ \nu $ be a $ k $-rational valuation. Let $ \mathcal{A} = \big( \mathcal{O} ; (\boldsymbol{x},\boldsymbol{y}) \big) $ be a parameterized regular local model of $ K,\nu $. Consider a codimension one rational foliation $ \mathcal{F} $ of $ K/k $. Denote by $ \mathcal{F}_{\mathcal{A}} $ the submodule of $ \Omega_{\mathcal{O}/k}(\log \boldsymbol{x}) $ given by
$$
	\mathcal{F}_{\mathcal{A}} := \mathcal{F} \cap  \Omega_{\mathcal{O}/k}(\log \boldsymbol{x}) \ .
$$
\begin{defi}
	The codimension one rational foliation $ \mathcal{F} $ is \emph{$ \mathcal{A} $-final} if $ \mathcal{F}_{\mathcal{A}} $ is $ \boldsymbol{x} $-log-final,
\end{defi}
\begin{theo}\label{th:2}
	Let $ k $ be a field of characteristic zero and let $ K/k $ be a finitely generated field extension. Let $ \mathcal{F} $ be a rational codimension one foliation of $ K/k $. Given a $ k $-rational archimedean valuation $ \nu $ of $ K/k $ and a parameterized regular local model $ \mathcal{A} $ of $K/k, \nu$, there is a nested transformation
	$$
		{\mathcal A} \longrightarrow {\mathcal B}
	$$
	such that ${\mathcal F}$ is ${\mathcal B}$-final.  
\end{theo}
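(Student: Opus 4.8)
\emph{Proof strategy.} We deduce Theorem~\ref{th:2} from the Truncated Local Uniformization Theorem~\ref{th:III}. Let $\omega$ generate $\mathcal F_{\mathcal A}=\mathcal F\cap\Omega_{\mathcal O/k}(\log\boldsymbol x)$, so that its coefficients have no common factor. Since $\omega\in\mathcal F$, the integrability condition holds identically, $\omega\wedge d\omega=0$; the inclusion $\Omega_{\mathcal O/k}(\log\boldsymbol x)\hookrightarrow N_{\mathcal A}^{\,n-r}=\Omega_{\widehat{\mathcal O}/k}(\log\boldsymbol x)$ is compatible with $d$ and $\wedge$, so $\omega\wedge d\omega=0$ in $N_{\mathcal A}^{\,n-r}$ as well, whence $\nu_{\mathcal A}(\omega\wedge d\omega)=+\infty\ge 2\gamma$ for \emph{every} $\gamma\in\Gamma$. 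Thus Theorem~\ref{th:III}, which is $T_3(n-r)$ and produces a nested transformation, applies to $\omega$ with $\ell=n-r$ and any prescribed $\gamma$.

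First I would fix $\gamma$ (large; see below) and apply Theorem~\ref{th:III} to obtain a nested transformation $\mathcal A\to\mathcal C$ such that the transform $\pi(\omega)$ is $\gamma$-final in $\mathcal C$. As $\pi(\omega)$ need not generate $\mathcal F_{\mathcal C}$, I pass to a coprime generator $\omega'$ of $\mathcal F_{\mathcal C}$, write $\pi(\omega)=g\,\omega'$, and compare explicit values and dominant monomials of $\pi(\omega)$ and $\omega'$. In the $\gamma$-final \emph{dominant} case, $\nu_{\mathcal C}(\pi(\omega))=\nu(\boldsymbol x^{I_0})\le\gamma$ and the level $(\pi(\omega))_{I_0}$ is $\boldsymbol x$-log-elementary; then the dominant level of $\omega'$ is $\boldsymbol x$-log-elementary as well, and either its dominant monomial is trivial — so a log-coefficient of $\omega'$ is a unit and $\mathcal F$ is $\boldsymbol x$-log-elementary, hence $\mathcal C$-final — or I append a $0$-nested transformation monomializing the Newton polyhedron of $\omega'$ around that monomial (the polyhedral control behind $T_3(0)$ and \cite{Spi83}), turning it into a common factor whose removal leaves a coprime generator with trivial dominant monomial, again $\boldsymbol x$-log-elementary. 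In the $\gamma$-final \emph{recessive} case, $\nu_{\mathcal C}(\pi(\omega))>\gamma$; here one shows that for $\gamma$ large the coprime generator $\omega'$ has all its $\boldsymbol x$-log-coefficients in $\mathfrak m$, and that a further adapted combinatorial blow-up — or a Puiseux's package and $\gamma$-preparation, as in the function model discussed in the introduction — makes one of them acquire a nonzero $y_j$-linear term, giving $\boldsymbol x$-log-canonical (or, in the degenerate first-integral-type situation, a unit coefficient, giving $\boldsymbol x$-log-elementary). In every case the composite $\mathcal A\to\mathcal C\to\mathcal B$ is a nested transformation for which $\mathcal F$ is $\mathcal B$-final.

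The hard analytic and combinatorial work is already packaged inside Theorem~\ref{th:III}; the remaining difficulty — and the main obstacle — is the dictionary between the truncated vocabulary (``$\gamma$-final dominant/recessive'' of the transform $\pi(\omega)$, a statement about its $N$-expansion relative to one monomial) and the intrinsic vocabulary (``$\boldsymbol x$-log-elementary/canonical'' of the coprime generator, a statement in $\mathfrak m/\mathfrak m^2$). Three points require care: the transform picks up an a priori unbounded monomial common factor $g$, so recessiveness of $\pi(\omega)$ is not literally high order of $\omega'$ and one must pin down which $\gamma$ forces the conclusion (possibly iterating Theorem~\ref{th:III}, each recessive outcome strictly raising $\nu_{\mathcal C}(\pi(\omega))$ while the explicit value of the coprime generator stays bounded); the dominant monomial of $\pi(\omega)$ need not be trivial, so $\boldsymbol x$-log-elementarity of a single level propagates to $\mathcal F$ only after that monomial is cleared by valuation-adapted combinatorial blow-ups, which terminate by the Newton-polyhedron control; and an $\boldsymbol x$-log-canonical singularity is never $\gamma$-final, so the recessive case must be shown to already encode $\boldsymbol x$-log-canonicity. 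One also checks that the cleanup does not destroy $\gamma$-finality, which holds because combinatorial blow-ups on the independent variables preserve or decrease explicit values and preserve the leading coefficient of the dominant monomial. Carrying out this bookkeeping is the content of Chapter~\ref{ch:final}.
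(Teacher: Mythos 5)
Your reduction to Theorem~\ref{th:III} is the right starting point, and your treatment of the dominant branch is essentially the paper's: if some $\gamma$ and some nested transformation make $\omega$ $\gamma$-final dominant, a $0$-nested transformation (Lemma~\ref{le:simple_infinite_list}) factors out a monomial $Q$ in the independent variables, and $Q^{-1}\omega$ is a log-elementary element of $\mathcal F_{\tilde M,\tilde P}(\log\tilde{\boldsymbol x})$, so $\mathcal F$ is $\mathcal B$-final. But your handling of the other branch has a genuine gap. The case split cannot be organized as ``fix $\gamma$ large and see whether the output is dominant or recessive'': precisely because $\omega\wedge d\omega=0$, Theorem~\ref{th:III} applies for \emph{every} $\gamma$, and the relevant dichotomy is between (i) some $\gamma$ gives dominance and (ii) $\omega$ can be made $\gamma$-final recessive for \emph{all} $\gamma$ — the ``infinite explicit value'' phenomenon illustrated by the Euler equation in the introduction. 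In case (ii) no choice of $\gamma$, however large, ``already encodes'' log-canonicity, and your proposed mechanism (iterate with growing $\gamma$, claim the explicit value of the coprime generator stays bounded, then produce a $y_j$-linear term by one more adapted blow-up) is exactly the missing argument, not a proof: you give no reason why the coprime generator's explicit value is bounded, and boundedness alone would not yield the condition $(a_1,\dots,a_r)\not\subset(\boldsymbol x)+\mathfrak m^2$ defining $\boldsymbol x$-log-canonical.

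What the paper actually does in that branch is substantially different and uses the convergence of $\omega$ in an essential way. Writing $\omega=\sum a_i\frac{dx_i}{x_i}+\sum b_j dy_j+f\,dz$ with $f\in\mathcal O$, it first isolates the case $f\neq 0$: since $f$ has a genuine finite value $\nu(f)$ stable under birational maps, not all levels $f_k$ can be pushed to arbitrarily high explicit value, which pins down a lowest ``dominatable'' level $f_{k_0}$ and guarantees $\delta(\omega;\mathcal A_t,\gamma_t)<\gamma_t$ along a sequence of $\gamma_t$-preparations and $(\ell+1)$-Puiseux's packages; the critical height is then non-increasing (Propositions~\ref{pr:stability_critical_height_ordered_change} and~\ref{pr:stability_critical_height_Puiseux_package}), stays positive because dominance is excluded, and by the analysis of Section~\ref{se:end_proof_theorem_gamma_finalization_1_forms} one ends with $\chi=1$ and resonance condition (R2), at which point the generator becomes $\boldsymbol x^{\boldsymbol q}U\tilde\omega$ with $\tilde\omega$ explicitly $\boldsymbol x$-log-canonical (a coefficient congruent to $z$ modulo $\mathfrak m^2$). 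Separately, when $f=0$ the integrability condition forces all ratios $a_i/a_1$, $b_j/a_1$ to be independent of $z$, so the foliation is a cylinder generated by a $1$-form in $N_{\mathcal A}^{\ell}$ and one reduces the number of dependent variables. Both of these ingredients — the use of the finite value of the algebraic coefficient $f$ together with the critical-height/resonance machinery to manufacture the log-canonical generator, and the lower-dimensional reduction when $f=0$ — are absent from your sketch, and without them the recessive branch is not closed.
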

This statement is a refinement of Theorem \ref{th:II}, in which we specify the kind of transformations we allow. Thanks to Proposition \ref{pr:prml_existence} we have that Theorem \ref{th:2} implies Theorem \ref{th:1}.
 
\subsection{Formal completion}
Although the result we want to show is about ``convergent'' foliations, in order to prove it we have to consider formal functions and $ 1 $-forms with formal coefficients.

Let $ \mathcal{A} = \big( \mathcal{O} ; (\boldsymbol{x},\boldsymbol{y}) \big)$ be a parameterized regular local model for $K/k,\nu$. Let $ \hat{\mathcal{O}} $ be the $ \mathfrak{m} $-adic completion of $ \mathcal{O} $. By Cohen's Structure Theorem we know that
$$ \hat{\mathcal{O}} \cong \kappa[[\boldsymbol{x},\boldsymbol{y}]] $$
where $ \kappa := \mathcal{O} / \mathfrak{m} $ is the residue field of the center of the valuation. Since we are dealing with $ k $-rational valuations we have that $ \kappa = k $ so in our case
$$ \hat{\mathcal{O}} = k[[\boldsymbol{x},\boldsymbol{y}]] \ . $$

Let $ R_{\mathcal{A}}^{\ell} $ be the subrings of $ \hat{\mathcal{O}} $ defined by
$$ R_{\mathcal{A}}^0:= \kappa[[\boldsymbol{x}]] \quad \text{and} \quad R_{\mathcal{A}}^{\ell} := \kappa[[\boldsymbol{x},y_1,y_2,\dots,y_{\ell}]] $$
for $ 1 \leq \ell \leq n-r $. All of them are local rings with maximal ideal $ R_{\mathcal{A}}^{\ell} \cap \hat{\mathfrak{m}} $. We have
$$ R_{\mathcal{A}}^0 \subset R_{\mathcal{A}}^{1} \subset \cdots \subset R_{\mathcal{A}}^{n-r} = \hat{\mathcal{O}}, $$
where each inclusion is in fact a relation of domination of local rings.

Consider now a $ \ell $-nested transformation $ \pi : \mathcal{A} \longrightarrow \mathcal{A}' $. Tensoring the inclusion of local rings $ \pi : \mathcal{O} \rightarrow \mathcal{O}' $ by $ \hat{\mathcal{O}} $ we obtain an inclusion of complete local rings
$$
\pi : \hat{\mathcal{O}} \hookrightarrow \hat{\mathcal{O}}'
$$
which we denote with the same symbol $ \pi $. Such an inclusion is compatible with the decomposition in subrings of $ \hat{\mathcal{O}} $ in the following way:
$$
\pi(R_{\mathcal{A}}^{j}) \subset R_{\mathcal{A}'}^{\ell} \quad \text{for} \quad 0 \leq j \leq \ell \ ,
$$
and
$$
\pi(R_{\mathcal{A}}^{k}) \subset R_{\mathcal{A}'}^{k} \quad \text{for} \quad \ell + 1 \leq k \leq n-r \ .
$$
In fact, we have that
$$
\pi_{|R_{\mathcal{A}}^{\ell}} : R_{\mathcal{A}}^{\ell} \rightarrow R_{\mathcal{A}'}^{\ell}
$$
is an injective $ R_{\mathcal{A}}^{\ell} $-homomorphism.

We develop now a similar construction for the modules of differentials. Let $ \hat{\Omega}_{\mathcal{O}/k} $ be the free $ \hat{\mathcal{O}} $-module generated by
$$
\{dx_1,dx_2,\dots,dx_r,dy_1,dy_2,\dots,dy_{n-r}\} \ .
$$
We have that
$$
\hat{\Omega}_{\mathcal{O}/k} \simeq \Omega_{\mathcal{O}/k} \otimes_{\mathcal{O}} \hat{\mathcal{O}} \ .
$$
Let $ \hat{\Omega}_{\mathcal{O}/k}(\log \boldsymbol{x}) $ be the free $ \hat{\mathcal{O}} $-module generated by
$$
\{\frac{dx_1}{x_1},\frac{dx_2}{x_2},\dots,\frac{dx_r}{x_r},dy_1,dy_2,\dots,dy_{n-r}\} \ .
$$
We have that
$$
\hat{\Omega}_{\mathcal{O}/k}(\log \boldsymbol{x}) \simeq \Omega_{\mathcal{O}/k}(\log \boldsymbol{x}) \otimes_{\mathcal{O}} \hat{\mathcal{O}} \ .
$$
Tensoring the inclusion homomorphism \eqref{eq:inclusion_modules_differentials} by $ \hat{\mathcal{O}} $ we obtain an injective $ \hat{\mathcal{O}} $-module homomorphism
\begin{eqnarray*}
	\hat{\Omega}_{\mathcal{O}/k} & \rightarrow & \hat{\Omega}_{\mathcal{O}/k}(\log \boldsymbol{x}) \\
	\omega = \sum_{i=1}^n a_i dz_i  & \mapsto & \sum_{i=1}^r x_i a_i \frac{dx_i}{x_i} + \sum_{j=r+1}^n a_i dy_j .
\end{eqnarray*}
For each index index $ \ell $, $ 1 \leq \ell \leq n-r $, let $ N_{\mathcal{A}}^{\ell} $ be the free $ R_{\mathcal{A}}^{\ell} $-module generated by
$$
\{\frac{dx_1}{x_1},\frac{dx_2}{x_2},\dots,\frac{dx_r}{x_r},dy_1,dy_2,\dots,dy_{\ell}\} \ .
$$
Note that all these modules are subsets of $ \hat{\Omega}_{\mathcal{O}/k}(\log \boldsymbol{x}) $, and we have
$$
N_{\mathcal{A}}^0 \subset N_{\mathcal{A}}^1 \subset \cdots \subset N_{\mathcal{A}}^{n-r} \cong \hat{\Omega}_{\mathcal{O}/k}(\log \boldsymbol{x}) \ ,
$$
where each inclusion is just an inclusion of subsets (not a module monomorphism).

Consider now a $ \ell $-nested transformation $ \pi : \mathcal{A} \longrightarrow \mathcal{A}' $. The inclusion of $ \hat{\mathcal{O}} $-modules $ \pi : \hat{\Omega}_{\mathcal{O}/k}(\log \boldsymbol{x}) \hookrightarrow \hat{\Omega}_{\mathcal{O}'/k}(\log \boldsymbol{x}') $ satisfies
$$
\pi(N_{\mathcal{A}}^{j}) \subset N_{\mathcal{A}'}^{\ell} \quad \text{for} \quad 0 \leq j \leq \ell \ ,
$$
and
$$
\pi(N_{\mathcal{A}}^{k}) \subset N_{\mathcal{A}'}^{k} \quad \text{for} \quad \ell + 1 \leq k \leq n-r \ .
$$
In fact, we have that
$$
{\pi}_{|N_{\mathcal{A}}^{\ell}} : N_{\mathcal{A}}^{\ell} \rightarrow N_{\mathcal{A}'}^{\ell}
$$
is a $ R_{\mathcal{A}}^{\ell} $-module monomorphism.

\section{Maximal rational rank: the combinatorial case}\label{ch:max_rat_rank}
In this chapter we treat the case of valuations of maximal rational rank $ r = \operatorname{rat.rk}(\nu) = \operatorname{tr.deg}(K/k) $. Let $ \mathcal{A} = (\mathcal{O},\boldsymbol{x}) $ be a parameterized regular local model for $ K,\nu $. Recall that in this case we have
$$
\hat{\mathcal{O}} \simeq R_{\mathcal{A}}^0 \quad \text{and} \quad \hat{\Omega}_{\mathcal{O}/k}(\log \boldsymbol{x}) \otimes_{\mathcal{O}} \hat{\mathcal{O}} \simeq N_{\mathcal{A}}^0 \ .
$$
The transformations of parameterized regular local models allowed in this case are the $ 0 $-nested transformations. Such a transformation is a finite composition
\begin{displaymath}
	\xymatrix{ \mathcal{A} = \mathcal{A}_0 \ar[r]^{\tau_0} & \mathcal{A}_1 \ar[r]^{\tau_1} & \cdots \ar[r]^{\tau_{t-1}} & \mathcal{A}_t = \mathcal{A}' }
\end{displaymath}
where each
\begin{displaymath}
	\xymatrix{ \mathcal{A}_k \ar[r]^{\tau_k} & \mathcal{A}_{k+1} }
\end{displaymath}
is a combinatorial blow-up $ \tau_k = \theta_{i_k j_k} $ with $ 1 \leq i_k < j_k \leq r $. The inclusion of local rings is given by
\begin{eqnarray}\label{eq:0_nested_transformations_differentials}
	R_{\mathcal{A}}^{0} & \longrightarrow  & R_{\mathcal{A}'}^{0} \\ \nonumber
	x_i & \longmapsto & {\boldsymbol{x}'}^{\boldsymbol{c}_i} = {x'}_1^{c_{i1}} \cdots {x'}_r^{c_{ir}} \ ,
\end{eqnarray}	
where the matrix $ C := (c_{ij}) $ is an invertible matrix of non-negative integers with determinant $ 1 $. Note that this homomorphism preserves the value of the monomials, it means
$$
\nu(\boldsymbol{x^a}) = \nu( \boldsymbol{x}'^{\boldsymbol{a}'}) \ ,
$$
where $ \boldsymbol{a}' = \boldsymbol{a} C $. The divisibility relation is also maintained:
$$
\boldsymbol{x^a} | \boldsymbol{x^b} \Rightarrow \boldsymbol{x}'^{\boldsymbol{a}'} | \boldsymbol{x}'^{\boldsymbol{b}'} \ .
$$
Taking differentials in \eqref{eq:0_nested_transformations_differentials} we obtain
$$ \frac{dx_i}{x_i} = c_{i1} \frac{dx'_1}{x'_1} + \cdots + c_{ir} \frac{dx'_r}{x'_r} $$
for $ 1\leq i \leq r $. Therefore we have that the $ R_{\mathcal{A}}^{0} $-homomorphism between the modules of differentials is given by
\begin{eqnarray*}
	N_{\mathcal{A}}^{0} & \longrightarrow  & N_{\mathcal{A}'}^{0} \\
	\sum a_i \frac{dx_i}{x_i} & \longmapsto & \sum a'_i \frac{dx'_i}{x'_i} \ ,
\end{eqnarray*}
where
$$
a'_i = c_{i1} a_1 + \cdots + c_{ir} a_r 
$$
for $ 1\leq i \leq r $. It is in fact a monomorphism since the matrix $ C $ is invertible.

Before prove Theorem \ref{th:2} in this case, we present two useful lemmas which we will use frequently. Let $ \mathcal{L}=\{ F_i \ |\ i \in I \} \subset R_{\mathcal{A}}^0 $ be a list of elements of $ R_{\mathcal{A}}^0 $. We say the list $ \mathcal{L} $ is \emph{simple in $ \mathcal{A} $} if for all pair of elements $ F_i,F_j \in \mathcal{L} $ we have that $ \nu(F_i) \leq \nu(F_j) $ if and only if $ F_i $ divides $ F_j $.
\begin{lem}\label{le:simple_finite_monomial_list}
	Let $ \mathcal{L}\subset R_{\mathcal{A}}^0 $ be a finite list of monomials. There is a $ 0 $-nested transformation $ \mathcal{A} \rightarrow \mathcal{A}' $ such that $ \mathcal{L} $ is simple in $ \mathcal{A}' $.
	\begin{proof}
		Since the divisibility relation remains stable after performing a combinatorial blow-up in the independent variables, it is enough to prove the statement for lists with just two monomials. Consider $ \mathcal{L} = \{ \boldsymbol{x}^{\boldsymbol{a}},\boldsymbol{x}^{\boldsymbol{b}} \} $ and put $ \boldsymbol{c} = \boldsymbol{a} - \boldsymbol{b} $. We use the following invariants
		$$ M:= \max \big\{ 0,c_1,\dots,c_r \big\} \ , \quad m:= \min \big\{ 0,c_1,\dots,c_r \big\}, $$
		$$ T:= \# \big\{ i \ :\ c_i = M \big\} \ , \quad t:= \# \big\{ i \ :\ c_i = m \big\} \ , \quad \delta:= \big(-Mm,T + t \big) . $$
		If the first coordinate of $ \delta $ is $ 0 $ the list $ \mathcal{L} $ is simple. If it is not the case perform a blow-up $ \theta_{ij} $ such that $ c_i = m $ and $ c_j = M $. Without loss of generality we can suppose $ \nu(x_i) < \nu(x_j) $. At the center of $ \nu $ in the new variety obtained we have a system of coordinates $ \boldsymbol{x}' $ which only differs from $ \boldsymbol{x} $ in the $ j $-th variable, which is $ x_j' =x_j /x_i $. The exponents $ \boldsymbol{a} $ and $ \boldsymbol{b} $ becomes $ \boldsymbol{a}' $ and $ \boldsymbol{b}' $ respectively, where only the $ i $-th coordinate is modified. They are $ a'_i = a_i + a_j $ and $ b'_i = b_i + b_j $. The same thing happens with $ \boldsymbol{c}' $. We have $ m=c_i<0<c_j=M $, thus $ m < c'_i = c_i + c_j < M $. Then $ \delta' <_{lex} \delta $, so iterating we get the desired result.
	\end{proof}
\end{lem}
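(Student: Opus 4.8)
The plan is to reduce the statement to the case of a two-element list and then run a descent argument on a lexicographic invariant attached to a difference of exponent vectors.

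First I would record three elementary facts about monomials in $R_{\mathcal A}^0=k[[\boldsymbol x]]$ under a $0$-nested transformation. (i) For monomials, $\boldsymbol{x}^{\boldsymbol a}\mid\boldsymbol{x}^{\boldsymbol b}$ means $\boldsymbol b-\boldsymbol a\in\mathbb{Z}_{\geq 0}^r$, which (since each $\nu(x_i)>0$) forces $\nu(\boldsymbol{x}^{\boldsymbol a})\leq\nu(\boldsymbol{x}^{\boldsymbol b})$; so one implication in the definition of simplicity is automatic and only the converse needs to be arranged. (ii) Since $\nu(\boldsymbol{x}^{\boldsymbol a})=\sum a_i\nu(x_i)$ is a $\mathbb{Z}$-linear combination of the $\mathbb{Z}$-independent reals $\nu(x_1),\dots,\nu(x_r)$, two distinct exponent vectors always have distinct values; hence the ``equal value'' clause of simplicity is vacuous, and for a pair of monomials ``simple'' just means that the one of smaller value divides the other. (iii) A $0$-nested transformation preserves values of monomials and, by the monotonicity $\boldsymbol{x}^{\boldsymbol a}\mid\boldsymbol{x}^{\boldsymbol b}\Rightarrow \boldsymbol{x}'^{\boldsymbol a'}\mid\boldsymbol{x}'^{\boldsymbol b'}$, can only create divisibility relations, never destroy them. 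Combining these, once a given pair of $\mathcal L$ has become simple it stays simple under all further $0$-nested transformations, so it suffices to make the pairs of $\mathcal L$ simple one at a time; hence I may assume $\mathcal L=\{\boldsymbol{x}^{\boldsymbol a},\boldsymbol{x}^{\boldsymbol b}\}$.

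For such a two-element list put $\boldsymbol c=\boldsymbol a-\boldsymbol b$; by (i)-(ii), simplicity of $\mathcal L$ is exactly the condition $\boldsymbol c\in\mathbb{Z}_{\geq 0}^r\cup\mathbb{Z}_{\leq 0}^r$. I would attach to $\boldsymbol c$ the invariant $\delta=(-Mm,\ T+t)\in\mathbb{Z}_{\geq 0}^2$, with $M=\max\{0,c_1,\dots,c_r\}$, $m=\min\{0,c_1,\dots,c_r\}$, and $T$, $t$ the numbers of coordinates of $\boldsymbol c$ equal to $M$ and to $m$. If $-Mm=0$ then $\boldsymbol c$ has constant sign and $\mathcal L$ is already simple. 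Otherwise choose $i$ with $c_i=m<0$ and $j$ with $c_j=M>0$ and perform the combinatorial blow-up $\theta_{ij}$: after relabelling $i,j$ so that $i<j$, this replaces exactly one of $c_i,c_j$ by $c_i+c_j=m+M$ and leaves the other coordinates fixed, and since $m<0<M$ one has $m<m+M<M$ strictly. Consequently $M$ does not increase, $m$ does not decrease, and either the extremal value that was modified was the unique one of its kind — so $-M'm'<-Mm$ and the first component of $\delta$ drops — or it was not, in which case $(M',m')=(M,m)$ but one coordinate has left the extremal set, so $T'+t'<T+t$. In all cases $\delta'<_{\mathrm{lex}}\delta$, and since $(\mathbb{Z}_{\geq 0}^2,<_{\mathrm{lex}})$ is well ordered the iteration terminates with $\mathcal L$ simple.

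I do not expect a real obstacle: the argument is a bookkeeping descent. The two points that need a little care are verifying that the ``equal values'' case of simplicity is genuinely vacuous (this is where the $\mathbb{Z}$-independence of $\nu(x_1),\dots,\nu(x_r)$ is essential), and checking in the descent step that the modified coordinate $m+M$ lies \emph{strictly} between $m$ and $M$, so that it does not re-create the maximum or the minimum and the lexicographic invariant truly decreases.
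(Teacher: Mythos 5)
Your proposal is correct and follows essentially the same route as the paper: reduce to a pair of monomials using stability of divisibility under combinatorial blow-ups, then descend on the lexicographic invariant $\delta=(-Mm,T+t)$ by blowing up a center $(x_i,x_j)$ with $c_i=m$, $c_j=M$, noting that the modified coordinate $m+M$ lies strictly between $m$ and $M$. Your additional observations (vacuousness of the equal-value case by $\mathbb{Z}$-independence, and the explicit case split in the descent step) only make the paper's argument more precise.
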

For lists with infinitely many monomials we have the following statement:
\begin{lem}\label{le:simple_infinite_monomial_list}
	Let $ \mathcal{L} \subset R_{\mathcal{A}}^0 $ be an infinite list of monomials. There is a $ 0 $-nested transformation $ \mathcal{A} \rightarrow \mathcal{A}' $ such that in $ \mathcal{A}' $ every monomial of $ \mathcal{L} $ is divided by the monomial with lowest value.
	\begin{proof}
		Consider the ideal $ I \subset \mathcal{O} $ generated by the elements of $ \mathcal{L} $. Since $ \mathcal{O} $ is Noetherian, $ I $ is finitely generated. It is enough to apply Lemma \ref{le:simple_finite_list} to $ \mathcal{L}' $, where $ \mathcal{L}' $ is the list formed by a finite system of generators of $ I $.
	\end{proof}
\end{lem}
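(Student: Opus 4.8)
The plan is to reduce the statement to the finite case already handled in Lemma~\ref{le:simple_finite_monomial_list}, using the Noetherianity of $R_{\mathcal{A}}^0$. First I would consider the ideal $I \subset R_{\mathcal{A}}^0$ generated by the (infinitely many) monomials of $\mathcal{L}$. Being generated by monomials, $I$ is a monomial ideal, and since $R_{\mathcal{A}}^0 = k[[\boldsymbol{x}]]$ is Noetherian it admits a finite minimal monomial system of generators $\mathcal{L}' = \{\boldsymbol{x}^{\boldsymbol{a}_1},\dots,\boldsymbol{x}^{\boldsymbol{a}_s}\}$. The key elementary observation is that $\mathcal{L}' \subseteq \mathcal{L}$: each $\boldsymbol{x}^{\boldsymbol{a}_i}$ lies in $I$, hence is divisible by some monomial of $\mathcal{L}$, and by minimality under divisibility it must coincide with that monomial.

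Then I would apply Lemma~\ref{le:simple_finite_monomial_list} to the finite list $\mathcal{L}'$, obtaining a $0$-nested transformation $\mathcal{A}\to\mathcal{A}'$ in which $\mathcal{L}'$ is simple. In $\mathcal{A}'$, among the finitely many monomials of $\mathcal{L}'$ there is one of least value, say $m'$ (it is unique, since $\nu(x_1),\dots,\nu(x_r)$ are $\mathbb{Z}$-independent), and simpleness forces $m'$ to divide every element of $\mathcal{L}'$. Now recall that a $0$-nested transformation preserves divisibility of monomials, so $\mathcal{L}'$ still generates the transform of $I$; hence every monomial of $\mathcal{L}$, lying in that ideal, is divisible by some element of $\mathcal{L}'$, and therefore by $m'$. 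Finally, $m'$ is the monomial of $\mathcal{L}$ of least value: any $\boldsymbol{x}^{\boldsymbol{b}}\in\mathcal{L}$ is a multiple of some $\boldsymbol{x}^{\boldsymbol{a}_i}\in\mathcal{L}'$, so $\nu(\boldsymbol{x}^{\boldsymbol{b}})\geq\nu(\boldsymbol{x}^{\boldsymbol{a}_i})\geq\nu(m')$, while $m'\in\mathcal{L}'\subseteq\mathcal{L}$.

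There is essentially no serious obstacle here; the whole content is the reduction to the finite case via a monomial ideal. The only points that require a little care are (i) checking that the minimal monomial generators of $I$ may be chosen inside $\mathcal{L}$ rather than merely dividing members of $\mathcal{L}$, and (ii) transporting the conclusion through the transformation, i.e.\ using that the divisibility relation --- and hence the property of being divisible by the least-valued monomial --- is stable under combinatorial blow-ups in the independent variables, as recorded just before the statement of Lemma~\ref{le:simple_finite_monomial_list}. Everything else is routine bookkeeping with exponents.
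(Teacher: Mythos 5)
Your proof is correct and follows essentially the same route as the paper: reduce to a finite generating set of the monomial ideal via Noetherianity and invoke the finite-case lemma, using that value and divisibility of monomials are preserved under $0$-nested transformations. You are in fact a bit more careful than the paper's two-line argument, since you observe that the finite generators can be chosen to be monomials lying in $\mathcal{L}$ itself and you apply Lemma~\ref{le:simple_finite_monomial_list} directly, whereas the paper cites Lemma~\ref{le:simple_finite_list}, whose own proof relies on the present lemma.
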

The Local Uniformization of formal functions in the maximal rational case is a corollary of Lemma \ref{le:simple_infinite_monomial_list}. Let us see in detail. Take a formal function
$$ F = \sum_{\boldsymbol{a}\in \mathbb{Z}_{\geq 0}^r} F_{\boldsymbol{a}} \boldsymbol{x^a} \in R_{\mathcal{A}}^0 $$
and let $ \mathcal{L}_{F} $ be the list formed by its monomials
$$ \mathcal{L}_{F} := \big\{ \boldsymbol{x^a} \ |\ F_{\boldsymbol{a}} \neq 0 \big\} \ . $$
Applying Lemma \ref{le:simple_infinite_list} to $ \mathcal{L}_{F} $ we obtain a new parametrized regular local model $ \mathcal{B} $ in which $ F $ has the form
$$ F = \boldsymbol{x}'^{\boldsymbol{t}} U \in R_{\mathcal{B}}^0 \ , $$
where $ U \in R_{\mathcal{B}}^0 $ is a unit. We say that we have \emph{monomialized} the formal function $ F $.

Now we can improve the statements of Lemmas \ref{le:simple_finite_monomial_list} and \ref{le:simple_infinite_monomial_list}.
\begin{lem}\label{le:simple_finite_list}
	Let $ \mathcal{L}\subset R_{\mathcal{A}}^0 $ be a finite list. There is a $ 0 $-nested transformation $ \mathcal{A} \rightarrow \mathcal{A}' $ such that $ \mathcal{L} $ is simple in $ \mathcal{A}' $.
	\begin{proof}
		First, we monomialize each element of the list using Lemma \ref{le:simple_infinite_monomial_list}, so each element of the list becomes a monomial times a unit. Then we apply Lemma \ref{le:simple_finite_monomial_list} to the list of such monomials.
	\end{proof}
\end{lem}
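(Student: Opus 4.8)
The plan is to reduce the statement to the purely monomial case already settled in Lemma~\ref{le:simple_finite_monomial_list}, by first monomializing every element of $\mathcal{L}$ using the monomialization procedure obtained from Lemma~\ref{le:simple_infinite_monomial_list}.

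First I would write $\mathcal{L}=\{F_1,F_2,\dots,F_m\}$ and monomialize its elements one after another. Applying the monomialization consequence of Lemma~\ref{le:simple_infinite_monomial_list} to $F_1$ yields a $0$-nested transformation $\mathcal{A}\to\mathcal{A}_1$ in which $F_1=\boldsymbol{x}^{\boldsymbol{t}_1}U_1$ with $U_1$ a unit of $R_{\mathcal{A}_1}^0$. The key point — the only one requiring more than a citation — is that a combinatorial blow-up of independent variables sends each $x_i$ to a monomial in the new independent variables, as recorded in~\eqref{eq:0_nested_transformations_differentials}, hence carries monomials to monomials and units to units; therefore the ``monomial times unit'' form of $F_1$ is preserved by every subsequent $0$-nested transformation. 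Monomializing $F_2$ in $\mathcal{A}_1$, then $F_3$ in the resulting model, and so on, after $m$ steps (and using that a composition of $0$-nested transformations is again $0$-nested) we reach a model $\mathcal{A}'$ in which simultaneously $F_i=\boldsymbol{x}^{\boldsymbol{t}_i}U_i$, with each $U_i$ a unit, for all $i=1,2,\dots,m$.

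Next I would apply Lemma~\ref{le:simple_finite_monomial_list} to the finite list of monomials $\{\boldsymbol{x}^{\boldsymbol{t}_1},\boldsymbol{x}^{\boldsymbol{t}_2},\dots,\boldsymbol{x}^{\boldsymbol{t}_m}\}$, obtaining a further $0$-nested transformation after which this list of monomials is simple (and, by the observation above, the $F_i$ still have the form monomial times unit, with exponents transformed by the associated matrix). Finally I would transfer simplicity from the monomials to $\mathcal{L}$ itself: since each $U_i$ is a unit we have $\nu(F_i)=\nu(\boldsymbol{x}^{\boldsymbol{t}_i})$, and since the $U_i$ are invertible in the common ring, $F_i\mid F_j$ if and only if $\boldsymbol{x}^{\boldsymbol{t}_i}\mid\boldsymbol{x}^{\boldsymbol{t}_j}$. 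Hence for every pair $i,j$ one has $\nu(F_i)\le\nu(F_j)$ iff $\boldsymbol{x}^{\boldsymbol{t}_i}\mid\boldsymbol{x}^{\boldsymbol{t}_j}$ iff $F_i\mid F_j$, which is exactly the assertion that $\mathcal{L}$ is simple in the final model. There is essentially no obstacle: all the work is done in the two preceding lemmas, and the only thing to be careful about is the stability of the monomial-times-unit form under iterated $0$-nested transformations.
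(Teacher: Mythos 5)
Your proposal is correct and follows the same route as the paper: monomialize each element of the list via Lemma \ref{le:simple_infinite_monomial_list} so that it becomes a monomial times a unit, then apply Lemma \ref{le:simple_finite_monomial_list} to the resulting monomials, using that units affect neither values nor divisibility. The extra care you take about the stability of the monomial-times-unit form under subsequent $0$-nested transformations is a detail the paper leaves implicit, but it is the same argument.
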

\begin{lem}\label{le:simple_infinite_list}
	Let $ \mathcal{L} \subset R_{\mathcal{A}}^0 $ be an infinite list. There is a $ 0 $-nested transformation $ \mathcal{A} \rightarrow \mathcal{A}' $ such that in $ \mathcal{A}' $ every monomial of $ \mathcal{L} $ is divided by the monomial with lowest value.
	\begin{proof}
		We just need to apply Lemma \ref{le:simple_infinite_monomial_list} to the list formed by all the monomials appearing in the elements of $ \mathcal{L} $.
	\end{proof}
\end{lem}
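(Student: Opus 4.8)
The plan is to reduce the statement at once to the already-established monomial case, Lemma \ref{le:simple_infinite_monomial_list}. Given the (possibly infinite) list $\mathcal{L} \subset R_{\mathcal{A}}^0$, I would form the list
$$ \mathcal{L}^{\mathrm{mon}} := \big\{ \boldsymbol{x}^{\boldsymbol{a}} \ :\ \boldsymbol{x}^{\boldsymbol{a}} \text{ occurs with nonzero coefficient in some } F \in \mathcal{L} \big\} \subset R_{\mathcal{A}}^0 $$
of all monomials appearing in the elements of $\mathcal{L}$; this is a list of monomials (whether or not it is finite), so Lemma \ref{le:simple_infinite_monomial_list} applies to it and yields a $0$-nested transformation $\mathcal{A} \to \mathcal{A}'$ in which every monomial of $\mathcal{L}^{\mathrm{mon}}$ is divisible by the one of lowest value. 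Interpreting ``monomial of $\mathcal{L}$'' as ``monomial of $\mathcal{L}^{\mathrm{mon}}$'', this is exactly the assertion to be proved.

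The only point deserving a line of justification is that this notion is compatible with a $0$-nested transformation $\pi : R_{\mathcal{A}}^0 \to R_{\mathcal{A}'}^0$. Such a $\pi$ sends $x_i$ to $\boldsymbol{x}'^{\boldsymbol{c}_i}$ with $C=(c_{ij})$ an invertible matrix of non-negative integers, hence sends $\boldsymbol{x}^{\boldsymbol{a}}$ to $\boldsymbol{x}'^{\boldsymbol{a}C}$; since $C$ is invertible the exponent map $\boldsymbol{a}\mapsto \boldsymbol{a}C$ is injective, so no cancellation of terms occurs and the monomials of $\pi(F)$ are exactly the $\pi$-images of the monomials of $F$. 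As already recorded in the text, $\pi$ preserves both the value of monomials and the divisibility relation among them; therefore ``divisibility by the monomial of lowest value'' transports verbatim from $\mathcal{L}^{\mathrm{mon}}$ in $\mathcal{A}$ to its transform in $\mathcal{A}'$, and the lowest-value monomial stays the lowest-value one.

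I do not expect any genuine obstacle here: all the substance already sits in the available results, namely the decreasing lexicographic-invariant argument of Lemma \ref{le:simple_finite_monomial_list} and the Noetherian reduction from infinite to finite monomial lists underlying Lemma \ref{le:simple_infinite_monomial_list}. The single thing I would state carefully is the meaning of ``the monomial with lowest value'': after the transformation the monomial ideal generated by $\mathcal{L}^{\mathrm{mon}}$ has a finite generating set which is totally ordered by divisibility, so there is a unique $\leq$-minimal exponent occurring in $\mathcal{L}^{\mathrm{mon}}$, and its value is then genuinely the minimum of $\nu$ over $\mathcal{L}^{\mathrm{mon}}$.
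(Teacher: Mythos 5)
Your proposal is correct and coincides with the paper's own argument: the paper's proof is precisely to apply Lemma \ref{le:simple_infinite_monomial_list} to the list of all monomials appearing in the elements of $\mathcal{L}$. The extra remarks you add (no cancellation of monomials under a $0$-nested transformation, preservation of values and divisibility) are accurate and only make explicit what the paper leaves implicit.
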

Using these lemmas we are able to prove Theorem \ref{th:2} in the maximal rational case.
	
Let $ \mathcal{F} $ be a rational codimension one foliation of $ K/k $ and take a $ 1 $-form
$$ \omega = \sum a_i \frac{dx_i}{x_i} \in F_{\mathcal{A}} \subset N_{\mathcal{A}}^{0} \ . $$
Consider the list
$$ \mathcal{L}_{\omega,\mathcal{A}}:= \big\{ a_1,a_2,\dots,a_r \big\} \subset R_{\mathcal{A}}^0 \ . $$
Using Lemma \ref{le:simple_finite_list} we can determine a $ 0 $-nested transformation $ \mathcal{A} \rightarrow \mathcal{B} $ such that $ \mathcal{L}_{\omega,\mathcal{A}} $ is simple in $ \mathcal{B} $. As we have just explained, in $ \mathcal{B} $ we have
$$
	\omega = \sum a'_i \frac{dx'_i}{x'_i} \in \mathcal{F}_{\mathcal{B}} \subset N_{\mathcal{B}}^{0}
$$
where the coefficients $ a'_i \in R_{\mathcal{B}}^{0} $ are an invertible linear combination of the coefficients $ a_i \in R_{\mathcal{A}}^{0} $. Therefore the list
$$ \mathcal{L}_{\omega,\mathcal{B}} = \big\{ a'_1,a'_2,\dots,a'_r \big\} $$
is also simple, so we can factorize the coefficient with lower value and obtain an expression
$$
	\omega = \boldsymbol{x}'^{\boldsymbol{t}} \sum_{i=1}^{r} \tilde{a}_{i} \frac{dx'_i}{x'_i} \in \mathcal{F}_{\mathcal{B}} \subset N_{\mathcal{B}}^{0}
$$
where at least one of the coefficients $ \tilde{a}_i $ is a unit. We have that the $ 1 $-form
$$
\tilde{\omega} := \frac{1}{\boldsymbol{x}'^{\boldsymbol{t}}} \omega = \sum_{i=1}^{r} \tilde{a}_{i} \frac{dx'_i}{x'_i}
$$
belongs to $ \mathcal{F}_{\mathcal{B}} $ and it is $ \boldsymbol{x}' $-log-elementary, hence $ \mathcal{F} $ is $ \mathcal{B} $-final.
\begin{rem}
	Note that we have not use neither the integrability condition nor the algebraic nature of the coefficients. It means that in the maximal rational case we have proved a more general result:
	\begin{quote}
		``Given a $ 1 $-form $ \omega \in N_{\mathcal{A}}^0 $ there is a $ 0 $-nested transformation $ \mathcal{A} \rightarrow \mathcal{B} $ such that in $ \mathcal{B} $ we have $ \omega =  \boldsymbol{x}'^{\boldsymbol{t}}\tilde{\omega} $ being $ \tilde{\omega} $ log-elementary.''
	\end{quote}
\end{rem}
\begin{rem}
	Note that the results in this section can be used in the case $ r = \operatorname{rat.rk}(\nu) < \operatorname{tr.deg}(K/k) = n $ if we restrict ourselves to elements of $ R_{\mathcal{B}}^{0} \subsetneq R_{\mathcal{B}}^{n-r} $ and $ N_{\mathcal{B}}^{0} \subsetneq N_{\mathcal{B}}^{n-r} $.
\end{rem}

\subsection{Existence of Puiseux's packages}
Using the results of this chapter we are now able to prove the existence of Puiseux's packages.
\begin{prop}\label{pr:existence_puiseux_packages}
	Let $ \mathcal{A} $ be a parameterized regular local model of $ K,\nu $. Given a dependent variable $ y_{\ell} $ there are $ \ell $-Puiseux's packages.
	\begin{proof}
		Consider the $ l $-th contact rational function
		$$ \phi_{\ell} = \frac{\boldsymbol{x^q}y_{\ell}^d }{\boldsymbol{x^t}} .$$
		Applying Lemma \ref{le:simple_finite_list} to the list $ \{\boldsymbol{x^q},\boldsymbol{x^t}\} $, we can reduce to the case $ \boldsymbol{q}=0 $. We use as invariant $ \delta = (d,\sum t_i) $.
		
		Suppose $ \delta = (1,1) $ and let $ i_0 $ be the only index such that $ t_{i_0} \neq 0 $. We have that $ \nu(y_{\ell}) = \nu(x_{i_0}) $ so the blow-up $ \theta_{i_0,s} $ is a blow up with translation hence
		$$ \theta_{i_0,s} : \mathcal{A}_0 \longrightarrow \mathcal{A}_1 $$		
		is a $ \ell $-Puiseux's package.
		
		Suppose that $ \delta = (1,M) $ with $ M>1 $ and let $ i_0 $ be an index such that $ t_{i_0} \neq 0 $. In this case $ \theta_{i_0,s} $ is combinatorial. In the new coordinates we have $ \delta = (1,M-1) $ so iterating we reach the previous situation.
		
		Finally suppose that $ d > 1 $. If there is an index $ i_0 $ such that $ t_{i_0} \neq 0 $ and $ \nu(y_{\ell}) < \nu(x_{i_0}) $ then after the combinatorial blow-up $ \theta_{i_0,s} $ the invariant becomes $ \delta=(d-p_{i_0},M) $. On the other hand, if $ \nu(y_{\ell}) > \nu(x_i) $ for all index $ i $ with $ t_i \neq 0 $ let $ i_0 $ be the first index such that $ d \leq \sum_{i=1}^{i_0} t_i $. Perform the blow-up with center $ (x_1,x_{i_0}) $, then the one with center $ (x_2,x_{i_0}) $ and continue until $ (x_{i_0 - 1},x_{i_0}) $ (exclude the blow-ups corresponding with independent variables with $ t_i = 0 $). Perform the blow-up with center $ (x_{i_0},y_{\ell}) $. After this sequence of combinatorial blow-ups the invariant is $ \delta = (d,M') $ with $ M'<M $ since the new exponent of the variable $ x_{i_0} $ is $ 0 < t'_{i_0} = \sum_{i=1}^{i_0} t_i - d < t_{i_0} $. We observe that in both cases the invariant $ \delta $ decreases for the lexicographic order, so iterating we reach the case $ d=1 $.	
	\end{proof}
\end{prop}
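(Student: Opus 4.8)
The plan is to reduce the construction of an $\ell$-Puiseux's package to a local uniformization problem for the binomial attached to the $\ell$-th contact rational function, and to solve that problem by an explicit descending induction on a combinatorial invariant built from the ramification index.

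\emph{Normalizing the contact function.} Write $\phi_\ell = \boldsymbol{x}^{\boldsymbol q}y_\ell^d/\boldsymbol{x}^{\boldsymbol t}$ with $\boldsymbol q,\boldsymbol t\in\mathbb{Z}_{\ge 0}^r$, so that $\boldsymbol p=\boldsymbol t-\boldsymbol q$. Since $\nu(\phi_\ell)=0$ and $\nu(y_\ell)>0$, we have $\nu(\boldsymbol{x}^{\boldsymbol q})\le\nu(\boldsymbol{x}^{\boldsymbol t})$; applying Lemma \ref{le:simple_finite_list} to the list $\{\boldsymbol{x}^{\boldsymbol q},\boldsymbol{x}^{\boldsymbol t}\}$ produces a $0$-nested transformation — in particular an admissible string of combinatorial blow-ups $\theta_{ij}$ with $i<j$ — after which $\boldsymbol{x}^{\boldsymbol q}$ divides $\boldsymbol{x}^{\boldsymbol t}$. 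Thus we may assume $\boldsymbol q=\boldsymbol 0$, i.e. $\phi_\ell=y_\ell^d/\boldsymbol{x}^{\boldsymbol t}$ with $\boldsymbol t\ge \boldsymbol 0$ and $\gcd(d,\boldsymbol t)=1$ (which also forces $\nu(y_\ell)\ne\nu(x_i)$ whenever $d>1$, by the $\mathbb{Z}$-independence of the $\nu(x_i)$). Set $\delta:=(d,\sum_i t_i)$, ordered lexicographically. Since $\le_{\mathrm{lex}}$ is a well-order on $\mathbb{Z}_{\ge 1}\times\mathbb{Z}_{\ge 0}$, it is enough to show that whenever $\delta\ne(1,1)$ there is a finite string of admissible combinatorial blow-ups (centres $I_{ij}$ with $i<j$, or $I_i^\ell$) restoring a monomial contact function $y_\ell^d/\boldsymbol{x}^{\boldsymbol t}$ with strictly smaller $\delta$, while when $\delta=(1,1)$ a single blow-up is already a blow-up with translation; concatenating these strings and finishing with that last translation gives the required $\ell$-Puiseux's package.

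\emph{The descent.} Three cases. If $d=1$ and $\sum_i t_i=1$, say $\boldsymbol t=\boldsymbol e_{i_0}$, then $\nu(y_\ell)=\nu(x_{i_0})$ and $\theta_{i_0}^\ell$ is a blow-up with translation: the terminal step. If $d=1$ and $\sum_i t_i>1$, pick $i_0$ with $t_{i_0}>0$; then $\nu(x_{i_0})<\nu(y_\ell)$, the blow-up $\theta_{i_0}^\ell$ is combinatorial with $y_\ell'=y_\ell/x_{i_0}$, and the new contact function is $y_\ell'/\boldsymbol{x}^{\boldsymbol t-\boldsymbol e_{i_0}}$, so $\delta$ drops to $(1,\sum_i t_i-1)$. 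If $d>1$, either there is an $i_0$ with $t_{i_0}>0$ and $\nu(x_{i_0})>\nu(y_\ell)$, in which case $\theta_{i_0}^\ell$ with $x_{i_0}'=x_{i_0}/y_\ell$ is combinatorial and, substituting $x_{i_0}=x_{i_0}'y_\ell$, turns $\phi_\ell$ into $y_\ell^{\,d-t_{i_0}}/\bigl((x_{i_0}')^{t_{i_0}}\cdots\bigr)$ with $d-t_{i_0}>0$ and $\sum_i t_i$ unchanged, so $\delta$ drops (after clearing the new $\gcd$) to at most $(d-t_{i_0},\sum_i t_i)$; or every $x_i$ occurring in the denominator has $\nu(x_i)<\nu(y_\ell)$, which forces $d<\sum_i t_i$, and one then performs a string of combinatorial blow-ups $\theta_{\cdot\, i_0}$ among the $x$-variables to accumulate enough of the denominator exponent onto a single variable $x_{i_0}$ — chosen as the first index with $\sum_{i\le i_0}t_i\ge d$ — followed by one $\theta_{i_0}^\ell$ with $y_\ell'=y_\ell/x_{i_0}$, which substitutes $y_\ell^d=x_{i_0}^d(y_\ell')^d$ and so subtracts $d$ from the exponent of $x_{i_0}$, leaving $d$ fixed and the new total denominator exponent strictly below $\sum_i t_i$. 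In each case one also checks that the special restriction in the definition of a Puiseux's package is automatically respected: whenever $d=1$, any $x_i$ appearing in the denominator has $\nu(x_i)\le\nu(y_\ell)$, so $\theta_i^\ell$ is never invoked in the forbidden configuration.

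\emph{Main obstacle.} The delicate point is the last case ($d>1$ with all denominator variables of value below $\nu(y_\ell)$): one must choose the auxiliary blow-ups among the $x$-variables and their directions so that, after those blow-ups and the final $\theta_{i_0}^\ell$, the quantity $\sum_i t_i$ genuinely decreases, and so that none of the auxiliary blow-ups is accidentally a blow-up with translation — the only translation permitted inside a Puiseux's package being the last one. All the work of the proof lies in this bookkeeping, namely in tracking how $\sum_i t_i$ and the individual exponents $t_i$ evolve under the combinatorial blow-ups $\theta_{ij}$ and $\theta_i^\ell$; once that is controlled, the lexicographic well-order on $\delta$ closes the induction.
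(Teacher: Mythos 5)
Your proposal follows the same route as the paper: normalize the contact function to $y_{\ell}^{d}/\boldsymbol{x}^{\boldsymbol{t}}$ via Lemma \ref{le:simple_finite_list}, then descend on the lexicographic invariant $\delta=(d,\sum_i t_i)$, with the same case division. The cases $\delta=(1,1)$, then $d=1$ with $\sum_i t_i>1$, and $d>1$ with some $t_{i_0}\neq 0$ and $\nu(x_{i_0})>\nu(y_{\ell})$ are treated correctly (including your checks that $d>t_{i_0}$ holds automatically and that the special restriction in the definition of a Puiseux's package is never violated when $d=1$).

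The genuine gap is exactly where you place your ``main obstacle'' and then stop. In the remaining case ($d>1$ and $\nu(x_i)<\nu(y_{\ell})$ for every $i$ in the support of $\boldsymbol{t}$) you propose to ``accumulate enough of the denominator exponent onto $x_{i_0}$'' by blow-ups $\theta_{i\,i_0}$, ``choosing \dots their directions''; but the direction of such a blow-up is not a choice, it is dictated by the valuation. If $\nu(x_i)<\nu(x_{i_0})$, then $\theta_{i\,i_0}$ gives $x_{i_0}=x_i x_{i_0}'$, so the exponent is transferred onto $x_i$ rather than onto $x_{i_0}$, and at that step $\sum_i t_i$ actually increases. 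Hence the assertion that after the auxiliary string followed by $\theta_{i_0}^{\ell}$ the invariant becomes $(d,M')$ with $M'<M$ is precisely what needs to be proved, and you explicitly leave it unproved (``all the work of the proof lies in this bookkeeping''). Note also that half of the obstacle you worry about is vacuous: a blow-up centred at $x_i=x_j=0$ is never a blow-up with translation, since $\nu(x_i)\neq\nu(x_j)$ by $\mathbb{Z}$-independence, so the only real issue is the direction of exponent transfer just described. The paper commits to the concrete sequence of centres $(x_1,x_{i_0}),\dots,(x_{i_0-1},x_{i_0}),(x_{i_0},y_{\ell})$, with $i_0$ the first index satisfying $\sum_{i\le i_0}t_i\ge d$, and records the resulting exponent $t'_{i_0}=\sum_{i\le i_0}t_i-d<t_{i_0}$; your write-up reproduces this prescription but defers the verification that the procedure really terminates with strictly smaller $\delta$, which is the whole content of the proposition in this case. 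As it stands, the descent, and with it the induction, is not closed.
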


\section{Explicit value and truncated statements. Induction structure}
In this chapter introduce the notions of $ \gamma $-final formal functions and $ 1 $-forms and state local uniformization theorems in a value-truncated version. The local uniformization of foliations will be a consequence of this truncated version.

Let $ \mathcal{A} = \big( \mathcal{O} ; (\boldsymbol{x},\boldsymbol{y}) \big) $ be a parameterized regular local model for $ K , \nu $ and fix an integer $ \ell $, $ 0 \leq \ell \leq n-r$. 

\subsection{Explicit value and $ \gamma $-final $ 1 $-forms}
For each $ \alpha \in \Gamma_{\geq 0} $ denote by $ \mathcal{Q}_{\mathcal{A}}^{\ell}(\alpha) \subset R_{\mathcal{A}}^{\ell} $ the ideal generated by all the monomials $ \boldsymbol{x}^{\boldsymbol{r}} $ with $ \nu(\boldsymbol{x}^{\boldsymbol{r}}) \geq \alpha $.
\begin{rem}\label{re:value_ideals}
	Note that we have defined the ideals $ \mathcal{Q}_{\mathcal{A}}^{\ell}(\alpha) $ only for values $ \alpha \in \Gamma_{\geq 0} $ and not for any non-negative real number. So, given such an ideal $ \mathcal{Q}_{\mathcal{A}}^{\ell}(\alpha) $ there exists a unique integer exponent $ \boldsymbol{q}_{\alpha} \in \mathbb{Z}^r $ such that $ \nu(\boldsymbol{x}^{\boldsymbol{q}_{\alpha}}) = \alpha $. Therefore, there is at most one monomial in the ideal with value $ \alpha $.
	
	Except in the case $ \operatorname{rat.rk}(\nu)=1 $ these ideals are never principal. However, we always can determine a $ 0 $-nested transformation $ \pi : \mathcal{A} \rightarrow \mathcal{B} $ (Lemma \ref{le:simple_infinite_list}) such that $ \pi ( \mathcal{Q}_{\mathcal{A}}^{\ell}(\alpha) ) \subset R_{\mathcal{A}}^{\ell} $ is a principal ideal.
	
	Note also that these ideals ``grow'' by means of $ \ell $-nested transformations. Let $ \pi : \mathcal{A} \rightarrow \mathcal{B} $ be a $ \ell $-nested transformation. We have
	$$ \pi \left(  \mathcal{Q}_{\mathcal{A}}^{\ell}(\alpha) \right) \subset  \mathcal{Q}_{\mathcal{B}}^{\ell}(\alpha) \ . $$
	In fact, the ideal $ \mathcal{Q}_{\mathcal{A}}^{\ell}(\alpha) $ is the contraction of $ \mathcal{Q}_{\mathcal{B}}^{\ell}(\alpha) $ to $ R_{\mathcal{A}}^{\ell} $.
\end{rem}
Given a function $ F \in R_{\mathcal{A}}^{\ell} $ we say that
$$ \nu_{\mathcal{A}}(F) := \max \left\{ \lambda \in \Gamma \ |\ F \in \mathcal{Q}_{\mathcal{A}}^{\ell}(\lambda) \right\} $$
is \emph{the explicit value of $ F $}. We establish $ \nu_{\mathcal{A}}(0) := \infty $. Note that the explicit value is well defined. In fact, if we write $ F $ as a series in the independent variables
$$ F = \sum_{I \in \mathbb{Z}^{r}_{\geq0}} F_{I}(\boldsymbol{y}) \boldsymbol{x}^{I} \ , $$
we have that
$$ \max \left\{ \lambda \in \Gamma \ |\ F \in \mathcal{Q}_{\mathcal{A}}^{\ell}(\lambda) \right\} = 
 \min \left\{ \nu(\boldsymbol{x}^{I}) \ |\ F_{I}(\boldsymbol{y}) \neq 0 \right\} \ , $$
and the minimum in the right term is always attained since the set of values of monomials in the independent variables is a well ordered set.
 
\begin{defi}\label{de:gamma_final_functions}
	Given $ \gamma \in \Gamma $ a formal function $ F \in R_{\mathcal{A}}^{\ell} $ is \emph{$ \gamma $-final} if one of the following properties is satisfied:
	\begin{enumerate}
		\item $ \nu_{\mathcal{A}}(F) \leq \gamma $ and $ F $  can be written as
				$$ F = \boldsymbol{x}^{\boldsymbol{t}} \tilde{F} + \bar{F} $$
				where $ \tilde{F} $ is a unit and $ \nu_{\mathcal{A}}(\bar{F}) > \nu_{\mathcal{A}}(F) = \nu(\boldsymbol{x}^{\boldsymbol{t}}) $. In this case we say \emph{$ F $ is $ \gamma $-final dominant};
		\item $ \nu_{\mathcal{A}}(F) > \gamma $. In this case we say \emph{$ F $ is $ \gamma $-final recessive}.
	\end{enumerate}
\end{defi}
\begin{rem}
	Let $ F  $ be a $ \gamma $-final dominant formal function and consider a valuation $ \hat{\nu} $ defined on $ \hat{\mathcal{O}} $ extending $ \nu $. We have that $ \hat{\nu}(F) = \nu_{\mathcal{A}}(F) = \nu(\boldsymbol{x}^{\boldsymbol{t}}) $.
\end{rem}
While the value $ \nu(\phi) $ of a rational function $ \phi \in K $ is stable under birational transformations, the explicit value can increase by means of a $ \ell $-nested transformation:
\begin{lem}\label{le:gamma_final_functions_stable}
	Let $ F \in R_{\mathcal{A}}^{\ell} $ be a formal function and let $ \pi : \mathcal{A} \longrightarrow \mathcal{B} $ be a $ \ell $-nested transformation. We have that
	$$ \nu_{\mathcal{A}}(F) \leq \nu_{\mathcal{B}}(F) \ . $$
	In particular we have
	\begin{itemize}
		\item if $ F $ is $ \gamma $-final dominant in $ \mathcal{A} $, it is also $ \gamma $-final dominant in $ \mathcal{B} $ and $ \nu_{\mathcal{A}}(F) = \nu_{\mathcal{B}}(F) $.
		\item if $ F $ is $ \gamma $-final recessive in $ \mathcal{A} $, it is also $ \gamma $-final recessive in $ \mathcal{B} $.
	\end{itemize}
\end{lem}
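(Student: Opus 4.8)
The statement to prove is Lemma \ref{le:gamma_final_functions_stable}: that the explicit value of a formal function cannot decrease under an $\ell$-nested transformation, and that $\gamma$-finality (dominant or recessive) is preserved. The plan is to reduce to the elementary building blocks of $\ell$-nested transformations and check each one in turn, using the characterization of $\nu_{\mathcal A}(F)$ as $\min\{\nu(\boldsymbol{x}^I) : F_I(\boldsymbol{y})\neq 0\}$ when $F=\sum_I F_I(\boldsymbol{y})\boldsymbol{x}^I$, equivalently as the largest $\lambda$ with $F\in\mathcal{Q}^{\ell}_{\mathcal A}(\lambda)$. Since composition of transformations satisfying the inequality again satisfies it, and since an $\ell$-nested transformation is by definition a finite composition of $(\ell-1)$-nested transformations, $\ell$-Puiseux's packages, and ordered changes of the $\ell$-th coordinate, it suffices to treat those three cases.

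First I would observe the cleanest formulation: by Remark \ref{re:value_ideals}, for any $\ell$-nested transformation $\pi:\mathcal A\to\mathcal B$ one has $\pi(\mathcal{Q}^{\ell}_{\mathcal A}(\alpha))\subset\mathcal{Q}^{\ell}_{\mathcal B}(\alpha)$, and $\mathcal{Q}^{\ell}_{\mathcal A}(\alpha)$ is the contraction of $\mathcal{Q}^{\ell}_{\mathcal B}(\alpha)$ to $R^{\ell}_{\mathcal A}$. Granting that, if $F\in\mathcal{Q}^{\ell}_{\mathcal A}(\lambda)$ then $\pi(F)\in\mathcal{Q}^{\ell}_{\mathcal B}(\lambda)$, so $\nu_{\mathcal B}(F)\geq\lambda$ for every $\lambda\leq\nu_{\mathcal A}(F)$, giving $\nu_{\mathcal A}(F)\leq\nu_{\mathcal B}(F)$. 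So really the content is the inclusion in Remark \ref{re:value_ideals}, which I would verify by cases: for a combinatorial blow-up (hence for a $0$-nested or an $(\ell-1)$-nested transformation built from them) the monomial map $x_i\mapsto\boldsymbol{x}'^{\boldsymbol{c}_i}$ preserves values of monomials exactly, so $\mathcal{Q}$-ideals are carried into $\mathcal{Q}$-ideals with the same index; for a blow-up with translation inside a Puiseux's package one uses the formulas \eqref{eq:variables_after_Puiseux_package} and Remark \ref{re:monomial_after_Puiseux_package}, noting that a monomial of value $\geq\alpha$ becomes $\boldsymbol{x}'^{\boldsymbol{q}'}(y'_\ell+\xi)^{e}$ with $\nu(\boldsymbol{x}'^{\boldsymbol{q}'})$ equal to the original value, so the image lies in $\mathcal{Q}^{\ell}_{\mathcal B}(\alpha)$; for an ordered change $\tilde y_\ell=y_\ell+\psi$ with $\nu(\boldsymbol{x}^I)\geq\nu(y_\ell)$ on the support of $\psi$, substitution only replaces $y_\ell$ by $\tilde y_\ell-\psi$ and does not lower the $\boldsymbol{x}$-degree contribution below $\nu_{\mathcal A}(F)$, since every monomial of $\psi$ has $\boldsymbol{x}$-value $\geq\nu(y_\ell)\geq 0$.

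Next I would deduce the two bullet consequences. In the recessive case $\nu_{\mathcal A}(F)>\gamma$ immediately gives $\nu_{\mathcal B}(F)\geq\nu_{\mathcal A}(F)>\gamma$, so $F$ is $\gamma$-final recessive in $\mathcal B$. In the dominant case $F=\boldsymbol{x}^{\boldsymbol t}\tilde F+\bar F$ with $\tilde F$ a unit and $\nu_{\mathcal A}(\bar F)>\nu_{\mathcal A}(F)=\nu(\boldsymbol{x}^{\boldsymbol t})$; applying $\pi$, the term $\pi(\boldsymbol{x}^{\boldsymbol t})$ becomes (by \eqref{eq:variables_after_Puiseux_package} in the Puiseux case, or a plain monomial otherwise) a monomial $\boldsymbol{x}'^{\boldsymbol t'}$ times a unit of $R^{\ell}_{\mathcal B}$ with $\nu(\boldsymbol{x}'^{\boldsymbol t'})=\nu(\boldsymbol{x}^{\boldsymbol t})$, while $\pi(\bar F)\in\mathcal{Q}^{\ell}_{\mathcal B}(\nu_{\mathcal A}(\bar F))$ still has strictly larger explicit value; hence $\pi(F)$ is again in the dominant form, so $\gamma$-final dominant, and $\nu_{\mathcal B}(F)=\nu(\boldsymbol{x}'^{\boldsymbol t'})=\nu_{\mathcal A}(F)$.

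The only delicate point — the main obstacle — is the ordered change of the $\ell$-th coordinate: one must check that substituting $y_\ell\mapsto\tilde y_\ell-\psi$ into $F=\sum_I F_I(\boldsymbol y)\boldsymbol x^I$ does not create a new monomial in the independent variables of value strictly below $\nu_{\mathcal A}(F)$. This is exactly where the constraint $\nu(\boldsymbol{x}^I)\geq\nu(y_\ell)$ on the monomials appearing in $\psi$, together with $\psi\in\mathfrak m$ (so every monomial of $\psi$ has positive $\boldsymbol x$-value), is used: any monomial produced in the expansion is a product of an original monomial $\boldsymbol x^I$ of value $\geq\nu_{\mathcal A}(F)$ with monomials from $\psi$ of nonnegative value, so its value is still $\geq\nu_{\mathcal A}(F)$. (In the Puiseux case the analogous bookkeeping is already packaged in Remark \ref{re:monomial_after_Puiseux_package}.) Once this is in hand the lemma follows, and I would remark that strict increase genuinely can occur — e.g. Euler's equation example in the introduction — which is why the inequality is not an equality.
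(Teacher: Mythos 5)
Your overall strategy (reduce to the elementary constituents of an $\ell$-nested transformation, check that each one carries $\mathcal{Q}^{\ell}_{\mathcal A}(\alpha)$ into $\mathcal{Q}^{\ell}_{\mathcal B}(\alpha)$ as in Remark \ref{re:value_ideals}, then read off the dominant and recessive bullets) is the natural one, and your treatment of the ordered coordinate change and of the two consequences is fine. But the reduction itself contains a step that fails. You claim that for a combinatorial blow-up the map is the value-preserving monomial substitution $x_i\mapsto \boldsymbol{x}'^{\boldsymbol{c}_i}$, and that this also covers $0$-nested and $(\ell-1)$-nested transformations. That is only true for the blow-ups $\theta_{ij}$ whose centers lie among the independent variables. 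The combinatorial blow-ups $\theta_i^{k}$ occurring inside a $k$-Puiseux's package are not of this form, and in the case $\nu(x_i)>\nu(y_k)$, where $x_i=x'_i\,y_k$, the explicit value can genuinely drop: take $r=1$, $\nu(x)=2$, $\nu(y)=1$ (so $d=2$) and $F=x$; after the allowed combinatorial blow-up $x'=x/y$ one has $F=x'y$, hence $\nu_{\mathcal A'}(F)=\nu(x')=1<2=\nu_{\mathcal A}(F)$, and the value $2$ is only recovered after the final blow-up with translation, where $F=x'^2(y'+\xi)$. So monotonicity of $\nu_{\mathcal A}$ is false blow-up by blow-up inside a Puiseux's package; it holds only for the \emph{complete} package, via \eqref{eq:variables_after_Puiseux_package} and Remark \ref{re:monomial_after_Puiseux_package}, precisely because the last translation replaces $y_k$ by a monomial of value $\nu(y_k)$ times a unit. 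Your second case does cite exactly those package-level formulas, but you label it as ``the blow-up with translation'' while your first case silently absorbs the intermediate blow-ups under a value-preservation claim that is wrong for them.

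The related problem is the recursion: an $(\ell-1)$-nested transformation is not ``built from'' monomial maps in the $\boldsymbol{x}$'s. It contains $k$-Puiseux's packages and ordered changes of the variables $y_k$ for $k\le\ell-1$, and these variables do occur in $F\in R^{\ell}_{\mathcal A}$, so such transformations act nontrivially on the dependent part of $F$ and cannot be dismissed as in your first case. The correct organization is to unfold the $\ell$-nested transformation into three kinds of blocks, namely blow-ups $\theta_{ij}$ with $1\le i<j\le r$, complete $k$-Puiseux's packages with $k\le\ell$, and ordered changes of $y_k$ with $k\le\ell$, and to prove $\pi\big(\mathcal{Q}^{\ell}_{\mathcal A}(\alpha)\big)\subset\mathcal{Q}^{\ell}_{\mathcal B}(\alpha)$ for each block; the computation you wrote for the $\ell$-th variable works verbatim for every $k\le\ell$. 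With that reorganization your argument goes through; as written, it proves a false intermediate statement.
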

In the module $ N_{\mathcal{A}}^{\ell} $ the \emph{log-elementary} forms are the equivalents of the units in $ R_{\mathcal{A}}^{\ell} $:
\begin{defi}\label{de:log-elementary_1_forms}
	A $ 1 $-form $ \omega \in N_{\mathcal{A}}^{\ell} $ is \emph{log-elementary} if
	$$ \omega = \sum_{i=1}^r a_i \frac{dx_i}{x_i} + \sum_{j=r+1}^n a_j dy_j $$
	with $ \operatorname{ord}(a_1,a_2,\dots,a_r) = 0 $.
\end{defi}
For each $ \alpha \in \Gamma $ denote by $ \mathcal{M}_{\mathcal{A}}^{\ell}(\alpha) \subset N_{\mathcal{A}}^{\ell} $ the submodule
$$
	\mathcal{M}_{\mathcal{A}}^{\ell}(\alpha) := \mathcal{Q}_{\mathcal{A}}^{\ell}(\alpha) \, N_{\mathcal{A}}^{\ell} \ .
$$
Given a $ 1 $-form $ \omega \in N_{\mathcal{A}}^{\ell} $ we say that
$$ \nu_{\mathcal{A}}(\omega) := \max \left\{ \lambda \ |\ \omega \in \mathcal{M}_{\mathcal{A}}^{\ell}(\lambda) \right\} $$
is \emph{the explicit value of $ \omega $}. We establish $ \nu_{\mathcal{A}}(0) := \infty $. As in the case of functions, writing 
$$ \omega = \sum_{I \in \mathbb{Z}^{r}_{\geq0}} \omega_{I}(\boldsymbol{y}) \boldsymbol{x}^{I} \ , $$
where each $ \omega_I(\boldsymbol{y}) $ is a $ 1 $-form whose coefficients are series in the dependent variables, we have that
$$ \max \left\{ \lambda \in \Gamma \ |\ \omega \in \mathcal{Q}_{\mathcal{A}}^{\ell}(\lambda) \right\} = 
 \min \left\{ \nu(\boldsymbol{x}^{I}) \ |\ \omega_{I}(\boldsymbol{y}) \neq 0 \right\} \ , $$
so the explicit value is well defined.

In the same way we define the explicit value for elements of $ \wedge^2 N_{\mathcal{A}}^{\ell} $ and $ \wedge^3 N_{\mathcal{A}}^{\ell} $, where $ \wedge^p $ denotes the $ p $-th exterior power.
\begin{defi}\label{de:gamma_final_1_forms}
	Given $ \gamma \in \Gamma $ a $ 1 $-form $ \omega \in N_{\mathcal{A}}^{\ell} $ is \emph{$ \gamma $-final} if one of the following properties is satisfied
	\begin{enumerate}
		\item $ \nu_{\mathcal{A}}(\omega) \leq \gamma $ and $ \omega$ can be written as
		$$ \omega = \boldsymbol{x}^{\boldsymbol{t}} \tilde{\omega} + \bar{\omega} $$
		where $ \tilde{\omega} $ is log-elementary and $ \nu_{\mathcal{A}}(\bar{\omega}) > \nu_{\mathcal{A}}(\omega) = \nu(\boldsymbol{x}^{\boldsymbol{t}}) $. In this case we say \emph{$ \omega $ is $ \gamma $-final dominant};
		\item $ \nu_{\mathcal{A}}(\omega) > \gamma $. In this case we say \emph{$ \omega $ is $ \gamma $-final recessive}.
	\end{enumerate}
\end{defi}
As in the case of functions, the explicit value of a $ 1 $-form can increase by means of a $ \ell $-nested transformation:
\begin{lem}\label{le:gamma_final_1_forms_stable}
	Let $ \omega \in N_{\mathcal{A}}^{\ell} $ be a formal $ 1 $-form and let $ \pi : \mathcal{A} \longrightarrow \mathcal{B} $ be a $ \ell $-nested transformation. We have that
	$$ \nu_{\mathcal{A}}(\omega) \leq \nu_{\mathcal{B}}(\omega) \ . $$
	In particular we have
	\begin{enumerate}
		\item if $ \omega $ is $ \gamma $-final dominant in $ \mathcal{A} $, it is also $ \gamma $-final dominant in $ \mathcal{B} $ and $ \nu_{\mathcal{A}}(\omega) = \nu_{\mathcal{B}}(\omega) $.
		\item if $ \omega $ is $ \gamma $-final recessive in $ \mathcal{A} $, it is also $ \gamma $-final recessive in $ \mathcal{B} $.
	\end{enumerate}
\end{lem}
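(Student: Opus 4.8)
The plan is to imitate the (unproved here, but routine) argument behind Lemma \ref{le:gamma_final_functions_stable} for functions, upgrading every ingredient to $1$-forms. Write $\pi\colon N_{\mathcal{A}}^{\ell}\hookrightarrow N_{\mathcal{B}}^{\ell}$ for the monomorphism induced by the $\ell$-nested transformation. The inequality $\nu_{\mathcal{A}}(\omega)\le\nu_{\mathcal{B}}(\omega)$ is the formal core of the statement, and I would get it directly from the inclusion $\pi(\mathcal{M}_{\mathcal{A}}^{\ell}(\alpha))\subset\mathcal{M}_{\mathcal{B}}^{\ell}(\alpha)$ for every $\alpha\in\Gamma$: this follows from $\mathcal{M}_{\mathcal{A}}^{\ell}(\alpha)=\mathcal{Q}_{\mathcal{A}}^{\ell}(\alpha)\,N_{\mathcal{A}}^{\ell}$, the inclusion $\pi(N_{\mathcal{A}}^{\ell})\subset N_{\mathcal{B}}^{\ell}$ from the formal completion subsection, and the inclusion $\pi(\mathcal{Q}_{\mathcal{A}}^{\ell}(\alpha))\subset\mathcal{Q}_{\mathcal{B}}^{\ell}(\alpha)$ of Remark \ref{re:value_ideals}. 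If $\omega\in\mathcal{M}_{\mathcal{A}}^{\ell}(\lambda)$ then $\pi(\omega)\in\mathcal{M}_{\mathcal{B}}^{\ell}(\lambda)$, so $\nu_{\mathcal{B}}(\omega)\ge\lambda$, whence $\nu_{\mathcal{B}}(\omega)\ge\nu_{\mathcal{A}}(\omega)$. The recessive half of the ``in particular'' is then immediate: $\nu_{\mathcal{A}}(\omega)>\gamma$ forces $\nu_{\mathcal{B}}(\omega)>\gamma$.

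For the dominant case I would write $\omega=\boldsymbol{x}^{\boldsymbol{t}}\tilde{\omega}+\bar{\omega}$ with $\tilde{\omega}$ log-elementary and $\nu_{\mathcal{A}}(\bar{\omega})>\nu_{\mathcal{A}}(\omega)=\nu(\boldsymbol{x}^{\boldsymbol{t}})\le\gamma$, apply $\pi$, and check the same shape survives. Since a $\ell$-nested transformation is a composition of $(\ell-1)$-nested transformations, $\ell$-Puiseux's packages and ordered changes of the $\ell$-th coordinate, and the statement is clearly transitive under composition, it is enough to treat these three building blocks (with $0$-nested transformations as the base case), using the explicit formulas collected in Section \ref{ch:prlm}. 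Two observations do the work: (i) $\pi(\boldsymbol{x}^{\boldsymbol{t}})=\boldsymbol{x}'^{\boldsymbol{t}'}V$ with $V$ a unit of $\mathcal{O}'$ and $\nu(\boldsymbol{x}'^{\boldsymbol{t}'})=\nu(\boldsymbol{x}^{\boldsymbol{t}})$ --- for a $\ell$-Puiseux's package this is \eqref{eq:variables_after_Puiseux_package} together with $\nu(y_{\ell}'+\xi)=\nu(\phi_{\ell})=0$, and $V=1$ for the other two types; (ii) $\pi(\tilde{\omega})$ is again log-elementary. For (ii), reducing the coefficients of the $\tfrac{dx_k'}{x_k'}$ in $\pi(\tilde{\omega})$ modulo $\hat{\mathfrak{m}}'$ yields the vector $(a_1(0),\dots,a_r(0))$ multiplied by an invertible matrix --- the matrix $\check{H}_{\pi}$ of \eqref{eq:Puiseux_package_matrix_1} for a Puiseux's package, the matrix $C$ for a $0$-nested step, the identity for a coordinate change --- because the contributions to those coefficients coming from $dy_j$ and $dy_{\ell}$ have strictly positive explicit value (the factor $\boldsymbol{x}'^{\boldsymbol{\alpha}_0}\phi_{\ell}^{\beta_0}$ in a Puiseux's package, since $\boldsymbol{\alpha}_0\ne\boldsymbol{0}$; the terms $x_i\,\partial\psi/\partial x_i$ in a coordinate change, since every monomial of $\psi$ has value $\ge\nu(y_{\ell})>0$), hence vanish at the closed point; as $\tilde{\omega}$ is log-elementary the leading vector is nonzero and so is its image.

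Putting (i) and (ii) together, $\pi(\omega)=\boldsymbol{x}'^{\boldsymbol{t}'}\bigl(V\pi(\tilde{\omega})\bigr)+\pi(\bar{\omega})$, where $V\pi(\tilde{\omega})$ is log-elementary (a unit times a log-elementary form), hence of explicit value $0$, so $\nu_{\mathcal{B}}(\boldsymbol{x}'^{\boldsymbol{t}'}V\pi(\tilde{\omega}))=\nu(\boldsymbol{x}'^{\boldsymbol{t}'})=\nu_{\mathcal{A}}(\omega)$, while $\nu_{\mathcal{B}}(\pi(\bar{\omega}))\ge\nu_{\mathcal{A}}(\bar{\omega})>\nu(\boldsymbol{x}'^{\boldsymbol{t}'})$ by the first part of the lemma applied to $\bar{\omega}$. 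Since the two explicit values are distinct, $\nu_{\mathcal{B}}(\pi(\omega))=\nu(\boldsymbol{x}'^{\boldsymbol{t}'})=\nu_{\mathcal{A}}(\omega)\le\gamma$, and the displayed decomposition exhibits $\omega$ as $\gamma$-final dominant in $\mathcal{B}$ with $\nu_{\mathcal{A}}(\omega)=\nu_{\mathcal{B}}(\omega)$, as required.

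The only genuinely delicate point --- everything else being bookkeeping with the additivity of the explicit value under multiplication by monomials in $\boldsymbol{x}$ and with the ultrametric inequality for $\nu_{\mathcal{A}}$ --- is observation (ii): one has to keep careful track of the linear part of the transformation on the logarithmic generators $\tfrac{dx_i}{x_i}$ and be sure that the parasitic terms injected by $dy_{\ell}$ cannot conspire to cancel the leading vector $(a_1(0),\dots,a_r(0))$. This is exactly where the invertibility of $\check{H}_{\pi}$ (resp.\ $C$) and the positivity of $\boldsymbol{\alpha}_0$ (resp.\ of the valuations of the monomials of $\psi$) are used.
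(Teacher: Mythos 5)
Your proposal is correct and follows essentially the route the paper intends: the paper omits the detailed proof, stating only that the lemma follows from the definitions with Remark~\ref{re:nested_transformation_matrix} as the key, and your argument is precisely a fleshed-out version of that remark, tracking the leading coefficients of the logarithmic part through the three building blocks via \eqref{eq:variables_after_Puiseux_package}--\eqref{eq:differentials_after_Puiseux_package}. Your added precision that the relevant matrix is invertible (namely $\check{H}_{\pi}$, $C$, or the identity), rather than merely a non-zero matrix of non-negative integers as the remark states, is exactly what is needed to ensure the leading vector $(a_1(0),\dots,a_r(0))$ cannot be killed, so the dominant case goes through as you wrote it.
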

This lemma is a consequence of the definitions. We do not detail the proof, by the key is the following remark:
\begin{rem}\label{re:nested_transformation_matrix}
	Let $ \omega \in N_{\mathcal{A}}^{\ell} $ be a log-elementary $ 1 $-form
	$$ \omega = \sum_{i=1}^r a_i \frac{dx_i}{x_i} + \sum_{j=1}^{\ell} b_j dy_j \ . $$
	For each index $ 1 \leq i \leq r $ denote $ \alpha_i = a_i(\boldsymbol{0}) \in k $. Consider a $ \ell $-nested transformation $ \pi:\mathcal{A} \longrightarrow \mathcal{B} $. Let $ a'_i,b'_j \in R_{\mathcal{B}}^{\ell} $ the coefficients of $ \omega $ in $ \mathcal{B} $ and denote $ \alpha'_i = a'_i(\boldsymbol{0}) $. We have that
	$$ (\alpha'_1,\alpha'_2,\dots,\alpha'_r) = (\alpha_1,\alpha_2,\dots,\alpha_r) \text{\large{$C$}}_{\pi} $$
	where $ C_{\pi} $ is a non-zero matrix of non-negative integers.
\end{rem}
Let
$$
	d : \mathcal{O} \longrightarrow \Omega_{\mathcal{O}/k}(\log \boldsymbol{x})
$$
be the map obtained by composition of the exterior derivative $ \mathcal{O} \rightarrow \Omega_{\mathcal{O}/k} $ with the inclusion $ \Omega_{\mathcal{O}/k} \rightarrow \Omega_{\mathcal{O}/k}(\log \boldsymbol{x}) $ given in \eqref{eq:inclusion_modules_differentials}. Tensoring by $ \hat{\mathcal{O}} $ we obtain
$$
d \otimes_{\mathcal{O}} 1 : \mathcal{O} \otimes_{\mathcal{O}} \hat{\mathcal{O}} \longrightarrow \Omega_{\mathcal{O}/k}(\log \boldsymbol{x}) \otimes_{\mathcal{O}} \hat{\mathcal{O}} \ .
$$
By abuse of notation we denote the map $ d \otimes_{\mathcal{O}} 1 $ just by $ d $. We have just defined the map
\begin{eqnarray*}
	d : R_{\mathcal{A}}^{n-r} & \longrightarrow & N_{\mathcal{A}}^{n-r} \\
	F & \longmapsto & \sum_{i=1}^{r} x_i \frac{\partial F}{\partial x_i} \frac{dx_i}{x_i} + \sum_{j=1}^{n-r} \frac{\partial F}{\partial y_j} dy_j 
\end{eqnarray*}
Note that for any index $ \ell $, $ 0 \leq \ell \leq n-r $, we have
$$
	d(R_{\mathcal{A}}^{\ell}) \subset N_{\mathcal{A}}^{\ell} \ .
$$
\begin{prop}\label{pr:monomialize_differentials_of_functions}
	Let $ F \in R_{\mathcal{A}}^{\ell} $ be formal function which is not a unit. We have that
	$$ \nu_{\mathcal{A}}(F) = \nu_{\mathcal{A}}(dF) \ . $$
	In addition, fixed a value $ \gamma \in \Gamma $, we have that $ F $ is $ \gamma $-final dominant (recessive) if and only if $ dF \in N_{\mathcal{A}}^{\ell} $ is $ \gamma $-final dominant (recessive).
	\begin{proof}
		Write $ F $ as a series in monomials in the independent variables:
		$$ F = \sum_{T \in \mathbb{Z}_{\geq 0}^r} F_{\boldsymbol{T}} (\boldsymbol{y}) \boldsymbol{x^T} .$$
		We have
		$$ dF = \sum_{T \in \mathbb{Z}_{\geq 0}^r} \boldsymbol{x^T} \bigg( F_{\boldsymbol{T}}(\boldsymbol{y}) \sum_{i=1}^r T_i \frac{dx_i}{x_i} + \sum_{i=r+1}^n \frac{\partial F_{\boldsymbol{T}}}{\partial y_i}(\boldsymbol{y}) dy_i \bigg) \ . $$
		The result follows from the equivalences
		$$ F_{\boldsymbol{T}}(\boldsymbol{y}) = 0 \Longleftrightarrow  F_{\boldsymbol{T}}(\boldsymbol{y}) \sum_{i=1}^r T_i \frac{dx_i}{x_i} + \sum_{i=r+1}^n \frac{\partial F_{\boldsymbol{T}}}{\partial y_i}(\boldsymbol{y}) dy_i = 0 $$
		and
		$$ F_{\boldsymbol{T}}(\boldsymbol{y}) \text{ is a unit } \Longleftrightarrow  F_{\boldsymbol{T}}(\boldsymbol{y}) \sum_{i=1}^r T_i \frac{dx_i}{x_i} + \sum_{i=r+1}^n \frac{\partial F_{\boldsymbol{T}}}{\partial y_i}(\boldsymbol{y}) dy_i \text{ is log-elementary.} $$
	\end{proof}
\end{prop}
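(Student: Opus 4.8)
The plan is to work directly with the expansion of $F$ as a series in monomials of the independent variables, $F=\sum_{T\in\mathbb{Z}_{\geq0}^r}F_T(\boldsymbol{y})\,\boldsymbol{x}^T$, and the induced expansion $dF=\sum_T\boldsymbol{x}^T\Delta_T$, where $\Delta_T=F_T(\boldsymbol{y})\sum_{i=1}^r T_i\tfrac{dx_i}{x_i}+\sum_{j=1}^{\ell}\tfrac{\partial F_T}{\partial y_j}\,dy_j$ is a $1$-form with coefficients in $k[[\boldsymbol{y}]]$. Since by definition $\nu_{\mathcal{A}}(F)=\min\{\nu(\boldsymbol{x}^T):F_T\neq0\}$ and $\nu_{\mathcal{A}}(dF)=\min\{\nu(\boldsymbol{x}^T):\Delta_T\neq0\}$, the whole statement reduces to controlling, level by level, the vanishing and the ``elementariness'' of the coefficient $F_T$ versus those of the $1$-form $\Delta_T$.

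First I would establish the equivalence $F_T=0\Leftrightarrow\Delta_T=0$ for every exponent $T$. One implication is trivial. For the converse, since $\tfrac{dx_1}{x_1},\dots,\tfrac{dx_r}{x_r},dy_1,\dots,dy_\ell$ is a free basis of $N_{\mathcal{A}}^{\ell}$, the vanishing of $\Delta_T$ forces $T_iF_T=0$ for all $i$ and $\partial F_T/\partial y_j=0$ for all $j$. If $T\neq0$ some $T_i$ is a nonzero integer, hence a unit of the domain $R_{\mathcal{A}}^{\ell}$ (here characteristic zero enters), so $F_T=0$. If $T=0$ the vanishing of all partials of $F_0\in k[[\boldsymbol{y}]]$ forces $F_0$ to be a constant (again characteristic zero), and since $F$ is not a unit its constant term $F_0(\boldsymbol{0})$ vanishes, whence $F_0=0$. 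This is exactly where the hypothesis ``$F$ not a unit'' is used, and it is genuinely needed (for instance $F=1+x_1$ has $\nu_{\mathcal{A}}(F)=0$ but $\nu_{\mathcal{A}}(dF)=\nu(x_1)>0$). The equivalence immediately yields $\nu_{\mathcal{A}}(F)=\nu_{\mathcal{A}}(dF)$, and therefore the recessive alternative of Definition \ref{de:gamma_final_functions} for $F$ holds iff the recessive alternative of Definition \ref{de:gamma_final_1_forms} holds for $dF$.

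For the dominant case I would invoke Remark \ref{re:value_ideals}: at most one monomial has a prescribed value, so the minimal level of $F$ (and of $dF$) is supported on a single monomial $\boldsymbol{x}^{\boldsymbol{t}}$ with $\nu(\boldsymbol{x}^{\boldsymbol{t}})=\nu_{\mathcal{A}}(F)=\nu_{\mathcal{A}}(dF)$. Then $F$ is $\gamma$-final dominant iff $\nu_{\mathcal{A}}(F)\leq\gamma$ and $F_{\boldsymbol{t}}$ is a unit, whereas $dF$ is $\gamma$-final dominant iff $\nu_{\mathcal{A}}(dF)\leq\gamma$ and $\Delta_{\boldsymbol{t}}$ is log-elementary, i.e.\ one of its coefficients $t_iF_{\boldsymbol{t}}$ is a unit (Definition \ref{de:log-elementary_1_forms}). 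If $\Delta_{\boldsymbol{t}}$ is log-elementary then $t_iF_{\boldsymbol{t}}(\boldsymbol{0})\neq0$ for some $i$, so $F_{\boldsymbol{t}}$ is a unit. Conversely, if $F_{\boldsymbol{t}}$ is a unit then $\boldsymbol{t}\neq\boldsymbol{0}$ — otherwise $F_{\boldsymbol{0}}$, which is $F$ reduced modulo $(x_1,\dots,x_r)$, would have nonzero constant term, contradicting that $F$ is not a unit — hence some $t_i\neq0$ and $t_iF_{\boldsymbol{t}}(\boldsymbol{0})\neq0$, so $\Delta_{\boldsymbol{t}}$ is log-elementary. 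Combining this with $\nu_{\mathcal{A}}(F)=\nu_{\mathcal{A}}(dF)$ closes the dominant case.

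The argument is in the end pure bookkeeping on the series expansions; the only delicate point, and the one I would emphasize, is the role played by the hypothesis ``$F$ not a unit'' at the level $T=0$ (equivalently, in forcing $\boldsymbol{t}\neq\boldsymbol{0}$ whenever the minimal coefficient is a unit), since discarding it invalidates both the equality of explicit values and the equivalence of the dominant conditions.
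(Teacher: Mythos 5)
Your proof is correct and follows essentially the same route as the paper: expand $F$ in $\boldsymbol{x}$-monomials, compare each level $F_{\boldsymbol{T}}$ with the corresponding level of $dF$, and deduce both the equality of explicit values and the equivalence of the dominant/recessive alternatives. You merely make explicit what the paper leaves implicit — the use of characteristic zero, the uniqueness of the monomial of minimal value coming from the $\mathbb{Z}$-independence of $\nu(x_1),\dots,\nu(x_r)$, and the fact that the hypothesis that $F$ is not a unit is exactly what rescues the two equivalences at the level $\boldsymbol{T}=\boldsymbol{0}$ — which is a faithful and correct elaboration of the paper's argument.
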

Proceeding as before we define the map
$$
\Omega_{\mathcal{O}/k}(\log \boldsymbol{x}) \otimes_{\mathcal{O}} \hat{\mathcal{O}} \rightarrow \wedge^2 \ \Omega_{\mathcal{O}/k}(\log \boldsymbol{x}) \otimes_{\mathcal{O}} \hat{\mathcal{O}} \ ,
$$
and we denote it also by $ d $. Again, note that for any index $ \ell $, $ 0 \leq \ell \leq n-r $ we have
$$
	d(N_{\mathcal{A}}^{\ell}) \subset  \wedge^2 N_{\mathcal{A}}^{\ell} \ .
$$
A direct computation shows that for $ \omega \in N_{\mathcal{A}}^{\ell} $ we have
$$
\nu_{\mathcal{A}}(\omega) \leq \nu_{\mathcal{A}}(d\omega) \ .
$$
We state now the corresponding notions for pairs $ (\omega,F) \in N_{\mathcal{A}}^{\ell} \times R_{\mathcal{A}}^{\ell} $. Note that this Cartesian product has naturally structure of $ R_{\mathcal{A}}^{\ell} $-module. Given a pair $ (\omega,F) \in N_{\mathcal{A}}^{\ell} \times R_{\mathcal{A}}^{\ell} $ we say that
$$ \nu_{\mathcal{A}}(\omega,F) := \max \left\{ \lambda \ |\ (\omega,F) \in \mathcal{M}_{\mathcal{A}}^{\ell}(\lambda) \times \mathcal{Q}_{\mathcal{A}}^{\ell}(\lambda) \right\} $$
is \emph{the explicit value of $ (\omega,F) $}. We establish $ \nu_{\mathcal{A}}(0,0) := \infty $. As in the case of functions and $ 1 $-forms the explicit value of a pair is well defined.
\begin{defi}\label{de:gamma_final_1_pairs}
	Given $ \gamma \in \Gamma $ a pair $ (\omega,F) \in N_{\mathcal{A}}^{\ell} \times R_{\mathcal{A}}^{\ell} $ is \emph{$ \gamma $-final} if one of the following properties is satisfied:
	\begin{enumerate}
		\item $ \nu_{\mathcal{A}}(\omega,F) \leq \gamma $ and $ \omega $ and $ F $ are $ \nu_{\mathcal{A}}(\omega,F) $-final. In this case we say \emph{$ (\omega,F) $ is $ \gamma $-final dominant};
		\item $ \nu_{\mathcal{A}}(\omega,F) > \gamma $. In this case we say \emph{$ (\omega,F) $ is $ \gamma $-final recessive}.
	\end{enumerate}
\end{defi}
\begin{rem}
	Note that the definition of a $ \gamma $-final dominant pair is slightly different that the corresponding to functions or $ 1 $-forms. The following is an equivalent definition:
	\begin{quote}
		 A pair $ (\omega,F) \in N_{\mathcal{A}}^{\ell} \times R_{\mathcal{A}}^{\ell} $ is $ \gamma $-final dominant if $ \nu_{\mathcal{A}}(\omega,F) \leq \gamma $ and it can be written as
			$$ (\omega,F) = \boldsymbol{x}^{\boldsymbol{t}} (\tilde{\omega},\tilde{F}) + (\bar{\omega},\bar{f}) $$
			where $ \nu_{\mathcal{A}}(\bar{\omega},\bar{f}) > \nu_{\mathcal{A}}(\omega,F) = \nu(\boldsymbol{x}^{\boldsymbol{t}}) $ and one of the following options is satisfied:
			\begin{enumerate}
				\item $ \tilde{\omega} $ is log-elementary and $ \tilde{F} $ is a unit;
				\item $ \tilde{\omega} $ is log-elementary and $ \tilde{F} = 0 $;
				\item $ \tilde{\omega} = 0 $ and $ \tilde{F} $ is a unit.
			\end{enumerate}
	\end{quote}
\end{rem}
\begin{rem}
	Note that if $ \omega \in N_{\mathcal{A}}^{\ell} $ and $ F \in R_{\mathcal{A}}^{\ell} $ are both $ \gamma $-final then the pair $ (\omega,F) \in N_{\mathcal{A}}^{\ell} \times R_{\mathcal{A}}^{\ell} $ is also $ \gamma $-final. However, the opposite is not necessarily true: it happens when $ \nu_{\mathcal{A}}(\omega,F) = \nu_{\mathcal{A}}(\omega) = \nu_{\mathcal{A}}(F) < \gamma $ but only one of the terms of the pair is $ \nu_{\mathcal{A}}(\omega,F) $-final dominant.
\end{rem}
Given a $ \ell $-nested transformation $ \mathcal{A} \rightarrow \mathcal{A}' $ it is well defined the $ R_{\mathcal{A}}^{\ell} $-module monomorphism
$$ N_{\mathcal{A}}^{\ell} \times R_{\mathcal{A}}^{\ell} \longrightarrow N_{\mathcal{A}'}^{\ell} \times R_{\mathcal{A}'}^{\ell} \ . $$
As in previous cases, the explicit value of a pair can increase by means of a $ \ell $-nested transformation:
\begin{lem}\label{le:gamma_final_pairs_stable}
	Let $ (\omega,F) \in N_{\mathcal{A}}^{\ell} \times R_{\mathcal{A}}^{\ell} $ be a pair and let $ \pi : \mathcal{A} \longrightarrow \mathcal{B} $ be a $ \ell $-nested transformation. We have that
	$$ \nu_{\mathcal{A}}(\omega,F) \leq \nu_{\mathcal{B}}(\omega,F) \ . $$
	In particular we have
	\begin{enumerate}
		\item if $ (\omega,F) $ is $ \gamma $-final dominant in $ \mathcal{A} $, it is also $ \gamma $-final dominant in $ \mathcal{B} $ and $ \nu_{\mathcal{A}}(\omega,F) = \nu_{\mathcal{B}}(\omega,F) $.
		\item if $ (\omega,F) $ is $ \gamma $-final recessive in $ \mathcal{A} $, it is also $ \gamma $-final recessive in $ \mathcal{B} $.
	\end{enumerate}
\end{lem}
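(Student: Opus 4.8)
The plan is to reduce the statement to the analogous stability results already proved for functions (Lemma \ref{le:gamma_final_functions_stable}) and for $1$-forms (Lemma \ref{le:gamma_final_1_forms_stable}), together with the monotonicity of the value-ideals under $\ell$-nested transformations recorded in Remark \ref{re:value_ideals}. The first observation is that the explicit value of a pair splits as $\nu_{\mathcal{A}}(\omega,F)=\min\{\nu_{\mathcal{A}}(\omega),\nu_{\mathcal{A}}(F)\}$, since $(\omega,F)\in \mathcal{M}_{\mathcal{A}}^{\ell}(\lambda)\times \mathcal{Q}_{\mathcal{A}}^{\ell}(\lambda)$ holds exactly when $\nu_{\mathcal{A}}(\omega)\ge\lambda$ and $\nu_{\mathcal{A}}(F)\ge\lambda$. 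Combined with the inequalities $\nu_{\mathcal{A}}(\omega)\le\nu_{\mathcal{B}}(\omega)$ and $\nu_{\mathcal{A}}(F)\le\nu_{\mathcal{B}}(F)$ from the two cited lemmas (alternatively, a direct consequence of $\pi(\mathcal{Q}_{\mathcal{A}}^{\ell}(\alpha))\subset\mathcal{Q}_{\mathcal{B}}^{\ell}(\alpha)$, hence $\pi(\mathcal{M}_{\mathcal{A}}^{\ell}(\alpha))\subset\mathcal{M}_{\mathcal{B}}^{\ell}(\alpha)$), this gives $\nu_{\mathcal{A}}(\omega,F)\le\nu_{\mathcal{B}}(\omega,F)$, and in particular settles the recessive case: if $\nu_{\mathcal{A}}(\omega,F)>\gamma$ then $\nu_{\mathcal{B}}(\omega,F)\ge\nu_{\mathcal{A}}(\omega,F)>\gamma$.

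For the dominant case I would set $\lambda:=\nu_{\mathcal{A}}(\omega,F)\le\gamma$ and argue according to which component realizes the minimum. Suppose $\nu_{\mathcal{A}}(\omega)=\lambda$. Since $\omega$ is $\lambda$-final by hypothesis, it cannot be $\lambda$-final recessive (that would force $\nu_{\mathcal{A}}(\omega)>\lambda$), so $\omega$ is $\lambda$-final dominant; by Lemma \ref{le:gamma_final_1_forms_stable} it stays $\lambda$-final dominant in $\mathcal{B}$ with $\nu_{\mathcal{B}}(\omega)=\lambda$. For $F$ there are two options: if $\nu_{\mathcal{A}}(F)=\lambda$, the same reasoning together with Lemma \ref{le:gamma_final_functions_stable} shows $F$ is $\lambda$-final dominant in $\mathcal{B}$ with $\nu_{\mathcal{B}}(F)=\lambda$; if $\nu_{\mathcal{A}}(F)>\lambda$, then $F$ is $\lambda$-final recessive, hence remains so in $\mathcal{B}$, so $\nu_{\mathcal{B}}(F)>\lambda$. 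In either subcase $\nu_{\mathcal{B}}(\omega,F)=\min\{\nu_{\mathcal{B}}(\omega),\nu_{\mathcal{B}}(F)\}=\lambda=\nu_{\mathcal{A}}(\omega,F)$, and both $\omega$ and $F$ are $\lambda$-final in $\mathcal{B}$, so $(\omega,F)$ is $\gamma$-final dominant in $\mathcal{B}$. The case in which $F$ realizes the minimum is entirely symmetric (apply Lemma \ref{le:gamma_final_functions_stable} to $F$ first), and the degenerate subcases $\omega=0$ or $F=0$ fall under the recessive branch since then the corresponding explicit value is $\infty>\lambda$.

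This is a bookkeeping lemma with no serious obstacle; I expect the only delicate point to be the asymmetric shape of the definition of a $\gamma$-final dominant pair, where each component must be final with respect to the common reference value $\nu_{\mathcal{A}}(\omega,F)=\min\{\nu_{\mathcal{A}}(\omega),\nu_{\mathcal{A}}(F)\}$. One must ensure that the component attaining this minimum stays dominant after $\pi$ — so that the explicit value of the pair does not strictly increase — while the other component is permitted either to remain dominant at the same value or to move up into the recessive range, both situations being exactly what Lemmas \ref{le:gamma_final_functions_stable} and \ref{le:gamma_final_1_forms_stable} guarantee.
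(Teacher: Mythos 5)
Your proof is correct and follows exactly the route the paper intends: the paper states this lemma without proof as a direct consequence of the definitions, relying on the identity $\nu_{\mathcal{A}}(\omega,F)=\min\{\nu_{\mathcal{A}}(\omega),\nu_{\mathcal{A}}(F)\}$ and the componentwise stability Lemmas \ref{le:gamma_final_functions_stable} and \ref{le:gamma_final_1_forms_stable}, which is precisely your argument. Your case split on which component attains the minimum, and the observation that this component must be dominant (hence keeps its explicit value) while the other may stay dominant or drift into the recessive range, fills in the bookkeeping correctly.
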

The following lemma provides a simple but powerful tool which allows to ``push right'' non dominant objects:
\begin{lem}\label{le:push_right}
	There is a $ \ell $-nested transformation $ \Psi_{\ell} : \mathcal{A} \rightarrow \mathcal{B} $ such that for any object (function, $ 1 $-form or pair) $ \psi $ we have:
	$$ \psi \text{ is not dominant in } \mathcal{A} \Longrightarrow \nu_{\mathcal{A}}(\psi) < \nu_{\mathcal{B}}(\psi) \ .$$
	\begin{proof}
		We have to perform Puiseux's packages with respect to all dependent variables $ y_1,y_2,\dots,y_{\ell} $. So we can take
		$$ \Psi_{\ell} := \pi_{\ell} \circ \cdots \circ \pi_{1} $$
		where $ \pi_j $ is a $ j $-Puiseux's package. 
	\end{proof}
\end{lem}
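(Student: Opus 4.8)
The plan is to take $\Psi_\ell := \pi_\ell\circ\cdots\circ\pi_1$, where $\pi_j$ is a $j$-Puiseux's package (these exist by Proposition~\ref{pr:existence_puiseux_packages}); since each $\pi_j$ is a $j$-nested, hence $\ell$-nested, transformation, $\Psi_\ell$ is an $\ell$-nested transformation. The key point is that a Puiseux's package \emph{monomializes} the dependent variable it resolves: if $\pi_j$ is a $j$-Puiseux's package then by~\eqref{eq:variables_after_Puiseux_package} one has $y_j=\boldsymbol{x}'^{\boldsymbol{\alpha}_0}(y_j'+\xi)^{\beta_0}$ with $\xi\in k^{*}$, so $y_j$ equals a monomial in the new independent variables times the unit $(y_j'+\xi)^{\beta_0}$, and $\nu(\boldsymbol{x}'^{\boldsymbol{\alpha}_0})=\nu(y_j)>0$. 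A $j$-Puiseux's package only involves $\boldsymbol{x}$ and $y_j$, so this property survives the subsequent packages $\pi_{j+1},\dots,\pi_\ell$: after $\Psi_\ell$ every original variable $x_i$, resp.\ $y_j$, is a monomial in the independent variables of $\mathcal B$ times a unit, the monomial having value $\nu(x_i)$, resp.\ $\nu(y_j)$; consequently every monomial $\boldsymbol{x}^{I}\boldsymbol{y}^{\boldsymbol s}$ of $R_{\mathcal A}^\ell$ becomes in $\mathcal B$ a monomial in the independent variables times a unit, of value $\nu(\boldsymbol{x}^I\boldsymbol{y}^{\boldsymbol s})=\nu(\boldsymbol{x}^I)+\sum_j s_j\nu(y_j)$. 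Differentiating~\eqref{eq:variables_after_Puiseux_package} (see~\eqref{eq:differentials_after_Puiseux_package}) one also gets that $\tfrac{dx_i}{x_i}$ transforms into a combination of the $\tfrac{dx'_k}{x'_k}$ and the $dy'_k$ with coefficients of nonnegative explicit value — here it is essential that the factors $\phi^{-1}=(y'+\xi)^{-1}$ appearing there are units — so $\nu_{\mathcal B}(\tfrac{dx_i}{x_i})\ge 0$, while $dy_j$ acquires the monomial factor $\boldsymbol{x}'^{\boldsymbol{\alpha}_0}$, so $\nu_{\mathcal B}(dy_j)\ge\nu(y_j)>0$, and this persists by Lemma~\ref{le:gamma_final_1_forms_stable}.

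First I would settle the case of a function $F=\sum_{I,\boldsymbol s}c_{I,\boldsymbol s}\,\boldsymbol{x}^I\boldsymbol{y}^{\boldsymbol s}\in R_{\mathcal A}^\ell$. By the previous paragraph, $\nu_{\mathcal B}(F)\ge\min\{\nu(\boldsymbol{x}^I\boldsymbol{y}^{\boldsymbol s}):c_{I,\boldsymbol s}\ne0\}$. Suppose $F$ is not dominant in $\mathcal A$ and put $\lambda=\nu_{\mathcal A}(F)$; by Remark~\ref{re:value_ideals} there is a unique exponent $I_0$ with $\nu(\boldsymbol{x}^{I_0})=\lambda$, and non-dominance of $F$ means $c_{I_0,\boldsymbol 0}=0$. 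Hence every monomial of $F$ whose $\boldsymbol{x}$-part is $\boldsymbol{x}^{I_0}$ contains a nontrivial power of some $y_j$, so has value $>\lambda$, while every monomial with $\boldsymbol{x}$-part $\boldsymbol{x}^I$, $I\ne I_0$, already satisfies $\nu(\boldsymbol{x}^I)>\lambda$. Thus the minimum above is $>\lambda$, i.e.\ $\nu_{\mathcal B}(F)>\nu_{\mathcal A}(F)$; this refines Lemma~\ref{le:gamma_final_functions_stable} for non-dominant $F$.

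Next, for a $1$-form $\omega=\sum_{i=1}^rA_i\tfrac{dx_i}{x_i}+\sum_{j=1}^\ell B_j\,dy_j\in N_{\mathcal A}^\ell$ I would argue coefficient by coefficient. One has $\nu_{\mathcal A}(\omega)=\min\bigl(\min_i\nu_{\mathcal A}(A_i),\min_j\nu_{\mathcal A}(B_j)\bigr)=:\lambda$, and $\omega$ is dominant precisely when some $A_i$ is a dominant function of explicit value $\lambda$ (the level $\omega_{I_0}$ is log-elementary iff some $a_{I_0,i}(\boldsymbol 0)\ne0$, i.e.\ iff $A_i$ has a nonzero $\boldsymbol y$-constant coefficient at the monomial of value $\lambda$). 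So if $\omega$ is not dominant, then for every $i$ either $\nu_{\mathcal A}(A_i)>\lambda$ or ($\nu_{\mathcal A}(A_i)=\lambda$ and $A_i$ is non-dominant), and in both cases the function case yields $\nu_{\mathcal B}(A_i)>\lambda$, whence $\nu_{\mathcal B}\!\bigl(A_i\tfrac{dx_i}{x_i}\bigr)\ge\nu_{\mathcal B}(A_i)>\lambda$; on the other hand $\nu_{\mathcal B}(B_j\,dy_j)\ge\nu_{\mathcal B}(B_j)+\nu_{\mathcal B}(dy_j)\ge\nu_{\mathcal A}(B_j)+\nu(y_j)>\nu_{\mathcal A}(B_j)\ge\lambda$. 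Summing, $\nu_{\mathcal B}(\omega)>\lambda=\nu_{\mathcal A}(\omega)$. Finally, for a pair $(\omega,F)\in N_{\mathcal A}^\ell\times R_{\mathcal A}^\ell$, unwinding Definition~\ref{de:gamma_final_1_pairs} shows that $(\omega,F)$ fails to be dominant only when $\nu_{\mathcal A}(\omega)=\nu_{\mathcal A}(F)=\nu_{\mathcal A}(\omega,F)$ and neither $\omega$ nor $F$ is dominant at that value; applying the two cases just proved to the components gives $\nu_{\mathcal B}(\omega)>\nu_{\mathcal A}(\omega,F)$ and $\nu_{\mathcal B}(F)>\nu_{\mathcal A}(\omega,F)$, hence $\nu_{\mathcal B}(\omega,F)=\min\bigl(\nu_{\mathcal B}(\omega),\nu_{\mathcal B}(F)\bigr)>\nu_{\mathcal A}(\omega,F)$.

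I expect the main difficulty to be bookkeeping rather than conceptual: one must check that along the successive Puiseux's packages the logarithmic differentials $\tfrac{dx_i}{x_i}$ and the $\boldsymbol{x}$-monomials never acquire a negative value while $y_j$ and $dy_j$ gain a strictly positive one — which is exactly why one needs the factors $\phi^{-1}=(y'+\xi)^{-1}$ produced by each package to be units — and that composing the $\pi_j$ does not spoil the monomialization already obtained for the lower dependent variables, which holds because $\pi_j$ affects only $\boldsymbol{x}$ and $y_j$.
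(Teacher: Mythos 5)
Your construction is exactly the one in the paper, whose proof consists precisely of taking $\Psi_{\ell}:=\pi_{\ell}\circ\cdots\circ\pi_{1}$ with $\pi_j$ a $j$-Puiseux's package; your verifications for functions and for $1$-forms are correct and supply the bookkeeping the paper leaves implicit (monomialization of each $y_j$ with a unit factor $(y_j'+\xi)^{\beta_0}$, uniqueness of the minimal-value monomial from Remark~\ref{re:value_ideals}, and $\nu_{\mathcal{B}}(dy_j)\geq\nu(y_j)>0$ preserved under the later packages).

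The gap is in the pair case. Your ``unwinding'' of Definition~\ref{de:gamma_final_1_pairs} is not what that definition says: a pair $(\omega,F)$ fails to be dominant as soon as some component realizing $\nu_{\mathcal{A}}(\omega,F)$ fails to be dominant, and this neither forces $\nu_{\mathcal{A}}(\omega)=\nu_{\mathcal{A}}(F)$ nor forces both components to be non-dominant. The omitted cases in which the two explicit values differ are harmless and easily added: the component of larger value only grows under $\ell$-nested transformations, while the minimal, non-dominant component strictly increases by your function/$1$-form cases, so the pair's explicit value increases. But the case $\nu_{\mathcal{A}}(\omega)=\nu_{\mathcal{A}}(F)=\mu$ with exactly one component dominant is genuinely problematic: by the literal definition such a pair is non-dominant, yet by stability of dominant objects (Lemmas~\ref{le:gamma_final_functions_stable} and \ref{le:gamma_final_1_forms_stable}) the dominant component keeps explicit value $\mu$ after \emph{every} $\ell$-nested transformation, so $\nu_{\mathcal{B}}(\omega,F)=\mu$ and the asserted strict increase cannot hold (take, for instance, $F=\boldsymbol{x}^{\boldsymbol{t}}$ and $\omega=\boldsymbol{x}^{\boldsymbol{t}}\,y_1\,\frac{dx_1}{x_1}$). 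So either ``dominant'' for pairs must be read as ``some component realizing $\nu_{\mathcal{A}}(\omega,F)$ is dominant'' --- the reading your characterization silently presupposes, and under which you still need the two easy unequal-value cases --- or this mixed situation must be excluded where the lemma is invoked. You should state and justify that reading explicitly rather than present your characterization as a consequence of Definition~\ref{de:gamma_final_1_pairs}, from which it does not follow.
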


\subsection{Truncated Local Uniformization statements}
The following statement is a refinement of Theorem \ref{th:III}. It is the key in the proof of Theorem \ref{th:2}.
\begin{theo}[Truncated Local Uniformization of formal differential $ 1 $-forms]\label{th:gamma_finalization_1_forms}
	Let $ \mathcal{A} $ be a parameterized regular local model for $ K, \nu $ and let $ \ell $ be an index $ 0 \leq \ell \leq n-r$. Given a $ 1 $-form $ \omega \in N_{\mathcal{A}}^{\ell} $ and a value $ \gamma \in \Gamma $, if $  \nu_{\mathcal{A}}(\omega \wedge d \omega) \geq 2 \gamma  $ then there exists a $ \ell $-nested transformation $  \mathcal{A} \rightarrow \mathcal{B} $ such that $ \omega $ is $ \gamma $-final in $ \mathcal{B} $.
\end{theo}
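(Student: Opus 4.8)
The statement is exactly $T_3(\ell)$, so the plan is to prove $T_3(\ell)$ for every $\ell$, $0\le\ell\le n-r$, by induction on $\ell$, simultaneously with the auxiliary statements $T_4(\ell)$ (formal functions) and $T_5(\ell)$ (pairs function--form). For $\ell=0$ everything lives in $N_{\mathcal A}^0$ and $R_{\mathcal A}^0$, and the conclusion follows from the combinatorial control of Newton polyhedra of ideals under $0$-nested transformations carried out in Chapter~\ref{ch:max_rat_rank}; in fact there the integrability hypothesis is not even needed. So fix $\ell\ge1$, assume $T_3(k),T_4(k),T_5(k)$ for all $k<\ell$, and work in the passage from level $\ell-1$ to level $\ell$, writing $z=y_\ell$ and $\boldsymbol{y}=(y_1,\dots,y_{\ell-1})$. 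First I would dispose of the two cheap implications inside the step: $T_3(\ell)\Rightarrow T_4(\ell)$, since by Proposition~\ref{pr:monomialize_differentials_of_functions} a non-unit $F$ is $\gamma$-final dominant (resp.\ recessive) exactly when $dF\in N_{\mathcal A}^\ell$ is, and $dF\wedge d(dF)=0$ makes the hypothesis of $T_3(\ell)$ vacuous for $dF$ (the unit case being trivial); and $T_3(\ell)+T_4(\ell)\Rightarrow T_5(\ell)$, by first $\gamma$-finalizing $\omega$ and then $F$, using that $\gamma$-finality of $\omega$ is preserved by $\ell$-nested transformations (Lemma~\ref{le:gamma_final_1_forms_stable}) and that a pair both of whose entries are $\gamma$-final is $\gamma$-final. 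Thus the whole weight rests on $T_3(\ell)$.

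For $T_3(\ell)$, decompose $\omega=\sum_{k\ge0}z^k\omega_k$ with $\omega_k=\eta_k+f_k\,dz/z$, and attach to each level the pair $(\eta_k,f_k)\in N_{\mathcal A}^{\ell-1}\times R_{\mathcal A}^{\ell-1}$, to which $T_5(\ell-1)$ applies. The first stage is $\gamma$-preparation (Chapter~\ref{ch:truncated_preparation}). I would: (i) perform a $(\ell-1)$-nested transformation realizing the maximal possible number of $(\gamma-s\nu(z))$-final dominant levels $\omega_s$ --- this uses no induction, because $(\ell-1)$-nested transformations neither mix distinct levels nor mix the differential part $\eta_s$ with the functional part $f_s$ inside a level, and it stabilizes the Dominant Truncated Newton Polygon $\mathrm{Dom}\mathcal N$; (ii) extract from $\nu_{\mathcal A}(\omega\wedge d\omega)\ge2\gamma$, through the expansion $\omega\wedge d\omega=\sum_m z^m(\Theta_m+\tfrac{dz}{z}\Delta_m)$ and the inequality $\nu_{\mathcal A}(d\sigma)\ge\nu_{\mathcal A}(\sigma)$, a lower bound for each $\nu_{\mathcal A}(\eta_s\wedge d\eta_s)$ strong enough to feed $T_5(\ell-1)$; then, applying $T_5(\ell-1)$ to the non-dominant levels below $\gamma/\nu(z)$ and sweeping upward (iterating as in the $\varepsilon$-lemma), bring $\mathcal N$ arbitrarily close to $\mathrm{Dom}\mathcal N$, in particular forcing every vertex of $\mathrm{Dom}\mathcal N$ to be a vertex of $\mathcal N$; and (iii) complete the preparation with truncated proportionality results --- the truncated De~Rham--Saito lemma quoted in the introduction --- which turn the inclusion $\mathcal N\subset\mathrm{Dom}\mathcal N$ into equality and produce interiority of the non-dominant points.

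In the second stage (Chapter~\ref{ch:getting_final_forms}) I take the critical height $\chi$ of the $\gamma$-prepared $\omega$ --- the top of the side of $\mathcal N$ of slope $-1/\nu(z)$ --- as the main invariant, and analyse a $z$-Puiseux's package with contact function $z^d/\boldsymbol{x^p}$ (which exists by Proposition~\ref{pr:existence_puiseux_packages}) followed by a fresh $\gamma$-preparation. The key claim is the dichotomy: either $\chi$ strictly drops, or one of the resonance conditions (R1) (forcing $d>1$) or (R2) (forcing $d=1$) holds. Since (R1) cannot recur while $\chi$ is fixed, and reaching $\chi=0$ finishes after one more $z$-Puiseux's package plus $\gamma$-preparation (either $\gamma$-final dominant or $\gamma$-final recessive), the only scenario to exclude is (R1) holding indefinitely: there each $z$-Puiseux's package raises the explicit value of $\omega$, so unless it accumulates strictly below $\gamma$ we reach the $\gamma$-final recessive case. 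I expect the hard part to be exactly this endgame: pinning down the resonance conditions, proving the dichotomy, and --- the subtlest point, flagged in the introduction --- constructing the appropriate (partial) Tschirnhausen change of the $\ell$-th coordinate that kills the accumulation, which for a $1$-form (unlike a function) must be identified via further truncated proportionality arguments that select the cancelling term. Once $\chi$ is driven to $0$ and one final $z$-Puiseux's package plus $\gamma$-preparation is applied, the resulting model $\mathcal B$ has $\omega$ $\gamma$-final, closing the induction and hence proving Theorem~\ref{th:gamma_finalization_1_forms} (equivalently Theorem~\ref{th:III}).
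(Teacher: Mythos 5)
Your plan follows essentially the same route as the paper: induction on the number of dependent variables with the auxiliary statements $T_4(\ell)$ and $T_5(\ell)$, the combinatorial base case, $\gamma$-preparation of the $z$-levels via the $\Theta_m,\Delta_m$ expansion of $\omega\wedge d\omega$ and $T_5$ one level down plus the truncated De Rham--Saito proportionality, and then the critical-height analysis under Puiseux's packages with the resonance dichotomy and a Tschirnhausen coordinate change to defeat accumulation of the explicit value. The only caveat is a label slip, not a change of method: since (R1) (the $d\geq 2$ resonance) can occur at most once while the critical height is stable, the persistent scenario that the Tschirnhausen argument must exclude is (R2) (the $d=1$ case), not ``(R1) holding indefinitely'' as you wrote.
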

Taking into account that for a formal function $ F $ we have that $ d (d F) \equiv 0 $, Theorem \ref{th:gamma_finalization_functions} and Proposition \ref{pr:monomialize_differentials_of_functions} implies the following statement.
\begin{theo}[Truncated Local Uniformization of formal functions]\label{th:gamma_finalization_functions}
	Let $ \mathcal{A} $ be a parameterized regular local model for $ K, \nu $ and let $ \ell $ be an index $ 0 \leq \ell \leq n-r$. Given a formal function $ F \in R_{\mathcal{A}}^{\ell} $ and a value $ \gamma \in \Gamma $, there exists a $ \ell $-nested transformation $ \mathcal{A} \rightarrow \mathcal{B} $ such that $ F $ is $ \gamma $-final in $ \mathcal{B} $.
\end{theo}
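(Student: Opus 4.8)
The plan is to obtain this statement as an immediate corollary of Theorem \ref{th:gamma_finalization_1_forms} (the $1$-form version) by exterior differentiation, exactly as announced in the paragraph that precedes the statement. Given a formal function $F\in R_{\mathcal{A}}^{\ell}$, I first dispose of the trivial cases. If $\gamma<0$, then since $\nu_{\mathcal{A}}(F)\ge 0$ always (the exponents of the independent variables are non-negative and $\nu(x_i)>0$), any $F$ is already $\gamma$-final recessive in $\mathcal{A}$. If $F$ is a unit, then $\nu_{\mathcal{A}}(F)=0\le\gamma$ and the decomposition $F=\boldsymbol{x}^{\boldsymbol{0}}\tilde{F}+\bar{F}$ with $\tilde{F}=F$ a unit and $\bar{F}=0$ (so $\nu_{\mathcal{A}}(\bar{F})=\infty>0$) shows that $F$ is $\gamma$-final dominant in $\mathcal{A}$ itself; no transformation is needed. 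So I may assume $\gamma\ge 0$ and that $F$ is not a unit, i.e. $\nu_{\mathcal{A}}(F)>0$.

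Next, set $\omega:=dF\in N_{\mathcal{A}}^{\ell}$, which is legitimate because $d(R_{\mathcal{A}}^{\ell})\subset N_{\mathcal{A}}^{\ell}$. The key observation is that the logarithmic exterior derivative still satisfies $d\circ d=0$: since $d(dx_i/x_i)=0$ and $d$ is a graded derivation on $\wedge^{\bullet}\hat{\Omega}_{\mathcal{O}/k}(\log\boldsymbol{x})$ extending the ordinary de Rham differential, we get $d(dF)\equiv 0$ in $\wedge^2 N_{\mathcal{A}}^{\ell}$. Hence $\omega\wedge d\omega=dF\wedge 0=0$, so $\nu_{\mathcal{A}}(\omega\wedge d\omega)=\nu_{\mathcal{A}}(0)=\infty\ge 2\gamma$ and the integrability hypothesis of Theorem \ref{th:gamma_finalization_1_forms} holds vacuously.

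Now I apply Theorem \ref{th:gamma_finalization_1_forms} to $\omega=dF$ and $\gamma$, obtaining a $\ell$-nested transformation $\pi:\mathcal{A}\to\mathcal{B}$ such that $dF$ is $\gamma$-final in $\mathcal{B}$; here I use that $\pi$ is functorial for $d$ (the $\mathcal{O}$-module maps attached to blow-ups and to ordered coordinate changes are obtained by differentiating the defining equations), so the image of $dF$ in $N_{\mathcal{B}}^{\ell}$ is $d$ of the image of $F$ in $R_{\mathcal{B}}^{\ell}$. By Lemma \ref{le:gamma_final_functions_stable}, $\nu_{\mathcal{B}}(F)\ge\nu_{\mathcal{A}}(F)>0$, so $F$ is still a non-unit in $\mathcal{B}$, and Proposition \ref{pr:monomialize_differentials_of_functions} applied in $\mathcal{B}$ gives that $F$ is $\gamma$-final dominant (respectively recessive) in $\mathcal{B}$ if and only if $dF$ is. Therefore $F$ is $\gamma$-final in $\mathcal{B}$, which proves the theorem.

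In this deduction all the genuine difficulty is absorbed into Theorem \ref{th:gamma_finalization_1_forms} (equivalently $T_3(n-r)$), whose proof runs by induction on $\ell$ via the $\gamma$-preparation and critical-height machinery described in the introduction; the present statement is essentially a $0$-cost consequence once that is in hand. The only thing to be careful about here is that the passage from $F$ to $dF$ neither creates nor destroys $\gamma$-finality and is compatible with $\ell$-nested transformations, which is precisely Proposition \ref{pr:monomialize_differentials_of_functions} together with the functoriality of $d$. I would remark, finally, that the paper also includes a self-contained inductive proof of the function case ($T_4(\ell+1)$ from the induction hypotheses) using Truncated Newton Polygons, Puiseux's packages and a Tschirnhausen change of coordinates to defeat the value-accumulation phenomenon (Chapter \ref{ch:function_case}); that argument is not needed for the logical chain above but is kept as a model for the harder treatment of $1$-forms.
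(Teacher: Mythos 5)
Your proposal is correct and follows essentially the same route as the paper: the text deduces this statement from Theorem \ref{th:gamma_finalization_1_forms} applied to $\omega=dF$ (using $d(dF)\equiv 0$, hence $\nu_{\mathcal{A}}(\omega\wedge d\omega)=\infty\geq 2\gamma$) together with Proposition \ref{pr:monomialize_differentials_of_functions}, exactly as you do. Your extra care with the unit case and with the compatibility of $d$ under $\ell$-nested transformations only makes explicit points the paper leaves implicit, and your closing remark about the independent inductive proof of the function case in Chapter \ref{ch:function_case} matches the paper's stated purpose for that chapter.
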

We have also the corresponding statement for pairs:
\begin{theo}\label{th:gamma_preparation_pairs}
	Let $ \mathcal{A} $ be a parameterized regular local model for $ K, \nu $ and let $ \ell $ be an index $ 0 \leq \ell \leq n-r$. Given a pair $ (\omega,F) \in N_{\mathcal{A}}^{\ell} \times R_{\mathcal{A}}^{\ell} $ and a value $ \gamma \in \Gamma $, if $ \nu_{\mathcal{A}}(\omega \wedge d \omega) \geq 2 \gamma $ then there exists a $ \ell $-nested transformation $ \mathcal{A} \rightarrow \mathcal{B} $ such that $ (\omega,F) $ is $ \gamma $-final in $ \mathcal{B} $.
\end{theo}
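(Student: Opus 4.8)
The plan is to obtain this statement as a direct consequence of the Truncated Local Uniformization of $1$-forms (Theorem~\ref{th:gamma_finalization_1_forms}) and of formal functions (Theorem~\ref{th:gamma_finalization_functions}), combined with the stability of $\gamma$-finality under $\ell$-nested transformations. In the terminology of the induction scheme this is the implication $T_3(\ell)\Rightarrow T_5(\ell)$: all the genuine work has already been concentrated in Theorem~\ref{th:gamma_finalization_1_forms}, and here we only need to package a $1$-form and a function together into a single $\ell$-nested transformation.

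First I would apply Theorem~\ref{th:gamma_finalization_1_forms} to $\omega$; the hypothesis $\nu_{\mathcal A}(\omega\wedge d\omega)\ge 2\gamma$ is precisely what that theorem requires, so we obtain an $\ell$-nested transformation $\mathcal A\to\mathcal A_1$ such that $\omega$ is $\gamma$-final in $\mathcal A_1$. Next I would regard $F$ as an element of $R_{\mathcal A_1}^{\ell}$ and apply Theorem~\ref{th:gamma_finalization_functions} to it, producing an $\ell$-nested transformation $\mathcal A_1\to\mathcal B$ such that $F$ is $\gamma$-final in $\mathcal B$. The composite $\mathcal A\to\mathcal B$ is again an $\ell$-nested transformation (one just concatenates the two sequences of basic transformations), and by Lemma~\ref{le:gamma_final_1_forms_stable} the $1$-form $\omega$ stays $\gamma$-final in $\mathcal B$: a dominant $\omega$ remains dominant with the same explicit value, and a recessive $\omega$ remains recessive.

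It then remains to check that, once both $\omega$ and $F$ are $\gamma$-final in $\mathcal B$, the pair $(\omega,F)$ is $\gamma$-final in $\mathcal B$ in the sense of Definition~\ref{de:gamma_final_1_pairs}; this is the only point that is not pure bookkeeping, and it is the content of the remark following that definition to the effect that a pair both of whose components are $\gamma$-final is itself $\gamma$-final. Set $\beta:=\nu_{\mathcal B}(\omega,F)=\min\{\nu_{\mathcal B}(\omega),\nu_{\mathcal B}(F)\}$. If $\beta>\gamma$, the pair is $\gamma$-final recessive. If $\beta\le\gamma$, say $\beta=\nu_{\mathcal B}(\omega)\le\nu_{\mathcal B}(F)$ (the other case is symmetric), then $\omega$ being $\gamma$-final with explicit value $\le\gamma$ forces $\omega$ to be $\beta$-final dominant, while $F$ is $\beta$-final --- recessive if $\nu_{\mathcal B}(F)>\beta$, and dominant (again because it is $\gamma$-final with explicit value $\beta\le\gamma$) if $\nu_{\mathcal B}(F)=\beta$. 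Hence $(\omega,F)$ is $\gamma$-final dominant in $\mathcal B$, and the theorem follows.

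I do not expect a serious obstacle here: the whole difficulty of the induction on $\ell$ lives inside Theorem~\ref{th:gamma_finalization_1_forms}, and Theorem~\ref{th:gamma_finalization_functions} is itself obtained from it by applying it to $dF$ (using $d(dF)\equiv 0$, Proposition~\ref{pr:monomialize_differentials_of_functions}, and treating the trivial case ``$F$ a unit'' directly). The same argument, applied one object at a time and repeatedly invoking the stability Lemmas~\ref{le:gamma_final_1_forms_stable} and~\ref{le:gamma_final_pairs_stable}, also yields the more general version for a finite list of $1$-forms (each satisfying the corresponding integrability bound) together with finitely many functions, which is the form actually exploited inside the proof of $T_3(\ell+1)$.
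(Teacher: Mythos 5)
Your argument is correct and is essentially the paper's own proof: the paper likewise obtains Theorem \ref{th:gamma_preparation_pairs} by applying Theorems \ref{th:gamma_finalization_1_forms} and \ref{th:gamma_finalization_functions} successively, using the stability Lemmas \ref{le:gamma_final_1_forms_stable} and \ref{le:gamma_final_functions_stable} to keep the first object $\gamma$-final, and then noting that a pair both of whose components are $\gamma$-final is itself $\gamma$-final. Your final case check of that last point (which the paper leaves as a remark) is accurate, so nothing further is needed.
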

We have that Theorem \ref{th:gamma_preparation_pairs} is also a consequence of Theorem \ref{th:gamma_finalization_1_forms}. Thanks to Theorems \ref{th:gamma_finalization_1_forms} and \ref{th:gamma_finalization_functions} and Lemmas \ref{le:gamma_final_1_forms_stable} and \ref{le:gamma_final_functions_stable} we can obtain a pair whose both terms ($ 1 $-form and function) are $ \gamma $-final, hence the pair is also $ \gamma $-final.

\subsection{Induction procedure}
In the statement of Theorems \ref{th:gamma_finalization_1_forms}, \ref{th:gamma_finalization_functions} and \ref{th:gamma_preparation_pairs} appears a parameter $ \ell $, $ 0 \leq \ell \leq n-r$. Let us refer to these theorems by $ T_3(\ell) $, $ T_4(\ell) $ and $ T_5(\ell) $ respectively to indicate a fixed parameter $ \ell $. As we said in the previous section we have that
$$
T_3(\ell) \Rightarrow T_4(\ell) \quad \text{and} \quad T_3(\ell) \Rightarrow T_5(\ell) \ .
$$
Note also that for $ i=3,4,5 $ we have
$$
T_i(\ell) \Rightarrow T_i(\ell') \quad \text{for all} \quad 0 \leq \ell' < \ell \leq n-r \ ,
$$
and in particular
$$
T_i(n-r) \Leftrightarrow T_i \ .
$$
Our goal is to prove Theorem \ref{th:gamma_finalization_1_forms}. In Chapter \ref{ch:max_rat_rank} we proved  $ T_3(0) $. In Chapters \ref{ch:truncated_preparation} and \ref{ch:getting_final_forms} we conclude the proof of Theorem \ref{th:gamma_finalization_1_forms} by proving the induction step
$$ T_3(\ell) \Rightarrow T_3(\ell+1) \ , $$
so in particular we will also prove Theorems \ref{th:gamma_finalization_functions} and \ref{th:gamma_preparation_pairs}. However, in Chapter \ref{ch:function_case} we detail the proof of
$$ T_5(\ell) \Rightarrow T_5(\ell+1) \ , $$
since we will use that proof as a guide for the next chapters.

\section{Truncated Local Uniformization of functions}\label{ch:function_case}
Let $ \mathcal{A} = \big( \mathcal{O}, (\boldsymbol{x},\boldsymbol{y}) \big) $ be a parameterized regular local model of $ K,\nu $. Fix an index $ \ell $, $ 0 \leq \ell \leq n-r- 1 $ and a value $ \gamma \in \Gamma $.

In this chapter we consider a function $ F \in R_{\mathcal{A}}^{\ell +1} $ which is not $ \gamma $-final. We assume $ T_5(\ell) $ and we will show $ T_5(\ell+1) $, that is, there is a $ (\ell +1) $-nested transformation $ \mathcal{A} \rightarrow \mathcal{B} $ such that $ F $ is $ \gamma $-final in $ \mathcal{B} $.

\subsection{Truncated preparation of a function}\label{se:truncated_preparation_functions}
Let us denote the dependent variable $ y_{\ell+1} $ by $ z $. Write $ F $ as a power series in the dependent variable $ z $:
$$ F = \sum_{k=0}^{\infty} z^k F_k \ , \quad F_k \in R_{\mathcal{A}}^{\ell} \text{ for } k\geq 0 \ . $$
For each $ k \geq 0  $ denote by $ \phi_k(F;\mathcal{A}) \in \Gamma $ the explicit value
$$ \phi_k(F;\mathcal{A}) := \phi_k(F;\mathcal{A}) \ . $$
The \emph{Cloud of Points of $ F $} is the discrete subset
$$ \operatorname{CL}(F;\mathcal{A}) := \left\{ (\phi_k,k) \ |\ k = 0,1,\dots \right\} \ . $$
Note that
$$ F \neq 0 \Rightarrow \operatorname{CL}(F;\mathcal{A}) \neq \emptyset \ .$$
We also use the \emph{Dominant Cloud of Points of $ F $}
$$ \operatorname{DomCL}(F;\mathcal{A}) := \left\{ (\beta_k,k) \in \operatorname{CL}(F;\mathcal{A}) \ |\ F_k \text{ is dominant} \right\} \ . $$
Note that $ \operatorname{DomCL}(F;\mathcal{A}) $ can be empty. In Figure \ref{fi:clouds_functions} we can see an example in which the points $ (\phi_k,k) $ are represented with black and white circles, corresponding to dominant and non-dominant levels respectively.
\begin{figure}[!ht]
	\centering
	\includegraphics[width=0.8\textwidth]{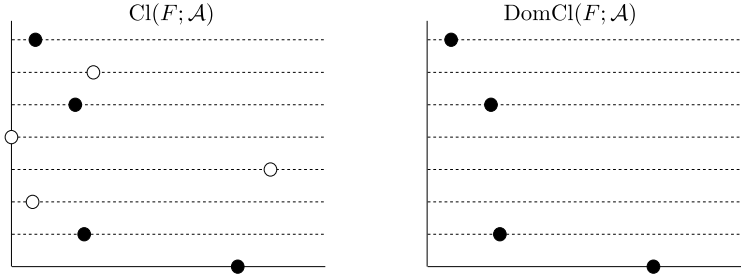}
	\caption{The Cloud of Points and the Dominant Cloud of Points}
	\label{fi:clouds_functions}
\end{figure}
Given a value $ \sigma \in \Gamma $, we define the truncated polygons
$$
\mathcal{N}(F;\mathcal{A},\sigma) \quad \text{and} \quad
\operatorname{Dom}\mathcal{N}(F;\mathcal{A},\sigma)
$$
to be the respective positively convex hulls of
$$
\{(0,\sigma/\nu(z)),(\sigma,0)\} \cup \operatorname{CL}(F;\mathcal{A})
$$
and
$$
\{(0,\sigma/\nu(z)),(\sigma,0)\} \cup \operatorname{DomCL}(F;\mathcal{A}) \ .
$$
Note that for any $ \sigma \in \Gamma $ we have that
$$ \mathcal{N}(F;\mathcal{A},\sigma) \supset \operatorname{Dom}\mathcal{N}(F;\mathcal{A},\sigma) \ . $$
In Figure \ref{fi:truncated_polygons_functions} we can see the truncated polygons corresponding to the cloud of points represented in Figure \ref{fi:clouds_functions}. Note that in this example $ \operatorname{Dom}\mathcal{N}(F;\mathcal{A},\gamma) $ has the vertex $ (0,\gamma/\nu(z)) $ which does not correspond to any level.
\begin{figure}[!ht]
	\centering
	\includegraphics[width=0.8\textwidth]{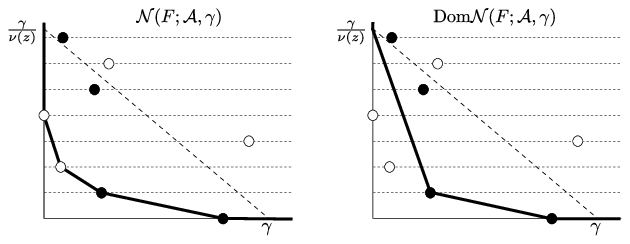}
	\caption{The Truncated Newton Polygon and the Dominant Truncated Newton Polygon}
	\label{fi:truncated_polygons_functions}
\end{figure}

For each $ k \geq 0 $ let us consider the real number
$$
\tau_k(F;\mathcal{A},\gamma) = \min \{ u \ |\ (u,k)\in  \operatorname{Dom}\mathcal{N}(F;\mathcal{A},\gamma) \} \ .
$$
Note that $0\leq \tau_k(F;\mathcal{A},\gamma)\leq\max\{0, \gamma-k\nu(z)\}$.
\begin{defi}
	A function $ F \in R_{\mathcal{A}}^{\ell+1} $ is {\em $\gamma$-prepared in $ \mathcal{A} $} if for any $0\leq k\leq \gamma/\nu(z)$, the level $F_k$ is $\tau_k(F;\mathcal{A},\gamma)$-final.
\end{defi}
The example represented in Figures \ref{fi:clouds_functions} and \ref{fi:truncated_polygons_functions} corresponds to a non $ \gamma $-prepared function.
\begin{prop}\label{pr:gammma_preparation_functions}
	Given a function $ F \in R_{\mathcal{A}}^{\ell+1} $ there is a $ \ell $-nested transformation $ \mathcal{A} \rightarrow \mathcal{B} $ such that $ F $ is $\gamma$-prepared in $\mathcal B$.
	\begin{proof}
		Let $ h $ be the integer part of $ \gamma / \nu(z) $. By $ T_4(\ell) $ there is a $ \ell $-nested transformation $ \mathcal{A} \rightarrow \mathcal{A}_1 $ such that $ F_0 $ is $ \gamma $-final dominant in $ \mathcal{A}_1 $. In the same way, there is a $ \ell $-nested transformation $ \mathcal{A}_1 \rightarrow \mathcal{A}_2 $ such that $ F_1 $ is $ (\gamma-\nu(z)) $-final dominant in $ \mathcal{A}_2 $. By Lemma \ref{le:gamma_final_functions_stable} we know that $ F_0 $ is $ \gamma $-final dominant in $ \mathcal{A}_2 $. After performing a finite number of $ \ell $-nested transformation we obtain a parameterized regular local model $ \mathcal{A}^* $ in which $ F_t $ is $ (\gamma-t\nu(z)) $-final for $ t=0,1,\dots,h $. Finally, performing the $ \ell $-nested transformation $ \Phi_{\ell} : \mathcal{A}^* \rightarrow \mathcal{B} $ given by Lemma \ref{le:push_right}, all the levels $ F_k $ with $ k>h $ becomes $ 0 $-final.
	\end{proof}
\end{prop}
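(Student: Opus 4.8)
The plan is to reduce the $\gamma$-preparation of $F$ to finitely many applications of the induction hypothesis $T_4(\ell)$ to the coefficients of the $z$-adic expansion of $F$, where $z:=y_{\ell+1}$. This rests on two structural facts about $\ell$-nested transformations, which I would record first. A $\ell$-nested transformation $\pi:\mathcal{A}\to\mathcal{A}'$ involves only the variables $x_1,\dots,x_r,y_1,\dots,y_\ell$ and leaves $z$ untouched; hence it carries the decomposition $F=\sum_{k\ge0}z^kF_k$, with $F_k\in R_{\mathcal{A}}^\ell$, to $\pi(F)=\sum_{k\ge0}z^k\pi(F_k)$ with $\pi(F_k)\in R_{\mathcal{A}'}^\ell$, so the levels are transformed independently and can be dealt with one at a time. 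Moreover, by Lemma \ref{le:gamma_final_functions_stable} the property ``$F_k$ is $\sigma$-final'' is preserved under every further $\ell$-nested transformation, so a level that has been prepared stays prepared.

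Next I would set $h:=\lfloor\gamma/\nu(z)\rfloor$ and note that only the levels $F_0,\dots,F_h$, whose points $(\nu_{\mathcal{A}}(F_k),k)$ can lie on or below the truncation segment joining $(0,\gamma/\nu(z))$ and $(\gamma,0)$, enter the definition of $\gamma$-preparedness. Applying $T_4(\ell)$ to $F_0\in R_{\mathcal{A}}^\ell$ with value $\gamma$ yields a $\ell$-nested transformation after which $F_0$ is $\gamma$-final; applying $T_4(\ell)$ to $F_1$ with value $\gamma-\nu(z)$, which by the stability lemma does not spoil $F_0$, makes $F_1$ be $(\gamma-\nu(z))$-final; iterating through $F_2,\dots,F_h$ we reach in finitely many steps a model $\mathcal{A}^{*}$ in which $F_t$ is $(\gamma-t\nu(z))$-final for every $0\le t\le h$.

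It remains to see that this forces $\gamma$-preparedness, after a cosmetic cleanup of the higher levels. For $k\le h$: since $\tau_k(F;\mathcal{A}^{*},\gamma)$ never exceeds the height-$k$ value $\gamma-k\nu(z)$ on the truncation segment, a level that ended up $(\gamma-k\nu(z))$-final recessive satisfies $\nu_{\mathcal{A}^{*}}(F_k)>\gamma-k\nu(z)\ge\tau_k$ and is therefore $\tau_k$-final recessive; a level that ended up $(\gamma-k\nu(z))$-final dominant has its point $(\nu_{\mathcal{A}^{*}}(F_k),k)$ in $\operatorname{DomCL}(F;\mathcal{A}^{*})$, so $\tau_k\le\nu_{\mathcal{A}^{*}}(F_k)$ and $F_k$ is $\tau_k$-final (dominant if equality holds, recessive otherwise). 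Finally, composing with the push-right transformation $\Phi_\ell:\mathcal{A}^{*}\to\mathcal{B}$ of Lemma \ref{le:push_right} strictly raises the explicit value of every still non-dominant level while leaving $F_0,\dots,F_h$ prepared; this makes the remaining levels $0$-final and yields $\mathcal{N}=\operatorname{Dom}\mathcal{N}$ together with the interior-point condition, so $F$ is $\gamma$-prepared in $\mathcal{B}$.

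The only genuinely delicate point is the initial structural observation that the admissible transformations act level-by-level and that finalization of a level is monotone under later transformations; once that is in hand the rest is a finite bookkeeping over the $h+1$ relevant levels, and, crucially, no use of the integrability condition is needed, which is exactly why the function case is simpler than the $1$-form case treated later.
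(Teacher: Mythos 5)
Your proposal is correct and follows essentially the same route as the paper's proof: apply $T_4(\ell)$ successively to the levels $F_0,\dots,F_h$ with $h=[\gamma/\nu(z)]$, rely on Lemma \ref{le:gamma_final_functions_stable} so that already-finalized levels stay final, and conclude with the push-right transformation of Lemma \ref{le:push_right} to handle the levels above $h$. The extra observations you spell out (that $\ell$-nested transformations act level-by-level and that $(\gamma-k\nu(z))$-finality implies $\tau_k$-finality) are exactly the implicit bookkeeping behind the paper's argument, so no genuinely different idea is involved.
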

A $ \ell $-nested transformation $ \mathcal{A} \rightarrow \mathcal{B} $ such that $ F $ is $ \gamma $-prepared in $ \mathcal{B} $ is a \emph{$ \gamma $-preparation for $ F $}.
\begin{rem}
	Note that thanks to Lemma \ref{le:gamma_final_functions_stable} given a $ \gamma $-preparation for $ F $
	$$ \mathcal{A} \rightarrow \mathcal{B} $$
	and any $ \ell $-nested transformation
	$$ \mathcal{B} \rightarrow \mathcal{C} $$
	then the composition
	$$ \mathcal{A} \rightarrow \mathcal{C} $$
	of both $ \ell $-nested transformations is also a $ \gamma $-preparation for $ F $.
\end{rem}
Note that if $ F \in R_{\mathcal{A}}^{\ell+1} $ is $ \gamma $-prepared then have that $ \mathcal{N}(F;\mathcal{A},\gamma) = \operatorname{Dom} \mathcal{N}(F;\mathcal{A},\gamma) $.

\subsection{The critical height of a $ \gamma $-prepared function}\label{se:critical_height_function}
Let $ F \in R_{\mathcal{A}}^{\ell+1}$ be a $ \gamma $-prepared function. Recall that in this situation we have that $ \mathcal{N}(F;\mathcal{A},\gamma) = \operatorname{Dom} \mathcal{N}(F;\mathcal{A},\gamma) $.

The \emph{critical value $ \delta(F;\mathcal{A},\gamma) $} is defined by
$$
	\delta(F;\mathcal{A},\gamma) := \min_{k \geq 0} \left\{ \tau_k(F;\mathcal{A},\gamma) + k \nu(z) \right\} \ .
$$
Note that $ \delta(F;\mathcal{A},\gamma) \leq \gamma $ since $ (0,\gamma) \in \mathcal{N}(F;\mathcal{A},\gamma) $. The critical value can be determined graphically:
$$ \delta(F;\mathcal{A},\gamma) = \min \left\{ \alpha \in \Gamma \ | \mathcal{N}(F;\mathcal{A},\gamma) \cap L_{\nu(z)}(\alpha) \neq \emptyset \right\} $$
where $ L_{\nu(z)}(\alpha) $ stands for the line passing though the point $ (\alpha,0) $ with slope $ -1/\nu(z) $. If no confusion arises we denote the critical value by $ \delta $. 

In the case $ \delta < \gamma $ we say that $ \mathcal{N}(F;\mathcal{A},\gamma) \cap L_{\nu(z)}(\delta) $ is \emph{the critical segment of $ \mathcal{N}(F;\mathcal{A},\gamma) $}. The highest vertex of the critical segment is the \emph{critical vertex}. The height of the critical vertex is the \emph{critical height of $ \mathcal{N}(F;\mathcal{A},\gamma)$} and is denoted by $ \chi(F;\mathcal{A},\gamma) $. This integer number is our main control invariant. It satisfies
$$  0 \leq \chi(F;\mathcal{A},\gamma) \leq \frac{\delta}{\nu(z)} < \frac{\gamma}{\nu(z)} \ . $$
If no confusion arises we denote the critical height by $ \chi $.

Note that if $ \delta(F;\mathcal{A},\gamma) < \gamma $ we have
$$ \delta(F;\mathcal{A},\gamma) \geq \nu_{\mathcal{A}}(F) + \chi(F;\mathcal{A},\gamma) \nu(z) \ , $$
where we have equality if and only if $ \nu_{\mathcal{A}}(F) $ is the abscissa of the critical vertex.
\begin{figure}[!ht]
	\centering
	\includegraphics[width=0.4\textwidth]{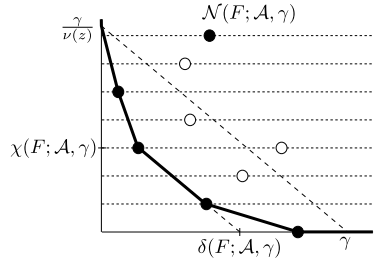}
	\caption{The critical value and the critical height}
	\label{fi:invariants_function}
\end{figure}

\subsection{Pre-$ \gamma $-final functions}\label{se:pre_final_functions}
\begin{defi}
	A $ \gamma $-prepared function $ F \in R_{\mathcal{A}}^{\ell+1} $ is \emph{pre-$ \gamma $-final} if
	$$ \delta(F;\mathcal{A},\gamma) = \gamma $$
	or
	$$ \delta(F;\mathcal{A},\gamma) < \gamma \quad \text{and} \quad \chi(F;\mathcal{A},\gamma) = 0 \ . $$
\end{defi}
Pre-$ \gamma $-final functions are easily recognizable by its Truncated Newton Polygon as it is represented in Figure \ref{fi:pre_gamma_final_function}
\begin{figure}[!ht]
	\centering
	\includegraphics[width=0.8\textwidth]{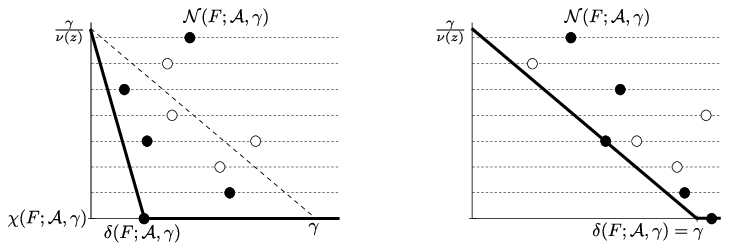}
	\caption{The two pre-$ \gamma $-final situations}
	\label{fi:pre_gamma_final_function}
\end{figure}
Let $ \Psi_{\ell+1} $ be the $ (\ell+1) $-nested transformation given in Lemma \ref{le:push_right}.
\begin{prop}
	Let $ F \in R_{\mathcal{A}}^{\ell+1}$ be a pre-$ \gamma $-final function. Consider the $ (\ell+1) $-nested transformation
	\begin{equation*}
		\xymatrix{\mathcal{A} \ar[r]^{\pi} &  \mathcal{B} \ar[r]^{\Psi_{\ell+1}}  & \mathcal{C}}
	\end{equation*}
	where $ \pi : \mathcal{A} \rightarrow  \mathcal{B} $ is a $ (\ell+1) $-Puiseux's package. Then $ F $ is $ \gamma $-final in $ \mathcal{C} $.
	\begin{proof}
		First, suppose we have $ \delta = \gamma $. In this situation for each index $ k \geq 0 $ we have
		$$ \nu_{\mathcal{A}}(F_k) \geq \gamma - k \nu(z) \ . $$
		Let $ \boldsymbol{x}' $ and $ z' $ be the variables in the parameterized regular local model $ \mathcal{B} $ obtained after perform the $ (\ell+1) $-Puiseux's package. From Equations \eqref{eq:variables_after_Puiseux_package} we know that
		$$ z = \boldsymbol{x}'^{\boldsymbol{\alpha}_0} (z' + \xi)^{\beta_0} \ , \quad \text{with} \ \nu(\boldsymbol{x}'^{\boldsymbol{\alpha}_0}) = \nu(z) \ , $$
		hence
		$$ \nu_{\mathcal{B}}(z^k) = \nu_{\mathcal{B}} \left(\boldsymbol{x}'^{k\boldsymbol{\alpha}_0} (z' + \xi)^{k\beta_0} \right) = k \nu(z) \ . $$
		Therefore, for each $ k \geq 0 $ we have
		$$ \nu_{\mathcal{B}}(z^k F_k) = \nu_{\mathcal{B}}(z^k) + \nu_{\mathcal{B}}(F_k) \geq \gamma \ . $$
		It follows that
		$$ \nu_{\mathcal{B}}(F) \geq \gamma \ . $$
		If $ \nu_{\mathcal{B}}(F) > \gamma $ then $ F $ is $ \gamma $-final recessive in $  \mathcal{B} $, so the same holds in $ \mathcal{C} $ (Lemma \ref{le:gamma_final_functions_stable}). On the other hand, if $ \nu_{\mathcal{B}}(F) = \gamma $, then $ F $ is $ \gamma $-final (dominant or recessive) in $ \mathcal{C} $ (see Lemma \ref{le:push_right}).
		
		Now, suppose $ \delta < \gamma $ and $ \chi = 0 $. In this situation for each index $ k \geq 1 $ we have
		$$ \nu_{\mathcal{A}}(F_k) > \delta - k \nu(z) \ , $$
		hence
		$$ \nu_{\mathcal{B}}(z^k F_k) > \delta \ . $$
		As a consequence we have that
		$$ \nu_{\mathcal{B}}(F-F_0) = \nu_{\mathcal{B}}(F- \sum_{k\geq1}F_k) > \delta \ , $$
		and therefore
		$$ \nu_{\mathcal{C}}(F-F_0) > \delta \ . $$	
		On the other hand we have that $ F_0 $ is dominant in $ \mathcal{A} $ with explicit value $ \delta $, hence
		$$ \nu_{\mathcal{B}}(F_0) =  \nu_{\mathcal{A}}(F_0) = \delta \ . $$
		By Lemma \ref{le:push_right} we know that $ F_0 $ is $ \gamma $-final dominant in $ \mathcal{C} $ with value $ \delta $. These facts imply that $ F $ is also $ \gamma $-final dominant in $ \mathcal{C} $ with value $ \delta $ (note that $ F = F_0 + (F-F_0) $).
	\end{proof}
\end{prop}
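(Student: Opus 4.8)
The plan is to treat separately the two alternatives in the definition of a pre-$\gamma$-final function, reducing each to explicit-value estimates on the levels $F_k$ and then invoking the stability lemmas for $\gamma$-final objects together with the effect of $\Psi_{\ell+1}$ from Lemma~\ref{le:push_right}. Write $z=y_{\ell+1}$ and $F=\sum_{k\ge 0} z^k F_k$ with $F_k\in R_{\mathcal{A}}^{\ell}$. The first ingredient I would record is that, by Equations~\eqref{eq:variables_after_Puiseux_package}, after the $(\ell+1)$-Puiseux's package $\pi$ one has $z=\boldsymbol{x}'^{\boldsymbol{\alpha}_0}(z'+\xi)^{\beta_0}$ with $\nu(\boldsymbol{x}'^{\boldsymbol{\alpha}_0})=\nu(z)$ and $z'+\xi$ a unit, hence $\nu_{\mathcal{B}}(z^k)=k\,\nu(z)$ for every $k\ge 0$. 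This equality, not merely the inequality $\ge$, is what transfers the prepared-polygon estimates from $\mathcal{A}$ to $\mathcal{B}$.

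\emph{Case $\delta(F;\mathcal{A},\gamma)=\gamma$.} Since $F$ is $\gamma$-prepared, every point $(\nu_{\mathcal{A}}(F_k),k)$ of $\operatorname{CL}(F;\mathcal{A})$ lies on or above the line $L_{\nu(z)}(\gamma)$, so $\nu_{\mathcal{A}}(F_k)\ge \gamma-k\,\nu(z)$ for all $k$. By Lemma~\ref{le:gamma_final_functions_stable}, $\nu_{\mathcal{B}}(F_k)\ge\nu_{\mathcal{A}}(F_k)$, whence $\nu_{\mathcal{B}}(z^kF_k)\ge\gamma$ for every $k$ and therefore $\nu_{\mathcal{B}}(F)\ge\gamma$. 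If the inequality is strict, $F$ is $\gamma$-final recessive in $\mathcal{B}$ and stays so in $\mathcal{C}$ by Lemma~\ref{le:gamma_final_functions_stable}. If $\nu_{\mathcal{B}}(F)=\gamma$, then either $F$ is already dominant in $\mathcal{B}$, hence $\gamma$-final dominant there and in $\mathcal{C}$, or it is non-dominant in $\mathcal{B}$, and then Lemma~\ref{le:push_right} gives $\nu_{\mathcal{C}}(F)>\gamma$, so $F$ is $\gamma$-final recessive in $\mathcal{C}$.

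\emph{Case $\delta<\gamma$ and $\chi=0$.} Because $F$ is $\gamma$-prepared one has $\mathcal{N}(F;\mathcal{A},\gamma)=\operatorname{Dom}\mathcal{N}(F;\mathcal{A},\gamma)$, so the critical vertex, sitting at height $0$, comes from a dominant level: $F_0$ is dominant with $\nu_{\mathcal{A}}(F_0)=\delta$, while every point $(\nu_{\mathcal{A}}(F_k),k)$ with $k\ge 1$ lies strictly above $L_{\nu(z)}(\delta)$, i.e.\ $\nu_{\mathcal{A}}(F_k)>\delta-k\,\nu(z)$. Combining with $\nu_{\mathcal{B}}(z^k)=k\,\nu(z)$ and Lemma~\ref{le:gamma_final_functions_stable} gives $\nu_{\mathcal{B}}(z^kF_k)>\delta$ for $k\ge1$, hence $\nu_{\mathcal{B}}(F-F_0)>\delta$ and $\nu_{\mathcal{C}}(F-F_0)>\delta$. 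On the other hand $F_0$ is dominant of explicit value $\delta$ in $\mathcal{A}$, so by Lemma~\ref{le:gamma_final_functions_stable} it remains dominant with $\nu_{\mathcal{C}}(F_0)=\delta$, and since $\delta<\gamma$ it is $\gamma$-final dominant in $\mathcal{C}$. Writing $F=F_0+(F-F_0)$ and using $\nu_{\mathcal{C}}(F-F_0)>\delta=\nu_{\mathcal{C}}(F_0)$, the dominant decomposition $F_0=\boldsymbol{x}^{\boldsymbol{t}}\tilde F_0+\bar F_0$ of $F_0$ extends to a dominant decomposition of $F$, so $F$ is $\gamma$-final dominant in $\mathcal{C}$.

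The argument is mostly bookkeeping once the two cases are isolated; the only delicate point I anticipate is the borderline subcase $\nu_{\mathcal{B}}(F)=\gamma$ in the first case, where one genuinely needs the push-right transformation $\Psi_{\ell+1}$ to separate the recessive and dominant outcomes, and keeping careful track of the fact that the monomial relations from the Puiseux's package give the \emph{exact} value $\nu_{\mathcal{B}}(z^k)=k\,\nu(z)$, which is what makes the prepared-polygon inequalities usable after the transformation.
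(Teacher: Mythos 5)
Your proof is correct and follows essentially the same route as the paper's: the same two-case split, the same use of the Puiseux's package relation $\nu_{\mathcal{B}}(z^k)=k\,\nu(z)$, and the same appeal to the stability and push-right lemmas. Your slightly more explicit handling of the borderline subcase $\nu_{\mathcal{B}}(F)=\gamma$ and of why $F_0$ is the dominant critical level are just expansions of steps the paper leaves terse.
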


\subsection{Getting $ \gamma $-final functions}\label{se:getting_gamma_final_functions}
In this section we will complete the proof of $ T_4(\ell+1) $ by reductio ad absurdum: we suppose that we have a function $ F \in R_{\mathcal{A}}^{\ell+1} $ such that there is no $ (\ell+1) $-nested transformation $ \mathcal{A} \rightarrow \mathcal{B} $ such that $ F $ is pre-$ \gamma $-final in $ \mathcal{B} $ and we will get a contradiction.

Let $ \mathcal{A} $ be a parameterized regular local model and let $ F \in R_{\mathcal{A}}^{\ell+1} $ be a function. Assume
\begin{enumerate}
	\item $ F $ is $ \gamma $-prepared;
	\item for any $ (\ell+1) $-nested transformation $ \mathcal{A} \rightarrow \mathcal{B} $ we have that $ F $ is not pre $ \gamma $-final in $ \mathcal{B} $.
\end{enumerate}
For each index $ k \geq 0 $ let us write
$$ F_k = \boldsymbol{x}^{\boldsymbol{q}_k} \tilde{F}_k + \bar{F}_k \ , \quad \nu_{\mathcal{A}}(\bar{F}_k) > \nu(\boldsymbol{x}^{\boldsymbol{q}_k})  \ , $$
where we require $ \tilde{F}_k \in k[[y_1,y_2,\dots,y_{\ell}]] $. We have
$$ \nu_{\mathcal{A}}(F_{\chi}) = \nu(\boldsymbol{x}^{\boldsymbol{q}_{\chi}}) = \delta - \chi \nu(z) \ . $$ 

Let $ \phi \in K $ be the $ (\ell+1) $-contact rational function $ \phi_{\ell+1} = z^{d} / \boldsymbol{x^p} $, where $ d $ is the ramification index $ d(z;\mathcal{A}) $ (see Section \ref{se:Puiseux's_packages}).

Now, consider a level $ F_k $ which gives a point in the critical segment. We have that
$$ \nu_{\mathcal{A}}(F_k) = \nu(\boldsymbol{x}^{\boldsymbol{q}_{k}}) = \delta - k \nu(z) = \nu(\boldsymbol{x}^{\boldsymbol{q}_{\chi}}) + (\chi-k) \nu(z) \ . $$
Therefore, the index $ k $ must be of the form $ k = \chi - td $ for some integer $ 0 \leq t \leq \chi/d $. Following Remark \ref{re:monomial_after_Puiseux_package}, we have that
$$ \boldsymbol{x}^{\boldsymbol{q}_{\chi - td}} = \boldsymbol{x}^{\boldsymbol{q}_{\chi} + t \boldsymbol{p}} $$
hence
$$ z^{\chi-td}\tilde{F}_{\chi - td} = \boldsymbol{x}^{\boldsymbol{q}_{\chi}} z^{\chi} \phi^{-t} \tilde{F}_{\chi -td} \ . $$
Denote by $ M $ the integer part of $ \chi/d $. For $ 0 \leq t \leq M $ define the functions $ G_t \in k[[y_1,y_2,\dots,y_{\ell}]] $ given by
$$ G_t = \left\{
	\begin{array}{ll}
		\tilde{F}_{\chi - td} & \text{if } F_{\chi - td} \text{ gives a point in the critical segment} ; \\
		0 & \text{otherwise} \ .
	\end{array} \right. $$
Note that if $ G_t \neq 0 $ then it is a unit. For $ t=0,1,\dots,M $ write
$$ G_t = \tilde{G}_t + \bar{G}_t \ , \quad \tilde{G}_t \in k \ , \bar{G}_t \in \mathfrak{m} k[[y_1,y_2,\dots,y_{\ell}]] \ . $$

Let $ \tilde{F}_{\text{crit}} $  and $ \bar{F}_{\text{crit}} $ be the functions given by
$$ \tilde{F}_{\text{crit}} := \boldsymbol{x}^{\boldsymbol{q}_{\chi}} z^{\chi} \sum_{t=0}^{M} \phi^{-t} \tilde{G}_t
\quad \text{and} \quad
\bar{F}_{\text{crit}} := \boldsymbol{x}^{\boldsymbol{q}_{\chi}} z^{\chi} \sum_{t=0}^{M} \phi^{-t} \bar{G}_t \ . $$
Note that we have
\begin{equation}\label{eq:critical_tail_dependent_variables_1}
	\bar{F}_{\text{crit}} \in (y_1,y_2,\dots,y_{\ell}) R_{\mathcal{A}}^{\ell+1} \ .
\end{equation}
Denote by $ \breve{F} $ the function
$$ \breve{F} := F - \tilde{F}_{\text{crit}} - \bar{F}_{\text{crit}} \ . $$
Now we will study the behavior of $ F $ after performing a $ (\ell+1) $-nested transformation of the kind
$$ \xymatrix{\mathcal{A} \ar[r]^{\pi} &  \tilde{\mathcal{B}} \ar[r]^{\tau}  & \mathcal{B}} $$
where $ \pi:\mathcal{A} \rightarrow \tilde{\mathcal{B}} $ is a $ (\ell+1) $-Puiseux's package and $ \tau : \tilde{\mathcal{B}} \rightarrow \mathcal{B} $ is a $ \gamma $-preparation.

Perform a $ (\ell+1) $-Puiseux's package $ \mathcal{A} \rightarrow \tilde{\mathcal{B}} $ and let $ (\tilde{\boldsymbol{x}},\tilde{\boldsymbol{y}},\tilde{z}) $ be the coordinates in the parameterized regular local model $ \tilde{\mathcal{B}} $. We have
$$ \tilde{F}_{\text{crit}} = \tilde{\boldsymbol{x}}^{\boldsymbol{r}} \phi^e \sum_{t = 0}^{M} \phi^{-t} \tilde{G}_t
\quad \text{and} \quad
\bar{F}_{\text{crit}} = \tilde{\boldsymbol{x}}^{\boldsymbol{r}} \phi^e \sum_{t = 0}^{M} \phi^{-t} \bar{G}_t  \ , $$
where $ \nu(\tilde{\boldsymbol{x}}^{\boldsymbol{r}}) = \delta $. The exponents $ \boldsymbol{r} \in \mathbb{Z}_{\geq 0}^r $ and $ e \in \mathbb{Z}_{> 0} $ can be determined using the equalities given in \eqref{eq:variables_after_Puiseux_package}. Recall that $ \phi = \tilde{z} + \xi $ is a unit in $ R_{\tilde{\mathcal{B}}}^{\ell + 1} $. We can rewrite the above expressions as
$$ \tilde{F}_{\text{crit}} = \tilde{\boldsymbol{x}}^{\boldsymbol{r}} U \sum_{t = 0}^{M} (\tilde{z} + \xi)^{M-t} \tilde{G}_t
\quad \text{and} \quad
\bar{F}_{\text{crit}} = \tilde{\boldsymbol{x}}^{\boldsymbol{r}} U \sum_{t = 0}^{M} (\tilde{z} + \xi)^{M-t} \bar{G}_t  \ , $$
where $ U = \phi^{e-M} $ is a unit. Note that we have
\begin{equation}\label{eq:new_explicit_value_function_1}
	\nu_{\tilde{\mathcal{B}}}(\tilde{F}_{\text{crit}}) = \nu_{\tilde{\mathcal{B}}}(\bar{F}_{\text{crit}}) = \delta \ .
\end{equation}
On the other hand, it follows by construction that
\begin{equation}\label{eq:new_explicit_value_function_2}
	\nu_{\tilde{\mathcal{B}}}(\breve{F}) > \delta \ .
\end{equation}
Note that Equation \eqref{eq:critical_tail_dependent_variables_1} gives
\begin{equation}\label{eq:critical_tail_dependent_variables_2}
	\bar{F}_{\text{crit}} \in (\tilde{y}_1,\tilde{y}_2,\dots,\tilde{y}_{\ell}) R_{\tilde{\mathcal{A}}}^{\ell+1} \ .
\end{equation}
Let $ Q \in k[\tilde{z}] $ be the polynomial
$$ Q = \sum_{t = 0}^{M} \tilde{G}_t (\tilde{z} + \xi)^{M-t} \ , $$
and denote by $ \hbar $ its order. Note that $ \hbar \leq M \leq \chi $ and
\begin{eqnarray}\label{eq:resonance_condition_functions}
	\hbar = M & \Longleftrightarrow & Q = \tilde{G}_0 \tilde{z}^M  \nonumber \\
	& \Longleftrightarrow & \tilde{G}_t = (-1)^t \xi^t \binom{\chi}{t} \mu_0 \quad \text{for } 1 \leq t \leq M \ .
\end{eqnarray}
From Equations \eqref{eq:new_explicit_value_function_1}, \eqref{eq:new_explicit_value_function_2} and \eqref{eq:critical_tail_dependent_variables_1} we have that the $ \hbar $-level of $ F $ in $ \tilde{\mathcal{B}} $ is dominant.

Now, perform a $ \gamma $-preparation $ \tilde{\mathcal{B}} \rightarrow \mathcal{B} $. Let $ \delta' $ be the critical value of $ F $ in $ \mathcal{B} $. By assumption we have $ \delta' < \gamma $. Let $ \chi' $ be the new critical height.

Since the $ \hbar $-level of $ F $ in $ \tilde{\mathcal{B}} $ is dominant, the same happens in $ \mathcal{B} $ (Lemma \ref{le:gamma_final_functions_stable}). We also have that
$$ \nu_{\mathcal{B}}(F) = \nu_{\tilde{\mathcal{B}}}(F) = \nu_{\tilde{\mathcal{B}}}(\tilde{F}_{\text{crit}} + \bar{F}_{\text{crit}} + \breve{F}) = \delta \ . $$
We conclude that
\begin{equation}\label{eq:new_critical_height_functions}
	\chi' \leq \hbar \leq M = \left[\frac{\chi}{d}\right] \leq \chi \ .
\end{equation}
Inequality \eqref{eq:new_critical_height_functions} gives
$$ \chi' < \chi $$
except in the case when $ d = 1 $ and the condition about the coefficients of $ \tilde{F}_{\text{crit}} $ given in \eqref{eq:resonance_condition_functions} is satisfied. Note that we have
\begin{equation}\label{eq:critical_main_vertex_functions}
	\chi = \chi' \Longrightarrow \nu_{\mathcal{B}}(F)=\delta = \delta' - \chi \nu(z') \ ,
\end{equation}
where $ z' $ is the $ (\ell + 1) $-th dependent variable in $ \mathcal{B} $.

Suppose that $ \chi' = \chi $. In this situation instead of performing the $ (\ell+1) $-nested transformation
$$ \xymatrix{\mathcal{A} \ar[r]^{\pi} &  \tilde{\mathcal{B}} \ar[r]^{\tau}  & \mathcal{B}} $$
we will make an ordered change of the variable $ z $.

So we have a parameterized regular local model $ \mathcal{A} $ with $ d = d(z;\mathcal{A}) = 1 $ and a function $ F \in R_{\mathcal{A}}^{\ell+1} $ with critical height $ \chi $ and such that the coefficients of $ \tilde{F}_{\text{crit}} $ satisfy the condition given in \eqref{eq:resonance_condition_functions}. Furthermore, following Equation \eqref{eq:critical_main_vertex_functions}, after performing a $ (\ell+1) $-Puiseux's package and a $ \gamma $-preparation if necessary, we can assume that
$$ \nu_{\mathcal{A}}(F) = \nu_{\mathcal{A}}(F_{\chi}) \ . $$
Moreover, after performing a $ 0 $-nested transformation given by Lemma \ref{le:simple_infinite_list} if necessary, we can suppose that $ F_{\chi} $ divides $ F $. So, after factoring $ F_{\chi} $, we can assume that
$$ \nu_{\mathcal{A}}(F) = \nu_{\mathcal{A}}(F_{\chi}) = 1 \quad \text{and} \quad F_{\chi} = 1 \ . $$ 
Since $ F $ is $ \gamma $-prepared, the level at height $ (\chi-1) $ has the form
$$ F_{\chi-1} = \boldsymbol{x}^{\boldsymbol{p}} \tilde{F}_{\chi-1} + \bar{F}_{\chi-1} \ , \quad \nu_{\mathcal{A}}(\bar{F}_{\chi-1})> \nu(\boldsymbol{x}^{\boldsymbol{p}}) \ , $$
where $ \tilde{F}_{\chi-1} $ is a unit which does not depend on the independent variables $ \boldsymbol{x} $. Let us write $ \tilde{F}_{\chi-1} $ as a power series
$$ \tilde{F}_{\chi-1} = \sum_{(I,J)\in \mathbb{Z}^{r+\ell}_{\geq 0}} f_{IJ} \boldsymbol{x}^I \boldsymbol{y}^J \ , \quad f_{IJ} \in k \ . $$
Denote
$$ \tilde{F}_{\chi-1} = G + H $$
where $ G \in k[\boldsymbol{x},\boldsymbol{y}] \subset R_{\mathcal{A}}^{\ell} $ is the polynomial
$$ G = \sum_{\substack{(I,J)\in \mathbb{Z}^{r+\ell}_{\geq 0} \\ \nu(\boldsymbol{x}^I \boldsymbol{y}^J) \leq 2 \nu(z) }} f_{IJ} \boldsymbol{x}^I\boldsymbol{y}^J \  \ . $$
Since the coefficients of $ F $ satisfy the conditions given in \eqref{eq:resonance_condition_functions} we have
$$ G = - \xi \chi \boldsymbol{x}^{\boldsymbol{p}} + \cdots \ . $$
Note that
\begin{equation}\label{eq:explicit_value_Tschirhausen_functions}
	\nu_{\mathcal{A}}(\tilde{F}_{\chi-1}) = \nu_{\mathcal{A}}(G) = \nu(z) \leq \nu_{\mathcal{A}}(H) \ .
\end{equation}
Now consider the ordered change of coordinates
$$ \tilde{z} = z - \phi \ , \quad \text{where} \quad \phi:= \frac{-1}{\chi}G \ , $$
and let $ \tilde{\mathcal{A}} $ be the parameterized regular local model obtained. Note that
$$ \nu(\tilde{z}) \geq \nu(z) \ . $$
We have
$$ F = \sum_{k=0}^{\infty} z^k F_k =\sum_{k=0}^{\infty} \left( \tilde{z} + \phi \right)^k F_k = \sum_{k=0}^{\infty} \tilde{z}^k F'_k \ , $$
where
$$ F'_k = F_k + \sum_{i=1}^{\infty} \binom{k+i}{i}  \phi^i F_{k+i} \ . $$
So the $ (\chi-1) $-level of $ F $ in $ \tilde{\mathcal{A}} $ is
\begin{equation}\label{eq:coefficient_after_Tschirhausen_functions}
	F'_{\chi-1} = F_{\chi-1} + \chi \phi F_{\chi} + \phi^2 (\cdots) = G + H - G + \phi^2 (\cdots) = H + \phi^2 (\cdots) \ .
\end{equation}
In $ \tilde{\mathcal{A}} $ the function $ F $ is not necessarily $ \gamma $-prepared so let $ \tilde{\mathcal{A}} \rightarrow \mathcal{A}_1 $ a $ \gamma $-preparation.

It follows from the definition of $ H $ and Equations \eqref{eq:explicit_value_Tschirhausen_functions} and \eqref{eq:coefficient_after_Tschirhausen_functions} that
$$ \nu_{\mathcal{A}_1}(F'_{\chi-1}) \geq 2 \nu(z) \ . $$
Note also that we still have
$$ \nu_{\mathcal{A}_1}(F) = \nu_{\mathcal{A}_1}(F'_{\chi}) = 0 \ . $$
Let $ z_1 $ be the $ (\ell+1) $-th dependent variable in $ \mathcal{A}_1 $. We have that
$$ \chi(F;\mathcal{A}_1,\gamma) \leq \chi(F;\mathcal{A},\gamma) \ . $$
Furthermore, we have
\begin{equation}\label{eq:value_variable_after_Tschirhausen_functions}
	\chi(F;\mathcal{A}_1,\gamma) = \chi(F;\mathcal{A},\gamma) \Longrightarrow \nu(z_1) = \nu_{\mathcal{A}_1}(F'_{\chi-1}) \geq 2 \nu(z) \ .
\end{equation}
Now, we can perform a $ z_1 $-Puiseux's package. If the critical height does not drop, instead of performing a $ z_1 $-Puiseux's package we make an ordered change of coordinates as above. We iterate this procedure while the critical height does not drop. At each step we obtain a parameterized regular local model $ \mathcal{A}_i $. By Equation \eqref{eq:value_variable_after_Tschirhausen_functions} we know that the $ (\ell+1) $-th dependent variable $ z_i $ satisfies
$$ \nu(z_i) \geq 2^i \nu(z) \ . $$
This can not happen infinitely many times, since in a finite number of steps we reach a parameterized regular local model $ \mathcal{A}_{i_0} $ such that
$$ \nu(z_{i_0}) \geq \frac{\gamma - \nu_{\mathcal{A}}(F)}{\chi} = \frac{\gamma - \nu_{\mathcal{A}_{i_0}}(F)}{\chi}\ . $$
The above inequality implies that $ \delta(F;\mathcal{A}_{i_0},\gamma) = \gamma $ which is in contradiction with our assumptions.

Then, after a finite number of ordered changes of the $ (\ell+1) $-th variable and $ \gamma $-preparations of $ F $, necessarily we reach a parameterized regular local model in which the critical height drops by means of a  $ (\ell+1) $-Puiseux's package. Again, this can not happen infinitely many times since we are assuming that the critical height is strictly positive.

We have just proved that our assumptions give a contradiction, so there is always a $ (\ell +1) $-nested transformation which transform a function into a $ \gamma $-final one. Thus, we have prove that
$$ T_4(\ell) \Longrightarrow T_4(\ell+1) \ . $$

\section{Truncated preparation of a $ 1 $-form}\label{ch:truncated_preparation}
In this chapter and the next one we will detail the proof of
$$ T_3(\ell) \Longrightarrow T_3(\ell+1) \ . $$
We will adapt the arguments used in Chapter \ref{ch:function_case} to the case of $ 1 $-forms. As the name of the chapter says, this chapter is the equivalent for $ 1 $-forms of the Section \ref{se:truncated_preparation_functions}. 

Let $ \mathcal{A} = \big( \mathcal{O}, (\boldsymbol{x},\boldsymbol{y}) \big) $ be a parameterized regular local model of $ K,\nu $. Fix an index $ \ell $, $ 0 \leq \ell \leq n-r- 1 $ and a value $ \gamma \in \Gamma $. We consider a 1-form $\omega \in N_{\mathcal{A}}^{l+1}$ such that $ \nu_{\mathcal{A}}(\omega \wedge d \omega) \geq 2 \gamma $. Since we are working by induction on $\ell$, we assume that the statement $T_3(\ell)$ is true (hence $T_4(\ell)$ and $T_5(\ell)$ are also true).

\subsection{Expansions relative to a dependent variable}\label{se:level_expansion}
Let us denote the dependent variable $ y_{l+1} $ by $ z $. Note that by definition
$$ R_{\mathcal{A}}^{l+1} = R_{\mathcal{A}}^{\ell}[[z]] \ . $$
Thus we can expand an element $ F \in R_{\mathcal{A}}^{l+1} $ as a power series in the variable $ z $:
$$ F = \sum_{k=0}^{\infty} F_k z^k \quad , \ F_k \in R_{\mathcal{A}}^{\ell} . $$
Take an element of $ N_{\mathcal{A}}^{l+1} $
$$ \omega = \sum_{i=1}^{r} a_i \frac{dx_i}{x_i} + \sum_{j=1}^{\ell} b_j dy_j + c dz \ . $$
Write
$$ \omega = \sum_{i=1}^{r} a_i \frac{dx_i}{x_i} + \sum_{j=1}^{\ell} b_j dy_j + f \frac{dz}{z} \ , $$
where $ f = zc $. The \textit{decomposition in $ z $-levels of $ \omega $} of consists in writing $ \omega $ as
\begin{equation}\label{eq:levels_decomposition}
	\omega = \sum_{k=0}^{\infty} z^{k} \omega_k = \sum_{k=0}^{\infty} z^{k} \Big( \sum_{i=1}^{r} a_{ik} \frac{dx_i}{x_i} + \sum_{j=1}^{\ell} b_{jk} dy_j + f_k \frac{dz}{z} \Big)  \ .
\end{equation}
where
$$ a_i = \sum_{k=0}^{\infty} a_{ik} z^k \ , \ b_j = \sum_{k=0}^{\infty} b_{jk} z^k \ \text{ and } \ f = \sum_{k=1}^{\infty} f_{0} z^k \ . $$
Note that $ f_0 = 0 $. We say that $ \omega_k $ is \emph{the $ k $-level of $ \omega $}.
\begin{rem}\label{re:levels_structure}
	The coefficients of each $ z $-level $ \omega_k $ are elements of $ R_{\mathcal{A}}^{\ell} $, but $ \omega_k $ itself belongs neither to $ N_{\mathcal{A}}^{\ell} $ nor to $ N_{\mathcal{A}}^{l+1} $. The $ z $-levels $ \omega_k $ belong to the $ R_{\mathcal{A}}^{\ell} $-module $ N_{\mathcal{A}}^{\ell} \oplus R_{\mathcal{A}}^{\ell} \frac{dz}{z} $. We will write
	$$ \omega_k = \eta_k + f_k \frac{dz}{z} \in N_{\mathcal{A}}^{\ell} \oplus R_{\mathcal{A}}^{\ell} \frac{dz}{z} \ , \ \forall k \geq 0 \ , $$
	where we denote by $ \eta_k \in N_{\mathcal{A}}^{\ell} $ the $ 1 $-forms
	$$ \eta_k := \sum_{i=1}^{r} a_{ik} \frac{dx_i}{x_i} + \sum_{j=1}^{\ell} b_{jk} dy_j \ , \ \forall k \geq 0 \ . $$
\end{rem}
\noindent To each level we can attach a pair
$$ \omega_k = \eta_k + f_k \frac{dz}{z} \longmapsto (\eta_k,f_k) \in N_{\mathcal{A}}^{\ell} \times R_{\mathcal{A}}^{\ell} \ . $$
Denote by $ \delta_k(\omega;\mathcal{A}),\phi_k(\omega;\mathcal{A}), \beta_k(\omega;\mathcal{A}) \in \Gamma \cup \{\infty\} $ the explicit values
\begin{eqnarray*}
	\delta_k(\omega;\mathcal{A}) & := & \nu_{\mathcal{A}}(\eta_k) \ , \\ 
	\phi_k(\omega;\mathcal{A}) & := & \nu_{\mathcal{A}}(f_k) \ , \\
	\beta_k(\omega;\mathcal{A}) & := & \nu_{\mathcal{A}}(\eta_k,f_k) = \min \left\{ \phi_k(\omega;\mathcal{A}),\eta_k(\omega;\mathcal{A})  \right\} \ .
\end{eqnarray*}
The value $ \beta_k(\omega;\mathcal{A}) $ is the \emph{explicit value of $ \omega_k $}. If no confusion arises we denote $ \delta_k(\omega;\mathcal{A}) $, $ \phi_k(\omega;\mathcal{A}) $ and $ \beta_k(\omega;\mathcal{A}) $ by $ \delta_k $, $ \phi_k $ and $ \beta_k $ respectively. Given $ \alpha \in \Gamma $ we say that the level $ \omega_k $ is \emph{$ \alpha $-final} (\emph{final dominant}, \emph{final recessive}) if and only if the pair $ (\eta_k,f_k) $ is $ \alpha $-final (final dominant, final recessive). In particular, we say that a level $ \omega_k $ is \emph{log-elementary} if it is $ 0 $-final dominant, and that it is \emph{dominant} if it is $ \beta_k $-final dominant. 

From Lemmas \ref{le:gamma_final_functions_stable}, \ref{le:gamma_final_1_forms_stable} and \ref{le:gamma_final_pairs_stable} we obtain the following property of stability of $ \delta_k $, $ \phi_k $ and $ \beta_k $ under any $ \ell $-nested transformation:
\begin{quote}
	{\bf Property of stability of levels}. For any $ \ell $-nested transformation ${\mathcal A}\rightarrow {\mathcal A}'$  and any $k\geq 0$, we have that $ \delta_k' \geq \delta_k $, $ \phi_k' \geq \phi_k $ and $ \beta_k' \geq \beta_k $.
\end{quote}
In addition we have stability for dominant levels:
\begin{quote}
	{\bf Property of stability for dominant levels}. Given a dominant level $\omega_k$ and any $ \ell $-nested transformation $ {\mathcal A} \rightarrow {\mathcal A}'$, the transformed level $\omega'_k$ is also dominant and  $\beta_k'=\beta_k$.
\end{quote}

\subsection{Truncated Newton polygons and prepared $ 1 $-forms}
Using the values defined in the previous section we define certain subsets of $ \Gamma_{\geq 0} \times \mathbb{Z}_{\geq 0} \subset \mathbb{R}_{\geq 0}^{2} $. The \emph{Cloud of Points of $ \omega $} is the discrete subset
$$ \operatorname{CL}(\omega;\mathcal{A}) := \left\{ (\beta_k,k) \ |\ k = 0,1,\dots \right\} \ . $$
Note that
$$ \omega \neq 0 \Rightarrow \operatorname{CL}(\omega;\mathcal{A}) \neq \emptyset \ .$$
We also use the \emph{Dominant Cloud of Points of $ \omega $}
$$ \operatorname{DomCL}(\omega;\mathcal{A}) := \left\{ (\beta_k,k) \in \operatorname{CL}(\omega;\mathcal{A}) \ |\ \omega_k \text{ is dominant} \right\} \ . $$
Note that $ \operatorname{DomCL}(\omega;\mathcal{A}) $ can be empty. In Figure \ref{fi:clouds} we can see an example in which the points $ (\beta_k,k) $ are represented with black and white circles, corresponding to dominant and non-dominant levels respectively.
\begin{figure}[!ht]
	\centering
	\includegraphics[width=0.8\textwidth]{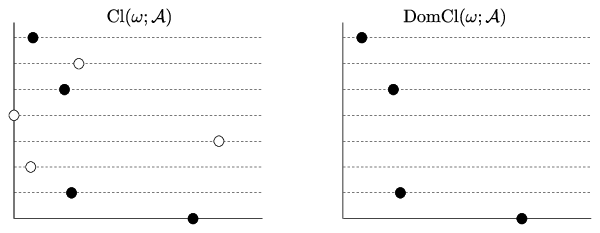}
	\caption{The Cloud of Points and the Dominant Cloud of Points}
	\label{fi:clouds}
\end{figure}

\noindent Given a value $ \sigma \in \Gamma $, we define the truncated polygons
$$
\mathcal{N}(\omega;\mathcal{A},\sigma) \quad \text{and} \quad
\operatorname{Dom}\mathcal{N}(\omega;\mathcal{A},\sigma)
$$
to be the respective positively convex hulls of
$$
\{(0,\sigma/\nu(z)),(\sigma,0)\} \cup \operatorname{CL}(\omega;\mathcal{A})
$$
and
$$
\{(0,\sigma/\nu(z)),(\sigma,0)\} \cup \operatorname{DomCL}(\omega;\mathcal{A}) \ .
$$
Note that for any $ \sigma \in \Gamma $ we have that
$$ \mathcal{N}(\omega;\mathcal{A},\sigma) \supset \operatorname{Dom}\mathcal{N}(\omega;\mathcal{A},\sigma) \ . $$
In Figure \ref{fi:truncated_polygons} we can see the truncated polygons corresponding to the cloud of points represented in Figure \ref{fi:clouds}. Note that in this example $ \operatorname{Dom}\mathcal{N}(\omega;\mathcal{A},\gamma) $ has the vertex $ (0,\gamma/\nu(z)) $ which does not correspond to any level.
\begin{figure}[!ht]
	\centering
	\includegraphics[width=0.8\textwidth]{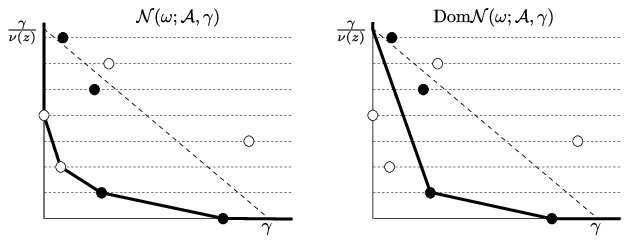}
	\caption{The Truncated Newton Polygon and the Dominant Truncated Newton Polygon}
	\label{fi:truncated_polygons}
\end{figure}

For each $ k \geq 0 $ let us consider the real number
$$
\tau_k(\omega;\mathcal{A},\gamma) := \min \{ u \ |\ (u,k)\in  \operatorname{Dom}\mathcal{N}(\omega;\mathcal{A},\gamma) \} \ .
$$
Note that $0\leq \tau_k(\omega;\mathcal{A},\gamma)\leq\max\{0, \gamma-k\nu(z)\}$.
\begin{defi}
	We say that  $\omega$ is {\em $\gamma$-prepared } in $\mathcal A$ if and only if the level $\omega_k$ is $\tau_k(\omega;\mathcal{A},\gamma)$-final for any $0\leq k\leq \gamma/\nu(z)$.
\end{defi}
The example represented in Figures \ref{fi:clouds} and \ref{fi:truncated_polygons} corresponds to a non $ \gamma $-prepared $ 1 $-form.
\begin{rem}\label{re:gamma_preparation_polygons}
	Note that being $ \gamma $-prepared implies that $ \mathcal{N}(\omega;\mathcal{A},\gamma) = \operatorname{Dom\mathcal{N}} $. Conversely, if we have that $ \mathcal{N}(\omega;\mathcal{A},\gamma) = \operatorname{Dom\mathcal{N}} $ to assure that $\omega$ is $\gamma$-prepared it is enough to guarantee that $ \beta_k > \tau_k $ for any level $\omega_k$ which is not $\tau_k$-dominant. This last condition can be obtained applying Lemma \ref{le:push_right}.
\end{rem}
The objective of this chapter is to prove the following result
\begin{theo}[Existence of $ \gamma $-preparation]\label{th:gammma_preparation}
	Let $ \omega \in N_{\mathcal{A}}^{\ell+1} $ be a $ 1 $-form such that $ \nu_{\mathcal{A}}(\omega \wedge d \omega) \geq 2 \gamma $. There is a $ \ell $-nested transformation $ \mathcal{A} \rightarrow \mathcal{B} $ such that $ \omega $ is $\gamma$-prepared in $\mathcal B$.
\end{theo}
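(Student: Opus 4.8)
The goal is to prove Theorem~\ref{th:gammma_preparation}, the existence of a $\gamma$-preparation for a $1$-form $\omega\in N_{\mathcal A}^{\ell+1}$ with $\nu_{\mathcal A}(\omega\wedge d\omega)\ge 2\gamma$. The plan is to mimic the structure of the proof of Proposition~\ref{pr:gammma_preparation_functions}, but since the ``coefficients'' $\omega_s$ of the level decomposition \eqref{eq:levels_decomposition} do not lie in $N_{\mathcal A}^\ell$ (Remark~\ref{re:levels_structure}), we work with the attached pairs $(\eta_s,f_s)\in N_{\mathcal A}^\ell\times R_{\mathcal A}^\ell$ and apply $T_5(\ell)$, which is available by the induction hypothesis.

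First I would carry out the ``easy'' reduction exactly as in the functional case: using $T_5(\ell)$ level by level (for the finitely many levels $k$ with $k\le\gamma/\nu(z)$), together with the stability Lemmas~\ref{le:gamma_final_functions_stable}, \ref{le:gamma_final_1_forms_stable}, \ref{le:gamma_final_pairs_stable} and the ``push right'' Lemma~\ref{le:push_right} for the levels above $\gamma/\nu(z)$, obtain a $\ell$-nested transformation after which $\operatorname{DomCL}(\omega;\mathcal A)$, hence $\operatorname{Dom}\mathcal{N}(\omega;\mathcal A,\gamma)$, is stable under all further $\ell$-nested transformations. At this point $\mathcal N\supset\operatorname{Dom}\mathcal N$ and, by Remark~\ref{re:gamma_preparation_polygons}, it only remains to bring $\mathcal N$ down onto $\operatorname{Dom}\mathcal N$ (after which a final application of Lemma~\ref{le:push_right} settles the interior-point condition). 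The subtlety absent in the functional case is that to apply $T_5(\ell)$ to a level $\omega_s=\eta_s+f_s\frac{dz}{z}$ one needs a bound on $\nu_{\mathcal A}(\eta_s\wedge d\eta_s)$, which is not directly given. Here I would use the expansion
$$
\omega\wedge d\omega=\sum_{m\ge0}z^m\Big(\Theta_m+\frac{dz}{z}\Delta_m\Big),\qquad
\Theta_m=\sum_{i+j=m}\eta_i\wedge d\eta_j,
$$
so that $\nu_{\mathcal A}(\omega\wedge d\omega)\ge2\gamma$ forces $\nu_{\mathcal A}(\Theta_m)\ge2\gamma$ for all $m$; combined with $\nu_{\mathcal A}(d\sigma)\ge\nu_{\mathcal A}(\sigma)$, isolating the $m=2s$, $i=j=s$ term yields
$$
\nu_{\mathcal A}(\eta_s\wedge d\eta_s)\ \ge\ \min\Big(\{2\gamma\}\cup\{\nu_{\mathcal A}(\eta_{s-i})+\nu_{\mathcal A}(\eta_{s+i})\}_{1\le i\le s}\Big).
$$
This is exactly the quantity needed to feed $T_5(\ell)$ into the lower level once the strictly lower levels have already been pushed up, so the levels must be treated in increasing order of $k$, restarting from the bottom and repeating.

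The heart of the argument, and the main obstacle, is that this iterative application of $T_5(\ell)$ need not terminate in a single sweep: it only lets one approach $\mathcal N$ to $\operatorname{Dom}\mathcal N$ within a prescribed distance (the content of the $\epsilon$-type Lemma~\ref{le:epsilon} referenced in the introduction), in particular making every \emph{vertex} of $\operatorname{Dom}\mathcal N$ a vertex of $\mathcal N$, but leaving possibly non-dominant points of $\mathcal N$ lying on the sides of $\operatorname{Dom}\mathcal N$ strictly outside. I would therefore prove the needed approximation lemma first (a convexity/well-ordering argument on the finitely many relevant heights), and then close the gap on each side of $\operatorname{Dom}\mathcal N$ using a ``truncated proportionality'' statement: on a side of $\operatorname{Dom}\mathcal N$ there is a dominant, hence log-elementary (up to a monomial factor), level $\omega_{k_0}$, and the truncated De~Rham--Saito lemma quoted in the introduction lets one write any other level on that side as $f\,\eta_{k_0}+\bar\sigma$ with $\nu_{\mathcal A}(\bar\sigma)$ strictly larger, i.e. one subtracts off the ``resonant'' part and raises the explicit value of the offending level. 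Care is needed that these proportionality corrections are realized by genuine $\ell$-nested transformations (changes of the $y_j$, $j\le\ell$, and $0$-nested transformations) and interact correctly with the $dz/z$-component $f_k$ of each level, which is why $T_5(\ell)$ rather than $T_3(\ell)$ is the right inductive tool. Finally, once $\mathcal N=\operatorname{Dom}\mathcal N$, one last use of Lemma~\ref{le:push_right} makes every non-dominant level interior, giving a $\gamma$-prepared $1$-form in the resulting $\mathcal B$; since the whole process is a finite composition of $\ell$-nested transformations, the proof is complete.
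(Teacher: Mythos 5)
Your overall architecture matches the paper's: first reductions via $T_4(\ell)$/$T_5(\ell)$, stability and Lemma~\ref{le:push_right}; the estimate $\nu_{\mathcal A}(\eta_s\wedge d\eta_s)\geq\min\{2\gamma,\nu_{\mathcal A}(\eta_{s-i})+\nu_{\mathcal A}(\eta_{s+i})\}$ extracted from $\Theta_{2s}$; the $\epsilon$-approximation Lemma~\ref{le:epsilon} making the vertices of $\operatorname{Dom}\mathcal N$ explicit; and a final step based on truncated proportionality. However, the final step as you describe it has a genuine gap. To apply the truncated De~Rham--Saito Lemma~\ref{le:truncated_proportionality} to an offending level $\omega_h$ against a dominant level $\omega_b$ on the same side, you need the input $\nu_{\mathcal A}(\eta_h\wedge\sigma)\geq\tau_h$ for a suitable log-elementary $\sigma$, and this bound does \emph{not} follow from the $\Theta_m$-estimates you have already used. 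In the paper it is extracted from the other component of the integrability condition, $\Delta_{h+b}=\sum_{i+j=h+b}\bigl(j\,\eta_j\wedge\eta_i+f_i\,d\eta_j+\eta_i\wedge df_j\bigr)$, after an application of Lemma~\ref{le:epsilon} with a carefully chosen $\epsilon$ (depending on the gap between $\operatorname{Dom}\mathcal N$ and the line through $(\tau_h,h)$ and on the dominant part of $\omega_b$); moreover the $1$-form one compares with is not $\tilde\eta_b$ but $\sigma=(h-b)\tilde\eta_b+\mu\,\frac{d\boldsymbol{x}^{\boldsymbol{p}_b}}{\boldsymbol{x}^{\boldsymbol{p}_b}}$, because the dominance of $\omega_b$ may come from its $dz/z$-coefficient $f_b$. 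Your sketch never brings $\Delta$ into play, so the hypothesis of the proportionality lemma is unjustified; and when $\sigma$ fails to be log-elementary (e.g.\ $\tilde\eta_b=0$, $\mu\neq0$) proportionality does not apply at all, and the paper must instead show $d\check\eta_h=0$ by a homogeneity argument, conclude that the truncation $\check\eta_h$ is integrable (Poincar\'e), and apply $T_3(\ell)$ to it — a case your plan does not foresee.

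A second, related inaccuracy is the mechanism by which the offending level is raised. You propose to ``subtract off the resonant part'' $f\,\eta_{k_0}$ and realize this by $\ell$-nested transformations, but no such subtraction is (or can be) performed: the relation $\eta_h=g\,\sigma+\bar\eta_h$ is used together with the \emph{maximality of dominant levels} from Proposition~\ref{pr:preparation_reductions} to force $\nu_{\mathcal A}(g)\geq\tau_h$ (otherwise $\omega_h$ would become $\tau_h$-final dominant, contradicting maximality), whence $\nu_{\mathcal A}(\omega_h)\geq\tau_h$ and Lemma~\ref{le:push_right} makes it recessive. Finally, note that a side of $\operatorname{Dom}\mathcal N$ need not carry any dominant level (the ``totally recessive'' case, where the side is supported by the artificial points $(\gamma,0)$, $(0,\gamma/\nu(z))$); there your proportionality step has nothing to lean on, and the paper treats it separately by Lemma~\ref{le:epsilon} with $\epsilon=\min_k\{\beta_k-\tau_k\}$ followed by the reduction-of-vertices property. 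Without these ingredients the closing step of your argument does not go through as written.
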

A $ \ell $-nested transformation $ \mathcal{A} \rightarrow \mathcal{B} $ such that $ \omega $ is $ \gamma $-prepared in $ \mathcal{B} $ is called a \emph{$ \gamma $-preparation}.

\subsection{Property of preparation of levels}
Consider and integer number $k\geq 0$ and put
\begin{equation}\label{eq:preparation_levels_triangles}
	\lambda_{k} (\omega;\mathcal{A},\gamma) = \min \{ \gamma - k \nu(z) \} \cup \left \{ \frac{ \delta_{k+s} + \delta_{k-s} }{2} \ |\ \quad s \geq 1 \right \} \ .
\end{equation}
In this section we prove the following Lemma
\begin{lem}\label{le:preparation_levels_triangles}
	There is an $ \ell $-nested transformation $ {\mathcal A} \rightarrow {\mathcal A}'$ such that the $ k $-level $ \omega'_k $ of $ \omega $ with respect to $ {\mathcal A}' $ is $ \lambda_{k} ({\mathcal A};\omega,\gamma) $-final.
\end{lem}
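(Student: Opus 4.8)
The plan is to reduce the statement, level by level, to an application of $T_5(\ell)$ to the pairs $(\eta_k,f_k)$ attached to the $z$-levels of $\omega$. Since $\omega_k$ is $\alpha$-final exactly when the pair $(\eta_k,f_k)\in N_{\mathcal{A}}^{\ell}\times R_{\mathcal{A}}^{\ell}$ is $\alpha$-final, it suffices to $\lambda_k$-finalize each of these pairs while not spoiling the ones already treated.

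First I would extract from the integrability hypothesis the right ``partial integrability'' bound for each $\eta_k$. Writing $\omega=\sum_{k\ge 0}z^k\omega_k$ with $\omega_k=\eta_k+f_k\tfrac{dz}{z}$, a direct computation (keeping track of the signs) gives $\omega\wedge d\omega=\sum_{m\ge 0}z^m\bigl(\Theta_m+\tfrac{dz}{z}\Delta_m\bigr)$ with $\Theta_m=\sum_{i+j=m}\eta_i\wedge d\eta_j$. Because the $z$-adic and $\boldsymbol{x}$-adic gradings are independent, $\nu_{\mathcal{A}}(\omega\wedge d\omega)\ge 2\gamma$ forces $\nu_{\mathcal{A}}(\Theta_m)\ge 2\gamma$ for every $m$. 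Isolating the middle term of $\Theta_{2k}$,
$$\eta_k\wedge d\eta_k=\Theta_{2k}-\sum_{s=1}^{k}\bigl(\eta_{k-s}\wedge d\eta_{k+s}+\eta_{k+s}\wedge d\eta_{k-s}\bigr),$$
and using the elementary inequalities $\nu_{\mathcal{A}}(\alpha\wedge\beta)\ge\nu_{\mathcal{A}}(\alpha)+\nu_{\mathcal{A}}(\beta)$ and $\nu_{\mathcal{A}}(d\sigma)\ge\nu_{\mathcal{A}}(\sigma)$, each summand has explicit value at least $\delta_{k-s}+\delta_{k+s}$ (convention $\delta_j=\infty$ for $j<0$). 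Hence
$$\nu_{\mathcal{A}}(\eta_k\wedge d\eta_k)\ \ge\ \min\bigl\{2\gamma\bigr\}\cup\bigl\{\delta_{k-s}+\delta_{k+s}\ :\ s\ge 1\bigr\}\ \ge\ 2\lambda_k(\omega;\mathcal{A},\gamma),$$
the last step because $\gamma-k\nu(z)\le\gamma$. Moreover, as the values $\delta_j$ and $\nu_{\mathcal{A}}(\omega\wedge d\omega)$ only grow under $\ell$-nested transformations (Property of stability of levels), the same inequality, with the right-hand side $2\lambda_k(\omega;\mathcal{A},\gamma)$ fixed once and for all in $\mathcal{A}$, remains valid in every model obtained from $\mathcal{A}$ by an $\ell$-nested transformation.

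Now I would finalize the levels one at a time. Let $h$ be the integer part of $\gamma/\nu(z)$; for $k>h$ one has $\lambda_k=\gamma-k\nu(z)<0$, and since $\beta_k\ge 0>\lambda_k$ the level $\omega_k$ is automatically $\lambda_k$-final recessive, so nothing is required. For $k=0,1,\dots,h$ I apply $T_5(\ell)$ to $(\eta_k,f_k)$ with target value $\lambda_k(\omega;\mathcal{A},\gamma)$; this value lies in $\tfrac12\Gamma$, and the notion of $\alpha$-finality together with the statements $T_3,T_4,T_5$ extend verbatim to values in the divisible hull of $\Gamma$, as only the order structure is used. By the previous paragraph the hypothesis $\nu(\eta_k\wedge d\eta_k)\ge 2\lambda_k$ holds in the current model, so $T_5(\ell)$ provides an $\ell$-nested transformation after which $(\eta_k,f_k)$, hence $\omega_k$, is $\lambda_k$-final. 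By Lemma \ref{le:gamma_final_pairs_stable}, a level already $\lambda_{k'}$-final stays $\lambda_{k'}$-final under any further $\ell$-nested transformation, so the levels treated earlier are preserved and the order in which we process them is immaterial. Composing these finitely many $\ell$-nested transformations yields the desired $\mathcal{A}\to\mathcal{A}'$.

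The main obstacle is the bookkeeping in the second paragraph: producing the $z$-level expansion of $\omega\wedge d\omega$ with correct signs and checking that the resulting bound $\nu_{\mathcal{A}}(\eta_k\wedge d\eta_k)\ge 2\lambda_k$ is precisely what $T_5(\ell)$ consumes — in particular that it is the fixed quantity $\lambda_k(\omega;\mathcal{A},\gamma)$ computed in the original $\mathcal{A}$, not the possibly larger value recomputed in intermediate models. Once this is in place, the rest is a routine assembly of the stability lemmas and of $T_5(\ell)$.
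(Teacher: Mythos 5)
Your proof is correct and follows essentially the same route as the paper: extract $\nu_{\mathcal{A}}(\Theta_{2k})\geq 2\gamma$ from the $z$-level expansion of $\omega\wedge d\omega$, isolate $\eta_k\wedge d\eta_k$ to get $\nu_{\mathcal{A}}(\eta_k\wedge d\eta_k)\geq 2\lambda_k$, and then apply $T_5(\ell)$ to the pair $(\eta_k,f_k)$. The only differences are cosmetic: the lemma concerns a single fixed level $k$, so your simultaneous treatment of all levels (and the attendant stability bookkeeping) is more than is required, though harmless.
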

This result is a consequence of the induction hypothesis and the fact that $ \nu_{\mathcal{A}}(\omega \wedge d \omega) \geq 2 \gamma $. Namely, we can write
$$
	\omega \wedge d \omega = \sum_{m=0}^{\infty} z^m \Big( \Theta_m + \frac{dz}{z} \wedge \Delta_m \Big)
$$
where
$$ \Theta_m := \sum_{i+j=m} \eta_i \wedge d \eta_j $$
and
$$ \Delta_m := \sum_{i+j=m} j \eta_j \wedge \eta_i +f_i d \eta_j + \eta_i \wedge d f_j \ . $$
We have that
$$
	\nu_{\mathcal{A}}(\omega \wedge d \omega) \geq 2 \gamma 
$$
is equivalent to
$$ \nu_{\mathcal{A}}(\Theta_m) \geq 2 \gamma \text{ and } \nu_{\mathcal{A}}(\Delta_m) \geq 2 \gamma \quad \forall \, m \geq 0 \ .
$$
The proof of Lemma \ref{le:preparation_levels_triangles} is based on this equivalence. In view of the statement $T_3(\ell)$ it is enough to prove that 
$$
\nu_{\mathcal{A}}(\eta_k\wedge d\eta_k) \geq 2 \lambda_k \ .
$$
Look at $ \Theta_{2 k} $:
$$
\Theta_{2 k} = \eta_{k} \wedge d \eta_{k} + \sum_{\substack{i+j=2 k \\ i,j \neq k}} \eta_i \wedge d \eta_j \ . $$
Recall that $ \nu_{\mathcal{A}}(\Theta_{2k}) \geq 2 \gamma  $. By definition of the values $ \delta_i $, we have that $ \nu_{\mathcal{A}}(\eta_i) \geq \delta_i $, hence $ \nu_{\mathcal{A}}(d \eta_i) \geq \delta_i $.
Writing
$$
\eta_{k} \wedge d \eta_{k} = - \Theta_{2 k} + \sum_{\substack{i+j=2 k \\ i,j \neq k}} \eta_i \wedge d \eta_j \ ,
$$
we conclude that $ \nu_{\mathcal{A}}(\eta_{k} \wedge d \eta_{k} ) \geq 2 \lambda_k $. We end the proof of Lemma \ref{le:preparation_levels_triangles} applying $T_5(\ell)$ to the pair $(\eta_k, f_k)$.

\subsection{Preparation. First reductions}\label{se:preparationreductions}
In order to prove Theorem \ref{th:gammma_preparation} we will first show that we can assume some reductions.
\begin{prop}\label{pr:preparation_reductions}
	Let $ \omega \in N_{\mathcal{A}}^{\ell+1} $ be a $ 1 $-form such that $ \nu_{\mathcal{A}}(\omega \wedge d \omega) \geq 2 \gamma $. Without lost of generality we can assume that the following properties are satisfied:
	\begin{enumerate}
		\item \textbf{Maximality of dominant levels}: For any integer number $k$ with $0\leq k\leq \gamma/\nu(z)$, the level $\omega_k$ is either $(\gamma-k\nu(z))$-final dominant in $ \mathcal{A} $ or there is no $ \ell $-nested transformation $ \mathcal{A} \rightarrow \mathcal{A}' $ such that $\omega_k$ is $(\gamma-k\nu(z))$-final dominant in $ \mathcal{A}' $;
		\item \textbf{Preparation of the functional part}: For any integer number $k$ with $0\leq k\leq \gamma/\nu(z)$, the function $ f_k \in R_{\mathcal{A}}^{\ell}$ is $ (\gamma -k\nu(z)) $-final;
		\item \textbf{Preparation of the $0$-level}: The $0$-level $ \omega_0 $ is $ \gamma $-final;
		\item \textbf{Explicitness of the dominant vertices}: Any vertex of $ \operatorname{Dom}\mathcal{N}(\omega;\mathcal{A},\gamma) $ is also a vertex of $ \mathcal{N}(\omega;\mathcal{A},\gamma) $.
	\end{enumerate}
\end{prop}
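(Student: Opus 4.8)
The plan is to secure the four reductions \emph{in succession}, using the fact that once a given reduction has been achieved in a model it is preserved by every further $\ell$-nested transformation. For (1) this preservation is clear: a level that is $(\gamma-k\nu(z))$-final dominant stays so by the \textbf{Property of stability for dominant levels}, and a level that cannot be made $(\gamma-k\nu(z))$-final dominant by an $\ell$-nested transformation issued from $\mathcal A$ cannot be made so from any model above $\mathcal A$ either (otherwise the composition would do it from $\mathcal A$). For (2) and (3) it is the stability of $\gamma$-final functions and $1$-forms, Lemmas \ref{le:gamma_final_functions_stable} and \ref{le:gamma_final_1_forms_stable}. Throughout one uses that only finitely many integers $k$ satisfy $k\nu(z)\le\gamma$, since $\nu(z)>0$ and $\Gamma\hookrightarrow(\mathbb R,+)$.

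\textbf{Reductions (1), (2), (3).} For (1) I would run a finite counting process: let $D(\mathcal A)$ be the number of indices $k$ with $0\le k\le\gamma/\nu(z)$ for which $\omega_k$ is $(\gamma-k\nu(z))$-final dominant. If (1) fails there is a level $\omega_{k_0}$, not yet dominant, that becomes $(\gamma-k_0\nu(z))$-final dominant after some $\ell$-nested transformation $\mathcal A\to\mathcal A_1$; by stability of dominant levels the previously dominant levels stay dominant, so $D(\mathcal A_1)\ge D(\mathcal A)+1$. As $D$ is bounded by the number of relevant indices, iterating reaches a model where (1) holds. Assuming (1), I then apply $T_4(\ell)$ (Theorem \ref{th:gamma_finalization_functions}) successively to $f_0,f_1,\dots$ with the values $\gamma,\gamma-\nu(z),\dots$, making each $f_k$ with $k\nu(z)\le\gamma$ be $(\gamma-k\nu(z))$-final; Lemma \ref{le:gamma_final_functions_stable} keeps the already treated $f_j$ final and (1) is preserved, giving (2). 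For (3) recall that $f_0=0$, so $\omega_0=\eta_0\in N_{\mathcal A}^{\ell}$; moreover $\Theta_0=\eta_0\wedge d\eta_0$, and the equivalence $\nu_{\mathcal A}(\omega\wedge d\omega)\ge2\gamma\Leftrightarrow\nu_{\mathcal A}(\Theta_m)\ge2\gamma,\ \nu_{\mathcal A}(\Delta_m)\ge2\gamma\ \forall m$ yields $\nu_{\mathcal A}(\eta_0\wedge d\eta_0)\ge2\gamma$. Hence the induction hypothesis $T_3(\ell)$ (Theorem \ref{th:gamma_finalization_1_forms}) applies to $\eta_0$ with the value $\gamma$ and furnishes an $\ell$-nested transformation making $\omega_0$ $\gamma$-final; this preserves (1) and (2).

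\textbf{Reduction (4), the main obstacle.} Once (1)--(3) hold, the polygon $\operatorname{Dom}\mathcal N(\omega;\mathcal A,\gamma)$ is stable under further $\ell$-nested transformations: the explicit values of its dominant vertices are fixed by stability of dominant levels, and by (1) no non-dominant level with $k\nu(z)\le\gamma$ can become dominant, so $\operatorname{DomCL}$ does not grow below height $\gamma/\nu(z)$. The first remark is that only finitely many non-dominant levels can spoil the explicitness of a dominant vertex: a level $\omega_j$ with $j\nu(z)\ge\gamma$ gives a point $(\beta_j,j)\in(0,\gamma/\nu(z))+\mathbb R_{\ge0}^2\subset\operatorname{Dom}\mathcal N$, so it cuts no corner, and for $j\nu(z)<\gamma$ there are finitely many indices; such an $\omega_j$ cuts a corner precisely when $\beta_j<\tau_j(\omega;\mathcal A,\gamma)$, and $\tau_j$ is now fixed. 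So it suffices to raise $\beta_j$ up to $\tau_j$ for each of these finitely many $j$. To do this I would apply the push-right transformation $\Psi_\ell$ of Lemma \ref{le:push_right} (which leaves all dominant levels, hence $\operatorname{Dom}\mathcal N$, untouched while strictly increasing every non-dominant explicit value), re-establish (1)--(3), and iterate.

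The delicate point, which I expect to be the technical crux, is the termination of this last iteration: Lemma \ref{le:push_right} only guarantees a strict increase of $\beta_j$, not that it overtakes the fixed threshold $\tau_j$ after a single step, and $\Gamma$ need not be discrete. The intended mechanism, I believe, is the same accumulation phenomenon already analysed in Chapter \ref{ch:function_case}: interleaving the push-right steps with Lemma \ref{le:preparation_levels_triangles} also raises the values $\delta_k$ of the recessive levels and hence the bounds $\lambda_k(\omega;\mathcal A,\gamma)$, and a cascade argument on the finitely many relevant indices (processing them in order of increasing $\lambda_k$) shows that after finitely many rounds each such $\omega_j$ is $(\gamma-j\nu(z))$-final — necessarily recessive by (1) — so that $\beta_j>\gamma-j\nu(z)\ge\tau_j$. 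Making this cascade effective, and ruling out an infinite regress where low-value levels keep blocking their neighbours, is where the real work lies, and is the part I would develop with the greatest care.
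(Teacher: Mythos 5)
Your treatment of reductions (1)--(3) coincides with the paper's: finitely many passes using the stability of dominant levels for (1), $T_4(\ell)$ level by level for (2), and $T_3(\ell)$ applied to $\eta_0$ via $\Theta_0=\eta_0\wedge d\eta_0$ for (3). The problem is reduction (4), and there your proposed mechanism has a genuine gap. You aim to raise, for each of the finitely many non-dominant levels $\omega_j$ with $j\nu(z)<\gamma$, the explicit value $\beta_j$ past the fixed threshold $\tau_j$ (indeed you claim the cascade makes $\omega_j$ $(\gamma-j\nu(z))$-final). But neither tool you invoke can deliver this. Lemma \ref{le:push_right} strictly increases $\beta_j$ with no lower bound on the increment, and since $\Gamma$ is a dense subgroup of $\mathbb{R}$ in general, iterating it need not overtake $\tau_j$ in finitely many steps. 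Lemma \ref{le:preparation_levels_triangles} only lets you apply $T_5(\ell)$ to $(\eta_j,f_j)$ up to the value $\lambda_j$, which is capped by the half-sums $(\delta_{j-s}+\delta_{j+s})/2$ of the neighbouring differential parts: as long as some nearby levels have small value, $\lambda_j$ stays strictly below $\tau_j$, and iterated midpoint averaging only approaches the dominant polygon asymptotically, not in finitely many rounds. In fact, what your cascade would establish is essentially $\mathcal{N}=\operatorname{Dom}\mathcal{N}$, i.e.\ the full $\gamma$-preparation; in the paper that stronger statement needs the truncated proportionality (de Rham--Saito) arguments of the ``elimination of recessive vertices'' section, which cannot be bypassed by push-right plus averaging alone.

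The missing idea is that property (4) does not require reaching the thresholds $\tau_j$ at all: it suffices to bring $\mathcal{N}$ within a prescribed distance $\epsilon$ of each supporting line of $\operatorname{Dom}\mathcal{N}$, and this \emph{is} attainable with your tools, provided one adds a quantitative termination argument. That is Lemma \ref{le:epsilon}: if $\mathcal{N}\not\subset H^+_{\delta}(\rho_\delta-\epsilon)$, then (Lemma \ref{le:difference_slopes_bound}) some vertex of $\mathcal{N}$ inside $H^-_{\delta}(\rho_\delta)$ has a slope gap bounded below by a constant $K_\epsilon$ depending only on $\epsilon,\delta,\rho_\delta$; applying the reduction-of-vertices property (your averaging step, legitimate because such a vertex is never dominant, by (1)) cuts off a triangle of area at least $K_\epsilon/2$ from $\mathcal{N}\cap H^-_{\delta}(\rho_\delta)$, so only finitely many reductions are possible. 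Then, for each vertex $\boldsymbol{v}$ of $\operatorname{Dom}\mathcal{N}$, one chooses two supporting slopes $\delta_1>\delta_2$ touching $\operatorname{Dom}\mathcal{N}$ only at $\boldsymbol{v}$ and an intermediate $\delta_3$; an elementary geometric argument produces an $\epsilon>0$ such that $\epsilon$-closeness to $L_{\delta_1}$ and $L_{\delta_2}$ forces every point of $\operatorname{CL}$ at a different height to lie above $L_{\delta_3}(\rho_{\delta_3})$, so $\boldsymbol{v}$ is a vertex of $\mathcal{N}$. This area-descent plus three-slope argument is what replaces your unterminated cascade.
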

First of all, note that the four properties listed in the proposition are stable under further $ \ell $-nested transformations. The first property can be obtained without making use of the induction hypothesis while the remaining ones needs the assumption that $ T_3(\ell) $ is true. In this section we detail how to obtain the first three properties. The following section is devoted to show how to obtain explicit dominant vertices.

\textbf{Maximality of dominant levels}: This property can be obtained thanks to the stability of dominant levels as follows. Take an integer $k$ with $0\leq k\leq \gamma/\nu(z)$. If there is a $ \ell $-nested transformation such that $ \omega_k $ becomes $(\gamma-k\nu(z))$-final dominant, perform it. In this way we perform a finite number of transformations to get the desired maximality property.

\textbf{Preparation of the functional part:} We only have to use $ T_4(\ell) $ finitely many times.

\textbf{Preparation of the $0$-level:} This property is also obtained using the induction hypothesis. We have that $ \nu_{\mathcal{A}}(\omega \wedge d \omega) \geq 2 \gamma $ implies $ \nu_{\mathcal{A}} (\Theta_0) \geq 2 \gamma $. Since $ \Theta_0 = \eta_0 \wedge d \eta_0 $ we can invoke $ T_3(\ell) $ and transform $ \omega_0 $ into a $ \gamma $-final level.

\subsection{Getting explicit dominant vertices}
In this section we complete the proof of Proposition \ref{pr:preparation_reductions} by showing how to obtain explicit dominant vertices. First of all, note that the maximality of dominant vertices property implies the following additional property:
\begin{quote}
	\textbf{Stability of the Truncated Dominant Newton Polygon}: For any $ \ell $-nested transformation $ \mathcal{A} \rightarrow \mathcal{A}' $ we have that
	$$
	\textrm{Dom}\mathcal{N}(\omega;\mathcal{A},\gamma) = \textrm{Dom}\mathcal{N}(\omega;\mathcal{A}',\gamma) \ .
	$$
\end{quote}
In view of this stability property, from now on we will denote the Truncated Dominant Newton Polygon $ \textrm{Dom}\mathcal{N}(\omega;\mathcal{A},\gamma) $ simply by $ \textrm{Dom}\mathcal{N} $, and the values $ \tau_k(\omega;\mathcal{A},\gamma) $ by $ \tau_k $.

For any positive real number $\delta>0$, let us consider the lines
$$ L_\delta (\rho) = \{ (u,v) \ |\ u + \delta v = \rho \} $$
of slope $ -1/\delta $, and the open half-planes
$$ H^{+}_{\delta} (\rho) = \{ (u,v) \ |\ u + \delta v >\rho \} \quad \text{and} \quad H^{-}_{\delta} (\rho) = \{ (u,v) \ |\ u + \delta v <\rho \}   \ . $$
Let $ \rho_{\delta} $ be the real number defined by
$$
\rho_\delta : = \min \{ \rho \ |\ L_\delta(\rho) \cap \operatorname{Dom}\mathcal{N} \neq \emptyset \} = \sup \{ \rho \ |\ H^{+}_{\delta} (\rho)\supset \operatorname{Dom}\mathcal{N} \} \ .
$$
We have that $ L_\delta(\rho_\delta) $ cuts the polygon $ \operatorname{Dom}\mathcal{N} $ in only one vertex or a side joining two vertices.

In order to get explicit dominant vertices, it is enough to prove the following lemma:
\begin{lem}\label{le:epsilon}
	Given $\delta>0$ and a fixed $\epsilon >0$ there is an $ \ell $-nested transformation $ \mathcal{A} \rightarrow \mathcal{A}' $ such that $ \mathcal{N}(\omega;\mathcal{A}',\gamma) \subset H^{+}_{\delta}(\rho_{\delta} - \epsilon) $.
\end{lem}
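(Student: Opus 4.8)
The plan is to prove Lemma \ref{le:epsilon} by a finite induction driven by the statements $T_3(\ell)$, $T_4(\ell)$ and $T_5(\ell)$, applied level-by-level to the $z$-levels of $\omega$, keeping the Truncated Dominant Newton Polygon $\operatorname{Dom}\mathcal{N}$ fixed (this is the stability property already established) while pushing the full Cloud of Points $\operatorname{CL}(\omega;\mathcal{A}')$ into the half-plane $H^{+}_{\delta}(\rho_{\delta}-\epsilon)$. Fix $\delta>0$ and $\epsilon>0$. Only finitely many levels $\omega_k$ can have $(\beta_k,k)$ outside $H^{+}_{\delta}(\rho_{\delta}-\epsilon)$: indeed the line $L_\delta(\rho_\delta-\epsilon)$ meets the strip $0\le v\le \gamma/\nu(z)$ in a bounded segment, and for $k>\gamma/\nu(z)$ the point $(\beta_k,k)$ already lies above the truncation vertex $(0,\gamma/\nu(z))\in\operatorname{Dom}\mathcal{N}\subset H^{+}_{\delta}(\rho_\delta)$ by convexity (or else Lemma \ref{le:push_right} moves them up). So it suffices to deal with the finitely many indices $k$ with $0\le k\le \gamma/\nu(z)$.

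The key step is to show that for each such non-dominant level $\omega_k$ one can raise $\beta_k$ strictly, without disturbing $\operatorname{Dom}\mathcal{N}$, until $(\beta_k,k)\in H^{+}_{\delta}(\rho_\delta-\epsilon)$. Here is where the integrability hypothesis $\nu_{\mathcal{A}}(\omega\wedge d\omega)\ge 2\gamma$ enters: by Lemma \ref{le:preparation_levels_triangles}, an $\ell$-nested transformation makes $\omega'_k$ become $\lambda_k(\omega;\mathcal{A},\gamma)$-final, where $\lambda_k=\min\{\gamma-k\nu(z)\}\cup\{(\delta_{k+s}+\delta_{k-s})/2\mid s\ge 1\}$. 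If $\omega_k$ is $\lambda_k$-final recessive we are done for that level, since then $\beta_k>\lambda_k$; if it is $\lambda_k$-final dominant, then $(\beta_k,k)$ is a dominant point, hence lies on or above $\operatorname{Dom}\mathcal{N}$, hence already in $H^{+}_{\delta}(\rho_\delta)\supset H^{+}_{\delta}(\rho_\delta-\epsilon)$ — contradicting that it was a bad level. So after one application the bad level either disappears or its $\delta_{k\pm s}$ have strictly increased, forcing $\lambda_k$ up on the next round. I would organize this as a "lowest-bad-level" sweep: take the smallest index $k$ whose point is still outside $H^{+}_{\delta}(\rho_\delta-\epsilon)$, apply Lemma \ref{le:preparation_levels_triangles} to it, and repeat. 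Because $\lambda_k$ is built from the $\delta_j$ of the \emph{neighbouring} levels and those are non-decreasing and stabilize (a dominant level's value is frozen by the stability-of-dominant-levels property, while a non-dominant one can only be lifted finitely often before it passes $\gamma-k\nu(z)$), each sweep strictly improves a lexicographic invariant such as the pair $(\#\{\text{bad levels}\},\ -\!\!\sum_{\text{bad }k}\beta_k)$ measured against a discrete target grid, so the process terminates.

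The point to handle with care is that raising $\beta_k$ must not create new bad levels nor destroy explicitness of dominant vertices already achieved; this is exactly the \textbf{Property of stability of levels} and \textbf{Property of stability for dominant levels} from Section \ref{se:level_expansion}, which guarantee that an $\ell$-nested transformation never decreases any $\delta_j,\phi_j,\beta_j$ and never spoils a dominant level, so the finite set of bad levels can only shrink under the sweep. I also need that the truncation vertices $(0,\gamma/\nu(z))$ and $(\gamma,0)$ are common to $\mathcal{N}$ and $\operatorname{Dom}\mathcal{N}$ and are unaffected by the transformations; that is built into the definition of the truncated polygons. The main obstacle, and the place I would spend the most effort, is the termination bookkeeping: making precise that the quantities $\lambda_k$ really are forced to increase, i.e.\ ruling out an infinite exchange where lifting level $k$ is always paid for by level $k\pm s$ still being low. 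The resolution is that there are only finitely many levels in play and each individual $\beta_k$ lives in the well-ordered value set $\nu(\boldsymbol{x}^{\mathbb{Z}_{\ge0}^r})$ capped at $\gamma-k\nu(z)$, so an infinite strictly-ascending chain of sweep-invariants is impossible; once every $\lambda_k\ge \gamma-k\nu(z)$ the corresponding level is $(\gamma-k\nu(z))$-final and thus its point lies in $\operatorname{Dom}\mathcal{N}$ or strictly above the critical line, in either case inside $H^{+}_{\delta}(\rho_\delta-\epsilon)$. Collecting the finite composition of all these $\ell$-nested transformations yields the desired $\mathcal{A}\to\mathcal{A}'$ with $\mathcal{N}(\omega;\mathcal{A}',\gamma)\subset H^{+}_{\delta}(\rho_\delta-\epsilon)$.
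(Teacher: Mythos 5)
Your first half coincides with the paper's strategy: the raising of a non-dominant level via Lemma \ref{le:preparation_levels_triangles} is exactly the ``reduction of vertices'' step, and your observation that only the finitely many levels with $k\leq \gamma/\nu(z)$ matter is correct. The genuine gap is the termination argument. You claim that each $\beta_k$ ``lives in the well-ordered value set $\nu(\boldsymbol{x}^{\mathbb{Z}_{\geq 0}^r})$ capped at $\gamma-k\nu(z)$'', so that a strictly ascending chain must reach the cap in finitely many steps. But the monomial value semigroup is not fixed along the process: every $\ell$-nested transformation replaces the independent variables (e.g.\ $\nu(x_j')=\nu(x_j)-\nu(x_i)$ after a combinatorial blow-up), so the set of attainable explicit values is refined at each step, and over an infinite sequence of transformations it is no longer well ordered (for $\operatorname{rat.rk}(\nu)\geq 2$ the positive part of $\Gamma$ is dense in $\mathbb{R}_{>0}$). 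Hence ``$\beta_k$ strictly increases at every sweep'' does not preclude an infinite sequence in which $\beta_k$ accumulates strictly below $\rho_\delta-\epsilon-\delta k$; this accumulation of explicit values is precisely the phenomenon the paper warns about (it is the reason for the Tschirnhausen step elsewhere, and the introduction exhibits an explicit value accumulating below $1$ under infinitely many transformations). The same objection applies to your auxiliary claim that a non-dominant level ``can only be lifted finitely often before it passes $\gamma-k\nu(z)$'', and your one-step estimate does not help either: for the lowest bad level $k$ the bound $\lambda_k$ involves the values $\delta_{k+s}$ of \emph{higher} levels, which may still be low, so a single application of Lemma \ref{le:preparation_levels_triangles} need not move $(\beta_k,k)$ past the line $L_\delta(\rho_\delta-\epsilon)$ at all.

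What is missing is a \emph{uniform} lower bound on the progress made at each step, and this is exactly what the paper supplies. It first proves the elementary geometric Lemma \ref{le:difference_slopes_bound}: there is a constant $K_\epsilon>0$, independent of the polygon, such that whenever $\mathcal{N}\not\subset H^{+}_{\delta}(\rho_\delta-\epsilon)$ there is a vertex $\boldsymbol{v}$ of $\mathcal{N}$ lying in $H^{-}_{\delta}(\rho_\delta)$ whose two adjacent slopes differ by more than $K_\epsilon$; such a vertex cannot be a vertex of $\operatorname{Dom}\mathcal{N}$, so the reduction of vertices applies to it and removes from $\mathcal{N}\cap H^{-}_{\delta}(\rho_\delta)$ a triangle of area greater than $K_\epsilon/2$. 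Since that region has finite area, only finitely many such steps are possible, and termination follows without any appeal to well-ordering of values. To repair your proof you would need to replace the lexicographic bookkeeping by some quantitative invariant of this kind (or otherwise exclude accumulation of the $\beta_k$); as written, the argument does not close.
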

Note that as usual, the property obtained after applying the lemma is stable under new $ \ell $-nested transformations.

Let us show that Lemma \ref{le:epsilon} allows us to get the explicitness of dominant vertices property. Consider a vertex $ \boldsymbol{v} = (\tau_k,k) $ of $ \operatorname{Dom}\mathcal{N} $. Take two slopes $ 0 < \delta_2 < \delta_1 $ such that both $ L_{{\delta_1}}(\rho_{{\delta}_1}) $ and $ L_{{\delta_2}}(\rho_{{\delta}_2}) $ cut the polygon $ \operatorname{Dom}\mathcal{N} $ only in the vertex $ \boldsymbol{v} $. Consider an slope $ \delta_3 $ with $ \delta_2 < \delta_3 < \delta_1 $. We also have that $L_{\delta_3}(\rho_{\delta_3})$ cuts $ \operatorname{Dom}\mathcal{N} $ only in the vertex ${\boldsymbol{v}}$. By an elementary geometrical argument, we see that there is an $ \epsilon>0 $ satisfying the following property
\begin{quote}
	``For any $ (a,k') \in H^+_{{\delta_1}}(\rho_{{\delta}_1}-\epsilon) \cap H^+_{{\delta_2}}(\rho_{{\delta}_2}-\epsilon)$ such that  $k'\ne k$ we have that $(a,k')\in H^+_{{\delta_3}}(\rho_{{\delta}_3}) $''.
\end{quote}
Applying Lemma \ref{le:epsilon} with respect to $ \epsilon $, we obtain that $ \boldsymbol{v} $ is a vertex of $\mathcal{N}(\omega;\mathcal{A}',\gamma) $. Repeating this argument at all the vertices of $ \operatorname{Dom}\mathcal{N} $ we obtain the explicitness of dominant vertices property.
\begin{figure}[!ht]
	\centering
	\includegraphics[width=0.8\textwidth]{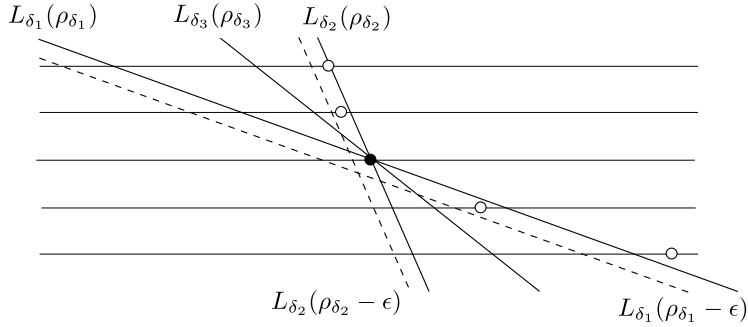}
	\caption{Situation after applying Lemma \ref{le:epsilon}}
	\label{fi:epsilon_lemma}
\end{figure}

So in order to complete the proof of Proposition \ref{pr:preparation_reductions} we have to prove Lemma \ref{le:epsilon}. Denote by $(\alpha_{k},k)$ the point in $ \mathcal{N}(\omega;\mathcal{A},\gamma)
$ with integer ordinate equal to  $k$ and smallest abscissa. Note that
$$
\alpha_k\leq \tau_k \ .
$$
Note also that $\alpha_0\leq \gamma$ and $\alpha_k=0$ for any $k>\gamma/\nu(z)$. We use as a key argument the following property
\begin{quote}
	{\bf Reduction of vertices}: Consider a vertex $\boldsymbol{v}=(\alpha_k,k)$ of the polygon  $ \mathcal{N}(\omega;\mathcal{A},\gamma) $ which is not a vertex of $ \operatorname{Dom}\mathcal{N} $ (in particular $k\geq 1$). There is a $ \ell $-nested transformation $ \mathcal{A} \rightarrow \mathcal{A} $ such that
	$$
	\mathcal{N}(\omega;\mathcal{A}',\lambda) \subset \mathcal{N}'
	$$
	where $ \mathcal{N}' $ is the positively convex polygon generated by $ \{ (\alpha'_s,s) \}_{s\geq 0} $ where
	$$
	\alpha'_s=\left\{
	\begin{array}{lc}
	\alpha_s \ , & \text{ if } s \neq k \ ; \\
	\frac{\alpha_{k-1}+ \alpha_{k+1}}{2}\ , & \text{ if } s=k \ .
	\end{array}
	\right.
	$$
\end{quote}
This property is a direct consequence of Lemma \ref{le:preparation_levels_triangles}. Note that the $ k $-level is never dominant, since $ \boldsymbol{v} $ is a vertex of $ \mathcal{N}(\omega;\mathcal{A},\gamma) $ which is not a vertex of $ \operatorname{Dom}\mathcal{N} $ and the property of maximality of dominant levels holds.

Now, take a positively convex polygon  ${\mathcal N}$ of ${\mathbb R}^2_{\geq 0}$ such that all the vertices have integer ordinates, except maybe the vertex of abscissa $0$. Consider $\delta$ and $\rho_\delta$ as in the statement of Lemma \ref{le:epsilon}. Note that
$$
\gamma/\nu(z)\geq \rho_\delta/\delta \quad \text{and} \quad \gamma \geq \rho{_\delta} \ .
$$
Suppose also that
\begin{enumerate}
	\item Either $(\rho_\delta,0)$ is a vertex of ${\mathcal N}$ or $(\rho_\delta,0)\notin {\mathcal N}$;
	\item Either $(0,\rho_{\delta}/ \delta)$ is a vertex of ${\mathcal N}$ or $(0,\rho_\delta/\delta)\notin {\mathcal N}$;
	\item The points $(0,\gamma/\nu(z))$ and $(\gamma,0)$ are in $\mathcal N$.
\end{enumerate}
For any vertex $\boldsymbol{v}$ of ${\mathcal P}$ denote by $\delta_{l}(\boldsymbol{v})$ and by $\delta_{r}(\boldsymbol{v})$ the real numbers such that $ -1/\delta_{l}(\boldsymbol{v}) $ and $ -1/\delta_{r}(\boldsymbol{v}) $ are the slopes of the two sides of $\mathcal N$ throughout $ \boldsymbol{v} $, with $0\leq \delta_{l}(\boldsymbol{v})<\delta_{r}(\boldsymbol{v})\leq +\infty$. The following Lemma has an elementary proof:
\begin{lem}\label{le:difference_slopes_bound}
	In the above situation, there is a constant $K_\epsilon\geq 0$, not depending on the particular polygon, such that for any $\mathcal N$ with
	$$
	{\mathcal N}\not\subset H_{\delta}^+(\rho_\delta-\epsilon) \ ,
	$$
	there is at least one vertex $\boldsymbol{v}$ of $\mathcal N$ with $\boldsymbol{v}\in{\mathcal N}\cap H_{\delta}^-(\rho_\delta)$ such that
	$$
	\delta_{\ell}({\boldsymbol{v}})<\delta_{r}({\boldsymbol{v}})-K_{\epsilon} \ .
	$$
\end{lem}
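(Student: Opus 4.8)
The plan is to prove the lemma by contradiction, with the explicit constant $K_\epsilon:=\epsilon\,\nu(z)^2/(2\gamma^2)$, which depends only on $\epsilon$ and the fixed data $\gamma,\nu(z)$ and not on $\mathcal N$. So assume that every vertex $\boldsymbol v$ of $\mathcal N$ with $\boldsymbol v\in\mathcal N\cap H^-_\delta(\rho_\delta)$ satisfies $\delta_r(\boldsymbol v)-\delta_\ell(\boldsymbol v)\le K_\epsilon$, and let us derive a contradiction. List the vertices of $\mathcal N$ as $\boldsymbol v_0,\boldsymbol v_1,\dots,\boldsymbol v_m$ by increasing abscissa (hence decreasing ordinate), with $\boldsymbol v_0$ on the axis $u=0$ and $\boldsymbol v_m$ on the axis $v=0$, and let $-1/\delta_i$ be the slope of the side joining $\boldsymbol v_{i-1}$ to $\boldsymbol v_i$; with the conventions $\delta_0:=0$, $\delta_{m+1}:=+\infty$ we have $\delta_\ell(\boldsymbol v_i)=\delta_i$, $\delta_r(\boldsymbol v_i)=\delta_{i+1}$, and $0=\delta_0<\delta_1<\cdots<\delta_{m+1}=+\infty$ by convexity. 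Write $h(u,v):=u+\delta v$. The function $h$ is proper and bounded below on $\mathcal N$, so it attains its minimum at some vertex $\boldsymbol w=\boldsymbol v_j$; the hypothesis $\mathcal N\not\subset H^+_\delta(\rho_\delta-\epsilon)$ is precisely $h(\boldsymbol w)\le\rho_\delta-\epsilon$, so in particular $\boldsymbol w\in H^-_\delta(\rho_\delta)$. Since $h$ is convex along the frontier, the vertices in $H^-_\delta(\rho_\delta)$ form a contiguous block $\boldsymbol v_a,\dots,\boldsymbol v_b$ with $a\le j\le b$; hypotheses (1) and (2) put $\boldsymbol v_0$ and $\boldsymbol v_m$ outside $H^-_\delta(\rho_\delta)$, so $1\le a$ and $b\le m-1$, and $\boldsymbol v_{a-1}$ exists. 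The ordinates $v_{\boldsymbol v_a}>\cdots>v_{\boldsymbol v_b}$ are positive integers lying in $[0,\rho_\delta/\delta)\subseteq[0,\gamma/\nu(z))$, so the block has at most $C:=\gamma/\nu(z)$ vertices.

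Now I would turn the standing hypothesis into a uniform slope bound: using $\delta_j\le\delta<\delta_{j+1}$, the telescoping identity $\sum_{i=a}^{b}(\delta_{i+1}-\delta_i)=\delta_{b+1}-\delta_a$, and the assumed inequality $\delta_{i+1}-\delta_i\le K_\epsilon$ at each of the $\le C$ vertices of the block, an elementary bookkeeping gives $|\delta_i-\delta|\le C\,K_\epsilon$ for every side index $i$ with $a\le i\le b+1$. Travelling along the frontier from $\boldsymbol v_{a-1}$ to $\boldsymbol w=\boldsymbol v_j$ through the sides of indices $a,a+1,\dots,j$, a step $\Delta v<0$ in ordinate along the side of index $i$ changes $h$ by $\Delta h=(\delta-\delta_i)\,\Delta v$, so $|\Delta h|\le C\,K_\epsilon\,|\Delta v|$; summing,
$$ |h(\boldsymbol w)-h(\boldsymbol v_{a-1})|\ \le\ C\,K_\epsilon\,\bigl(v_{\boldsymbol v_{a-1}}-v_{\boldsymbol w}\bigr)\ \le\ C^2K_\epsilon\ , $$
using $v_{\boldsymbol w}\ge 0$ and $v_{\boldsymbol v_{a-1}}\le v_{\boldsymbol v_0}\le\gamma/\nu(z)=C$, the latter being hypothesis (3) together with the fact that $\mathcal N$ is positively convex. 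But $\boldsymbol v_{a-1}\notin H^-_\delta(\rho_\delta)$ gives $h(\boldsymbol v_{a-1})\ge\rho_\delta$ while $h(\boldsymbol w)\le\rho_\delta-\epsilon$, so $h(\boldsymbol v_{a-1})-h(\boldsymbol w)\ge\epsilon$; combined with the display, $\epsilon\le C^2K_\epsilon=\epsilon/2$, a contradiction.

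Hence the standing assumption fails, so there is a vertex $\boldsymbol v\in\mathcal N\cap H^-_\delta(\rho_\delta)$ with $\delta_\ell(\boldsymbol v)<\delta_r(\boldsymbol v)-K_\epsilon$, which is the statement of the lemma. Everything is elementary; the points requiring a little care are that the $H^-_\delta(\rho_\delta)$-vertices form a contiguous block flanked on the left by a genuine vertex $\boldsymbol v_{a-1}$ of $\mathcal N$ (this is where hypotheses (1)--(2) are used) and the bookkeeping that propagates the per-vertex angle bound to the uniform estimate $|\delta_i-\delta|\le C\,K_\epsilon$ on all sides of the path from $\boldsymbol v_{a-1}$ to $\boldsymbol w$. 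The latter is the main, albeit mild, obstacle.
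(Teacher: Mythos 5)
Your proof is correct, and it is worth comparing with the paper's, because the paper's own argument is extremely terse: it fixes $k_\epsilon < 2\delta^2\epsilon/\bigl(\rho_\delta(\rho_\delta+\delta)\bigr)$, assumes every vertex in $H^-_\delta(\rho_\delta)$ has slope gap at most $K_\epsilon$, and then simply asserts that this forces one of the points $(0,\rho_\delta/\delta)$ or $(\rho_\delta,0)$ to become an interior point of $\mathcal N$, contradicting hypotheses (1)--(2). You run the same contradiction strategy but finish it by a different, fully quantitative mechanism: hypotheses (1)--(3) are used only to guarantee that $\boldsymbol v_0$ and $\boldsymbol v_m$ lie on the axes and outside $H^-_\delta(\rho_\delta)$, hence that the contiguous block of sub-line vertices is flanked by a vertex $\boldsymbol v_{a-1}$ with $u+\delta v\geq\rho_\delta$; then the integrality of the ordinates together with $\rho_\delta/\delta\leq\gamma/\nu(z)$ bounds the block size and the total ordinate drop by $C=\gamma/\nu(z)$, so the assumed per-vertex slope gaps propagate to $|\delta_i-\delta|\leq CK_\epsilon$ on the relevant sides and the linear functional $u+\delta v$ can vary by at most $C^2K_\epsilon$ between $\boldsymbol v_{a-1}$ and the minimizing vertex, while it must vary by at least $\epsilon$. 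This buys you a complete proof of the step the paper leaves implicit, and an explicit constant $K_\epsilon=\epsilon\,\nu(z)^2/(2\gamma^2)$ depending only on $\epsilon$, $\gamma$, $\nu(z)$ (not on $\delta$ or $\rho_\delta$, unlike the paper's), which is still "independent of the particular polygon" as the lemma requires; the paper's version, conversely, is shorter and contradicts (1)--(2) head-on via the interior-point statement. The only points where your write-up leans on facts worth making explicit are that hypothesis (3) is what places $\boldsymbol v_0$ and $\boldsymbol v_m$ on the coordinate axes (so that (1)--(2) apply to them), and that the block ordinates are positive integers strictly below $\rho_\delta/\delta$, which is what legitimizes the bound of the block size by $C$; both are true and your argument uses them correctly.
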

\begin{proof}
	Take $ k_{\epsilon} \in \mathbb{R} $ such that
	$$
	k_{\epsilon} < \frac{2 \delta^2 \epsilon }{\rho_{\delta} (\rho_{\delta}+\delta)}  \ .
	$$
	We assert that this constant satisfies the conditions required in the lemma. Suppose that it is false. Consider a polygon $ {\mathcal N}\not\subset H_{\delta}^+(\rho_\delta-\epsilon) $. Since $ \mathcal{N} $ is not contained in $ H_{\delta}^+(\rho_\delta-\epsilon) $ there must be a vertex $ \boldsymbol{v} $ of $ \mathcal{N} $ such that $ \boldsymbol{v} \notin H_{\delta}^+(\rho_\delta-\epsilon) $. If our assumption is false, for every vertex $ \boldsymbol{v}' $ in $ H_{\delta}^- (\rho_\delta) $ (and in particular for $ \boldsymbol{v} $) we have
	$$ \delta_{\ell} (\boldsymbol{v}') \geq \delta_{r} (\boldsymbol{v}') - K_{\epsilon} \ . $$
	But this condition gives that at least one of the points $(0,\rho_{\delta}/ \delta)$ or $(\rho_\delta,0)$ are interior points of $ \mathcal{N} $ in contradiction with the hypothesis about $ \mathcal{N} $.
\end{proof}
\noindent Now, let us apply Lemma \ref{le:difference_slopes_bound} to prove Lemma \ref{le:epsilon}. Assume that
$$
\mathcal{N}(\omega;\mathcal{A},\gamma) \not\subset H_{\delta}^{+}(\rho_0 - \epsilon) \ .
$$
Take one of the vertices ${\boldsymbol v}=(\alpha_k,k)$ of $ \mathcal{N}(\omega;\mathcal{A},\gamma) $ given by  Lemma \ref{le:difference_slopes_bound}.  Note that ${\boldsymbol v}$ is not a vertex of $ \operatorname{Dom}\mathcal{N} $, hence we can apply the reduction of vertices. We obtain that
$$
\mathcal{N}(\omega;\mathcal{A}',\gamma) \subset {\mathcal N}' = {\mathcal N}\setminus \text{interior of }{\mathcal T} \ ,
$$
where ${\mathcal T}$ is the triangle having vertices $\boldsymbol{v}$, $(\alpha_{k-1},k-1)$ and $(\alpha_{k+1},k+1) $. Moreover
$$
\operatorname{area}({\mathcal T})=\frac{\delta_{r}(\boldsymbol{v})-\delta_{l}(\boldsymbol{v})}{2}> K_\epsilon/2 \ .
$$
We deduce that the area of
$$
\mathcal{N}(\omega;\mathcal{A},\gamma) \cap H_{\delta}^-(\rho_\delta)
$$
decreases strictly at least the amount $K_\epsilon/2$. This cannot be repeated infinitely many times, thus we obtain the condition stated in Lemma \ref{le:epsilon}.

\subsection{Elimination of recessive vertices}
In this section we complete the proof of Theorem \ref{th:gammma_preparation}. In view of Remark \ref{re:gamma_preparation_polygons} it is enough with determine a $ \ell $-nested transformation $ \mathcal{A} \rightarrow \mathcal{B} $ such that $\mathcal{N}(\omega;\mathcal{B},\gamma) = \operatorname{Dom}\mathcal{N}(\omega;\mathcal{B},\gamma) = \operatorname{Dom}\mathcal{N} $ and then use Lemma \ref{le:push_right}.

We assume that the properties listed in Proposition \ref{pr:preparation_reductions} are satisfied. Note that this reductions and Lemma \ref{le:epsilon} guarantee that for levels $ \omega_k $ which are not $ \tau_k $-final we have
\begin{equation}\label{eq:non_tau_final_levels}
	\alpha_k \leq \beta_k = \delta_k \leq \tau_k \leq \phi_k \ ,
\end{equation}
where we recall that $ \beta_k=\nu_{\mathcal{A}}(\omega_k) $, $ \delta_k=\nu_{\mathcal{A}}(\eta_k) $ and $ \phi_k=\nu_{\mathcal{A}}(f_k) $, and $ \alpha_k $ and $ \tau_k $ are the minimum values such that $ (\alpha_k,k) $ and $ (\tau_k,k) $ belong to $ \mathcal{N} $ and $ \textrm{Dom}\mathcal{N} $ respectively.

Let us state the induction property we are going to use:
\begin{quote}
	``$ \mathcal{P}_h (\omega;\mathcal{A};\gamma) $: for all $ 0 \leq k \leq h $ the $ k $-level $ \omega_k $ is $ \tau_k $-final.''
\end{quote}
Note that $\omega$ is $\gamma$-prepared if and only if $ \mathcal{P}_{h}(\omega;\mathcal{A};\gamma) $ is true for all $ h \leq \gamma/\nu(z) $.

The starting property $ \mathcal{P}_0 (\omega;\mathcal{A};\gamma) $ is true, since $ \tau_0 \leq \gamma $ hence $ \omega_0 $ is $ \tau_0 $-final. Moreover, the stability properties under $\ell$-nested transformations give that
$$
 \mathcal{P}_h (\omega;\mathcal{A};\gamma) \Rightarrow  \mathcal{P}_h (\omega;\mathcal{A}';\gamma) \ ,
$$
for any $\ell$-nested transformation $ \mathcal{A} \rightarrow \mathcal{A}'$.

Thus, in order to complete the proof of Theorem \ref{th:gammma_preparation} we have to show the following statement:
\begin{quote}
	``For a given integer $ h $ with $ 1 \leq h \leq \gamma/ \nu(z) $, if $ \mathcal{P}_{h-1}(\omega;\mathcal{A};\gamma) $ is true, there is a $\ell$-nested transformation $ \mathcal{A} \rightarrow \mathcal{A}' $ such that  $ \mathcal{P}_{h}(\omega;\mathcal{A}';\gamma) $ is true.''
\end{quote}
We suppose that $ \mathcal{P}_h (\omega;\mathcal{A};\gamma) $ is not true. Note that the level $\omega_h$ is not $(\gamma-h\nu(z))$-final dominant, otherwise it would be $\tau_h$-final.

Let $ L_{\delta}(\rho) $ be the line passing through the point $ (\tau_h,h) $ and containing a side of $ \mathrm{Dom}\mathcal{N} $. We have two possibilities:
\begin{itemize}
	\item [a)] for every $ k < h $ the level $ \omega_k $ is $ (\rho - k \delta) $-final recessive;
	\item [b)] there is at least on index $ k < h $ such that $ \omega_k $ is $ (\rho - k \delta) $-final dominant.
\end{itemize}
Note that in the first case we must have $ \rho = \gamma $. Let us refer to the case a) as the ``totally recessive case'' and to the case b) as "dominant base point case".

\subsubsection{Totally recessive case}
In this case we will use Lemma \ref{le:epsilon} in order to bring $ \mathcal{N} $ close enough to $ L_{\delta}(\rho) $ such that the property of reduction of vertices applied to $ (\beta_h,h) $ gives us the desired result.

For every $ k < h $ we have that $ \tau_k = \rho - k \delta $, thus the real number $ \epsilon $ given by
$$
\epsilon := \min \{ \beta_k - \tau_k \ |\ k=0,1,\dots,h-1 \}
$$
is strictly positive. Let $ \mathcal{A} \rightarrow \mathcal{A}^* $ be a $ \ell $-nested transformation given by Lemma \ref{le:epsilon} with respect to $ L_{\delta}(\rho) $ and $ \epsilon $. Now, the property of reduction of vertices applied to the vertex $ (\beta_h,h) $ of $ \mathcal{N}(\omega;\mathcal{A}^*,\gamma) $ gives a $ \ell $-nested transformation $ \mathcal{A}^* \rightarrow \mathcal{A}' $ such that $ \mathcal{P}_h (\omega;\mathcal{A}';\gamma) $ is true.

\subsubsection{Dominant base point case}\label{subse:dominant_vertex}
Let $ b $ be the lowest index such that $ \omega_b $ is $ (\rho - b \delta) $-final dominant. Note that if $ \rho < \gamma $ the point $ (\tau_b,b) $ is a vertex of $ \mathrm{Dom}\mathcal{N} $ but in the case $ \rho = \gamma $ it is not necessarily a vertex. Taking into account these possibilities, in the case that $ b \geq 1 $ we define a value $ \epsilon_1 >0 $ as
\begin{equation*}
	\epsilon_1 := \left\{
		\begin{array}{lll}
			\tau_{b-1} - (\tau_b + \delta) & \text{if} & \rho < \gamma \ ; \\
			\min \{ \beta_k - (\tau_k + (h-k)\delta) \ |\ k=0,1,\dots,h-1 \} & \text{if} & \rho = \gamma \ .
		\end{array} \right.
\end{equation*}
In the first case $ \epsilon_1 $ is the distance between $ L_{\delta}(\rho) $ and $ \mathrm{Dom}\mathcal{N} $ over the horizontal line at height $ b-1 $. In the second case, $ \beta_k - (\tau_k + (h-k)\delta) $ is the distance between the line $ L_{\delta}(\rho) $ and the point $ (\beta_k,k) $ of $ \mathrm{DomCl}(\omega;\mathcal{A}) $ over the horizontal line at height $ k $, so $ \epsilon_1 $ is the minimum among such distances.

Since $ \omega_b $ is $ \tau_b $-final dominant we have
$$
	\omega_b = \boldsymbol{x}^{\boldsymbol{p}_b} \tilde{\omega}_b + \bar{\omega}_b \ ,
$$
where $ \nu_{\mathcal{A}}(\bar{\omega}_b) > \nu({{\boldsymbol x}^{{\boldsymbol p}_b}})=\tau_b $ and $ \tilde{\omega}_b $ is log-elementary. If we write $ \omega_b = \eta_b + f_b \frac{dz}{z} $ we have
$$
\eta_b = \boldsymbol{x}^{\boldsymbol{p}_b} \tilde{\eta}_b + \bar{\eta}_b \quad \text{and} \quad f_b = \tilde{f}_b \boldsymbol{x}^{\boldsymbol{p}_b} + \bar{f}_b \ ,
$$
where $ \tilde{\eta}_b \in N_{\mathcal{A}}^{\ell} $ is log-elementary or $ \tilde{\eta}_b \equiv 0 $ and $ \nu_{\mathcal{A}}(\bar{\eta}_b) \geq \epsilon $, $ \tilde{f}_b \in R_{\mathcal{A}}^{\ell} $ is a unit or $ \tilde{f}_b \equiv 0 $ $ \nu_{\mathcal{A}}(\bar{f}_b) \geq \epsilon $, and $ (\tilde{\eta}_b,\tilde{f}_b) \neq (0,0) $. Moreover, we can assume that $ \tilde{f}_b $ is a constant, just by taking the non-constant terms and considering them as terms of $ \bar{f}_b $, and using Lemma \ref{le:push_right} if necessary. So from now on we assume that
$$
\omega_b = \boldsymbol{x}^{\boldsymbol{p}_b} \left( \tilde{\eta_b} + \mu \frac{dz}{z} \right) + \bar{\omega}_b \ .
$$
Let $ \epsilon_2 $ be the positive value defined by
$$
\epsilon_2 := \nu_{\mathcal{A}}(\bar{\omega}_b) - \tau_b \ .
$$
Consider the value
\begin{equation*}
	\epsilon := \left\{
		\begin{array}{lll}
			\min \{ \epsilon_1,\epsilon_2\} & \text{if} & b \geq 1 \ ; \\
			\epsilon_2 & \text{if} & b = 0 \ .
		\end{array} \right.
\end{equation*}
After performing a $\ell$-nested transformation given by Lemma \ref{le:epsilon} with respect to $L_{\delta}(\rho)$ and $ \epsilon $ if necessary, we can assume that:
\begin{itemize}
	\item[a)] $\beta_k> \tau_h-(k-h) \delta - \epsilon $, for any $h\leq k\leq  \gamma / \nu(z) $.
	\item[b)] $\beta_k\geq \tau_k = \tau_h-(k-h) \delta $, for any $b\leq k\leq h-1$.
	\item[c)] $\beta_k> \tau_h-(k-h) \delta + \epsilon $, for any $0\leq k\leq b-1$.
\end{itemize}
Recall that $ \nu_{\mathcal{A}}(\Delta_{h+b} ) \geq 2 \gamma $, where this $ 2 $-form is given by
$$ \Delta_{h+b} = \sum_{i+j=h+b} \big( j \eta_j \wedge \eta_i +f_i d \eta_j + \eta_i \wedge d f_j \big) \ . $$
We can write
\begin{eqnarray*}
	(h-b)\eta_h \wedge \eta_b +  f_h d \eta_b + \eta_b \wedge d f_h+ f_b d \eta_h + \eta_h \wedge d f_b
 =\\
 =  \Delta_{h+b}  - \sum_{\stackrel{i+j=h+b}{i,j \neq h,b}} \big( j \eta_j \wedge \eta_i +f_i d \eta_j + \eta_i \wedge d f_j \big)
\end{eqnarray*}
In view of properties a), b) and c), and taking into account that $ \tau_h + \tau_b < 2 \gamma $, we deduce that
\begin{equation}\label{eq:truncations_1}
	\nu_{\mathcal{A}}\big( (h-b) \eta_h \wedge \eta_b +  f_h \, d \eta_b + \eta_b \wedge d f_h+ f_b \, d \eta_h + \eta_h \wedge d f_b \big) \geq \tau_h+\tau_b \ .
\end{equation}
By \eqref{eq:non_tau_final_levels} we know that $ \nu_{\mathcal{A}}(f_h) \geq \tau_h $. On the other hand, since $ \nu_{\mathcal{A}}(\eta_b) \geq \tau_b $ we have that $ \nu_{\mathcal{A}}(d \eta_b) \geq \tau_b $. We deduce that
\begin{equation}\label{eq:truncations_2}
	\nu_{\mathcal{A}}\left( f_h \, d \eta_b + \eta_b \wedge d f_h  \right) \geq \tau_h+\tau_b \ .
\end{equation}
From \eqref{eq:truncations_1} and \eqref{eq:truncations_2} we derive that
\begin{equation}\label{eq:truncations_3}
	\nu_{\mathcal{A}}\big( (h-b)\eta_h \wedge \eta_b +   f_b \, d \eta_h + \eta_h \wedge d f_b \big) \geq \tau_h+\tau_b \ .
\end{equation}
By property a) we have
$$ \nu_{\mathcal{A}} ( \eta_h \wedge \bar{\eta}_b ) \geq \tau_b+ \tau_h \quad \text{and} \quad  \nu_{\mathcal{A}} ( \bar{f}_b \, d \eta_h ) \geq \tau_b+ \tau_h \ , $$
so from Equation \eqref{eq:truncations_3} we deduce
\begin{equation*}
	\nu_{\mathcal{A}} \left( (h-b) \, \eta_h \wedge \boldsymbol{x}^{\boldsymbol{p}_b} \tilde{\eta}_b + \mu \, \boldsymbol{x}^{\boldsymbol{p}_b} \, d \eta_h \right) \geq \tau_h+\tau_b \ .
\end{equation*}
We can rewrite the above expression as
\begin{equation*}
	\nu_{\mathcal{A}} \left( \boldsymbol{x}^{\boldsymbol{p}_b} \left( \eta_h \wedge \left[ (h-b) \tilde{\eta}_b + \mu \frac{d\boldsymbol{x}^{\boldsymbol{p}_b}}{\boldsymbol{x}^{\boldsymbol{p}_b}} \right] + \mu \, d \eta_h  \right) \right) \geq \tau_h+\tau_b \ .
\end{equation*}
Dividing by ${\boldsymbol x}^{{\boldsymbol p}_b}$ we obtain
\begin{equation}\label{eq:truncations_4}
	\nu_{\mathcal{A}} \left( \eta_h \wedge \left[ (h-b) \tilde{\eta}_b + \mu \frac{d\boldsymbol{x}^{\boldsymbol{p}_b}}{\boldsymbol{x}^{\boldsymbol{p}_b}} \right] + \mu \, d \eta_h \right) \geq \tau_h \ .
\end{equation}
Let us denote by $ \sigma \in N_{\mathcal{A}}^{\ell} $ the term in the brackets
$$
\sigma := (h-b) \tilde{\eta}_b + \mu \frac{d\boldsymbol{x}^{\boldsymbol{p}_b}}{\boldsymbol{x}^{\boldsymbol{p}_b}} \ .
$$
Now we study separately two cases depending on whether or not $ \sigma $ is log-elementary.
\paragraph{a) $ \sigma $ is log-elementary.}
We study first two particular cases first and then we treat the general situation. The first particular case is $ \mu = 0 $ and the second one is $ \tilde{\eta}_b = 0$.

\subparagraph{ a1) Case $ \mu = 0 $.} In this situation we have that $ \sigma = (h-b) \tilde{\eta}_b $ so Equation \eqref{eq:truncations_4} gives
\begin{equation}\label{eq:truncations_5}
	\nu_{\mathcal{A}} \left( \eta_h \wedge \tilde{\eta}_b \right) \geq \tau_h \ .
\end{equation}
We need the following lemma:
\begin{lem}[Truncated proportionality]\label{le:truncated_proportionality}
	Let $\tilde\eta\in N_{\mathcal A}^{\ell}$ be a log-elementary $1$-form. Given $\theta\in N_{\mathcal A}^{\ell}$ such that $ \nu_{\mathcal{A}} (\theta \wedge \tilde\eta ) \geq \lambda $, there is $ g \in R_{\mathcal A}^{\ell} $ and $ \bar\theta \in N_{\mathcal A}^{\ell} $ with $ \nu_{\mathcal{A}}(\bar\theta) \geq \lambda $ such that
	$$
	\theta = g \, \tilde{\eta} + \bar{\theta} \ .
	$$
	\begin{proof}
		Let us write
		$$ \tilde{\eta} = \sum a_i \frac{dx_i}{x_i} + \sum b_j dy_j \quad \text{and} \quad \theta = \sum a'_i \frac{dx_i}{x_i} + \sum b'_j dy_j \ ,$$
		where we suppose without lost of generality that $ a_1 $ is a unit. The coefficients of the $ 2 $-form $ \theta \wedge \tilde\eta $ are given by the minors of the matrix
		$$
		\left(
		\begin{array}{cccccc}
			a_1	& \dots & a_r & b_1 & \dots & b_s \\
			a'_1 & \dots & a'_r & b'_1 & \dots & b'_s  \\
		\end{array}
		\right) \ .
		$$
		Since $ \nu_{\mathcal{A}}(\tilde{\eta} \wedge \theta) \geq \lambda $ we have that
		$$ \nu_{\mathcal{A}}( a_1 a'_i - a_i a'_1 ) \geq \lambda \ ,\quad 2\leq i \leq r  \ ,$$
		and
		$$ \nu_{\mathcal{A}}( a_1 b'_j - b_j a'_1 ) \geq \lambda \ ,\quad 1 \leq j \leq n-r \ . $$
		Thus we have
		$$ a'_i = \frac{a'_1}{a_1}  a_i + \bar{a}_i \ ,\quad  \nu_{\mathcal{A}}( \bar{a}_i ) \geq \lambda \ ,\quad2\leq i \leq r \ , $$
		and
		$$ b'_j = \frac{a'_1}{a_1}  b_j + \bar{b}_j \ ,\quad  \nu_{\mathcal{A}}( \bar{b}_j ) \geq \lambda  \ ,\quad 1 \leq j \leq n-r \ . $$
		Therefore we can write
		$$ \theta = g \, \tilde{\eta} + \bar\theta \ , $$
		where
		$$ g = \frac{a'_1}{a_1} \quad \text{and} \quad \bar\theta = \sum \bar{a}_i \frac{dx_i}{x_i} + \sum \bar{b}_j dy_j \ , $$
		and by construction we have $ \nu_{\mathcal{A}}(\bar\theta) \geq \lambda $.
	\end{proof}
\end{lem}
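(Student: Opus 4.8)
The plan is to build $g$ and $\bar\theta$ by an explicit division over the ring $R_{\mathcal{A}}^{\ell}$, using the fact that $\tilde\eta$ being log-elementary means one of its logarithmic coefficients is a unit. First I would write
$$\tilde\eta = \sum_{i=1}^r a_i \frac{dx_i}{x_i} + \sum_{j=1}^{\ell} b_j dy_j, \qquad \theta = \sum_{i=1}^r a'_i \frac{dx_i}{x_i} + \sum_{j=1}^{\ell} b'_j dy_j,$$
and, reordering the independent variables if necessary, assume $a_1\in R_{\mathcal{A}}^{\ell}$ is a unit. Then I would simply set
$$g := \frac{a'_1}{a_1}\in R_{\mathcal{A}}^{\ell}, \qquad \bar\theta := \theta - g\,\tilde\eta,$$
so that the $\frac{dx_1}{x_1}$-coefficient of $\bar\theta$ vanishes by construction. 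What remains is to check that every other coefficient of $\bar\theta$ has explicit value at least $\lambda$, which is exactly the assertion $\nu_{\mathcal{A}}(\bar\theta)\geq\lambda$.

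The key computation is that the $\frac{dx_i}{x_i}$-coefficient of $\bar\theta$ is $a'_i - \tfrac{a'_1}{a_1}a_i = a_1^{-1}(a_1 a'_i - a_i a'_1)$ and its $dy_j$-coefficient is $a_1^{-1}(a_1 b'_j - b_j a'_1)$. Now the quantities $a_1 a'_i - a_i a'_1$ and $a_1 b'_j - b_j a'_1$ are, up to sign, precisely the coefficients of $\frac{dx_1}{x_1}\wedge\frac{dx_i}{x_i}$ and $\frac{dx_1}{x_1}\wedge dy_j$ in $\theta\wedge\tilde\eta$, i.e. the relevant $2\times 2$ minors of the coefficient matrix. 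Since $\nu_{\mathcal{A}}(\theta\wedge\tilde\eta)\geq\lambda$, every coefficient of the $2$-form $\theta\wedge\tilde\eta$ lies in $\mathcal{Q}_{\mathcal{A}}^{\ell}(\lambda)$, that is, has explicit value $\geq\lambda$; and dividing by the unit $a_1$ does not change explicit value. Hence each coefficient of $\bar\theta$ has explicit value $\geq\lambda$, so $\bar\theta\in\mathcal{Q}_{\mathcal{A}}^{\ell}(\lambda)\,N_{\mathcal{A}}^{\ell}=\mathcal{M}_{\mathcal{A}}^{\ell}(\lambda)$, which gives $\nu_{\mathcal{A}}(\bar\theta)\geq\lambda$.

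I do not expect a genuine obstacle here: this is the Saito--de Rham division in its simplest instance, where the divisor $\tilde\eta$ already carries a unit coefficient, so no cohomological input is needed and the whole argument is linear algebra. The only points demanding a line of justification are the bookkeeping facts that the explicit value of a $1$- or $2$-form equals the minimum of the explicit values of its coefficients (immediate from the definitions of $\nu_{\mathcal{A}}$ via the ideals $\mathcal{Q}_{\mathcal{A}}^{\ell}(\lambda)$ and of $\mathcal{M}_{\mathcal{A}}^{\ell}(\lambda)$ as $\mathcal{Q}_{\mathcal{A}}^{\ell}(\lambda)\,N_{\mathcal{A}}^{\ell}$) and that $\nu_{\mathcal{A}}(u\,h)=\nu_{\mathcal{A}}(h)$ whenever $u\in R_{\mathcal{A}}^{\ell}$ is a unit (since multiplication by $u$ preserves the set of monomials in the independent variables occurring with nonzero coefficient in the $\boldsymbol{x}$-expansion). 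With these in hand the proof reduces to the two-step verification above.
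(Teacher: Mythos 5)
Your proposal is correct and follows essentially the same argument as the paper: assume $a_1$ is a unit, set $g=a'_1/a_1$, and bound the coefficients of $\bar\theta=\theta-g\,\tilde\eta$ by recognizing them as the $2\times 2$ minors (divided by the unit $a_1$) coming from the coefficients of $\theta\wedge\tilde\eta$, whose explicit values are at least $\lambda$ by hypothesis. The extra bookkeeping remarks you flag (explicit value as a minimum over coefficients, invariance under multiplication by a unit) are exactly the implicit facts the paper's proof relies on.
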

\begin{rem}
	We have detailed a direct proof of Lemma \ref{le:truncated_proportionality}, but it can be obtained as a consequence of the de Rham-Saito Lemma \cite{Sai}. 
\end{rem}
\noindent From Equation \eqref{eq:truncations_5} and Lemma \ref{le:truncated_proportionality} we conclude that there are $ g \in R_{\mathcal{A}}^{\ell} $ and $ \bar{\eta}_h \in N_{\mathcal{A}}^{\ell} $ such that
$$ \eta_h=g \sigma + \bar{\eta}_h \ , $$
where $ \nu_{\mathcal{A}} ( \bar{\eta}_h ) \geq \tau_h $. This expression is stable under further $\ell$-nested transformations, thus we can assume that $ g $ is $ \tau_h $-final. If $ \nu_{\mathcal{A}}(g) < \tau_h $ the level $ \omega_h $ would be $ \tau_h $-final dominant with value 
$ \nu_{\mathcal{A}}(\omega_h) = \nu_{\mathcal{A}}(g) < \tau_h $, in contradiction with the maximality of dominant levels assumption. So we have $ \nu_{\mathcal{A}}(g) \geq \tau_h $, hence $ \nu_{\mathcal{A}}(\omega_h) \geq \tau_h $. Since $ \omega_h $ cannot be dominant, applying Lemma \ref{le:push_right} if necessary we obtain $ \nu_{\mathcal{A}}(\omega_h) > \tau_h $, that is, $ \omega_h $ is $ \tau_h $-final recessive.

\subparagraph{ a2) Case $ \tilde{\eta}_b = 0 $.} In this situation we have $ \mu \neq 0 $. In this situation we have that
$$
\sigma = \mu \frac{d\boldsymbol{x}^{\boldsymbol{p}_b}}{\boldsymbol{x}^{\boldsymbol{p}_b}} \ ,
$$
hence Equation \eqref{eq:truncations_4} gives
\begin{equation}\label{eq:truncations_6}
	\nu_{\mathcal{A}} \left( \eta_h \wedge \frac{d\boldsymbol{x}^{\boldsymbol{p}_b}}{\boldsymbol{x}^{\boldsymbol{p}_b}} + d \eta_h \right) \geq \tau_h \ .
\end{equation}
Let us write $ \eta_h $ as a series in the independent variables
$$
	\eta_h = \sum_{I} \boldsymbol{x}^{I} \eta_{h,I} \ ,
$$
where the coefficients of the $ 1 $-forms $ \eta_{h,I} \in N_{\mathcal{A}}^{\ell} $ are series in the variables $ \boldsymbol{y} $. Let us denote $ \eta_h = \check{\eta}_h + \bar{\eta}_h $ where
\begin{equation}\label{eq:explicit_truncation}
	\check{\eta}_h  = \sum_{\nu(\boldsymbol{x}^{I}) < \tau_h } \boldsymbol{x}^{I} \eta_{h,I} \ .
\end{equation}
Since $ \nu_{\mathcal{A}}(\bar{\eta}_h) \geq \tau_h $, from Equation \eqref{eq:truncations_6} we have that
\begin{equation}\label{eq:truncations_7}
	\nu_{\mathcal{A}} \left( \check{\eta}_h \wedge \frac{d\boldsymbol{x}^{\boldsymbol{p}_b}}{\boldsymbol{x}^{\boldsymbol{p}_b}} + d \check{\eta}_h \right) \geq \tau_h \ .
\end{equation}
Note that this expression is homogeneous with respect to $ \boldsymbol{x} $, it means,
$$
\check{\eta}_h \wedge \frac{d\boldsymbol{x}^{\boldsymbol{p}_b}}{\boldsymbol{x}^{\boldsymbol{p}_b}} + d \check{\eta}_h = \sum_{\nu(\boldsymbol{x}^{I}) < \tau_h } \boldsymbol{x}^{I} \left( \eta_{h,I} \wedge \frac{d\boldsymbol{x}^{\boldsymbol{p}_b}}{\boldsymbol{x}^{\boldsymbol{p}_b}} + \frac{d\boldsymbol{x}^{\boldsymbol{I}}}{\boldsymbol{x}^{\boldsymbol{I}}} \wedge \eta_{h,I} + d \eta_{h,I} \right) \ .
$$
Due to this homogeneity we have that Equation \eqref{eq:truncations_7} is equivalent to
\begin{equation}\label{eq:truncations_8}
	\check{\eta}_h \wedge \frac{d\boldsymbol{x}^{\boldsymbol{p}_b}}{\boldsymbol{x}^{\boldsymbol{p}_b}} + d \check{\eta}_h = 0 \ .
\end{equation}
Multiplying by $ \check{\eta}_h $ the above expression we deduce that $ \check{\eta}_h $ is integrable. By the induction statement $ T_3(\ell) $ there is a $ \ell $-nested transformation $ \mathcal{A} \rightarrow \mathcal{A}' $ such that $ \check{\eta}_h $ is $ \tau_h $-final. If $ \nu_{\mathcal{A}'}(\check{\eta}_h) < \tau_h $ the level $ \omega_h $ would be $ \tau_h $-final dominant with value 
$ \nu_{\mathcal{A}'}(\omega_h) = \nu_{\mathcal{A}'}(\check{\eta}_h) < \tau_h $, in contradiction with the maximality of dominant levels assumption. So we have $ \nu_{\mathcal{A}'}(\check{\eta}_h) \geq \tau_h $, hence $ \nu_{\mathcal{A}}(\omega_h) \geq \tau_h $. Since $ \omega_h $ cannot be dominant, applying Lemma \ref{le:push_right} if necessary we obtain $ \nu_{\mathcal{A}'}(\omega_h) > \tau_h $, that is, $ \omega_h $ is $ \tau_h $-final recessive.
\subparagraph{ a3) General case.} Let us write
$$
\sigma = (h-b) \tilde{\eta}_b + \mu \frac{d\boldsymbol{x}^{\boldsymbol{p}_b}}{\boldsymbol{x}^{\boldsymbol{p}_b}} =  \sum_{i=1}^r \lambda_i \frac{dx_i}{x_i} + \sigma^* \ ,
$$
where $ \boldsymbol{\lambda} \in k^{r} \setminus \{ \boldsymbol{0} \} $ and $ \sigma^* \in N_{\mathcal{A}}^{\ell} $ is not log-elementary. Suppose that we perform a $ \ell $-nested transformation $ \mathcal{A} \rightarrow \mathcal{A}' $. We obtain new coordinates $ (\boldsymbol{x}',\boldsymbol{y}') $ such that for $ i=1,2,\dots,r $ we have
$$
x_i = \boldsymbol{x}'^{\boldsymbol{\alpha}_i} U_i \ ,
$$
where $ \boldsymbol{\alpha}_i \in \mathbb{Z}^{r}_{\geq 0} \setminus \{ \boldsymbol{0} \} $ and $ U_i \in R_{\mathcal{A}'}^{\ell} $ is a unit. We have that
$$
\sum_{i=1}^r \lambda_i \frac{dx_i}{x_i} = \sum_{i=1}^r \lambda_i \frac{d(\boldsymbol{x}'^{\boldsymbol{\alpha}_i} U_i)}{\boldsymbol{x}'^{\boldsymbol{\alpha}_i} U_i} = \sum_{i=1}^r \lambda_i \frac{d\boldsymbol{x}'^{\boldsymbol{\alpha}_i} }{\boldsymbol{x}'^{\boldsymbol{\alpha}_i} } + \sum_{i=1}^r \lambda_i U_i^{-1} d U_i \ .
$$
We can write
$$
\sum_{i=1}^r \lambda_i \frac{d\boldsymbol{x}'^{\boldsymbol{\alpha}_i} }{\boldsymbol{x}'^{\boldsymbol{\alpha}_i} } = \sum_{i=1}^r \lambda'_i \frac{dx'_i}{x'_i}
$$
where $ \boldsymbol{\lambda}' \in k^{r} \setminus \{ \boldsymbol{0} \} $. On the other hand, we have that
$$
d \left( \sum_{i=1}^r \lambda_i U_i^{-1} d U_i \right) = 0 \ , 
$$
so, by Poincare's Lemma we know that
$$
 \sum_{i=1}^r \lambda_i U_i^{-1} d U_i = d G
$$
for certain formal function $ G \in R_{\mathcal{A}'}^{\ell} $. We have just proved that after performing a $ \ell $-nested transformation $ \mathcal{A} \rightarrow \mathcal{A}' $ the $ 1 $-form $ \sigma $ can be written as
$$
\sigma = \sum_{i=1}^r \lambda'_i \frac{dx'_i}{x'_i} + dG + \sigma^* \ .
$$
In view of these considerations and using if necessary Lemmas \ref{le:push_right} and \ref{le:epsilon} we can assume that in $ \mathcal{A} $ we have
$$
\sigma = \sum_{i=1}^r \lambda_i \frac{dx_i}{x_i} + dG + \sigma^* \ ,
$$
where $ \nu_{\mathcal{A}}(\sigma^*) > \tau_h - \nu_{\mathcal{A}}(\eta_h) > 0 $. With this assumption we have that Equation \eqref{eq:truncations_4} gives
\begin{equation}\label{eq:truncations_9}
	\nu_{\mathcal{A}} \left( \eta_h \wedge \left[ \sum_{i=1}^r \lambda_i \frac{dx_i}{x_i} + dG \right] + \mu \, d \eta_h \right) \geq \tau_h \ .
\end{equation}
Moreover, we can assume that $ \lambda_{i_0} = 1 $ for certain index $ 1 \leq i_0 \leq r $. Let us consider the elements $ x^*_1 , x^*_2 , \dots, x^*_r \in R_{\mathcal{A}}^{\ell} $ defined by
$ x^*_i := x_i $ if $ i \neq i_0 $ and $ x^*_{i_0} = x_{i_0} \exp(G) $. Note that we have
$$
\sum_{i=1}^r \lambda_i \frac{dx_i}{x_i} + dG = \sum_{i=1}^r \lambda_i \frac{dx^*_i}{x^*_i} \ .
$$
We have that $ x^*_1 , x^*_2 , \dots, x^*_r,y_1,y_2,\dots,y_{\ell} $ are a regular system of parameters of $ R_{\mathcal{A}}^{\ell} $. Let us consider a new ``explicit value'' $ \nu_{\mathcal{A}}^* $, defined exactly as $ \nu_{\mathcal{A}} $ but considering the power series expansions with respect to the parameters $ \boldsymbol{x}^* $ instead of $ \boldsymbol{x} $. For every exponent $ \boldsymbol{q} \in \mathbb{Z}^{r}_{\geq0} $ it follows that
$$
\nu_{\mathcal{A}}({\boldsymbol{x}^*}^{\boldsymbol{q}}) = \nu(\boldsymbol{x^q}) \ ,
$$
so we have that $ \nu_{\mathcal{A}}^* \equiv \nu_{\mathcal{A}} $. It means, for any object $ \psi $ (formal function or $ p $-form) we have that $ \nu_{\mathcal{A}}^* (\psi) = \nu_{\mathcal{A}} (\psi) $.

Let us write $ \eta_h $ as a power series of the elements $ x^*_1 , x^*_2 , \dots, x^*_r $
$$
\eta_h = \sum_{I} {\boldsymbol{x}^*}^{I} \eta_{h,I}
$$
where the coefficients of the $ 1 $-forms $ \eta_{h,I} \in N_{\mathcal{A}}^{\ell} $ are series in the variables $ \boldsymbol{y} $. Let us denote $ \eta_h = \check{\eta}_h + \bar{\eta}_h $ where
$$
\check{\eta}_h = \sum_{ \nu_{\mathcal{A}}^*({\boldsymbol{x}^*}^{I})  < \tau_h} {\boldsymbol{x}^*}^{I} \eta_{h,I} \ .
$$
We have that $ \check{\eta}_h $ and $ \bar{\eta}_h $ belong to $ N_{\mathcal{A}}^{\ell} $ and $ \nu_{\mathcal{A}}^*(\bar{\eta}_h) \geq \tau_h $. From Equation \eqref{eq:truncations_9} we obtain
$$
	\nu_{\mathcal{A}}^{*} \left( \check{\eta}_h \wedge \sum_{i=1}^r \lambda_i \frac{dx^*_i}{x^*_i} + \mu \, d \check{\eta}_h \right) \geq \tau_h \ .
$$
This expression is homogeneous with respect to $ \boldsymbol{x}^* $ so it is equivalent to
$$
\check{\eta}_h \wedge \sum_{i=1}^r \lambda_i \frac{dx^*_i}{x^*_i} + \mu \, d \check{\eta}_h = 0 \ .
$$
Multiplying this last expression by $ \check{\eta}_h $ we obtain that $ \check{\eta}_h $ is an integrable $ 1 $-form. Recall that both $ \check{\eta}_h $ and $ \bar{\eta}_h $ belong to $ N_{\mathcal{A}}^{\ell} $ and that $ \nu_{\mathcal{A}} (\bar{\eta}_h) \geq \tau_h $. We are in the same situation that in the particular case $ \tilde{\eta}_b \equiv 0 $ detailed previously, so we can end as we did in that case.
\begin{rem}
	Note that we have not performed non-algebraic operations. We have used the formal variables $ \boldsymbol{x}^* $ only for divide in two parts the $ 1 $-form $ \eta_h $.
\end{rem}

\paragraph{b) $ \sigma $ is not log-elementary.}
First of all, note that $ \mu = 0 $ implies that $ \sigma $ is log-elementary, so in this case we have $ \mu \neq 0 $.

In this case, using Lemma \ref{le:push_right} if necessary, we can assume that $ \nu_{\mathcal{A}}(\sigma) > 0 $. In fact, we can suppose that $ \nu_{\mathcal{A}}(\sigma) > \tau_h - \nu_{\mathcal{A}}(\eta_h) $, otherwise we use Lemma \ref{le:epsilon}. With these assumptions Equation \eqref{eq:truncations_4} gives
$$
\nu_{\mathcal{A}}(d \eta_h) \geq \tau_h \ .
$$
Writing $ \eta_h = \check{\eta}_h + \bar{\eta_h} $ as we did in Equation \eqref{eq:explicit_truncation}, where we recall that $ \nu_{\mathcal{A}}(\bar{\eta_h}) \geq \tau_h $, we obtain that
$$
\nu_{\mathcal{A}}(d \check{\eta}_h) \geq \tau_h \ ,
$$
and again due to the homogeneity with respect to the variables $ \boldsymbol{x} $ we have that
$$
d \check{\eta}_h = 0 \ .
$$
We have that $ \check{\eta}_h $ is integrable (indeed, following Poincare's Lemma it is the differential of a function), and we conclude this case as the previous one.

\section{Getting $ \gamma $-final forms}\label{ch:getting_final_forms}
In this chapter we complete the proof of the induction step
$$ T_3(\ell) \Longrightarrow T_3(\ell+1) $$
started in Chapter \ref{ch:truncated_preparation}, thus we also end the proof of Theorem \ref{th:gamma_finalization_1_forms}.

Let $ \mathcal{A} = \big( \mathcal{O}, (\boldsymbol{x},\boldsymbol{y}) \big) $ be a parameterized regular local model for $ K,\nu $. Fix an index $ \ell $, $ 0 \leq \ell \leq n-r- 1 $. Let us recall here the precise statement we want to prove:
\begin{quote}
	$ \boldsymbol{T_3(\ell + 1) }$: Given a $ 1 $-form $ \omega \in N_{\mathcal{A}}^{\ell+1} $ and a value $ \gamma \in \Gamma $, if $ \nu_{\mathcal{A}}(\omega \wedge d\omega) \geq 2 \gamma $ then there exists a $ (\ell+1) $-nested transformation $  \mathcal{A} \rightarrow \mathcal{B} $ such that $ \omega $ is $ \gamma $-final in $ \mathcal{B} $.
\end{quote}
So, during this chapter we fix a value $ \gamma \in \Gamma $ and consider a 1-form $\omega \in N_{\mathcal{A}}^{l+1}$ such that $ \nu_{\mathcal{A}}(\omega \wedge d\omega) \geq 2 \gamma $. Since we are working by induction on $\ell$, we assume that the statement $T_3(\ell)$ is true (hence $T_4(\ell)$ and $T_5(\ell)$ are also true).

As in the previous chapter we denote the dependent variables by $ \boldsymbol{y}=(y_1,y_2,\dots,y_{\ell}) $ and $ z = y_{\ell + 1} $.

\subsection{The critical height of a $ \gamma $-prepared $ 1 $-form}
In this section we assume that $ \omega \in N_{\mathcal{A}}^{\ell+1} $ is $ \gamma $-prepared. Recall that due to the $ \gamma $-prepared assumption in this situation we have that $ \mathcal{N}(\omega;\mathcal{A},\gamma) = \operatorname{Dom} \mathcal{N}(\omega;\mathcal{A},\gamma) $.

The \emph{critical value $ \delta(\omega;\mathcal{A},\gamma) $} is defined by
$$
	\delta(\omega;\mathcal{A},\gamma) := \min \left\{ \rho \ |\ L_{\nu(z)}(\rho) \cap \mathcal{N}(\omega;\mathcal{A},\gamma) \neq \emptyset \right\} \ .
$$
Note that $ \delta(\omega;\mathcal{A},\gamma) \leq \gamma $ since $ (0,\gamma) \in \mathcal{N}(\omega;\mathcal{A},\gamma) $. The critical value satisfies
$$  
	\delta(\omega;\mathcal{A},\gamma) = \min  \left\{ \beta_k(\omega;\mathcal{A}) + k \nu(z) \right\}_{k\geq 0} \cup \left\{ \gamma \right\} \,
$$
where we recall that $ \beta_k(\omega;\mathcal{A}) = \nu_{\mathcal{A}}(\omega_k) $ (see Section \ref{se:level_expansion}). Note that due to the $ \gamma $-preparation assumption in the above equality we can put $ \tau_k $ instead of $ \beta_k $. If no confusion arises we denote the critical value by $ \delta $. We study separately the cases $ \delta < \gamma $ and $ \delta = \gamma $

In the case $ \delta < \gamma $ we say that $ \mathcal{N}(\omega;\mathcal{A},\gamma) \cap L_{\nu(z)}(\delta) $ is \emph{the critical segment of $ \mathcal{N}(\omega;\mathcal{A},\gamma) $}. The \emph{critical height $ \chi(\omega;\mathcal{A},\gamma) $ of $ \mathcal{N}(\omega;\mathcal{A},\gamma)$} is the height of the upper endpoint of the critical segment. This integer number is our main control invariant. It satisfies
$$  0 \leq \chi(\omega;\mathcal{A},\gamma) \leq \frac{\delta}{\nu(z)} < \frac{\gamma}{\nu(z)} \ . $$
If no confusion arises we denote the critical height by $ \chi $. Note that we have
\begin{equation}\label{eq:critical_height_and_value}
	\delta =  \tau_{\chi} + \chi \, \nu(z) = \beta_{\chi} + \chi \, \nu(z)\ .
\end{equation}

Denote by $ \beta(\omega;\mathcal{A}) $ the explicit value $ \nu_{\mathcal{A}}(\omega) $ of $ \omega $ in $ \mathcal{A} $. Note that $ \beta(\omega;\mathcal{A}) $ is the minimum of the values $ \beta_k(\omega;\mathcal{A}) $. If $ \delta(\omega;\mathcal{A},\gamma) < \gamma $, from Equation \eqref{eq:critical_height_and_value} we derive that
\begin{equation}\label{eq:explicit_and_critical_value}
	\delta(\omega;\mathcal{A},\gamma) \geq \beta(\omega;\mathcal{A}) + \chi(\omega;\mathcal{A},\gamma) \nu(z) \ ,
\end{equation}
where we have equality if and only if $ \beta(\omega;\mathcal{A}) $ is the abscissa of the critical vertex. If no confusion arises we denote the explicit value of $ \omega $ by $ \beta $.
\begin{figure}[!ht]
	\centering
	\includegraphics[width=0.4\textwidth]{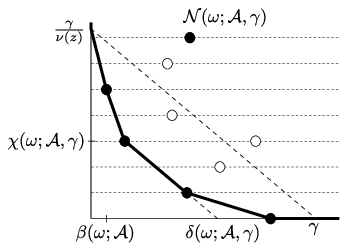}
	\caption{The explicit value, the critical value and the critical height}
	\label{fi:invariants_form}
\end{figure}

XXXXXXXXXXXXXXXXXXXXXXXXXXXXXXXXXXXXXXXXXXXXXXXXXXXX

XXXXXXXXXXXXXXXXXXXXXXXXXXXXXXXXXXXXXXXXXXXXXXXXXXXX

According to our definitions, all the levels $ \omega_k $ with explicit value $ \beta_k = \delta - k \nu(z) $ must be $ \beta_k $-final dominant. In particular, $ \omega_{\chi} $ is dominant with value $ \beta_{\chi} = \tau_{\chi} = \delta - \chi \nu(z) $. We have that
\begin{equation}\label{eq:critical_level}
	\omega_{\chi} = \boldsymbol{x}^{\boldsymbol{q}_{\chi}} \tilde{\omega}_{\chi} + \bar{\omega}_{\chi} \ , \quad \nu(\boldsymbol{x}^{\boldsymbol{q}_{\chi}}) = \tau_{\chi} \ ,
\end{equation}
where $ \tilde{\omega}_{\chi} $ is log-elementary and $ \nu_{\mathcal{A}}(\bar{\omega}_{\chi}) > \tau_{\chi} $.
	
Let $ \phi \in K $ be the $ (\ell+1) $-contact rational function $ \phi_{\ell+1} = z^{d} / \boldsymbol{x^p} $, where $ d $ is the ramification index $ d(z;\mathcal{A}) $ (see Section \ref{se:Puiseux's_packages}).
	
Now, consider a level $ \omega_k $ which gives a point $ (\beta_k,k) = (\tau_k,k) $ in the critical segment. Since $ \tau_k = \delta - k \nu(z) = \tau_{\chi} + (\chi - k)\nu(z) $, the index $ k $ must be of the form $ k = \chi - td $ for some integer $ 0 \leq t \leq \chi / d $. Since $ \omega $ is $ \gamma $-prepared we know that $ \omega_k $ is dominant, so it can be written as
$$ \omega_{k} = \boldsymbol{x}^{\boldsymbol{q}_{k}} \tilde{\omega}_{k} + \bar{\omega}_{k} \ , \quad \nu(\boldsymbol{x}^{\boldsymbol{q}_{k}}) = \tau_{k} \ . $$
	
Following Remark \ref{re:monomial_after_Puiseux_package}, we have that
$$ \boldsymbol{x}^{\boldsymbol{q}_{\chi - td}} = \boldsymbol{x}^{\boldsymbol{q}_{\chi} + t \boldsymbol{p}} $$
hence
\begin{equation}\label{eq:critical_segment_1}
	z^{\chi-td}\tilde{\omega}_{\chi - td} = \boldsymbol{x}^{\boldsymbol{q}_{\chi}} z^{\chi} \phi^{-t} \tilde{\omega}_{\chi -td} \ .
\end{equation}
Moreover, we can choose the forms $ \tilde{\omega}_{\chi - td} $ with coefficients not depending on the variables $ \boldsymbol{x} $. Write
$$ \tilde{\omega}_{\chi -td} = \sum_{i=1}^r \lambda_{t,i} \frac{dx_i}{x_i} + \mu_t \frac{dz}{z} + \omega^*_t \ , \quad (\boldsymbol{\lambda}_t,\mu_t)\in \mathbb{C}^{r+1} \setminus \{ \boldsymbol{0} \} \ , $$
where $ \omega^*_t $ is not log-elementary. Note that in order to simplify the expressions we have change the subindices. Denote by $ \sigma_t $ the closed form
\begin{equation}\label{eq:initial_part_critical_level}
	\sigma_t := \sum_{i=1}^r \lambda_{t,i} \frac{dx_i}{x_i} + \mu_t \frac{dz}{z} \ .
\end{equation}
Denote by $ M $ the integer part of $ \chi / d $. Let $ \omega_{\text{crit}} $ be the $ 1 $-form defined by
$$ \omega_{\text{crit}} := \boldsymbol{x}^{\boldsymbol{q}_{\chi}} \sum_{t = 0}^{M} z^t \boldsymbol{x}^{t\boldsymbol{p}} \sigma_t \ ,  $$
where we set $ \sigma_t = 0 $ if $ \omega_{\chi-td} $ is not a dominant level of the critical segment (note that at least $ \sigma_0 \neq 0 $). By Equation \eqref{eq:critical_segment_1} we have
\begin{equation}\label{eq:critical_segment_2}
	\omega_{\text{crit}} = \boldsymbol{x}^{\boldsymbol{q}_{\chi}} z^{\chi} \sum_{t = 0}^{M} \phi^{-t} \sigma_t  \ .
\end{equation}
Let $ \omega^* $ be the $ 1 $-form defined by
$$ \omega^* := \boldsymbol{x}^{\boldsymbol{q}_{\chi}} z^{\chi} \sum_{t = 0}^{M} \phi^{-t} \omega^*_t \ ,  $$	
where we set $ \omega^*_t = 0 $ if $ \omega_{\chi-td} $ is not a dominant level of the critical segment. Note that none of the levels of $ \omega^* $ is dominant.
	
Finally, let $ \breve{\omega} $ be the $ 1 $-form
$$ \breve{\omega} := \omega - \omega_{\text{crit}} - \omega^* \ . $$

XXXXXXXXXXXXXXXXXXXXXXXXXXXXXXXXXXXXXXXXXXXXXXXXXXXX

XXXXXXXXXXXXXXXXXXXXXXXXXXXXXXXXXXXXXXXXXXXXXXXXXXXXXxxxx

After performing a $ (\ell+1) $-Puiseux's package we obtain a parameterized regular local model in which the $ 1 $-form $ \omega $ not need to be $ \gamma $-prepared. By Theorem \ref{th:gammma_preparation} there is a $ \gamma $-preparation, so we will perform it and compare the new critical value and height with the old ones.
	
\subsection{Pre-$ \gamma $-final $ 1 $-forms}
As we see in Section \ref{se:pre_final_functions} in the case of functions, if the critical value is $ \gamma $ or if the critical height is $ 0 $ we know how to obtain a $ \gamma $-final situation. The same happens when we deal with $ 1 $-forms.
\begin{defi}
	A $ \gamma $-prepared $ 1 $-form $ \omega \in N_{\mathcal{A}}^{\ell+1} $ is \emph{pre-$ \gamma $-final} if
	$$ \delta(\omega;\mathcal{A},\gamma) = \gamma $$
	or
	$$ \delta(\omega;\mathcal{A},\gamma) < \gamma \quad \text{and} \quad \chi(\omega;\mathcal{A},\gamma) = 0 \ . $$
\end{defi}
Pre-$ \gamma $-final functions are easily recognizable by its Truncated Newton Polygon as it is represented in Figure \ref{fi:pre_gamma_final_form}

\begin{figure}[!ht]
	\centering
	\includegraphics[width=0.8\textwidth]{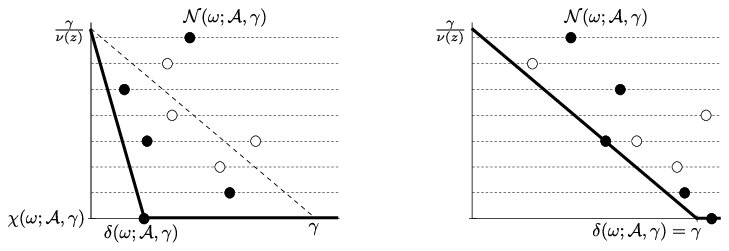}
	\caption{The two pre-$ \gamma $-final situations}
	\label{fi:pre_gamma_final_form}
\end{figure}

Let $ \Psi_{\ell+1} $ be the $ (\ell+1) $-nested transformation given in Lemma \ref{le:push_right}.
\begin{prop}\label{pr:gamma_pre_final}
	Let $ \omega \in N_{\mathcal{A}}^{\ell+1} $ be a pre-$ \gamma $-final $ 1 $-form.
	Consider the $ (\ell+1) $-nested transformation
	\begin{equation*}
		\xymatrix{\mathcal{A} \ar[r]^{\pi} &  \mathcal{A}' \ar[r]^{\Psi_{\ell+1}}  & \mathcal{B}}
	\end{equation*}
	where $ \pi : \mathcal{A} \rightarrow  \mathcal{A}' $ is a $ (\ell+1) $-Puiseux's package. Then $ \omega $ is $ \gamma $-final in $ \mathcal{B} $.
	\begin{proof}
		Consider the decomposition in $ z $-levels of $ \omega $ in $ \mathcal{A} $
		$$ \omega = \sum_{k=0}^{\infty} z^k \omega_k =  \sum_{k=0}^{\infty} z^k \left( \eta_k + f_k \frac{dz}{z} \right) \ . $$
		First, suppose we are in the first case $ \delta(\omega;\mathcal{A},\gamma) = \gamma $. For each index $ k \geq 0 $ we have
		$$ \nu_{\mathcal{A}'}(\omega_k) \geq \nu_{\mathcal{A}}(\omega_k) \geq \gamma - k \nu(z) \ . $$
		From Equations \eqref{eq:variables_after_Puiseux_package} we know that
		$$ z = \boldsymbol{x}'^{\boldsymbol{\alpha}_0} (z' + \xi)^{\beta_0} \ , \quad \text{with} \ \nu(\boldsymbol{x}'^{\boldsymbol{\alpha}_0}) = \nu(z) \ , $$
		hence
		$$ \nu_{\mathcal{A}'}(z^k) = \nu_{\mathcal{A}'} \left(\boldsymbol{x}'^{k\boldsymbol{\alpha}_0} (z' + \xi)^{k\beta_0} \right) = k \nu(z) \ . $$
		Therefore, for each $ k \geq 0 $ we have
		$$ \nu_{\mathcal{A}'}(z^k \omega_k) = \nu_{\mathcal{A}'}(z^k) + \nu_{\mathcal{A}'}(\omega_k) \geq \gamma \ . $$
		It follows that
		$$ \beta(\omega; \mathcal{A}') = \nu_{\mathcal{A}'}(\omega) \geq \gamma \ . $$
		If $ \beta(\omega; \mathcal{A}') > \gamma $ then $ \omega $ is $ \gamma $-final recessive in $  \mathcal{A}' $ so it is also $ \gamma $-final recessive in $ \mathcal{B} $ (Lemma \ref{le:gamma_final_1_forms_stable}). On the other hand, if $ \beta(\omega; \mathcal{A}') = \gamma $, by Lemma \ref{le:push_right} $ \omega $ is $ \gamma $-final in $ \mathcal{B} $.
		
		Now suppose that $ \chi(\omega;\mathcal{A},\gamma) = 0 $. All the levels $ \omega_k $ with $ k > 0 $ do not belong to the critical segment, so we have
		$$ \nu_{\mathcal{A}'}(\omega_k) \geq \nu_{\mathcal{A}}(\omega_k) > \delta(\omega;\mathcal{A},\gamma) - k \nu(z) \ , \quad \forall k \geq 1 \ . $$
		It follows that
		$$ \nu_{\mathcal{A}'}(z^k \omega_k) > \delta(\omega;\mathcal{A},\gamma) \quad \text{for all } k > 0 \ , $$
		hence
		\begin{equation}\label{eq:chi=0_explicit_value_klevels}
			\nu_{\mathcal{A}'}(\omega - \omega_0) = \nu_{\mathcal{A}'}\left(\sum_{k=1}^{\infty} z^k \omega_k \right) > \delta(\omega;\mathcal{A},\gamma) \ .
		\end{equation}
		Thinking in $ \omega_0 $ as a element of $ N_{\mathcal{A}}^{\ell+1} $, it is $ \gamma $-final dominant with explicit value $ \nu_{\mathcal{A}}(\omega_0) = \delta(\omega;\mathcal{A},\gamma) $. By Lemma \ref{le:gamma_final_1_forms_stable} we have that $ \omega_0 $, as a element of $ N_{\mathcal{A}'}^{\ell+1} $, is also $ \gamma $-final dominant with explicit value
		$$ \nu_{\mathcal{A}'}(\omega_0) = \delta(\omega;\mathcal{A},\gamma) \ . $$
		Taking into account Equation \eqref{eq:chi=0_explicit_value_klevels} we have that $ \omega $ is $ \gamma $-final dominant with explicit value
		$$ \nu_{\mathcal{A}'}(\omega) = \nu_{\mathcal{A}'}(\omega_0 + (\omega - \omega_0)) = \delta(\omega;\mathcal{A},\gamma) \ . $$
		Finally, it follows from Lemma \eqref{le:gamma_final_1_forms_stable} that the same happens in $ \mathcal{B} $.
	\end{proof}
\end{prop}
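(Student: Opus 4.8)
The plan is to exploit the fact that an $(\ell+1)$-Puiseux's package turns the dependent variable $z = y_{\ell+1}$ into an explicit monomial in the independent variables, so that any $z$-level lying strictly above the critical line acquires a large explicit value after the package; the two cases in the definition of pre-$\gamma$-final are then handled in parallel. Write the $z$-expansion $\omega = \sum_{k\geq 0} z^{k}\omega_{k}$ with $\omega_{k} = \eta_{k} + f_{k}\,\frac{dz}{z}$ as in Section \ref{se:level_expansion}, and recall that $\beta_{k} = \nu_{\mathcal{A}}(\omega_{k}) = \min\{\delta_{k},\phi_{k}\}$, where $\delta_{k} = \nu_{\mathcal{A}}(\eta_{k})$ and $\phi_{k} = \nu_{\mathcal{A}}(f_{k})$.

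The key preliminary observation is the following. After the Puiseux's package $\pi\colon\mathcal{A}\to\mathcal{A}'$ one has, by \eqref{eq:variables_after_Puiseux_package}, that $z$ equals $\boldsymbol{x}'^{\boldsymbol{\alpha}_{0}}$ times a unit of $R_{\mathcal{A}'}^{\ell+1}$, with $\nu(\boldsymbol{x}'^{\boldsymbol{\alpha}_{0}}) = \nu(z)$; hence $\nu_{\mathcal{A}'}(z^{k}) = k\,\nu(z)$. Moreover, by \eqref{eq:differentials_after_Puiseux_package} the image of $\frac{dz}{z}$ has explicit value $0$, and the values of all $\boldsymbol{x}$-monomials are preserved (they become $\boldsymbol{x}'$-monomials of the same value times units), so that $\nu_{\mathcal{A}'}(\omega_{k})\geq\beta_{k}$; combining these two facts with the submultiplicativity of the explicit value one gets
$$\nu_{\mathcal{A}'}(z^{k}\omega_{k})\ \geq\ k\,\nu(z)+\beta_{k}\qquad\text{for every }k\geq 0.$$

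In the case $\delta(\omega;\mathcal{A},\gamma) = \gamma$ we have $\beta_{k} + k\,\nu(z)\geq\gamma$ for all $k$, so the estimate gives $\nu_{\mathcal{A}'}(z^{k}\omega_{k})\geq\gamma$ for all $k$, whence $\nu_{\mathcal{A}'}(\omega)\geq\gamma$. If the inequality is strict then $\omega$ is $\gamma$-final recessive in $\mathcal{A}'$, hence in $\mathcal{B}$ by Lemma \ref{le:gamma_final_1_forms_stable}. If $\nu_{\mathcal{A}'}(\omega)=\gamma$, then either $\omega$ is dominant in $\mathcal{A}'$, in which case it is $\gamma$-final dominant there and this persists in $\mathcal{B}$ by Lemma \ref{le:gamma_final_1_forms_stable}, or $\omega$ is not dominant, in which case $\Psi_{\ell+1}$ strictly raises its explicit value by Lemma \ref{le:push_right}, giving $\nu_{\mathcal{B}}(\omega)>\gamma$. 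In the case $\delta(\omega;\mathcal{A},\gamma) = \delta < \gamma$ and $\chi(\omega;\mathcal{A},\gamma)=0$, the $\gamma$-prepared hypothesis together with critical height $0$ forces the level $\omega_{0}=\eta_{0}$ to be dominant with $\nu_{\mathcal{A}}(\omega_{0})=\delta$, while every level $\omega_{k}$ with $k\geq 1$ lies strictly above the critical line, i.e. $\beta_{k}+k\,\nu(z)>\delta$. By the preliminary observation $\nu_{\mathcal{A}'}(z^{k}\omega_{k})>\delta$ for all $k\geq 1$, so $\nu_{\mathcal{A}'}(\omega-\omega_{0})>\delta$. Viewing $\omega_{0}$ as an element of $N_{\mathcal{A}}^{\ell+1}$ it is $\gamma$-final dominant with explicit value $\delta$, and since $\pi$ is an $(\ell+1)$-nested transformation Lemma \ref{le:gamma_final_1_forms_stable} keeps it dominant in $\mathcal{A}'$ with $\nu_{\mathcal{A}'}(\omega_{0})=\delta$; then $\omega = \omega_{0}+(\omega-\omega_{0})$ exhibits $\omega$ as $\gamma$-final dominant in $\mathcal{A}'$ with explicit value $\delta$, and a last application of Lemma \ref{le:gamma_final_1_forms_stable} transfers this to $\mathcal{B}$.

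The step I expect to require the most care is the preliminary observation: one must check cleanly both that the Puiseux's package converts $z^{k}$ into an $\boldsymbol{x}'$-monomial of value $k\,\nu(z)$ times a unit, and that it does not decrease the explicit value of the level forms $\omega_{k}$. This is precisely where the definition of a Puiseux's package (all blow-ups combinatorial except the last, with centers among the $(x_{i},x_{j})$ and the $(x_{i},z)$) and the explicit formulas \eqref{eq:variables_after_Puiseux_package}–\eqref{eq:differentials_after_Puiseux_package} enter. Everything else is routine bookkeeping with the stability statements in Lemmas \ref{le:gamma_final_1_forms_stable} and \ref{le:push_right} and with the shape of the Truncated Newton Polygon of a $\gamma$-prepared $1$-form.
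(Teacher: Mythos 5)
Your proposal is correct and follows essentially the same route as the paper: the same $z$-level decomposition, the same use of \eqref{eq:variables_after_Puiseux_package}--\eqref{eq:differentials_after_Puiseux_package} to see that $z^k$ keeps explicit value $k\nu(z)$ after the Puiseux's package, and the same final appeal to Lemmas \ref{le:gamma_final_1_forms_stable} and \ref{le:push_right} in each of the two cases. Your explicit split of the boundary case $\nu_{\mathcal{A}'}(\omega)=\gamma$ into dominant versus non-dominant is just an unpacking of what the paper compresses into one sentence.
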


\subsection{Stability of the Critical Height}
In view of Proposition \ref{pr:gamma_pre_final}, in order to complete the proof of $ T_3(\ell+1) $ it is enough with determine a $ (\ell+1) $-nested transformation such that $ \omega $ becomes pre-$ \gamma $-final. In this section we show  that the critical height cannot increase by means of $ (\ell+1) $-nested transformations.
\begin{prop}\label{pr:stability_critical_height_ordered_change}
	Let $ \omega \in N_{\mathcal{A}}^{\ell+1} $ a $ \gamma $-prepared $ 1 $-form. Consider the $ (\ell+1) $-nested transformation
	$$ \xymatrix{ \mathcal{A} \ar[r]^{T} & \tilde{\mathcal{B}} \ar[r]^{\tau}  & \mathcal{B} } $$
	where $ T : \mathcal{A} \rightarrow \tilde{\mathcal{B}} $ is an ordered change of the variable $ z $ and $ \tau : \tilde{\mathcal{B}} \rightarrow \mathcal{B} $ is a $ \gamma $-preparation. Then
	$$ \beta(\mathcal{B}; \omega) = \beta(\mathcal{A}; \omega) \quad \text{and} \quad \delta(\omega;\mathcal{B},\gamma) \geq \delta(\omega;\mathcal{A},\gamma) \ . $$
	In addition, if $ \delta(\omega;\mathcal{B},\gamma) < \gamma $ we have that
	$$ \chi(\omega;\mathcal{B},\gamma) \leq \chi(\omega;\mathcal{A},\gamma) \ . $$
	\begin{proof}
		Consider the decomposition in $ z $-levels of $ \omega $ in $ \mathcal{A} $
		$$ \omega = \sum_{k=0}^{\infty} z^k \omega_k =  \sum_{k=0}^{\infty} z^k \left( \eta_k + f_k \frac{dz}{z} \right) \ . $$
		The ordered change of variables $ T : \mathcal{A} \rightarrow \tilde{\mathcal{B}} $ is given by $ \tilde{z}:= z - \psi $ where $ \psi $ is a polynomial $ \psi \in k[\boldsymbol{x},y_1,y_2,\dots,y_{\ell}] $ such that $ \nu_{\mathcal{A}}(\psi) \geq \nu(z) $. Note that $ \nu_{\mathcal{A}} \equiv \nu_{\tilde{\mathcal{B}}} $. In $ \tilde{\mathcal{B}} $ the decomposition in $ \tilde{z} $-levels is given by
		$$ \omega = \sum_{k=0}^{\infty} z^k \tilde{\omega}_k = \sum_{k=0}^{\infty} z^k \left( \tilde{\eta}_k + \tilde{f}_k \frac{dz}{z} \right) \ , $$
		where
		$$ \tilde{\eta}_k = \eta_k + f_{k+1} d \psi + \sum_{j=1}^{\infty} \binom{k+j}{j} \psi^j \big( \eta_{k+j} + f_{k+1+j} \, d \psi \big) $$
		and
		$$ \tilde{f}_k = f_k + \sum_{j=1}^{\infty} \binom{k-1+j}{j} \psi^j f_{k+j} \ . $$
		First, suppose we have $ \delta(\omega;\mathcal{A},\gamma) = \gamma $. This is equivalent to say that for every $ k \geq 0 $ we have
		$$ \nu_{\mathcal{A}}(\omega_k) \geq \gamma - k\nu(z) \ , $$
		hence
		\begin{equation}\label{eq:explicit_value_levels_1}
			\nu_{\mathcal{A}}(\eta_k) \geq \gamma - k\nu(z) \quad \text{and} \quad \nu_{\mathcal{A}}(f_k) \geq \gamma - k\nu(z) \ .
		\end{equation}
		Recall that $ \nu_{\mathcal{A}}(\psi) \geq \nu(z) $ implies $ \nu_{\mathcal{A}}(d \psi) \geq \nu(z) $, thus in view of \eqref{eq:explicit_value_levels_1} we have
		$$  \nu_{\mathcal{A}}(\tilde{\eta}_k) \geq \gamma - k\nu(z) \quad \text{and} \quad \nu_{\mathcal{A}}(\tilde{f}_k) \geq \gamma - k\nu(z) \ , $$
		so
		$$ \nu_{\mathcal{B}}(\tilde{\omega}_k) = \nu_{\mathcal{A}}(\tilde{\omega}_k) \geq \gamma - k\nu(z) \geq \gamma -k\nu(\tilde{z}) \ , $$
		hence $ \delta(\omega;\mathcal{B},\gamma) = \gamma $.
		
		Now, suppose $ \delta(\omega;\mathcal{A},\gamma) < \gamma $. For short, denote by $ \chi $ the critical height $ \chi(\omega;\mathcal{A},\gamma) $. Since $ \omega $ is $ \gamma $-prepared, for all index $ t \geq 1 $ we have
		$$ \nu_{\mathcal{A}}(\omega_{\chi + t}) > \nu_{\mathcal{A}}(\omega_{\chi}) - t \nu(z) \ , $$
		so
		\begin{equation}\label{eq:explicit_value_levels_2}
			\nu_{\mathcal{A}}(\eta_{\chi + t}) > \nu_{\mathcal{A}}(\omega_{\chi}) - t \nu(z) \quad \text{and} \quad \nu_{\mathcal{A}}(f_{\chi + t}) > \nu_{\mathcal{A}}(\omega_{\chi}) - t \nu(z) \ .
		\end{equation}
		From \eqref{eq:explicit_value_levels_2} we have 
		$$ \nu_{\tilde{\mathcal{B}}}\left(\psi^j \big( \eta_{\chi+t+j} + f_{\chi+t+1+j} \, d \psi \big)\right) > \nu_{\tilde{\mathcal{B}}}(\omega_{\chi}) - t \nu(z) $$
		and
		$$ \nu_{\tilde{\mathcal{B}}}\left(\psi^j f_{\chi+t+j}\right) > \nu_{\tilde{\mathcal{B}}}(\omega_{\chi}) - t \nu(z) $$
		for all $ t\geq1 $ and all $ j\geq1 $. Thus we have
		$$ \nu_{\tilde{\mathcal{B}}}(\tilde{\eta}_{\chi+t}) > \nu_{\tilde{\mathcal{B}}}(\omega_{\chi}) - t \nu(z) $$
		and
		$$ \nu_{\tilde{\mathcal{B}}}(\tilde{f}_{\chi+t}) > \nu_{\tilde{\mathcal{B}}}(\omega_{\chi}) - t \nu(z) $$
		hence
		\begin{equation}\label{eq:explicit_value_new_levels1}
			\nu_{\tilde{\mathcal{B}}}(\tilde{\omega}_{\chi+t}) > \nu_{\tilde{\mathcal{B}}}(\omega_{\chi}) - t \nu(z) \quad \text{for all } t \geq 1 \ .	
		\end{equation}
		In the same way we see that
		\begin{equation}\label{eq:explicit_value_new_levels2}
			\nu_{\tilde{\mathcal{B}}}(\tilde{\omega}_{\chi-t}) \geq \nu_{\tilde{\mathcal{B}}}(\omega_{\chi}) + t \nu(z) \quad \text{for } 1 \leq t \leq \chi \ ,	
		\end{equation}
		and that $ \tilde{\omega}_{\chi} $ is dominant with explicit value
		\begin{equation}\label{eq:explicit_value_new_critical_level}
			\nu_{\tilde{\mathcal{B}}}(\tilde{\omega}_{\chi}) = \nu_{\mathcal{A}}(\omega_{\chi}) \ ,	
		\end{equation}
		After performing the $ \gamma $-preparation $ \tilde{\mathcal{B}} \rightarrow \mathcal{B} $ we still have the properties given in \eqref{eq:explicit_value_new_levels1}, \eqref{eq:explicit_value_new_levels2} and \eqref{eq:explicit_value_new_critical_level} replacing $ \nu_{\tilde{\mathcal{B}}} $ by $ \nu_{\mathcal{B}} $. Let $ z' = \tilde{z} $ be the $ (\ell+1) $-th dependent variable in $ \mathcal{B} $. Since $ \nu(z') \geq \nu(z) $ and taking into account \eqref{eq:explicit_value_new_levels1} and \eqref{eq:explicit_value_new_critical_level} we have
		$$ \delta(\omega;\mathcal{B},\gamma) \geq \nu_{\mathcal{B}}(\tilde{\omega}_{\chi}) + \chi \nu(z') \geq \nu_{\mathcal{A}}(\omega_{\chi}) + \chi \nu(z) = \delta(\omega;\mathcal{A},\gamma) $$
		as desired. In addition, if $ \delta(\omega;\mathcal{B},\gamma) < \gamma $, from \eqref{eq:explicit_value_new_levels2} and $ \nu(z') \geq \nu(z) $ we have
		$$ \chi(\omega;\mathcal{B},\gamma) \leq \chi(\omega;\mathcal{A},\gamma) \ .$$
	\end{proof}
\end{prop}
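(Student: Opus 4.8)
The plan is to work entirely with the $z$-level (respectively $\tilde{z}$-level) decomposition of $\omega$ and to track how the explicit values $\beta_k,\delta_k,\phi_k$ of the levels behave, first under the ordered change $T$ and then under the $\gamma$-preparation $\tau$. First I would write $\omega=\sum_{k\ge 0} z^k\omega_k$ with $\omega_k=\eta_k+f_k\frac{dz}{z}$ in $\mathcal A$, substitute $z=\tilde z+\psi$ (recall $\psi\in k[\boldsymbol x,y_1,\dots,y_\ell]$ with $\nu_{\mathcal A}(\psi)\ge\nu(z)$, hence $\nu_{\mathcal A}(d\psi)\ge\nu(z)$), and re-expand using $\frac{dz}{z}=\frac{d\tilde z+d\psi}{\tilde z+\psi}$ to obtain $\omega=\sum \tilde z^k\tilde\omega_k$ in $\tilde{\mathcal B}$, with explicit formulas expressing $\tilde\eta_k$ and $\tilde f_k$ as $R_{\mathcal A}^\ell$-combinations, with binomial coefficients and powers of $\psi$, of the $\eta_{k+j}$, $f_{k+j}$ and $d\psi$. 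The key elementary remark is that $\nu_{\mathcal A}\equiv\nu_{\tilde{\mathcal B}}$, since $T$ does not touch the independent variables and the substitution can only shift weight of the $\boldsymbol x$-expansion "to the right" because $\nu(\psi)\ge\nu(z)>0$.

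Then I would split into the cases $\delta(\omega;\mathcal A,\gamma)=\gamma$ and $\delta(\omega;\mathcal A,\gamma)<\gamma$. In the first case, $\gamma$-preparedness gives $\nu_{\mathcal A}(\omega_k)\ge\gamma-k\nu(z)$ for all $k$; feeding this together with $\nu_{\mathcal A}(d\psi)\ge\nu(z)$ into the formulas for $\tilde\eta_k,\tilde f_k$ yields $\nu_{\tilde{\mathcal B}}(\tilde\omega_k)\ge\gamma-k\nu(z)\ge\gamma-k\nu(\tilde z)$, so $\delta(\omega;\tilde{\mathcal B},\gamma)=\gamma$, a property preserved by $\tau$ via Lemmas \ref{le:gamma_final_1_forms_stable}, \ref{le:gamma_final_functions_stable} and \ref{le:gamma_final_pairs_stable}; the equality $\beta(\mathcal B;\omega)=\beta(\mathcal A;\omega)$ follows because, since $\mathcal N=\operatorname{Dom}\mathcal N$, the leftmost vertex of $\mathcal N(\omega;\mathcal A,\gamma)$ corresponds to a dominant level, whose explicit value is preserved under $\ell$-nested transformations by the property of stability for dominant levels. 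In the second case, set $\chi=\chi(\omega;\mathcal A,\gamma)$; $\gamma$-preparedness provides the strict inequalities $\nu_{\mathcal A}(\omega_{\chi+t})>\nu_{\mathcal A}(\omega_\chi)-t\nu(z)$ for $t\ge 1$, the inequalities $\nu_{\mathcal A}(\omega_{\chi-t})\ge\nu_{\mathcal A}(\omega_\chi)+t\nu(z)$ for $1\le t\le\chi$, and dominance of $\omega_\chi$ with value $\tau_\chi$. I would propagate these through the level formulas: each factor $\psi^j$ contributes $\nu\ge j\nu(z)$ and each factor $\psi^j\,d\psi$ contributes $\nu\ge(j+1)\nu(z)$, which exactly cancels the index shift, so one obtains $\nu_{\tilde{\mathcal B}}(\tilde\omega_{\chi+t})>\nu_{\tilde{\mathcal B}}(\omega_\chi)-t\nu(z)$ for $t\ge 1$, $\nu_{\tilde{\mathcal B}}(\tilde\omega_{\chi-t})\ge\nu_{\tilde{\mathcal B}}(\omega_\chi)+t\nu(z)$ for $1\le t\le\chi$, and that $\tilde\omega_\chi$ stays dominant with $\nu_{\tilde{\mathcal B}}(\tilde\omega_\chi)=\nu_{\mathcal A}(\omega_\chi)$ (the extra terms have strictly larger explicit value, so they are absorbed into the non-dominant tail).

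To finish, the $\gamma$-preparation $\tau:\tilde{\mathcal B}\to\mathcal B$ preserves all three displayed facts by the stability lemmas; writing $z'=\tilde z$ for the new $(\ell+1)$-th variable and using $\nu(z')\ge\nu(z)$, the chain $\delta(\omega;\mathcal B,\gamma)\ge\nu_{\mathcal B}(\tilde\omega_\chi)+\chi\nu(z')\ge\nu_{\mathcal A}(\omega_\chi)+\chi\nu(z)=\delta(\omega;\mathcal A,\gamma)$ gives the second assertion, and when $\delta(\omega;\mathcal B,\gamma)<\gamma$ the inequality for the levels of height $\chi+t$ shows that no such level can lie on the new critical segment, whence $\chi(\omega;\mathcal B,\gamma)\le\chi(\omega;\mathcal A,\gamma)$. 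The routine part is the bookkeeping of the binomial expansion of $(\tilde z+\psi)^k$; the only delicate point I expect is making sure the strict inequalities above height $\chi$, and the non-strict ones at and below it, genuinely survive both the re-expansion and the $\gamma$-preparation, since it is exactly this strict/non-strict dichotomy isolating the critical vertex that forbids the critical height from increasing — and here the stability properties of levels from Section \ref{se:level_expansion} are what keep the estimates from degrading.
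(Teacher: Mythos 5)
Your proposal is correct and follows essentially the same route as the paper's own proof: the same $z$-level decomposition with the binomial re-expansion after $\tilde z = z-\psi$, the same case split $\delta=\gamma$ versus $\delta<\gamma$, the same cancellation of the index shift by $\nu(\psi^j)\geq j\nu(z)$ and $\nu(\psi^j d\psi)\geq (j+1)\nu(z)$, and the same use of the stability lemmas to carry the level estimates through the $\gamma$-preparation before concluding with $\nu(z')\geq\nu(z)$. Your closing remark that the strict inequalities at heights $\chi+t$ keep those levels off the new critical segment is exactly the mechanism the paper uses (and you even invoke the correct inequality where the paper's citation of \eqref{eq:explicit_value_new_levels2} appears to be a slip for \eqref{eq:explicit_value_new_levels1}).
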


\begin{prop}\label{pr:stability_critical_height_Puiseux_package}
	Let $ \omega \in N_{\mathcal{A}}^{\ell+1} $ be a $ \gamma $-prepared $ 1 $-form. Suppose that $ \delta(\omega;\mathcal{A},\gamma) < \gamma $. Consider the $ (\ell+1) $-nested transformation
	\begin{equation*}
		\xymatrix{
			\mathcal{A} \ar[r]^{\pi} & \tilde{\mathcal{B}} \ar[r]^{\tau}  & \mathcal{B}
		}
	\end{equation*}
	where $ \pi : \mathcal{A} \rightarrow \tilde{\mathcal{B}} $ is a $ (\ell+1) $-Puiseux's package and $ \tau : \tilde{\mathcal{B}} \rightarrow \mathcal{B} $ is a $ \gamma $-preparation. Then
	$$ \beta(\mathcal{B}; \omega) = \delta(\omega;\mathcal{A},\gamma) \ . $$
	In addition, if $ \delta(\omega;\mathcal{B},\gamma) < \gamma $ we have that
	$$ \chi(\omega;\mathcal{B},\gamma) \leq \chi(\omega;\mathcal{A},\gamma) \ . $$
\end{prop}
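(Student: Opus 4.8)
The plan is to carry over to $1$-forms the function-case analysis that led to \eqref{eq:new_critical_height_functions}, using the decomposition $\omega=\omega_{\text{crit}}+\omega^*+\breve{\omega}$ already set up. First I would record the bookkeeping facts that survive the Puiseux's package $\pi$ because explicit value never decreases: after $\pi$ one has $\nu_{\tilde{\mathcal B}}(\breve{\omega})>\delta$; every level of $\omega^*$ is non-dominant with explicit value $\ge\delta$ (its coefficients lie in the ideal $(\boldsymbol y)$); and, by Remark \ref{re:monomial_after_Puiseux_package} and \eqref{eq:variables_after_Puiseux_package}, the monomial $\boldsymbol{x}^{\boldsymbol q_\chi}z^{\chi}$ becomes $\tilde{\boldsymbol x}^{\boldsymbol r}\phi^{e}$ with $\nu(\tilde{\boldsymbol x}^{\boldsymbol r})=\delta$ and $e\ge\chi$ (since $\beta_0\ge 1$ by \eqref{eq:dependence_beta_p}). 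Here $\delta:=\delta(\omega;\mathcal A,\gamma)=\tau_\chi+\chi\,\nu(z)$.

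Next I would analyse $\omega_{\text{crit}}$ in $\tilde{\mathcal B}$. Factoring the unit $\phi^{e-M}$ gives $\omega_{\text{crit}}=\tilde{\boldsymbol x}^{\boldsymbol r}\phi^{e-M}\sum_{t=0}^{M}\phi^{M-t}\sigma_t$, and by \eqref{eq:differentials_after_Puiseux_package} each closed log-form $\sigma_t$ transforms into an integer combination of the $\frac{dx_k'}{x_k'}$ and $\frac{d\phi}{\phi}$ through the invertible matrix $H_\pi$; as $(\boldsymbol\lambda_0,\mu_0)\ne\boldsymbol 0$ this forces the transform of $\sigma_0$ to be nonzero, so $\omega_{\text{crit}}=\tilde{\boldsymbol x}^{\boldsymbol r}\cdot(\text{unit})\cdot\theta'$ with $\theta'\ne 0$ a $1$-form whose coefficients are series in $z'$ alone (explicit value $0$). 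Let $\hbar$ be the height of the lowest nonvanishing $z'$-level of $\theta'$. The crucial claim is $\hbar\le M=[\chi/d]\le\chi$: the $\frac{dx_k'}{x_k'}$-coefficients of $\theta'$ are polynomials in $z'$ of degree $\le M$ whose top coefficient involves $\sigma_0$, and since $\boldsymbol p\ne\boldsymbol 0$ and the invertibility of $H_\pi$ prevent $\sigma_0$ from being annihilated, the argument is the one behind \eqref{eq:resonance_condition_functions}; the only configuration in which this could fail is the resonant one, namely condition (R2), where the truncated integrability $\nu_{\mathcal A}(\omega\wedge d\omega)\ge 2\gamma$ (through the contributions of $\omega^*$, $\breve\omega$ and Lemma \ref{le:push_right}) is what keeps the new critical height $\le\chi$. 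Adding to the $\hbar$-level of $\omega_{\text{crit}}$ the contributions of $\omega^*$ (which lie in $(\tilde{\boldsymbol y})$) and of $\breve{\omega}$ (explicit value $>\delta$) neither lowers the value nor destroys the log-elementary/unit leading part, so the $\hbar$-level of $\omega$ in $\tilde{\mathcal B}$ is dominant with explicit value $\delta$, while all other levels have explicit value $\ge\delta$.

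The conclusion then follows as in Propositions \ref{pr:gamma_pre_final} and \ref{pr:stability_critical_height_ordered_change}: a $\gamma$-preparation $\tau$ is an $\ell$-nested transformation not affecting $z'=\tilde z$, so by the stability of dominant levels the $\hbar$-level of $\omega$ in $\mathcal B$ is still dominant with explicit value $\delta$; hence $\beta(\omega;\mathcal B)=\nu_{\mathcal B}(\omega)=\delta$ — it is $\le\delta$ because that level contributes a value-$\delta$ monomial, and $\ge\delta$ because explicit value does not drop under $\pi$ or $\tau$. If moreover $\delta(\omega;\mathcal B,\gamma)<\gamma$, set $\chi'=\chi(\omega;\mathcal B,\gamma)$; all levels still have explicit value $\ge\delta$ in $\mathcal B$, so for a level on the critical segment $\delta+\chi'\nu(z')\le\beta_{\chi'}+\chi'\nu(z')=\delta(\omega;\mathcal B,\gamma)\le\beta_{\hbar}+\hbar\,\nu(z')=\delta+\hbar\,\nu(z')$, whence $\chi'\le\hbar\le\chi$.

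The hard part will be the bound $\hbar\le\chi$: the $z'$-level bookkeeping is more delicate than for functions because $\frac{d\phi}{\phi}$ is only a unit multiple of $dz'$ and therefore shifts levels upward by one, so one must check, exactly as in the resonance computation \eqref{eq:resonance_condition_functions}, that the $\phi^{M}$-part $\sigma_0$ of $\theta'$ survives the substitution $\phi=z'+\xi$ and produces a nonvanishing $z'$-level at height $\le M$; isolating and disposing of the residual resonant case (condition (R2)) with the help of the truncated integrability inequality is the real content of what follows.
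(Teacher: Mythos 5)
Your overall route is the paper's: the same decomposition $\omega=\omega_{\text{crit}}+\omega^*+\breve\omega$, the transformation of the closed parts $\sigma_t$ through $H_\pi$, a bound $\hbar$ for the lowest dominant level in $\tilde{\mathcal B}$, and the endgame via stability of dominant levels under the preparation $\tau$. The gap is in the crucial bound. You claim $\hbar\le M=[\chi/d]$, with the problematic case dismissed as ``the resonant one, namely condition (R2)'' to be handled by the integrability inequality; neither half is right. First, $\hbar\le M$ fails precisely when all the $d\tilde x_i/\tilde x_i$-coefficients $P_i$ of the transformed critical part vanish identically: then the lowest nonzero level comes from the $d\tilde z$-component and sits one level higher, at height $\operatorname{ord}(Q)+1$, which can be $M+1$ (this is why the paper defines $\hbar$ as in \eqref{eq:critical_height_bound_definition}). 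For $d\ge2$ one still has $M+1\le\chi$, but for $d=1$, where $M=\chi$, this scenario would a priori give $\hbar=\chi+1>\chi$ and the proposition would be lost. The paper closes exactly this case by an elementary fact you never use: since $f_0=0$ one has $\mu_M=0$ whenever $d$ divides $\chi$ (see \eqref{eq:d_divides_chi}), and for $d=1$ the matrix $H_\pi$ has last column $(0,\dots,0,1)^{t}$, so $\mu'_\chi=\mu_\chi=0$, which forces $\operatorname{ord}(Q)\le\chi-1$ and hence $\hbar\le\chi$. Second, invoking (R2) and $\nu_{\mathcal A}(\omega\wedge d\omega)\ge 2\gamma$ here conflates this proposition with Proposition \ref{pr:no_resonant_conditions}: the resonance conditions govern when the critical height fails to \emph{drop strictly}, whereas the non-strict bound $\chi'\le\hbar\le\chi$ is unconditional and purely combinatorial (invertibility and block shape of $H_\pi$ plus $f_0=0$), and it holds even in the resonant case, where $\hbar=\chi$; integrability plays no role in this proof and cannot obviously substitute for the missing $\mu'_\chi=0$ argument.

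A smaller point: your justification of $\beta(\omega;\mathcal B)\ge\delta$ (``explicit value does not drop under $\pi$ or $\tau$'') only yields $\ge\beta(\omega;\mathcal A)$, which is in general strictly smaller than $\delta$. The correct reason is that each of the pieces $\omega_{\text{crit}}$, $\omega^*$, $\breve\omega$ (equivalently each transformed term $z^k\omega_k$, since $\nu_{\mathcal A}(\omega_k)\ge\delta-k\nu(z)$ by $\gamma$-preparedness) has explicit value $\ge\delta$ in $\tilde{\mathcal B}$, while the dominant $\hbar$-level pins the value at exactly $\delta$ and survives the $\gamma$-preparation $\tau$.
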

We divide the proof of Proposition \ref{pr:stability_critical_height_Puiseux_package} in three parts. First, we prepare the writing of $ \omega $. Second, we study the effect of $ \pi : \mathcal{A} \rightarrow \tilde{\mathcal{B}} $ and finally the effect of $ \tau : \tilde{\mathcal{B}} \rightarrow \mathcal{B} $.

\paragraph{Preparing the writing of $ \omega $.}
Since the level $ \omega_{\chi} $ is dominant with value $ \nu_{\mathcal{A}}(\omega_{\chi}) = \tau_{\chi} = \delta - \chi \nu(z) $ (see Equation \ref{eq:explicit_and_critical_value}), we have that
\begin{equation}\label{eq:critical_level}
	\omega_{\chi} = \boldsymbol{x}^{\boldsymbol{q}_{\chi}} \tilde{\omega}_{\chi} + \bar{\omega}_{\chi} \ , \quad \nu(\boldsymbol{x}^{\boldsymbol{q}_{\chi}}) = \tau_{\chi} \ ,
\end{equation}
where $ \tilde{\omega}_{\chi} $ is log-elementary and $ \nu_{\mathcal{A}}(\bar{\omega}_{\chi}) > \tau_{\chi} $.
	
Let $ \phi \in K $ be the $ (\ell+1) $-contact rational function $ \phi_{\ell+1} = z^{d} / \boldsymbol{x^p} $, where $ d $ is the ramification index $ d(z;\mathcal{A}) $ (see Section \ref{se:Puiseux's_packages}).
	
Now, consider a level $ \omega_k $ which gives a point $ (\beta_k,k) = (\tau_k,k) $ in the critical segment. Since $ \tau_k = \delta - k \nu(z) = \tau_{\chi} + (\chi - k)\nu(z) $, the index $ k $ must be of the form $ k = \chi - td $ for some integer $ 0 \leq t \leq \chi / d $. Since $ \omega $ is $ \gamma $-prepared we know that $ \omega_k $ is dominant, so it can be written as
$$ \omega_{k} = \boldsymbol{x}^{\boldsymbol{q}_{k}} \tilde{\omega}_{k} + \bar{\omega}_{k} \ , \quad \nu(\boldsymbol{x}^{\boldsymbol{q}_{k}}) = \tau_{k} \ . $$
	
Following Remark \ref{re:monomial_after_Puiseux_package}, we have that
$$ \boldsymbol{x}^{\boldsymbol{q}_{\chi - td}} = \boldsymbol{x}^{\boldsymbol{q}_{\chi} + t \boldsymbol{p}} $$
hence
\begin{equation}\label{eq:critical_segment_1}
	z^{\chi-td}\tilde{\omega}_{\chi - td} = \boldsymbol{x}^{\boldsymbol{q}_{\chi}} z^{\chi} \phi^{-t} \tilde{\omega}_{\chi -td} \ .
\end{equation}
Moreover, we can choose the forms $ \tilde{\omega}_{\chi - td} $ with coefficients not depending on the variables $ \boldsymbol{x} $. Write
$$ \tilde{\omega}_{\chi -td} = \sum_{i=1}^r \lambda_{t,i} \frac{dx_i}{x_i} + \mu_t \frac{dz}{z} + \omega^*_t \ , \quad (\boldsymbol{\lambda}_t,\mu_t)\in \mathbb{C}^{r+1} \setminus \{ \boldsymbol{0} \} \ , $$
where $ \omega^*_t $ is not log-elementary. Note that in order to simplify the expressions we have change the subindices. Denote by $ \sigma_t $ the closed form
\begin{equation}\label{eq:initial_part_critical_level}
	\sigma_t := \sum_{i=1}^r \lambda_{t,i} \frac{dx_i}{x_i} + \mu_t \frac{dz}{z} \ .
\end{equation}
Denote by $ M $ the integer part of $ \chi / d $. Let $ \omega_{\text{crit}} $ be the $ 1 $-form defined by
$$ \omega_{\text{crit}} := \boldsymbol{x}^{\boldsymbol{q}_{\chi}} \sum_{t = 0}^{M} z^t \boldsymbol{x}^{t\boldsymbol{p}} \sigma_t \ ,  $$
where we set $ \sigma_t = 0 $ if $ \omega_{\chi-td} $ is not a dominant level of the critical segment (note that at least $ \sigma_0 \neq 0 $). By Equation \eqref{eq:critical_segment_1} we have
\begin{equation}\label{eq:critical_segment_2}
	\omega_{\text{crit}} = \boldsymbol{x}^{\boldsymbol{q}_{\chi}} z^{\chi} \sum_{t = 0}^{M} \phi^{-t} \sigma_t  \ .
\end{equation}
Let $ \omega^* $ be the $ 1 $-form defined by
$$ \omega^* := \boldsymbol{x}^{\boldsymbol{q}_{\chi}} z^{\chi} \sum_{t = 0}^{M} \phi^{-t} \omega^*_t \ ,  $$	
where we set $ \omega^*_t = 0 $ if $ \omega_{\chi-td} $ is not a dominant level of the critical segment. Note that none of the levels of $ \omega^* $ is dominant.
	
Finally, let $ \breve{\omega} $ be the $ 1 $-form
$$ \breve{\omega} := \omega - \omega_{\text{crit}} - \omega^* \ . $$	
	
\paragraph{The effect of $ \pi : \mathcal{A} \rightarrow \tilde{\mathcal{B}} $.}
Let $ (\tilde{\boldsymbol{x}},\tilde{\boldsymbol{y}},\tilde{z}) $ be the coordinates in the parameterized regular local model $ \tilde{\mathcal{B}} $ obtained from $ \mathcal{A} $ by means of a $ (\ell+1) $-Puiseux's package. We have
\begin{equation}\label{eq:critical_segment_3}
	\omega_{\text{crit}} = \tilde{\boldsymbol{x}}^{\boldsymbol{r}} \phi^e \sum_{t = 0}^{M} \phi^{-t} \sigma_t  \ ,
\end{equation}
where $ \nu(\tilde{\boldsymbol{x}}^{\boldsymbol{r}}) = \delta(\omega;\mathcal{A},\gamma) $. The exponents $ \boldsymbol{r} \in \mathbb{Z}_{\geq 0}^r $ and $ e \in \mathbb{Z}_{> 0} $ are determined by the equalities given in \eqref{eq:variables_after_Puiseux_package}. Note that $ \phi = \tilde{z} + \xi $ is a unit in $ R_{\tilde{\mathcal{B}}}^{\ell + 1} $. We can rewrite \eqref{eq:critical_segment_3} as
\begin{equation}\label{eq:critical_segment_4}
	\omega_{\text{crit}} = \tilde{\boldsymbol{x}}^{\boldsymbol{r}} U \sum_{t =0}^{M}  (\tilde{z} + \xi)^{M-t} \sigma_t  \ ,
\end{equation}
where $ U = U(\tilde{z}) = \phi^{e-M} $. For each index $ t $ denote
\begin{equation}\label{eq:critical_coefficients}
	 (\lambda'_{t,1},\dots,\lambda'_{t,r},\mu'_t) = (\lambda_{t,1},\lambda_{t,2},\dots,\lambda_{t,r},\mu_t) \text{\large{$ H $}} \ ,
\end{equation}
where $ H $ is the invertible matrix of non-negative integers corresponding to the $ (\ell+1) $-Puiseux's package (see Equations \eqref{eq:differentials_after_Puiseux_package}). We have
\begin{equation}\label{eq:coefficients_after_Puiseux_package}
	\sigma_t = \sum_{i=1}^r \lambda_{t,i} \frac{dx_i}{x_i} + \mu_t \frac{dz}{z} = \sum_{i=1}^r \lambda'_{t,i} \frac{d\tilde{x}_i}{\tilde{x}_i} + \mu'_t \, \phi^{-1} \tilde{z} \frac{d\tilde{z}}{\tilde{z}} \ .
\end{equation}
Thus we can rewrite \eqref{eq:critical_segment_4} as
\begin{equation}\label{eq:critical_segment_5}
	\omega_{\text{crit}} = \tilde{\boldsymbol{x}}^{\boldsymbol{r}} U \left\{ \sum_{i=1}^{r} P_i \frac{d\tilde{x}_i}{\tilde{x}_i} + \phi^{-1} \tilde{z} \, Q \frac{d\tilde{z}}{\tilde{z}} \right\} \ , 
\end{equation}
where $ P_i , Q \in k[\tilde{z}] $ are given by
\begin{equation}\label{eq:critical_polynomials}
	P_i = \sum_{t =0}^{M} \lambda'_{t,i} (\tilde{z} + \xi)^{M-t} \quad \text{and} \quad Q = \sum_{t =0}^{M} \mu'_t (\tilde{z} + \xi)^{M-t} \ .
\end{equation}
Note that from \eqref{eq:critical_segment_5} it follows that
$$ \nu_{\tilde{\mathcal{B}}}(\omega_{\text{crit}}) = \delta(\omega;\mathcal{A},\gamma) \ . $$
By construction, for each index $ 1\leq i \leq r $ we have
$$ P_i = 0 \Longleftrightarrow \lambda'_{t,i} = 0 \text{ for } t= 0,1,\dots M \ . $$
In the same way we have
$$ Q = 0 \Longleftrightarrow \mu'_t = 0 \text{ for } t= 0,1,\dots M \ . $$
Note that since $ \sigma_0 \neq 0 $ we have $ (P_1,P_2,\dots,P_r,Q) \neq \boldsymbol{0} $. Consider the non-negative integer $ \hbar $ defined by
\begin{equation}\label{eq:critical_height_bound_definition}
	\hbar := \min \left\{ \operatorname{ord}(P_1), \operatorname{ord}(P_2),\dots,\operatorname{ord}(P_r),\operatorname{ord}(Q) + 1 \right\} \ .
\end{equation}
Let us show that $ \hbar \leq \chi(\omega;\mathcal{A},\gamma) $. Suppose that $ (P_1,P_2,\dots,P_r) \neq \boldsymbol{0} $. We have
$$ \min \left\{ \operatorname{ord}(P_1),\operatorname{ord}(P_2),\dots,\operatorname{ord}(P_r) \right\} \leq M = \left[\frac{\chi}{d}\right] \leq \chi \ , $$
hence $ \hbar \leq \chi $. Now, suppose $ (P_1,P_2,\dots,P_r) = \boldsymbol{0} $, so $ Q \neq 0 $. If $ d \geq 2 $ we have
$$ \hbar = \operatorname{ord}(Q) + 1 \leq M+1 = \left[\frac{\chi}{d}\right] +1 \leq \left[\frac{\chi}{2}\right] +1 \leq \chi \ . $$
On the other hand, if $ d=1 $ (thus $ M= \chi $) we have that $ \mu'_{M} = 0 $. Let us explain in detail this last affirmation. By assumption we have $ \lambda'_{M,1} = \dots = \lambda'_{M,r} = 0 $. We also have that $ \mu_M = 0 $. In fact, we have
\begin{equation}\label{eq:d_divides_chi}
	d \text{ divides } \chi \Rightarrow \mu_M = 0
\end{equation}
since $ \sigma_M $ corresponds to the level $ \omega_0 $ (and $ f_0 = 0 $). Looking at \eqref{eq:critical_coefficients}, \eqref{eq:Puiseux_package_matrix_1} and \eqref{eq:Puiseux_package_matrix_2} we obtain
$$ \boldsymbol{0} = (\lambda'_{M,1},\lambda'_{M,2}, \dots , \lambda'_{M,r}) = (\lambda_{M,1},\lambda_{M,2}, \dots , \lambda_{M,r}) \text{\large $\check{H}$} \ . $$
Since $ \check{H} $ is an invertible matrix it follows that $ \lambda_{M,1}= \dots = \lambda_{M,r} =0 $. Looking again at \eqref{eq:critical_coefficients} we conclude that $ \mu'_{M} = 0 $ as desired. In consequence $ \operatorname{ord}(Q) \leq \chi - 1 $ hence $ \hbar \leq \chi $.
	
In view of the expression of $ \omega_{\text{crit}} $ given in \eqref{eq:critical_segment_5} we have that all non-zero levels of $ \omega_{\text{crit}} $ in $ \tilde{\mathcal{B}} $ are dominant with explicit value $ \delta(\omega;\mathcal{A},\gamma) $ and the lowest one is the one located at height $ \hbar $.

The above arguments used to study the properties of $ \omega_{\text{crit}} $ in $ \tilde{\mathcal{B}} $ give us information about $ \omega^* $ and $ \breve{\omega} $. In the same way that we have obtained Equation \eqref{eq:critical_segment_4}, we have	
\begin{equation}
	\omega^* = \tilde{\boldsymbol{x}}^{\boldsymbol{r}} U \sum_{t =0}^{M}  (\tilde{z} + \xi)^{M-t} \omega^*_t  \ ,
\end{equation}
where we recall that $ U = (\tilde{z} + \xi)^{e-M} $ is a unit. Recall also that the coefficients of each $ 1 $-form $ \omega^*_t $ in $ \mathcal{A} $ are series in the dependent variables $ \boldsymbol{y} $, and moreover, the coefficients corresponding to $ \frac{dx_1}{x_1},\frac{dx_2}{x_2}, \dots, \frac{dx_r}{x_r} $ are not units. Therefore, the coefficients of each $ 1 $-form $ \omega^*_t $ in $ \tilde{\mathcal{B}} $ corresponding to $ \frac{d\tilde{x}_1}{\tilde{x}_1},\frac{d\tilde{x}_2}{\tilde{x}_2}, \dots, \frac{d\tilde{x}_r}{\tilde{x}_r} $ cannot be units (since linear combinations of non-units can never be units). We conclude that the levels of $ \omega^* $ in $ \tilde{\mathcal{B}} $ are not dominant and
$$ \nu_{\tilde{\mathcal{B}}}(\omega^*) = \delta(\omega;\mathcal{A},\gamma) \ . $$
In the same way we obtain
$$ \nu_{\tilde{\mathcal{B}}}(\breve{\omega}) > \delta(\omega;\mathcal{A},\gamma) \ . $$
In oder to obtain information about	$ \omega = \omega_{\text{crit}} + \omega^* + \breve{\omega} $ let us summarize some of the properties we have obtained:
\begin{quote}\textbf{Properties of $ \omega_{\text{crit}} $, $ \omega^* $ and $ \breve{\omega} $ in $ \tilde{\mathcal{B}} $}
	\begin{enumerate}
		\item $ \nu_{\tilde{\mathcal{B}}}(\omega_{\text{crit}}) = \delta(\omega;\mathcal{A},\gamma) $;
		\item all the non-zero levels of $ \omega_{\text{crit}} $ in $ \tilde{\mathcal{B}} $ are dominant, and, after factorizing $ \tilde{\boldsymbol{x}}^{\boldsymbol{r}} $, the coefficients of each level are constants;
		\item the lowest non-zero level of $ \omega_{\text{crit}} $ in $ \tilde{\mathcal{B}} $ is the one at height $ \hbar \leq \chi(\omega;\mathcal{A},\gamma) $;
		\item $ \nu_{\tilde{\mathcal{B}}}(\omega^*) = \delta(\omega;\mathcal{A},\gamma) $;
		\item the levels of $ \omega^* $ in $ \tilde{\mathcal{B}} $ are non-dominant;
		\item $ \nu_{\tilde{\mathcal{B}}}(\breve{\omega}) > \delta(\omega;\mathcal{A},\gamma) $.
	\end{enumerate} 
\end{quote}
In view of these properties and the important fact that the sum of a log-elementary $ 1 $-form and a non-log-elementary one is log-elementary we have that the following properties are satisfied:
\begin{quote}\textbf{Properties of $ \omega $ in $ \tilde{\mathcal{B}} $}
	\begin{enumerate}
		\item $ \beta(\omega;\tilde{\mathcal{B}}) = \delta(\omega;\mathcal{A},\gamma) $;
		\item the $ \hbar $-level of $ \omega $ in $ \tilde{\mathcal{B}} $ is $ \beta(\omega;\tilde{\mathcal{B}}) $-final dominant;
		\item for each $ 0 \leq k \leq \hbar - 1 $ the $ k $-level of $ \omega $ in $ \tilde{\mathcal{B}} $ is $ \beta(\omega;\tilde{\mathcal{B}}) $-final recessive.
	\end{enumerate} 
\end{quote}
	
\paragraph{The effect of $ \tau : \tilde{\mathcal{B}} \rightarrow \mathcal{B} $.}
In order to complete the proof we have to analyze the behavior of $ \omega $ under the $ \gamma $-preparation $ \tau : \tilde{\mathcal{B}} \rightarrow \mathcal{B} $. 
	
Bearing in mind the properties of $ \omega $ in $ \tilde{\mathcal{B}} $ listed above and Lemma \ref{le:gamma_final_pairs_stable} we have that the following properties are satisfied:
\begin{quote}\textbf{Properties of $ \omega $ in $ \mathcal{B} $}
	\begin{enumerate}
		\item $ \beta(\omega;\mathcal{B}) = \delta(\omega;\mathcal{A},\gamma) $;
		\item the $ \hbar $-level of $ \omega $ in $ \mathcal{B} $ is $ \beta(\omega;\mathcal{B}) $-final dominant;
		\item for each $ 0 \leq k \leq \hbar - 1 $ the $ k $-level of $ \omega $ in $ \mathcal{B} $ is $ \beta(\omega;\mathcal{B}) $-final recessive.
	\end{enumerate} 
\end{quote}
The first of these properties is exactly the first assertion of Proposition \ref{pr:stability_critical_height_Puiseux_package}. On the other hand, since $ \beta_{\hbar}(\omega;\mathcal{B}) = \delta(\omega;\mathcal{A},\gamma) $, we have that
$$ \delta(\omega;\mathcal{B},\gamma) \leq \delta(\omega;\mathcal{A},\gamma) + \hbar \nu(z') \ , $$
where $ z' = \tilde{z} $ is the $ (\ell + 1) $-th dependent variable in $ \mathcal{B} $. From this last equation we have that if $ \delta(\omega;\mathcal{B},\gamma) < \gamma $ then
$$ \chi(\omega;\mathcal{B},\gamma) \leq \hbar \leq \chi(\omega;\mathcal{A},\gamma) $$
as desired.

\subsection{Resonant conditions}
Proposition \ref{pr:stability_critical_height_Puiseux_package} guarantees that the critical height cannot increase by means of a $ (\ell+1) $-Puiseux's package. Now we give conditions to assure that the critical height drops.

Assume the conditions of Proposition \ref{pr:stability_critical_height_Puiseux_package} and keep the notations used during its proof. In particular recall that we have defined a closed $ 1 $-form
$$ \sigma_t := \sum_{i=1}^r \lambda_{t,i} \frac{dx_i}{x_i} + \mu_t \frac{dz}{z} \ , \quad (\boldsymbol{\lambda},\mu)\in \mathbb{C}^{r+1} \setminus \{ \boldsymbol{0} \} \ , $$
related to each dominant level of the critical segment $ \omega_{\chi -td} $ (see Equation \eqref{eq:initial_part_critical_level}).

Now we establish the \textit{resonant conditions}:
\begin{quote}\textbf{Resonant Condition (R1)}: 
	We say that the condition (R1) is satisfied in $ \mathcal{A} $ if
	$$ \delta(\omega;\mathcal{A},\gamma) < \gamma \ , \quad \chi(\omega;\mathcal{A},\gamma) = 1 \ , \quad d(z;\mathcal{A}) \geq 2 \ , $$
	and the following equivalent conditions are satisfied:
	\begin{enumerate}
		\item The coefficients of $ \sigma_{\mathcal{A},0} $ satisfies
		\begin{equation}\label{eq:non_decreasing_condition_h_d>1}
			\left( \lambda_{0,1}:\lambda_{0,2}:\dots:\lambda_{0,r}:\mu_0 \right)  =  \left( p_1:p_2:\dots:p_r:-d \right) \in \mathbb{P}_{k}^{r} \ ;
		\end{equation}
		\item The $ 1 $-form $ \operatorname{crit}_{\mathcal{A}}(\omega) $ can be written as
		\begin{equation}
			\operatorname{crit}_{\mathcal{A}}(\omega) = \mu_0 \, \boldsymbol{x}^{\boldsymbol{q}_{1}} z \frac{d\phi}{\phi} \ , \quad \mu_0 \in k^* \ .
		\end{equation}
	\end{enumerate}
\end{quote}
\begin{quote}\textbf{Resonant Condition (R2)}: 
	We say that the condition (R2) is satisfied in $ \mathcal{A} $ if
	$$ \delta(\omega;\mathcal{A},\gamma) < \gamma \ , \quad \chi(\omega;\mathcal{A},\gamma) \geq 1 \ , \quad d(z;\mathcal{A}) = 1 \ , $$
	and the following equivalent conditions are satisfied:
	\begin{enumerate}
		\item For each index $ 1 \leq t \leq \chi $ the coefficients of $ \sigma_{\mathcal{A},t} $ are
		\begin{eqnarray}\label{eq:non_decreasing_condition_h_d=1}
			\lambda_{t,i} & = & (-1)^t \xi^t \left[  \binom{\chi}{t} \lambda_{0,i} + p_i \binom{\chi - 1}{t - 1} \mu_0 \right]  \, , \quad t=1,\dots \chi ; \nonumber \\
			\mu_t & = &  (-1)^t \binom{\chi - 1}{t} \xi^t \mu_0 \, , \quad t=1,\dots \chi - 1 \ ;
		\end{eqnarray}
		\item The $ 1 $-form $ \operatorname{crit}_{\mathcal{A}}(\omega) $ can be written as
		\begin{equation}
			\operatorname{crit}_{\mathcal{A}}(\omega) = \boldsymbol{x}^{\boldsymbol{q}_{\chi}} \left( z - \xi \boldsymbol{x}^{\boldsymbol{p}} \right)^{\chi} \left[ \frac{d\boldsymbol{x}^{\boldsymbol{\lambda}_{0}}}{\boldsymbol{x}^{\boldsymbol{\lambda}_{0}}} + \mu_0 \, \frac{d \left( z - \xi \boldsymbol{x}^{\boldsymbol{p}} \right)}{\left( z - \xi \boldsymbol{x}^{\boldsymbol{p}} \right)} \right] \ .
		\end{equation}
	\end{enumerate}	
\end{quote}
This section is devoted to prove the following proposition:
\begin{prop}\label{pr:no_resonant_conditions}
	Let $ \omega \in N_{\mathcal{A}}^{\ell+1} $ a $ \gamma $-prepared $ 1 $-form which is not pre-$ \gamma $-final.
	Consider the $ (\ell+1) $-nested transformation
	\begin{equation*}
		\xymatrix{\mathcal{A} \ar[r]^{\pi} & \tilde{\mathcal{B}} \ar[r]^{\tau}  & \mathcal{B}}
	\end{equation*}
	where $ \pi : \mathcal{A} \rightarrow \tilde{\mathcal{B}} $ is a $ (\ell+1) $-Puiseux's package and $ \tau : \tilde{\mathcal{B}} \rightarrow \mathcal{B} $ is a $ \gamma $-preparation. Suppose that $ \delta(\omega;\mathcal{B},\gamma) < \gamma $. If in addition neither (R1) nor (R2) are satisfied in $ \mathcal{A} $ then
	\begin{equation*}
		\chi(\omega;\mathcal{B},\gamma) < \chi(\omega;\mathcal{A},\gamma) \ .
	\end{equation*}
\end{prop}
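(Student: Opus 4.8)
The plan is to argue by contraposition: assuming $\chi(\omega;\mathcal{B},\gamma)=\chi(\omega;\mathcal{A},\gamma)$, I will show that (R1) or (R2) is satisfied in $\mathcal{A}$. First I would record the entry point: by hypothesis $\delta(\omega;\mathcal{B},\gamma)<\gamma$, so the chain of inequalities established inside the proof of Proposition \ref{pr:stability_critical_height_Puiseux_package}, namely $\chi(\omega;\mathcal{B},\gamma)\leq \hbar\leq \chi(\omega;\mathcal{A},\gamma)$ with $\hbar$ the integer of \eqref{eq:critical_height_bound_definition}, together with the assumed equality forces
$$ \hbar=\chi(\omega;\mathcal{A},\gamma)=:\chi \ . $$
Recall $\hbar=\min\{\operatorname{ord}(P_1),\dots,\operatorname{ord}(P_r),\operatorname{ord}(Q)+1\}$ where $P_i,Q\in k[\tilde{z}]$ are the polynomials of \eqref{eq:critical_polynomials}, of degree at most $M=[\chi/d]$, built from the transformed coefficients $(\lambda'_{t,i},\mu'_t)=(\lambda_{t,1},\dots,\lambda_{t,r},\mu_t)H$. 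Note that since $\omega$ is not pre-$\gamma$-final we have $\delta(\omega;\mathcal{A},\gamma)<\gamma$ and $\chi\geq 1$. Everything thus reduces to proving: $\hbar=\chi\Rightarrow$ (R1) or (R2).

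For the case $d=d(z;\mathcal{A})\geq 2$ I would exploit the degree bounds $\deg P_i,\deg Q\leq M\leq \chi/2$. A nonzero $P_i$ has $\operatorname{ord}(P_i)\leq M<\chi$, so $\hbar=\chi$ forces $P_i=0$ for all $i$ and $\hbar=\operatorname{ord}(Q)+1$; hence $\operatorname{ord}(Q)=\chi-1\leq \chi/2$, giving $\chi\leq 2$. A short separate inspection of the borderline $\chi=2$, $d=2$ (using $\sigma_0\ne 0$, that $\mu_M=0$ since $f_0=0$, and invertibility of $\check{H}_{\pi}$) shows $\hbar<\chi$ there, leaving only $\chi=1$. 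For $\chi=1$, $M=0$ so $P_i=\lambda'_{0,i}=0$ for all $i$; writing $\sigma_0$ from the log-elementary part $\tilde{\omega}_{\chi}$ of $\omega_\chi$ and using the dependence relation \eqref{eq:dependence_alpha_p_1} between $\boldsymbol{p}$ and $\check{H}_{\pi}$ together with the invertibility of $\check{H}_{\pi}$, the condition $(\lambda'_{0,1},\dots,\lambda'_{0,r})=\boldsymbol{0}$ becomes exactly the projective proportionality \eqref{eq:non_decreasing_condition_h_d>1}; the alternative $\tilde{\eta}_{\chi}\equiv 0$ would give $\boldsymbol{\alpha}_0=\boldsymbol{0}$, hence $\boldsymbol{p}=\boldsymbol{0}$, which is impossible. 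Thus (R1) holds, and the closed-form description of $\operatorname{crit}_{\mathcal{A}}(\omega)$ in (R1) follows by substituting back in \eqref{eq:critical_segment_2}.

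For the case $d=1$ we have $M=\chi$, so $\operatorname{ord}(P_i)\geq \chi$ with $\deg P_i\leq \chi$ forces $P_i=\lambda'_{0,i}\tilde{z}^{\chi}$, and comparing expansions in powers of $(\tilde{z}+\xi)$ yields $\lambda'_{t,i}=(-1)^t\xi^t\binom{\chi}{t}\lambda'_{0,i}$. Since $\sigma_\chi$ corresponds to the level $\omega_0$ and $f_0=0$, we have $\mu'_\chi=\mu_\chi=0$, so $Q$ is divisible by $\tilde{z}+\xi$; then $\operatorname{ord}(Q)\geq \chi-1$ forces $Q=\mu'_0\,\tilde{z}^{\chi-1}(\tilde{z}+\xi)$, whence $\mu'_t=(-1)^t\xi^t\binom{\chi-1}{t}\mu'_0$. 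Transporting these normalized relations back through $H^{-1}$ — which for a Puiseux's package without ramification has the block form with invertible $r\times r$ part $C^{-1}$ and last row involving $\boldsymbol{p}$, so that $\mu_t=\mu'_t$ and $(\lambda_{t,1},\dots,\lambda_{t,r})=(\lambda'_{t,1},\dots,\lambda'_{t,r})C^{-1}-\mu_t\boldsymbol{p}$ — and using $\binom{\chi}{t}-\binom{\chi-1}{t}=\binom{\chi-1}{t-1}$, one obtains precisely the relations \eqref{eq:non_decreasing_condition_h_d=1} defining (R2). Along the way one checks that if some intermediate level $\omega_{\chi-t}$, $1\leq t\leq \chi-1$, of the critical segment were non-dominant (so $\sigma_t=0$), these same relations force all $\lambda'_{0,i}=0$ and $\mu'_0=0$, hence $\omega_{\text{crit}}$ trivial in $\tilde{\mathcal{B}}$, contradicting $\sigma_0\ne 0$; so every lattice point of the critical segment carries a dominant level, and the closed form of $\operatorname{crit}_{\mathcal{A}}(\omega)$ in (R2) is obtained by resumming $\sum_t\phi^{-t}\sigma_t$ with these coefficients.

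The hard part will be the bookkeeping in the $d=1$ case: carrying the normalized relations on $(\lambda'_{t,i},\mu'_t)$ back to $(\lambda_{t,i},\mu_t)$ through $H^{-1}$ in the general situation where $\boldsymbol{p}\notin \mathbb{Z}^r_{\geq 0}$ (so $H$ carries a nontrivial combinatorial change-of-basis block $C$), and verifying simultaneously that the two formulations of each resonant condition — the coefficient relations and the closed form for $\operatorname{crit}_{\mathcal{A}}(\omega)$ — are genuinely equivalent. One must also take care that $\hbar=\chi$ really does force every lattice point of the critical segment to carry a dominant level, so that no gap in the segment spoils the identification with (R1)/(R2).
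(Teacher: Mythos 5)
Your proposal is correct and follows essentially the same route as the paper: it reduces to showing $\hbar=\chi$ forces a resonance via the bound $\chi(\omega;\mathcal{B},\gamma)\leq\hbar\leq\chi(\omega;\mathcal{A},\gamma)$, splits into the cases $d\geq 2$ and $d=1$, analyzes the orders of the polynomials $P_i,Q$ (with the borderline $\chi=d=2$ handled through $\mu_M=0$ and the invertibility of $\check{H}_{\pi}$), and transports the resulting coefficient identities back through $H$ to recover (R1), respectively (R2). The only difference is that you spell out details the paper leaves implicit, such as the binomial bookkeeping identifying the relations \eqref{eq:non_decreasing_condition_h_d=1} and the check that no non-dominant level can sit on the critical segment, which is a welcome but not essentially new contribution.
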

In the proof of this proposition we use some calculations made in the proof of Preposition \ref{pr:stability_critical_height_Puiseux_package}. For short, denote by $ \chi $ and $ \chi' $ the critical heights $ \chi(\omega;\mathcal{A},\gamma) $ and $ \chi(\omega;\mathcal{B},\gamma) $ respectively, and denote by $ d $ the ramification index $ d(z;\mathcal{A}) $.
	
The integer number $ \hbar $ (defined in Equation \eqref{eq:critical_height_bound_definition}) is a bound for the new critical height $ \chi' $. It satisfies
\begin{equation}\label{eq:critical_height_bound}
	\chi' \leq \hbar \leq \left[ \frac{\chi}{d} \right] + 1 \ .
\end{equation}
We study separately the cases $ d=1 $ and $ d \geq 2 $.

\paragraph{The case $ d \geq 2 $.}
We have the following inequalities:
\begin{eqnarray*}
	\left[ \frac{\chi}{d} \right] + 1 \leq \frac{\chi}{d} + 1 \leq \frac{\chi}{2} + 1 < \chi \ , & \text{if }  \chi \geq 3 \text{ and } d \geq 2 \ ; \\
	\left[ \frac{\chi}{d} \right] + 1 = \left[ \frac{2}{d} \right] + 1 = 1 \ , & \text{if }   \chi=2 \text{ and } d > 2 \ .
\end{eqnarray*}
Therefore, except in the cases $ \chi = 1 $ or $ \chi = d = 2 $ the above inequalities and \eqref{eq:critical_height_bound} give us $ \chi' < \chi $.
		
Consider the case $ \chi = d = 2 $. We have $ M = [\chi/d] = 1 $. By \eqref{eq:d_divides_chi} we have that $ \mu_1 = 0 $. Therefore
\begin{eqnarray*}
	\left(P_1,P_2,\dots,P_r,Q \right) & = & \left( \tilde{z} + \xi \right) \left( \lambda'_{0,1},\lambda'_{0,2},\dots,\lambda'_{0,r},\mu'_0 \right) + \left( \lambda'_{1,1},\lambda'_{1,2},\dots,\lambda'_{1,r},\mu'_1 \right) \\ 
	& = & \phi \left( \lambda_{0,1},\lambda_{0,2},\dots,\lambda_{0,r},\mu_0 \right) \text{\large $ H $} + \left( \lambda_{1,1},\lambda_{1,2},\dots,\lambda_{1,r},0 \right) \text{\large $ H $} .
\end{eqnarray*}
If some $ P_i \neq 0 $ we have that $ \hbar \leq \operatorname{ord}(P_i) \leq 1 $. Suppose $ P_i = 0 $ for $ i=1,\dots,r $, thus $ Q \neq 0 $. We have that
$$ \left( P_1,P_2,\dots,P_r \right) = \boldsymbol{0} \Rightarrow \left( \lambda'_{1,1},\lambda'_{1,2},\dots,\lambda'_{1,r} \right) = \boldsymbol{0} \ . $$
It follows from $ \mu_1 = 0 $ that
$$ \left( \lambda'_{1,1},\lambda'_{1,2},\dots,\lambda'_{1,r} \right) = \left( \lambda_{1,1}, \lambda_{1,2},\dots,\lambda_{1,r} \right) \text{\large $ \check{H} $} \ . $$
Therefore we have
$$ \left( \lambda'_{1,1},\lambda'_{1,2},\dots,\lambda'_{1,r} \right) = \boldsymbol{0} \Rightarrow \left( \lambda_{1,1},\lambda_{1,2},\dots,\lambda_{1,r} \right) = \boldsymbol{0} \ , $$
since $ \check{H} $ is invertible. Thus we have $ \mu'_1 = 0 $ which implies $ \hbar = \operatorname{ord}(Q) + 1 = 1 $.
		
Now assume $ \chi = 1 $. We have $ M = 0 $ so
\begin{eqnarray*}
	\left(P_1,P_2,\dots,P_r,Q \right) & = & \left( \tilde{z} + \xi \right) \left( \lambda'_{0,1},\lambda'_{0,2},\dots,\lambda'_{0,r},\mu'_0 \right) \\ 
	& = & \phi \left( \lambda_{0,1},\lambda_{0,2},\dots,\lambda_{0,r},\mu_0 \right) \text{\large $ H $} \ .
\end{eqnarray*}
If some $ P_i \neq 0 $ we have that $ \hbar \leq \operatorname{ord}(P_i) \leq 0 $. On the other hand we have
$$ \left(P_1,P_2,\dots,P_r \right) = \boldsymbol{0} \Leftrightarrow \left( \lambda'_{0,1},\lambda'_{0,2},\dots,\lambda'_{0,r} \right) = \boldsymbol{0}  \ . $$
By Equation \eqref{eq:critical_coefficients} and \eqref{eq:Puiseux_package_matrix_2} we have
$$ \left( \lambda'_{0,1},\lambda'_{0,2},\dots,\lambda'_{0,r} \right) = \left( \lambda_{0,1},\lambda_{0,2},\dots,\lambda_{0,r} \right) \text{\large $ \check{H} $} + \mu_0 \boldsymbol{\alpha}_0 \ , $$
hence
$$ \left( \lambda_{0,1},\lambda_{0,2},\dots,\lambda_{0,r} \right) \text{\large $ \check{H} $} + \mu_0 \boldsymbol{\alpha}_0 = \boldsymbol{0} \ . $$		
Since $ \check{H} $ is invertible, following Equation \eqref{eq:dependence_alpha_p_1}, we have that
$$ \left( \lambda_{0,1}, \lambda_{0,2},\dots,\lambda_{0,r}, \mu_0 \right) = -\frac{\mu_0}{d} \left( p_{1},p_2,\dots,p_{r},-d \right) \ , $$
so
$$ \hbar = 1 \Leftrightarrow \text{ condition (R1) is satisfied} \ . $$

\paragraph{The case $ d = 1 $.}
First of all, recall that the matrix $ H $ of the $ (\ell+1) $-Puiseux's package has the form
\begin{equation*}
	\left( \begin{array}{ccc|c}
		&  &  & 0 \\
		& \text{\LARGE $\check{H}$} &  & \vdots \\
		&  &  & 0 \\
		\hline
		\check{p}_1 & \cdots & \check{p}_r & 1
	\end{array}	\right)
\end{equation*}
where $ \check{\boldsymbol{p}} = \boldsymbol{p} \check{H} $ (see the end of Section \ref{se:Puiseux's_packages}). Note also that $ M = \chi $. For all $ 0 \leq t \leq \chi $ denote
$$ \left( \check{\lambda}_{t,1},\check{\lambda}_{t,2},\dots,\check{\lambda}_{t,r} \right) :=  \left( \lambda_{t,1},\lambda_{t,2},\dots,\lambda_{t,r} \right) \check{H} \ . $$
We have that
\begin{eqnarray*}
	\left(P_1,P_2,\dots,P_r,Q \right) & = & \sum_{t=0}^{\chi} \left(\lambda'_{t,1},\lambda'_{t,2},\dots,\lambda'_{t,r},\mu'_t\right) \left(\tilde{z} + \xi\right)^{\chi-t} \\
	 & = & \sum_{t=0}^{\chi} \left(\lambda_{t,1},\lambda_{t,2},\dots,\lambda_{t,r},\mu_t\right) \text{\large $ H $} \left(\tilde{z} + \xi\right)^{\chi-t} \\
	 & = & \sum_{t=0}^{\chi} \left(\check{\lambda}_{t,1} + \check{p}_1 \mu_t ,\check{\lambda}_{t,2} + \check{p}_2 \mu_t ,\dots, \check{\lambda}_{t,r} + \check{p}_r \mu_t , \mu_t\right) \left(\tilde{z} + \xi\right)^{\chi-t} \ .
\end{eqnarray*}
On the other hand we have that $ \hbar = \chi $ if and only if
$$ \operatorname{ord}(P_i) \geq \chi \quad \text{for } i=1,\dots, r $$
and
$$ \operatorname{ord}(Q) \geq \chi - 1 \ . $$
Since $ \mu_{\chi} = 0 $ (see Equation \eqref{eq:d_divides_chi}), it follows that $ \hbar = \chi $ if and only if
$$ P_i = \left( \check{\lambda}_{0,i} + \check{p}_i \mu_0 \right) \tilde{z}^{\chi} \quad \text{for } i=1,\dots,r $$
and
$$ Q = \left(\tilde{z} + \xi\right) \mu_0 \tilde{z}^{\chi-1} \ . $$
These last two equalities are equivalent to condition (R2) so, in the conditions of the proposition we have $ \hbar < \chi $.
\begin{rem}\label{re:critical_height_stable}
	Proposition \ref{pr:no_resonant_conditions} give us necessary conditions for the critical height remains stable. Note that they are not sufficient conditions: in addition, it must happen that
	$$ \delta(\omega;\mathcal{B}) = \beta(\omega;\mathcal{B},\gamma) + \hbar \nu(z') \ ,  $$
	or, equivalently,
	$$ \nu_{\mathcal{B}}(\omega) = \nu_{\mathcal{B}}(\omega_{\text{crit}}) \ . $$
\end{rem}

\subsection{Reductions}
As we did in Section \ref{se:getting_gamma_final_functions} in the case of functions, we will complete the proof of Statement $ T_3(\ell+1) $ by reductio ad absurdum.

Let $ \mathcal{A} $ be a parameterized regular local model for $ K,\nu $, and let $ \omega \in N_{\mathcal{A}}^{\ell+1} $ be a $ 1 $-form such that $ \nu_{\mathcal{A}}( \omega \wedge d \omega ) \geq 2\gamma $. We assume
\begin{enumerate}
	\item $ \omega $ is $ \gamma $-prepared;
	\item for any $ (\ell+1) $-nested transformation $ \mathcal{A} \rightarrow \mathcal{B} $ we have that $ \omega $ is not pre-$ \gamma $-final in $ \mathcal{B} $.
\end{enumerate}
The first assumption is possible thanks to Theorem \ref{th:gammma_preparation}. In this section we will see some implications of the second assumption and finally, in the next section, we will get a contradiction.

As we said, our main control invariant is the critical height $ \chi(\omega;\mathcal{A},\gamma) $. By Proposition \ref{pr:no_resonant_conditions} we know that the critical height can only remain stable under a $ (\ell+1) $-Puiseux's package if one of the resonant conditions is satisfied.
\begin{lem}\label{le:R1_just_once}
	Suppose that condition (R1) is satisfied in $ \mathcal{A} $. Consider a $ (\ell+1) $-nested transformation
	\begin{equation*}
		\xymatrix{
			\mathcal{A} = \mathcal{A}_0 \ar[r]^{\pi_1}  & \tilde{\mathcal{A}}_1 \ar[r]^{\tau_1} & \mathcal{A}_1 \ar[r]^{\pi_2} & \cdots \ar[r]^{\pi_N} & \tilde{\mathcal{A}}_N \ar[r]^{\tau_N} & \mathcal{A}_N = \mathcal{B} }
	\end{equation*}
	where each $ \tau_i : \tilde{\mathcal{A}}_i \rightarrow \mathcal{A}_i  $ is a $ \gamma $-preparation and $ \pi_j : \mathcal{A}_j \rightarrow \tilde{\mathcal{A}}_{j+1} $ is a $ (\ell+1) $-Puiseux's package. If $ \chi(\omega;\mathcal{B},\gamma) = \chi(\omega;\mathcal{A},\gamma) $ then condition (R2) is satisfied in $ \mathcal{B} $.
	\begin{proof}
		Let $ (\boldsymbol{x}_0,\boldsymbol{y}_0,z_0) $ be the coordinates in $ \mathcal{A}_0 $. Since (R1) is satisfied in $ \mathcal{A}_0 $ we have that
		$$
		\operatorname{crit}_{\mathcal{A}_0}(\omega) = \mu \, \boldsymbol{x}_0^{\boldsymbol{q}_0} z_0 \frac{d\phi_0}{\phi_0} \ ,
		$$
		where $ \phi_0 = z_0^d / \boldsymbol{x}_0^{\boldsymbol{p}_0} $ is the $ (l+1) $-th contact rational function in $ \mathcal{A}_0 $. Let $ \xi_0 \in k^* $ be the constant such that $ \nu(\phi_0 - \xi_0) >0 $. After performing the $ (l+1) $-Puiseux's package $ \pi_1 : \mathcal{A}_0 \rightarrow \mathcal{A}_1 $ we obtain
		$$
		\operatorname{crit}_{\mathcal{A}_0}(\omega) = \mu \, \tilde{\boldsymbol{x}}_1^{\boldsymbol{r}} (\tilde{z}_1 + \xi)^u \tilde{z}_1 \frac{d\tilde{z}_1}{\tilde{z}_1} \ ,
		$$
		where $ \tilde{\boldsymbol{x}}_1 $ and $ \tilde{z}_1 $ are the new variables and the exponents $ \boldsymbol{r} $ and $ u $ are determined from Equations \eqref{eq:variables_after_Puiseux_package}, and in particular we have $ \nu(\tilde{\boldsymbol{x}}_1^{\boldsymbol{r}}) = \delta(\omega;\mathcal{A}_0;\gamma) $. By assumption the critical height remains stable after the $ \gamma $-preparation $ \tau_1 : \tilde{\mathcal{A}}_1 \rightarrow \mathcal{A}_1 $, so in $ \mathcal{A}_1 $ we have that
		$$
		\operatorname{crit}_{\mathcal{A}_1}(\omega) = \boldsymbol{x}_1^{\boldsymbol{q}_1} z_1 (\sigma_{\mathcal{A}_1,0} + \phi_1 \sigma_{\mathcal{A}_1,1}) \ ,
		$$
		where $ \boldsymbol{x}_1 $ and $ z_1 = \tilde{z}_1 $ are the new variables, $ \phi_1 $ is the $ (l+1) $-th contact rational function in $ \mathcal{A}_1 $, the exponent $ \boldsymbol{q}_1 $ satisfies $ \nu(\boldsymbol{x}_1^{\boldsymbol{q}_1}) = \delta(\omega;\mathcal{A}_0;\gamma) $ and, moreover, following Remark \ref{re:nested_transformation_matrix} we know that
		$$
		\sigma_{\mathcal{A}_1,0} = \mu \frac{dz_1}{z_1} \ .
		$$
		We see that condition (R1) is not satisfied in $ \mathcal{A}_1 $, so it must be satisfied condition (R2), thus
		$$
		\sigma_{\mathcal{A}_1,1} = - \xi_1 \mu \frac{d \boldsymbol{x}^{\boldsymbol{p}_1}}{\boldsymbol{x}^{\boldsymbol{p}_1}} \ ,
		$$
		where $ z_1 / \boldsymbol{x}^{\boldsymbol{p}_1} = \phi_1 $ and $ \nu(\phi_1 - \xi_1) > 0 $. Equivalently, we have
		$$
		\operatorname{crit}_{\mathcal{A}_1}(\omega) = \boldsymbol{x}_1^{\boldsymbol{q}_1} \mu \, d(z_1 - \xi_1 \boldsymbol{x}^{\boldsymbol{p}_1}) \ .
		$$
		After performing the $ (l+1) $-Puiseux's package $ \pi_1 : \mathcal{A}_1 \rightarrow \tilde{\mathcal{A}}_2 $ we obtain
		$$
		\operatorname{crit}_{\mathcal{A}_1}(\omega) = \mu \, \tilde{\boldsymbol{x}}_2^{\boldsymbol{q}_1} \tilde{z}_2 \left( \frac{d \tilde{\boldsymbol{x}}_2^{\boldsymbol{p}_1}}{\tilde{\boldsymbol{x}}_2^{\boldsymbol{p}_1}} + \frac{d \tilde{z}_2}{\tilde{z}_2}\right) \ ,
		$$
		where $ \tilde{\boldsymbol{x}}_2 = \boldsymbol{x}_1  $ and $ \tilde{z}_2 = \phi_1 - \xi_1 $ are the new variables. Again, since the critical height remains stable, after performing the $ \gamma $-preparation $ \tau_2 : \tilde{\mathcal{A}}_2 \rightarrow \mathcal{A}_2 $ we have
		$$
		\operatorname{crit}_{\mathcal{A}_2}(\omega) = \boldsymbol{x}_2^{\boldsymbol{q}_2} z_2 (\sigma_{\mathcal{A}_2,0} + \phi_2 \sigma_{\mathcal{A},2}) \ ,
		$$
		where $ \boldsymbol{x}_2 $ and $ z_2 $ are the new variables, $ \phi_2 $ is the $ (l+1) $-th contact rational function in $ \mathcal{A}_2 $, the exponent $ \boldsymbol{q}_2 $ satisfies $ \nu(\boldsymbol{x}_2^{\boldsymbol{q}_2}) = \delta(\omega;\mathcal{A}_1;\gamma) $ and, moreover, following Remark \ref{re:nested_transformation_matrix} we know that
		$$
		\sigma_{\mathcal{A}_2,0} = \mu \left( \frac{d \boldsymbol{x}_2^{\boldsymbol{t}_2} }{\boldsymbol{x}_2^{\boldsymbol{t}_2}} + \frac{dz_1}{z_1}\right) \ ,
		$$
		where $ \boldsymbol{t}_2 = C_{\pi_2} \boldsymbol{p}_1 $ being $ C_{\pi_2} $ the invertible matrix of non-negative integers related to $ \pi_2 $ (see Remark \ref{re:nested_transformation_matrix}). Thus we have that $ \boldsymbol{t}_2 $ is a non-zero vector of non-negative integers, so
		$$
		(t_{2,1}:t_{2,2}:\dots:t_{2,r}:1) \neq (p_{2,1}:p_{2,2}:\dots:p_{2,r}:-d(z,\mathcal{A}_2)) \in \mathbb{P}_{k}^{r} \ ,
		$$
		where $ \boldsymbol{p}_2 $ is given by $ \phi_2 = z_2^{d(z,\mathcal{A}_2)} / \boldsymbol{x}_2^{\boldsymbol{p}_2} $, hence condition (R1) is not satisfied in $ \mathcal{A}_2 $ (note that all the integers in the left side term has the same sign while in the right side term there are negative and positive integers).
		
		We have just check that in $ \mathcal{A}_2 $ condition (R2) must be satisfied. If we iterate the calculations made above, we obtain that in $ \mathcal{A}_s $, for $ 2 \leq s \leq N $, the critical part is given by 
		$$
		\operatorname{crit}_{\mathcal{A}_s}(\omega) = \boldsymbol{x}_s^{\boldsymbol{q}_s} z_s (\sigma_{\mathcal{A}_s,0} + \phi_s \sigma_{\mathcal{A}_s,1}) \ ,
		$$
		where $ \boldsymbol{x}_s $ and $ z_s $ are coordinates in $ \mathcal{A}_s $, $ \phi_s $ is the $ (l+1) $-th contact rational function in $ \mathcal{A}_s $, the exponent $ \boldsymbol{q}_s $ satisfies $ \nu(\boldsymbol{x}_s^{\boldsymbol{q}_s}) = \delta(\omega;\mathcal{A}_{s-1};\gamma) $ and 
		$$
		\sigma_{\mathcal{A}_s,0} = \mu \left( \frac{d \boldsymbol{x}_s^{\boldsymbol{t}_s} }{\boldsymbol{x}_s^{\boldsymbol{t}_s}} + \frac{dz_1}{z_1}\right) \ ,
		$$
		where $ \boldsymbol{t}_s = C_{\pi_s} \cdots C_{\pi_2} \boldsymbol{p}_1 $ is a non-zero vector of non-negative integers. Thus we have that
		$$
		(t_{s,1}:t_{s,2}:\dots :t_{s,r}:1) \neq (p_{s,1}:p_{s,2},\dots:p_{s,r}:-d(z,\mathcal{A}_s)) \in \mathbb{P}_{k}^{r} \ ,
		$$
		hence condition (R1) is not satisfied in $ \mathcal{A}_s $, which implies that condition (R2) is satisfied in $ \mathcal{A}_s $ for all $ 1 \leq s \leq N $ as desired.
	\end{proof}
\end{lem}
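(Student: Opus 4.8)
The plan is to follow the ``critical part'' $\operatorname{crit}_{\mathcal{A}_s}(\omega)$ along the tower $\mathcal{A}_0 \to \mathcal{A}_1 \to \cdots \to \mathcal{A}_N$, exploiting the rigidity encoded in the two equivalent descriptions of (R1) and (R2), and invoking Proposition \ref{pr:no_resonant_conditions} to force (R2) at every stage. Throughout, the hypothesis $\chi(\omega;\mathcal{B},\gamma) = \chi(\omega;\mathcal{A},\gamma) = 1$ together with Propositions \ref{pr:stability_critical_height_Puiseux_package} and \ref{pr:no_resonant_conditions} guarantees that at each intermediate $\mathcal{A}_s$ the critical height is still $\chi = 1$ with $\delta < \gamma$, so that (R1) or (R2) must hold there; the whole point is to rule out (R1) after the first step.

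First I would use the second form of (R1) to write, in $\mathcal{A}_0 = \mathcal{A}$, $\operatorname{crit}_{\mathcal{A}_0}(\omega) = \mu\, \boldsymbol{x}_0^{\boldsymbol{q}_0}\, z_0\, \frac{d\phi_0}{\phi_0}$, with $\phi_0 = z_0^{d}/\boldsymbol{x}_0^{\boldsymbol{p}_0}$ the $(\ell+1)$-contact rational function and $d = d(z;\mathcal{A}_0)\ge 2$. Under the Puiseux's package $\pi_1$ the function $\phi_0$ becomes the unit $z_1 + \xi_0$ and $\boldsymbol{x}_0^{\boldsymbol{q}_0}z_0$ becomes a monomial in $\tilde{\boldsymbol{x}}_1$ times a power of $\phi_0$ (Equations \eqref{eq:variables_after_Puiseux_package}), so $\operatorname{crit}_{\mathcal{A}_0}(\omega) = \mu\, \tilde{\boldsymbol{x}}_1^{\boldsymbol{r}}(z_1 + \xi_0)^{u} z_1 \frac{dz_1}{z_1}$ with $\nu(\tilde{\boldsymbol{x}}_1^{\boldsymbol{r}}) = \delta(\omega;\mathcal{A}_0,\gamma)$; in particular its initial form contains no $\frac{d\tilde{x}_i}{\tilde{x}_i}$ term. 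After the $\gamma$-preparation $\tau_1$, Remark \ref{re:nested_transformation_matrix} (leading-coefficient vectors transform by a non-negative integer matrix, which does not feed into the $\frac{dz}{z}$ direction) shows that the initial form $\sigma_{\mathcal{A}_1,0}$ of the critical part in $\mathcal{A}_1$ equals $\mu\frac{dz_1}{z_1}$, i.e.\ $\boldsymbol{\lambda}_0 = \boldsymbol{0}$. Since $\boldsymbol{p}$ is always a non-zero vector and, as $\nu(\phi_1) = 0$ forces at least one positive entry, the projective point $(p_1:\cdots:p_r:-d(z;\mathcal{A}_1))$ mixes signs, it cannot equal $(0:\cdots:0:1)$; hence (R1) fails in $\mathcal{A}_1$, and Proposition \ref{pr:no_resonant_conditions} forces (R2) in $\mathcal{A}_1$.

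Then I would run an induction on $s$: assuming (R2) in $\mathcal{A}_s$ I use its second form to write $\operatorname{crit}_{\mathcal{A}_s}(\omega) = \boldsymbol{x}_s^{\boldsymbol{q}_s}(z_s - \xi_s\boldsymbol{x}_s^{\boldsymbol{p}_s})^{\chi}\bigl[\frac{d\boldsymbol{x}_s^{\boldsymbol{\lambda}_0}}{\boldsymbol{x}_s^{\boldsymbol{\lambda}_0}} + \mu_0\frac{d(z_s - \xi_s\boldsymbol{x}_s^{\boldsymbol{p}_s})}{z_s - \xi_s\boldsymbol{x}_s^{\boldsymbol{p}_s}}\bigr]$ with $d(z;\mathcal{A}_s)=1$, so $\pi_{s+1}$ factors through the Tschirnhaus change $\tilde z = z_s - \xi_s\boldsymbol{x}_s^{\boldsymbol{p}_s}$ followed by combinatorial blow-ups (the remark at the end of Section \ref{se:Puiseux's_packages}). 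Carrying this change and the $\gamma$-preparation $\tau_{s+1}$ through, and using that the critical height is again $1$, the initial form of the $0$-level in $\mathcal{A}_{s+1}$ becomes $\sigma_{\mathcal{A}_{s+1},0} = \mu\bigl(\frac{d\boldsymbol{x}_{s+1}^{\boldsymbol{t}_{s+1}}}{\boldsymbol{x}_{s+1}^{\boldsymbol{t}_{s+1}}} + \frac{dz_1}{z_1}\bigr)$, where $\boldsymbol{t}_{s+1} = C_{\pi_{s+1}}\cdots C_{\pi_2}\,\boldsymbol{p}_1$ is obtained from $\boldsymbol{p}_1 \in \mathbb{Z}_{\ge 0}^r\setminus\{\boldsymbol{0}\}$ by left multiplication by the non-negative integer matrices of the intervening transformations (Remark \ref{re:nested_transformation_matrix}), hence itself lies in $\mathbb{Z}_{\ge 0}^r\setminus\{\boldsymbol{0}\}$. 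As before, $(t_{s+1,1}:\cdots:t_{s+1,r}:1)$ has all coordinates of one sign, so it cannot equal the sign-mixed $(p_{s+1,1}:\cdots:p_{s+1,r}:-d(z;\mathcal{A}_{s+1}))$; (R1) fails in $\mathcal{A}_{s+1}$, and Proposition \ref{pr:no_resonant_conditions} gives (R2) in $\mathcal{A}_{s+1}$. Iterating up to $s = N$ yields (R2) in $\mathcal{B}$.

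The main obstacle is the bookkeeping in the inductive step: one must check that the monomial prefactor, the Tschirnhaus factor $z_s - \xi_s\boldsymbol{x}_s^{\boldsymbol{p}_s}$, and the closed logarithmic bracket all transform exactly as predicted by the explicit formulas in (R2), combining the Puiseux matrices $H,\check H$ (Equations \eqref{eq:Puiseux_package_matrix_1}, \eqref{eq:Puiseux_package_matrix_2}, \eqref{eq:dependence_alpha_p_1}) with the preparation matrices $C_\pi$ of Remark \ref{re:nested_transformation_matrix}, and that along the way the weight vector $\boldsymbol{t}_s$ never leaves $\mathbb{Z}_{\ge 0}^r\setminus\{\boldsymbol{0}\}$. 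Once this computation is in place, the rigidity of (R1)/(R2) and Proposition \ref{pr:no_resonant_conditions} finish the argument.
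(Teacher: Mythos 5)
Your proposal is correct and follows essentially the same route as the paper: track $\operatorname{crit}_{\mathcal{A}_s}(\omega)$ through the tower, observe that after $\pi_1,\tau_1$ the $0$-level initial form is $\mu\,\frac{dz_1}{z_1}$ so (R1) fails in $\mathcal{A}_1$, and then inductively use the (R2) shape of the critical part to show $\sigma_{\mathcal{A}_s,0}=\mu\bigl(\frac{d\boldsymbol{x}_s^{\boldsymbol{t}_s}}{\boldsymbol{x}_s^{\boldsymbol{t}_s}}+\frac{dz}{z}\bigr)$ with $\boldsymbol{t}_s=C_{\pi_s}\cdots C_{\pi_2}\boldsymbol{p}_1\in\mathbb{Z}_{\ge0}^r\setminus\{\boldsymbol{0}\}$, whose sign pattern rules out (R1) and hence forces (R2) at every stage. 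The only cosmetic difference is that you factor each $d=1$ Puiseux's package through the associated ordered change of coordinates, whereas the paper carries the computation directly via the equations of the package; the bookkeeping you defer is at the same level of detail the paper itself leaves implicit.
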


As we said, our main control invariant is the critical height $ \chi(\omega;\mathcal{A},\gamma) $. Proposition \ref{pr:stability_critical_height_Puiseux_package} allow us to make the following assumption:
\begin{quote}\textbf{Stability of the critical height.}
	Consider a $ (\ell+1) $-nested transformations of the kind
	\begin{equation*}
		\xymatrix{
			\mathcal{A} = \mathcal{A}_0 \ar[r]^{\pi_1}  & \tilde{\mathcal{A}}_1 \ar[r]^{\tau_1} & \mathcal{A}_1 \ar[r]^{\pi_2} & \cdots \ar[r]^{\pi_N} & \tilde{\mathcal{A}}_N \ar[r]^{\tau_N} & \mathcal{A}_N = \mathcal{B} }
	\end{equation*}
	where each $ \tau_i : \tilde{\mathcal{A}}_i \rightarrow \mathcal{A}_i  $ is a $ \gamma $-preparation and $ \pi_j : \mathcal{A}_j \rightarrow \tilde{\mathcal{A}}_{j+1} $ is a $ (\ell+1) $-Puiseux's package. We have
	$$ \chi(\omega;\mathcal{B},\gamma) = \chi(\omega;\mathcal{A},\gamma) \ . $$	
\end{quote}
If there is such a transformation with $ \chi(\omega;\mathcal{B},\gamma) < \chi(\omega;\mathcal{A},\gamma) $ we simply perform it. 

Now, since the critical height $ \chi(\omega;\mathcal{A},\gamma) $ does not drop performing a $ (\ell+1) $-Puiseux's package, we know that condition (R1) or (R2) are satisfied in $ \mathcal{A} $.

In view of Lemma \ref{le:R1_just_once} we can make one more additional assumption
\begin{quote}\textbf{Stability of resonant condition (R2).}
	Consider a $ (\ell+1) $-nested transformations of the kind
	\begin{equation*}
		\xymatrix{
			\mathcal{A} = \mathcal{A}_0 \ar[r]^{\pi_1}  & \tilde{\mathcal{A}}_1 \ar[r]^{\tau_1} & \mathcal{A}_1 \ar[r]^{\pi_2} & \cdots \ar[r]^{\pi_s} & \tilde{\mathcal{A}}_N \ar[r]^{\tau_N} & \mathcal{A}_N = \mathcal{B} }
	\end{equation*}
	where each $ \tau_i : \tilde{\mathcal{A}}_i \rightarrow \mathcal{A}_i  $ is a $ \gamma $-preparation and $ \pi_j : \mathcal{A}_j \rightarrow \tilde{\mathcal{A}}_{j+1} $ is a $ (\ell+1) $-Puiseux's package. We have that condition (R2) is satisfied in $ \mathcal{A}_j $ for every $ j = 1,\dots,N $.		
\end{quote}
One of the features of condition (R1) is that $ d(z;\mathcal{A}) = 1 $. As a consequence we have the following key property:
\begin{quote}\textbf{Stability of the $ z $-coefficient.}
	The coefficient $ f^j \in R_{\mathcal{A}_j}^{\ell+1}$ is the total transform of $ f^0 \in R_{\mathcal{A}_0}^{\ell+1}$.
\end{quote}
Now, we will use Statement $ T_4(\ell+1) $ in order to the $ z $-coefficient of $ \omega $ becomes $ \gamma $-final. Note that a $ \gamma $-preparation for $ \omega $ composed with a $ \ell $-nested transformation is still a $ \gamma $-preparation for $ \omega $. Thus, we can determine a $ (\ell+1) $-nested transformation of the kind
\begin{equation*}
	\xymatrix{
		\mathcal{A} = \mathcal{A}_0 \ar[r]^{\pi_1}  & \tilde{\mathcal{A}}_1 \ar[r]^{\tau_1} & \mathcal{A}_1 \ar[r]^{\pi_2} & \cdots \ar[r]^{\pi_s} & \tilde{\mathcal{A}}_N \ar[r]^{\tau_N} & \mathcal{A}_N = \mathcal{B} }
\end{equation*}
where each $ \tau_i : \tilde{\mathcal{A}}_i \rightarrow \mathcal{A}_i  $ is a $ \gamma $-preparation for both $ \gamma $ and $ f $ and $ \pi_j : \mathcal{A}_j \rightarrow \tilde{\mathcal{A}}_{j+1} $ is either a $ (\ell+1) $-Puiseux's package or an ordered change of the $ (\ell+1) $-th coordinate, such that $ f \in R_{\mathcal{B}}^{\ell+1} $ is $ \gamma $-final. Proposition \ref{pr:stability_critical_height_ordered_change} guarantees that the critical height of $ \omega $ can not increase. If $ \chi(\omega;\mathcal{B},\gamma) < \chi(\omega;\mathcal{A},\gamma) $ we perform it an start again. If it remains stable we get a $ 1 $-form whose $ z $-coefficient is $ \gamma $-final.

Following Remark \ref{re:critical_height_stable}, we can also assume that
$$ \nu_{\mathcal{A}}(\omega) = \nu_{\mathcal{A}}(\omega_{\chi}) \ . $$

Finally, just by performing a $ 0 $-nested transformation following Lemma \ref{le:simple_infinite_list} we can assume that the critical level has the form $ \omega_{\chi} = \boldsymbol{x}^{\boldsymbol{q}} \tilde{\omega}_{\chi} $ where $ \tilde{\omega}_{\chi} $ is log-elementary.

\subsection{End of proof of Theorem \ref{th:gamma_finalization_1_forms}}\label{se:end_proof_theorem_gamma_finalization_1_forms}
In this section we complete the proof of $ T_3(\ell+1) $. In view of the considerations of the previous section we can assume that we have a parameterized regular local model $ \mathcal{A} $, a value $ \gamma \in \Gamma $ and a $ 1 $-form
$$ \omega = \sum_{i=1}^{r}a_i \frac{dx_i}{x_i} + \sum_{j=1}^{\ell}b_j dy_j + zf \frac{dz}{z} \in N_{\mathcal{A}}^{\ell+1} \ , \quad \nu_{\mathcal{A}}(\omega \wedge d\omega) \geq 2\gamma \ , $$
such that
\begin{enumerate}
	\item $ \delta(\omega;\mathcal{A},\gamma) < \gamma$;
	\item $ \chi = \chi(\omega;\mathcal{A},\gamma) > 0 $;
	\item $ \nu_{\mathcal{A}}(\omega) = \nu_{\mathcal{A}}(\omega_{\chi}) $;
	\item The critical level has the form $ \omega_{\chi} = \boldsymbol{x}^{\boldsymbol{q}} \tilde{\omega}_{\chi} $ where $ \tilde{\omega}_{\chi} $ is log-elementary;
	\item The $ z $-coefficient $ f $ is $ \gamma $-final;
	\item Condition (R2) is satisfied;
	\item Properties 1, 2, 3, 4, 5 and 6 are stable for any $ (\ell+1) $-nested transformation $ \mathcal{A} \rightarrow \mathcal{B} $ such that $ \omega $ is $ \gamma $-prepared in $ \mathcal{B} $.
\end{enumerate}
We study separately different cases depending on the explicit value of the function $ f $. In particular we will see that the previous assumptions implies $ \chi = 1 $.

\subsubsection{The case $ \nu_{\mathcal{A}}(f) \geq \nu_{\mathcal{A}}(\omega) + 2 \nu(z)$.}
Recall that
$$ \nu_{\mathcal{A}}(\omega \wedge d\omega) \geq 2\gamma \Longrightarrow \nu_{\mathcal{A}}(\Delta_t) \geq 2\gamma $$
for all $ t\geq0 $. In particular taking $ t = 2 \chi - 1 $ we obtain
\begin{equation}\label{eq:recessive_case_truncation_1}
	\nu_{\mathcal{A}}\left( \sum_{i+j=2 \chi - 1} \big( j \eta_j \wedge \eta_i +f_i d \eta_j + \eta_i \wedge d f_j \big) \right) \geq 2 \gamma \ .
\end{equation}
Since $ \nu_{\mathcal{A}}(f) \geq \nu_{\mathcal{A}}(\omega) + 2 \nu(z)$ we have
\begin{equation}\label{eq:recessive_case_truncation_2}
	\nu_{\mathcal{A}}(f_k) \geq \nu_{\mathcal{A}}(\omega) + 2\nu(z) \ , \quad \text{for all } k \geq 0 \ .
\end{equation}
Since condition (R2) is satisfied we have
\begin{equation}\label{eq:recessive_case_truncation_3}
	\nu_{\mathcal{A}}(\eta_{\chi - k}) \geq \nu_{\mathcal{A}}(\omega) + k\nu(z) \ , \quad \text{for all } k = 0,1,\dots,\chi \ .
\end{equation}
Taking into account \eqref{eq:recessive_case_truncation_2} and \eqref{eq:recessive_case_truncation_3} we derive from \eqref{eq:recessive_case_truncation_1} that
\begin{equation*}
	\nu_{\mathcal{A}}(\eta_{\chi - 1} \wedge \eta_{\chi}) \geq 2 \nu_{\mathcal{A}}(\omega) + 2 \nu(z) \ .
\end{equation*}
Since $ \eta_{\chi} = \boldsymbol{x}^{\boldsymbol{q}} \tilde{\eta}_{\chi} $, after factorizing $ \boldsymbol{x}^{\boldsymbol{q}} $ in the above expression we obtain
\begin{equation}\label{eq:recessive_case_truncation_4}
	\nu_{\mathcal{A}}(\eta_{\chi - 1} \wedge \tilde{\eta}_{\chi}) \geq \nu_{\mathcal{A}}(\omega) + 2 \nu(z) \ .
\end{equation}
By Lemma \ref{le:truncated_proportionality} of truncated proportionality we know there is a function $ g \in R_{\mathcal{A}}^{\ell} $ and a $ 1 $-form $ \bar{\eta} \in N_{\mathcal{A}}^{\ell} $ with $ \nu_{\mathcal{A}}(\bar{\eta}) \geq \nu_{\mathcal{A}}(\omega) + 2 \nu(z) $ such that
\begin{equation}\label{eq:recessive_case_truncation_5}
	\eta_{\chi - 1} = g \, \tilde{\eta}_{\chi} +  \bar{\eta} \ .
\end{equation}
Note that \eqref{eq:recessive_case_truncation_3} implies that $ \nu_{\mathcal{A}}(g) \geq \nu_{\mathcal{A}}(\omega) + \nu(z) $. Let us write $ g $ as a power series
$$ g = \sum_{(I,J)\in \mathbb{Z}^{r+\ell}_{\geq 0}} g_{IJ} \boldsymbol{x}^I \boldsymbol{y}^J \ , \quad f_{IJ} \in k \ . $$
Denote
$$ g = G + H $$
where $ G \in k[\boldsymbol{x},\boldsymbol{y}] \subset R_{\mathcal{A}}^{\ell} $ is the polynomial
$$ G = \sum_{\substack{(I,J)\in \mathbb{Z}^{r+\ell}_{\geq 0} \\ \nu(\boldsymbol{x}^I \boldsymbol{y}^J) \leq \nu_{\mathcal{A}}(\omega) + 2 \nu(z) }} g_{IJ} \boldsymbol{x}^I\boldsymbol{y}^J \  \ . $$
Now we perform the ordered change of coordinates $ \mathcal{A} \rightarrow \tilde{\mathcal{A}} $ given by
$$ \tilde{z} := z - \phi \ , \quad \phi := \frac{-1}{\chi} G \ . $$
As we saw in the proof of Proposition \ref{pr:stability_critical_height_ordered_change} we have that
$$ \eta'_{\chi - 1} = \eta_{\chi - 1} + \chi \phi \eta_{\chi} + \phi^2(\cdots)  $$
where $ \eta'_{\chi-1}\in N_{\tilde{\mathcal{A}}}^{\ell} $ is the $ (\chi-1) $-level of $ \omega $ in $ \tilde{\mathcal{A}} $. We have that
$$ \eta'_{\chi - 1} = g \, \tilde{\eta}_{\chi} +  \bar{\eta} - G \eta_{\chi} + \phi^2(\cdots)  = H \tilde{\eta}_{\chi} +  \bar{\eta} + \phi^2(\cdots) \ . $$
Now, perform a $ \gamma $-preparation $ \tilde{\mathcal{A}} \rightarrow \mathcal{B} $. By definition of $ H $ we have that
$$ \nu_{\mathcal{A}}(H) \geq \nu_{\mathcal{A}}(\omega) + 2 \nu(z) \Longrightarrow  \nu_{\mathcal{A}}(\eta'_{\chi - 1}) \geq \nu_{\mathcal{A}}(\omega) + 2 \nu(z) \ . $$
Since $ \nu_{\mathcal{B}}(\eta_{\chi}) = \nu_{\mathcal{A}}(\eta_{\chi}) = \nu_{\mathcal{A}}(\omega) $ and condition (R2) must be satisfied in $ \mathcal{B} $ we have that
$$ \nu(z') \geq 2 \nu(z) \ . $$
Iterating this procedure we obtain a sequence of parameterized regular local models whose $ (\ell+1) $-th dependent variable has at least twice value that the previous one. The value of the $ (\ell+1) $-th dependent variable can not be greater than
$$ \frac{\gamma - \nu_{\mathcal{A}}(\omega)}{\chi} \ , $$
since this implies that $ \omega $ is pre-$ \gamma $-final in such model. So, after finitely many steps we reach a model in which the value of the $ (\ell+1) $-th dependent variable is greater than
$$ \frac{\nu_{\mathcal{A}}(f) - \nu_{\mathcal{A}}(\omega)}{2} \ . $$

\subsubsection{The case $ \nu_{\mathcal{A}}(\omega) + \nu(z)\leq \nu_{\mathcal{A}}(f) < \nu_{\mathcal{A}}(\omega) + 2 \nu(z)$.}
Since $ f $ is dominant we have that
$$ \nu_{\mathcal{A}}(f) \geq \nu_{\mathcal{A}}(\omega_1) \ . $$
On the other hand, since $ \omega $ is $ \gamma $-prepared we have
$$ \nu_{\mathcal{A}}(\omega_1) = \nu_{\mathcal{A}}(\omega) + (\chi-1)\nu(z) \ . $$
Since by assumption $ \nu_{\mathcal{A}}(f) < \nu_{\mathcal{A}}(\omega) + 2 \nu(z) $, we have that
$$ \chi \leq 2 \ . $$
Repeating the arguments of the previous case we obtain
$$ \nu_{\mathcal{A}}(\eta_{\chi - 1} \wedge \tilde{\eta}_{\chi}) \geq \nu_{\mathcal{A}}(f) \ . $$
Exactly as we did, we can perform an ordered change of coordinates followed by a $ \gamma $-preparation and obtain a parameterized regular local model whose $ (\ell+1) $-th dependent variable $ \tilde{z} $ has value
$$ \nu(\tilde{z}) > \nu_{\mathcal{A}}(f) - \nu_{\mathcal{A}}(\omega) \ . $$

\subsubsection{The case $ \nu_{\mathcal{A}}(\omega) \leq \nu_{\mathcal{A}}(f) < \nu_{\mathcal{A}}(\omega) + \nu(z)$.}
In this case the only possibility is $ \chi = 1 $. Let $ \epsilon > 0 $ be the value given by
$$ \epsilon:= \nu_{\mathcal{A}}(\omega) - \nu_{\mathcal{A}}(f) \ . $$
Again, repeating the above arguments, we can perform an ordered change of coordinates followed by a $ \gamma $-preparation and obtain a parameterized regular local model whose $ (\ell+1) $-th dependent variable $ \tilde{z} $ has value
$$ \nu(\tilde{z}) \geq \nu(z) + \epsilon \ . $$
Iterating, in finitely many steps we
obtain a parameterized regular local model whose $ (\ell+1) $-th dependent variable $ z' $ has value 
$$ \nu(z') \geq \gamma - \nu_{\mathcal{A}}(\omega) \ , $$
which implies that $ \omega $ is pre-$ \gamma $-final in such model in contradiction with our assumptions.

\subsubsection{The case $ \nu_{\mathcal{A}}(\omega) = \nu_{\mathcal{A}}(f) $.}
We have just proved that $ \chi = 1 $ and $ \nu_{\mathcal{A}}(\omega) = \nu_{\mathcal{A}}(f) $ are the only possibilities which are not in contradiction with our assumptions.

Since $ f $ is dominant, we can perform a $ 0 $-nested transformation given by Lemma \ref{le:simple_infinite_list} in order to obtain a parameterized regular local model in which $ f $ is a monomial in the independent variables times a unit. With one more application of Lemma \ref{le:simple_infinite_list} we can obtain a parameterized regular local model $ \mathcal{A}' $ in which $ f $ divides $ \omega $. Denote $ \gamma' = \gamma - \nu_{\mathcal{A}}(f) $. The $ 1 $-form $ \omega' = f^{-1} \omega $ satisfies
$$ \nu_{\mathcal{A}'}(\omega')= 0 \quad \text{and} \quad \nu_{\mathcal{A}'}(\omega' \wedge d \omega) \geq 2 \gamma' \ . $$

So, replacing $ \omega $ by $ \omega' $, $ \mathcal{A} $ by $ \mathcal{A}' $ and $ \gamma $ by $ \gamma' $ we can ``improve'' our list of assumptions:
\begin{enumerate}
	\item $ \delta(\omega;\mathcal{A},\gamma) < \gamma$;
	\item $ \chi = \chi(\omega;\mathcal{A},\gamma) = 1 $;
	\item $ \nu_{\mathcal{A}}(\omega) = \nu_{\mathcal{A}}(\omega_{1}) = 0  $;
	\item The critical level $ \omega_1 $ is log-elementary;
	\item The $ z $-coefficient is $ f = 1 $;
	\item Condition (R2) is satisfied;
	\item Properties 1, 2, 3, 4, 5 and 6 are stable for any $ (\ell+1) $-nested transformation $ \mathcal{A} \rightarrow \mathcal{B} $ such that $ \omega $ is $ \gamma $-prepared in $ \mathcal{B} $.
\end{enumerate}
In this situation, we will show that it is always possible to determine an ordered change of the $ (\ell+1) $-th coordinate such that
$$ \nu(z') \geq 2 \nu(z) \ . $$
This is enough to get de desired contradiction, since iterating this procedure we necessarily reach a parameterized regular local model in which $ \omega $ is pre-$ \gamma $-final.

Since condition (R2) is satisfied we know that the critical part of $ \omega $ can be written as
$$ \omega_{\text{crit}} = \left( z - \xi \boldsymbol{x}^{\boldsymbol{p}} \right) \left[ \frac{d\boldsymbol{x}^{\boldsymbol{\lambda}}}{\boldsymbol{x}^{\boldsymbol{\lambda}}} + \frac{d \left( z - \xi \boldsymbol{x}^{\boldsymbol{p}} \right)}{\left( z - \xi \boldsymbol{x}^{\boldsymbol{p}} \right)} \right] \ , $$
where $ \boldsymbol{\lambda} \in k^r \setminus \{ \boldsymbol{0} \} $, $ \boldsymbol{p} \in \mathbb{Z}^{r}_{\geq 0} \setminus \{ \boldsymbol{0} \} $, $ \xi \in k^* $ and $ \nu(z - \xi \boldsymbol{x}^{\boldsymbol{p}}) > \nu(z) $. This implies that
$$ \eta_1 = \frac{d\boldsymbol{x}^{\boldsymbol{\lambda}}}{\boldsymbol{x}^{\boldsymbol{\lambda}}} + \bar{\eta}_1 \ , $$
where $ \bar{\eta}_1 $ is not log-elementary, and
$$ \eta_0 = - \xi \boldsymbol{x}^{\boldsymbol{p}} \left( \frac{d\boldsymbol{x}^{\boldsymbol{\lambda}}}{\boldsymbol{x}^{\boldsymbol{\lambda}}} + \frac{d\boldsymbol{x}^{\boldsymbol{p}}}{\boldsymbol{x}^{\boldsymbol{p}}}\right) + \bar{\eta}_0 \ , \quad \nu_{\mathcal{A}}(\bar{\eta}_0) \geq \nu(z) \ , $$
where $ \bar{\eta}_0 $ is not $ \nu(z) $-final dominant. Denoting
$$ \sigma := \frac{d\boldsymbol{x}^{\boldsymbol{\lambda}}}{\boldsymbol{x}^{\boldsymbol{\lambda}}} \quad \text{and} \quad \psi_1 := \xi \boldsymbol{x}^{\boldsymbol{p}} $$
we have
$$ \eta_1 = \sigma + \bar{\eta}_1 $$
and
\begin{equation}\label{eq:0_level_1}
	\eta_0 = - \psi_1 \sigma - d \psi_1 + \bar{\eta}_0 \ .
\end{equation}
Let $ \mathcal{A} \rightarrow \mathcal{A}_1 $ be the $ \ell $-nested transformation given by Lemma \ref{le:push_right}. We have
$$ \nu_{\mathcal{A}_1}(\bar{\eta}_1)  > 0 \quad \text{and} \quad \nu_{\mathcal{A}_1}(\bar{\eta}_0) = \epsilon_1 > \nu(z) \ . $$
Consider the ordered change of the $ (\ell+1) $-th coordinate $ \mathcal{A}_1 \rightarrow \tilde{\mathcal{A}}_2 $ given by
$$ \tilde{z}_2 := z + \psi_1 \ . $$
In $ \tilde{\mathcal{A}}_2 $ the critical level is
$$ \eta'_1 = \sigma + \bar{\eta}_1 + \psi_1 (\cdots) $$
and the $ 0 $-level is
$$ \eta'_0 = \bar{\eta}_0 + \psi_1^2 (\cdots) \ . $$
If $ \epsilon_1 \geq 2\nu(z) $ we are done. Indeed, if $ \epsilon_1 \geq 2\nu(z) $ we have
$$ \nu_{\tilde{\mathcal{A}}_2} (\eta'_0) = \nu_{\tilde{\mathcal{A}}_2}(\bar{\eta}_0 + \psi_1^2 (\cdots)) = \nu_{\mathcal{A}_1}(\bar{\eta}_0 + \psi_1^2 (\cdots)) \geq 2 \nu(z) \ , $$
hence necessarily we have $ \nu(\tilde{z}_2) \geq 2 \nu(z) $, since after a $ \gamma $-preparation condition (R2) must be satisfied.

Thus we have $ \nu(z) < \epsilon_1 < 2\nu(z) $.  As we said, after a $ \gamma $-preparation $ \tilde{\mathcal{A}}_2 \rightarrow \mathcal{A}_2 $ condition (R2) must be satisfied, thus in $ \mathcal{A}_2 $ we have
\begin{equation}\label{eq:tschirhausen_1}
	\bar{\eta}_0 = - \psi_2 \, \sigma - d \psi_2 + \bar{\bar{\eta}}_0 \ ,
\end{equation} 
where
$$ \psi_2 = \xi_2 \boldsymbol{x}_2^{\boldsymbol{p}_2} \ , \quad \xi_2 \in k^* \ , \nu(\boldsymbol{x}_2^{\boldsymbol{p}_2}) \geq \epsilon_1 > \nu(z) \ . $$
Since $ \bar{\eta_0} $ is a element of $ N_{\mathcal{A}_1}^{\ell} \subset  N_{\mathcal{A}_2}^{\ell} $, we have that the equality given in \eqref{eq:tschirhausen_1} is also valid in $ \mathcal{A}_1 $, so we have that Equation \eqref{eq:0_level_1} can be rewrite as
\begin{equation}\label{eq:0_level_2}
	\eta_0 = - (\psi_1 + \psi_2) \sigma - d (\psi_1 + \psi_2) + \bar{\bar{\eta}}_0 \ .
\end{equation}
We can iterate this method an obtain functions $ \psi_3,\psi_4,\dots\psi_k \in R_{\mathcal{A}_1}^{\ell}$ with increasing value. Since in $ R_{\mathcal{A}_1}^{\ell} $ the amount of monomials with value lower than $ 2 \nu(z) $ this procedure will provide an ordered change of the $ (\ell+1) $-th coordinate $ \mathcal{A}_1 \rightarrow \mathcal{A}' $ such that $ \nu(z') \geq 2 \nu(z) $.

\section{Proof of the main Theorem}\label{ch:final}
In this chapter we end the proof of Theorem \ref{th:2}. Let us recall the precise statement. Let $ K $ be the function field of an algebraic variety defined over an algebraically closed field $ k $ of characteristic $ 0 $:
\begin{quote}
	\textbf{Theorem $ 1 $}: Let $ \mathcal{F} \subset \Omega_{K/k} $ be a rational codimension one foliation of $ K/k $. Given a rational archimedean valuation $ \nu $ of $ K/k $ and a projective model $ M $ of $ K $ there exists a sequence of blow-ups with codimension two centers $ \pi : \tilde{M} \longrightarrow M $ such that $ \mathcal{F} $ is log-final at the center of $ \nu $ in $ \tilde{M} $.
\end{quote}
As we detail in Chapter \ref{ch:prlm} this result is a consequence of Theorem \ref{th:2}:
\begin{quote}
	\textbf{Theorem $ 2 $}: Let $ \mathcal{F} \subset \Omega_{K/k} $ be a rational codimension one foliation of $ K/k $. Given a rational archimedean valuation $ \nu $ of $ K/k $ and a projective model $ M $ of $ K $ there exists a nested transformation
	$$
		{\mathcal A} \longrightarrow {\mathcal B}
	$$
	such that ${\mathcal F}$ is ${\mathcal B}$-final.  
\end{quote}
Let $ \mathcal{A} = \big( \mathcal{O},(\boldsymbol{x},\boldsymbol{y},z) \big) $ be a parameterized regular local model for $ K,\nu $, where we denote the dependent variables as $ (\boldsymbol{y},z) = (y_1,y_2,\dots,y_{\ell},z) $, being $ \ell = \operatorname{tr.deg}(K/k) - \operatorname{rat.rk}(\nu) - 1$. Take a generator of $ \mathcal{F}_{\mathcal{A}} $
$$ \omega = \sum_{i=1}^r a_i \frac{dx_i}{x_i} + \sum_{j=1}^{s-1} b_j dy_j + f dz \in \mathcal{F}_{\mathcal{A}} \subset \Omega_{\mathcal{O}/k}(\log \boldsymbol{x}) \ . $$
Since
$$ \omega \in \Omega_{\mathcal{O}/k}(\log \boldsymbol{x}) \subset \Omega_{\mathcal{O}/k}(\log \boldsymbol{x}) \otimes_{\mathcal{O}} \hat{\mathcal{O}} =N_{\mathcal{A}}^{\ell+1} \ , $$
we can apply Theorem \ref{th:gamma_finalization_1_forms} to $ \omega $. There are two possibilities:
\begin{enumerate}
	\item For every $ \gamma \in \Gamma $ there is a $ (\ell+1) $-nested transformation
	$$ \mathcal{A} \rightarrow \mathcal{B}_{\gamma} $$
	such that $ \omega $ is $ \gamma $-final recessive in $ \mathcal{B}_{\gamma} $;
	\item There is $ \gamma \in \Gamma $ and a $ (\ell+1) $-nested transformation
	$$ \mathcal{A} \rightarrow \mathcal{B} $$
	such that $ \omega $ is $ \gamma $-final dominant in $ \mathcal{B} $.
\end{enumerate}
Suppose we are in the second case. Since $ \omega $ is dominant in $ \mathcal{B} $, we can perform a $ 0 $-nested transformation $ \mathcal{B} \rightarrow \mathcal{C} $
given by Lemma \ref{le:simple_infinite_list} such that in $ \mathcal{C} $ we have
$$ \omega = Q \tilde{\omega} $$
where $ Q $ is a monomial in the independent variables and $ \tilde{\omega} $ is log-elementary. Let $ \tilde{M} $ be the projective model of $ K $ related to $ \mathcal{C} $ and let $ \tilde{P} \in \tilde{M} $ be the center of $ \nu $. Let $ \tilde{\boldsymbol{x}} $ be the independent variables. Since the nested transformations are algebraic, we have that 
$$ \tilde{\omega} \in \mathcal{F}_{\tilde{M},\tilde{P}}(\log \tilde{\boldsymbol{x}}) \ , $$
thus $ \mathcal{F} $ is log-elementary at $ \tilde{P} $.

Now, suppose we are in the first case. We study separately the cases $ f \neq 0 $ and $ f = 0 $.
\paragraph{The case $ f\neq0 $:} Consider the decomposition of $ \omega $ in $ z $-levels:
$$ \omega = \sum_{k=0}^{\infty} z^k \left( \eta_k + f_k \frac{dz}{z}  \right) \ , \quad \eta_k \in N_{\mathcal{A}}^{\ell} \ , f_k \in R_{\mathcal{A}}^{\ell} \ . $$
By Theorem \ref{th:gamma_finalization_functions} we know that for each index $ k \geq 1 $ there are two possibilities:
\begin{enumerate}
	\item For every $ \gamma \in \Gamma $ there is a $ \ell $-nested transformation
	$$ \mathcal{A} \rightarrow \mathcal{B}_{\gamma} $$
	such that $ f_k $ is $ \gamma $-final recessive in $ \mathcal{B}_{\gamma} $;
	\item There is $ \gamma \in \Gamma $ and a $ \ell $-nested transformation
	$$ \mathcal{A} \rightarrow \mathcal{B} $$
	such that $ f_k $ is $ \gamma $-final dominant in $ \mathcal{B} $.
\end{enumerate}
Suppose that for all $ k \geq 0 $ we are in the first case. In this situation, the  same happens for the function $ f $: fix an index $ \gamma $, perform a $ \ell $-nested transformation such that all the functions $ f_k $ with $ k \leq \gamma / \nu(z) $ are $ \gamma $-final recessive and then perform a $ (\ell+1) $-Puiseux's package. This is not possible, since $ f \in \mathcal{O} \subset R_{\mathcal{A}}^{\ell +1} $ so it has a value $ \nu(f) \in \Gamma $ which keeps stable by means of birational morphisms.

So let $ k_0 $ be the lowest index such that $ f_{k_0} $ can be transformed into a $ \gamma $-final dominant function for some $ \gamma $ by means of a $ \ell $-nested transformation. Now take a value $ \gamma_1 $ such that
$$ \gamma_1 > \gamma + k_0 \nu(z) \  . $$
Since $ \omega \wedge d \omega = 0 $ we have $ \nu_{\mathcal{A}}(\omega \wedge d \omega) > 2 \gamma $, so by Theorem \ref{th:gammma_preparation} we know there is a $ \ell $-nested transformation $ \mathcal{A} \rightarrow \mathcal{A}_1 $ such that $ \omega $ is $ \gamma_1 $-prepared in $ \mathcal{A}_1 $. Since
$$ \nu_{\mathcal{A}_1}(f_{k_0}) \leq \gamma < \gamma_1 - k_0 \nu(z) \ , $$
we have that
$$ \delta(\omega;\mathcal{A}_1,\gamma_1) < \gamma_1 \ , $$
thus the critical height $ \chi(\omega;\mathcal{A}_1,\gamma_1) $ is defined.

Now, perform a $ (\ell+1) $-Puiseux's package $ \mathcal{A}_1 \rightarrow \tilde{\mathcal{A}}_2 $. As we saw in the proof of Proposition \ref{pr:stability_critical_height_Puiseux_package}, there is an integer $ \hbar \leq \chi(\omega;\mathcal{A}_1,\gamma_1) $ such that the $ \hbar $-level of $ \omega $ in $ \tilde{\mathcal{A}}_2 $ is dominant at it has the same explicit value than $ \omega $ (which is exactly $ \delta(\omega;\mathcal{A}_1,\gamma_1) $). Let $ \tilde{z}_2 $ be the $ (\ell+1) $-th dependent variable in $ \tilde{\mathcal{A}}_2 $ and take a value $ \gamma_2 $ such that
$$ \gamma_2 > \delta(\omega;\mathcal{A}_1,\gamma_1) + \hbar \nu(\tilde{z}_2) \  . $$

Again, since $ \omega \wedge d \omega = 0 $ we have $ \nu_{\mathcal{A}}(\omega \wedge d \omega) > 2 \gamma_2 $, so by Theorem \ref{th:gammma_preparation} we know there is a $ \ell $-nested transformation $ \tilde{\mathcal{A}}_2 \rightarrow \mathcal{A}_2 $ such that $ \omega $ is $ \gamma_2 $-prepared in $ \mathcal{A}_2 $. Furthermore we know that
$$ \delta(\omega;\mathcal{A}_2,\gamma_2) < \gamma_2 $$
and
$$ \chi(\omega;\mathcal{A}_2,\gamma_2) \leq \chi(\omega;\mathcal{A}_1,\gamma_1) \ . $$
We can iterate this procedure how many times as we want and we obtain parameterized regular local models $ \mathcal{A}_3, \mathcal{A}_4 , \dots $ such that
$$\chi(\omega;\mathcal{A}_t,\gamma_t) \leq \chi(\omega;\mathcal{A}_{t-1},\gamma_{t-1}) \ . $$
We know that these critical heights are always strictly greater than $ 0 $ since we are assuming that $ \omega $ can not be transformed into a dominant $ 1 $-form.

As we saw in Section \ref{se:end_proof_theorem_gamma_finalization_1_forms} the only possibility in this situation is that, after a finite number of steps, say $ T $, we have $ \chi(\omega;\mathcal{A}_T,\gamma_T) = 1 $ and condition (R2) is satisfied in $ \mathcal{A}_T $. After performing a $ 0 $-nested transformation $ \mathcal{A}_T \rightarrow \mathcal{B}$ given by Lemma \ref{le:simple_infinite_list} if necessary, we have that
$$ \omega = \boldsymbol{x}^{\boldsymbol{q}} U \tilde{\omega} \ , $$
where $ U \in R_{\mathcal{B}}^{\ell+1} $ is a unit and $ \tilde{\omega}\in N_{\mathcal{B}}^{\ell+1} $ is of the form
$$ \tilde{\omega} = \sum_{i=1}^{r} \tilde{a}_i \frac{d\tilde{x}_i}{\tilde{x}_i} + \sum_{j=1}^{\ell} \tilde{b}_j \tilde{y}_j + dz \ ,  $$
where there is at least one index $ 1 \leq i_0 \leq r $ such that
$$ \tilde{a}_{i_0} \equiv z \mod (\tilde{\boldsymbol{x}},\tilde{\boldsymbol{y}},\tilde{z})^2 \ . $$
We have that
$$ U \tilde{\omega} \in \mathcal{F}_{\tilde{M},\tilde{P}}(\log \tilde{\boldsymbol{x}}) $$
and it is a log-canonical $ 1 $-form, so $ \mathcal{F} $ is log-canonical at $ \tilde{P} $.

\paragraph{The case $ f=0 $:} Thanks to the integrability condition, we have that this case corresponds to a foliation of lower dimensional type, it means, the foliation is an analytic cylinder over a foliation defined on a hypersurface. Let us check this assertion. Suppose without lost of generality that $ a_1 \neq 0 $. Fix an index $ 2 \leq i \leq r $. The coefficient of $ \omega \wedge d \omega $ multiplying $ \frac{dx_1}{x_1} \wedge \frac{dx_i}{x_i} \wedge dz $ is
$$ a_i \frac{\partial a_1}{\partial z} - a_1 \frac{\partial a_i}{\partial z} = - a_1^{-2} \frac{\partial a_i/a_1}{\partial z} $$ 
Due to the integrability condition it must be equal to zero, hence
$$ \frac{\partial a_i/a_1}{\partial z} = 0 \ . $$
This is equivalent to say that there is a function $ g_i \in k(\boldsymbol{x},\boldsymbol{y}) $ such that
$$ a_i(\boldsymbol{x},\boldsymbol{y},z) = g_i(\boldsymbol{x},\boldsymbol{y}) a_1(\boldsymbol{x},\boldsymbol{y},z) \ . $$
In the same way, fix an index $ 1 \leq j \leq \ell $. The coefficient of $ \omega \wedge d \omega $ multiplying $ \frac{dx_1}{x_1} \wedge dy_j \wedge dz $ is
$$ b_j \frac{\partial a_1}{\partial z} - a_1 \frac{\partial b_j}{\partial z} = - a_1^{-2} \frac{\partial a_i/a_1}{\partial z} \ . $$ 
Again, it is equivalent to say that there is a function $ h_j \in k(\boldsymbol{x},\boldsymbol{y}) $ such that
$$ b_j(\boldsymbol{x},\boldsymbol{y},z) = h_j(\boldsymbol{x},\boldsymbol{y}) a_1(\boldsymbol{x},\boldsymbol{y},z) \ . $$
Let $ d(\boldsymbol{x},\boldsymbol{y}) \in k[\boldsymbol{x},\boldsymbol{y}] $ be the common denominator of $ g_2,\dots,g_r,h_1,\dots,h_{\ell} $. Denote $ G_i(\boldsymbol{x},\boldsymbol{y}) = g_i(\boldsymbol{x},\boldsymbol{y}) / d(\boldsymbol{x},\boldsymbol{y}) $ and $ H_j(\boldsymbol{x},\boldsymbol{y}) = h_j(\boldsymbol{x},\boldsymbol{y}) / d(\boldsymbol{x},\boldsymbol{y}) $. We have that
$$ \frac{a_1(\boldsymbol{x},\boldsymbol{y},z)}{d(\boldsymbol{x},\boldsymbol{y})} \omega(\boldsymbol{x},\boldsymbol{y},z) =
d(\boldsymbol{x},\boldsymbol{y}) \frac{dx_1}{x_1}  + \sum_{i=2}^{r} G_i(\boldsymbol{x},\boldsymbol{y}) \frac{dx_i}{x_i} + \sum_{j=1}^{\ell} H_j(\boldsymbol{x},\boldsymbol{y}) dy_j \ . $$
This $ 1 $-form belongs to $ N_{\mathcal{A}}^{\ell} $ and it generates the foliation $ \mathcal{F} $.

\end{document}